\title{Remarks on modulation spaces and homogeneity}
\newcommand{\scal}[2]{\langle #1,#2\rangle}
\newcommand{\re}{\mathbf R}
\newcommand{\rr}[1]{\mathbf R^{#1}}
\newcommand{\zz}[1]{\mathbf Z^{#1}}
\newcommand{\zzp}[1]{\mathbf Z_+^{#1}}
\newcommand{\cc}[1]{\mathbf C^{#1}}
\newcommand{\nn}[1]{\mathbf N^{#1}}
\newcommand{\nm}[2]{\Vert #1\Vert _{#2}}
\newcommand{\nmm}[1]{\Vert #1\Vert }
\newcommand{\op}{\operatorname{Op}}
\newcommand{\sets}[2]{\{ \, #1\, ;\, #2\, \} }
\newcommand{\Sets}[2]{\left \{ \, #1\, ;\, #2\, \right \} }
\newcommand{\ep}{\varepsilon}
\newcommand{\fy}{\varphi}
\newcommand{\cdo}{\, \cdot \, }
\newcommand{\tp}{\operatorname{Tp}}
\newcommand{\vrum}{\vspace{0.1cm}}
\newcommand{\wpr}{{\text{\footnotesize $\#$}}}
\newcommand{\be}{\mathbf{e}}
\newcommand{\beps}{\mathbf{\varepsilon}}
\newcommand{\tK}{\widetilde{K}}
\newcommand{\GL}{\mathbf{M}}
\newcommand{\maclK}{\mathcal K}
\newcommand{\maclP}{\mathcal P}
\newcommand{\maclS}{\mathcal S}
\newcommand{\mascB}{\mathscr B}
\newcommand{\mascF}{\mathscr F}
\newcommand{\mascP}{\mathscr P}
\newcommand{\mascS}{\mathscr S}
\newcommand{\mabfp}{{\boldsymbol p}}
\newcommand{\mabfq}{\boldsymbol q}
\newcommand{\splM}{\EuScript M}
\newcommand{\Srr}{\maclS^\sigma_s(\rr{d})}
\newcommand{\topo}{\tp _\phi (\omega )}
\newcommand{\psdo}{pseudo-differential operator}
\def\inv{^{-1}}
\numberwithin{equation}{section}          
\newtheorem{thm}{Theorem}
\numberwithin{thm}{section}
\newtheorem*{tom}{\rubrik}
\newcommand{\rubrik}{}
\newtheorem{prop}[thm]{Proposition}
\newtheorem{cor}[thm]{Corollary}
\newtheorem{lemma}[thm]{Lemma}
\theoremstyle{definition}
\newtheorem{defn}[thm]{Definition}
\newtheorem{example}[thm]{Example}
\theoremstyle{remark}
\newtheorem{rem}[thm]{Remark}              
\author{Ahmed Abdeljawad}
\address{Dipartimento di Matematica ``G. Peano'', Universit\'a
degli Studi di Torino}
\email{ahmed.abdeljawad@unito.it}
\author{Sandro Coriasco}
\address{Dipartimento di Matematica ``G. Peano'', Universit\'a
degli Studi di Torino}
\email{sandro.coriasco@unito.it}
\author{Joachim Toft}
\address{Department of Mathematics,
Linn{\ae}us University, Sweden}
\email{joachim.toft@lnu.se}
\begin{document}

\par

\title[Liftings for modulation spaces, and
one-parameter groups of $\Psi$DO]{Liftings
for ultra-modulation spaces, and one-parameter
groups of Gevrey type pseudo-differential
operators}

\keywords{Gelfand-Shilov spaces, Banach Function spaces,
ultra-distributions, confinement of symbols, comparable symbols}

\subjclass[2010]{Primary: 35S05, 47D06, 46B03, 42B35,
47B35, 46F05; Secondary: 46G15, 47L80}

\par

\begin{abstract}
We deduce one-parameter group properties for
pseudo-differential operators $\op (a)$, where
$a$ belongs to the class $\Gamma ^{(\omega _0)}_*$
of certain Gevrey symbols. We use this to show that there
are pseudo-differential operators $\op (a)$ and $\op (b)$
which are inverses to each others, where
$a\in \Gamma ^{(\omega _0)}_*$
and $b\in \Gamma ^{(1/\omega _0)}_*$.

\par

We apply these results to deduce lifting property for
modulation spaces and
construct explicit isomorpisms between them. For each weight
functions $\omega ,\omega _0$ moderated by GRS submultiplicative
weights, we prove that the Toeplitz operator
(or localization operator) $\tp (\omega _0)$ is an isomorphism
from $M^{p,q}_{(\omega )}$ onto $M^{p,q}_{(\omega /\omega _0)}$
for every $p,q \in (0,\infty ]$.
\end{abstract}

\maketitle

\section{Introduction}\label{sec0}

\par

The topological vector spaces $V_1$ and $V_2$ is said to possess
lifting property if there exists a "convenient" homeomorphisms
(a lifting) between them. For example, for any weight $\omega$
on $\rr d$, $p\in (0,\infty]$ and $s\in \mathbf R$ the
convenient mappings
$f\mapsto \omega \cdot f$ and $f\mapsto (1-\Delta )^{s/2}f$
are homeomorphic from the (weighted) Lebesgue space $L^p_{(\omega )}$ and
the Sobolev space $H^p_s$, respectively, into $L^p=H^p_0$,
with inverses $f\mapsto \omega ^{-1}\cdot f$ and
$f\mapsto (1-\Delta )^{-s/2}f$, respectively.
(Cf. \cite{Ho1} and Section \ref{sec1} for notations.)
Hence, these spaces possess lifting properties.

\par

It is often uncomplicated to deduce lifting properties
between (quasi-)Banach spaces of functions and distributions, 
if the definition of their norms only differs by a multiplicative weight
on the involved distributions, or on their Fourier
transforms, which is the case in the previous homeomorphisms. Here
recall that multiplications on the Fourier transform side are linked
to questions on differentiation of the involved elements.
A more complicated situation appear when there are some kind of
interactions between multiplication and differentiation in the
definition of the involved vector spaces.

\par 

An example where such interactions occur concerns the extended
family of Sobolev spaces, introduced by Bony and Chemin in
\cite{BoCh} (see also \cite{Le2}). More precisely, let
$\omega ,\omega _0$ be suitable weight functions and $g$
a suitable Riemannian metric, which are defined on the phase
space $W\simeq T^*\rr d \simeq \rr {2d}$. Then Bony and
Chemin introduced in \cite{BoCh} the
generalised Sobolev space $H(\omega ,g)$ which fits the
H{\"o}rmander-Weyl calculus well in the sense that $H(1,g)=L^2$,
and if $a$ belongs to the H{\"o}rmander class $S(\omega _0,g)$,
then Weyl operator $\op ^w(a)$ with symbol $a$ is continuous from
$H(\omega _0\omega ,g)$ to $H(\omega ,g)$. Moreover, they deduced
group algebras, from which it follows that to each such weight
$\omega _0$, there exist symbols $a$ and $b$ such that
\begin{equation}\label{Eq:IntrInverses1}
\op ^w(a)\circ \op ^w(b) = \op ^w(b)\circ \op ^w(a) = I ,
\quad a\in S(\omega _0,g),\ b\in S(1/\omega _0,g).
\end{equation}
Here $I$ is the identity operator on $\mascS '$. In particular,
by the continuity properties of $\op ^w(a)$ it follows that 
$H(\omega _0\omega ,g)$ and $H(\omega ,g)$ possess lifting properties
with the homeomorphism $\op ^w(a)$, and with $\op ^w(b)$ as its inverse.

\par

The existence of $a$ and $b$ in \eqref{Eq:IntrInverses1} is a consequence
of solution properties of the evolution equation
\begin{equation}\label{Eq:IntrSymeqDiff}
(\partial_t a)(t,\cdo)=(b+\log \vartheta )\wpr a(t, \cdo),
\qquad a(0,\cdo)=a_0\in S(\omega ,g),\ \vartheta \in S(\vartheta ,g),
\end{equation}
which involve the Weyl product $\wpr$ and a fixed element $b\in S(1,g)$.
It is proved that \eqref{Eq:IntrSymeqDiff} has a unique solution
$a(t,\cdo )$ which belongs to $S(\omega\vartheta^t ,g)$
(cf. \cite[Theorem 6.4]{BoCh} or \cite[Theorem 2.6.15]{Le2}). The existence
of $a$ and $b$ in \eqref{Eq:IntrInverses1}
will follow by choosing $\omega =a_0=1$, $t=1$ and $\vartheta =\omega _0$.

\par

If $g$ is the constant euclidean metric on the phase
space $\rr {2d}$, then $S(\omega _0,g)$ equals $S^{(\omega _0)}(\rr {2d})$,
the set of all smooth symbols $a$ which satisfies $|\partial ^\alpha a|
\lesssim \omega _0$.
We notice that also for such simple choices of $g$,
\eqref{Eq:IntrInverses1} above leads to lifting
properties that are not trivial. In fact, let $\omega$ and $\omega _0$ be
polynomially moderate weight on the phase space, and let $\mascB$ be
a suitable translation invariant BF-space. Then it is observed in
\cite{GrTo1} that the continuity results
for pseudo-differential operators on modulation spaces in
\cite{To7,To9} imply that
$\op ^w(a)$ in \eqref{Eq:IntrInverses1}
is continuous and bijective from $M(\omega _0\omega ,\mascB)$
to $M(\omega ,\mascB)$ with continuous inverse $\op ^w(b)$. In particular,
by choosing $\mascB$ to be the mixed norm space $L^{p,q}(\rr {2d})$ of
Lebesgue type, then $M(\omega ,\mascB )$ is equal to the
classical modulation space $M^{p,q}_{(\omega )}$, introduced by
Feichtinger in \cite{Fe4}. Consequently, $\op ^w(a)$ above lifts
$M^{p,q}_{(\omega _0\omega )}$ into $M^{p,q}_{(\omega )}$.

\par

An important class of operators in quantum mechanics and time-frequency
analysis concerns Toeplitz, or localisation operators. The main issue
in \cite{GrTo1,GrTo2} is to show that the Toeplitz operator $\tp (\omega _0)$
lifts $M^{p,q}_{(\omega _0\omega )}$ into $M^{p,q}_{(\omega )}$
for suitable $\omega _0$. The assumptions on $\omega _0$ in \cite{GrTo1}
is that it should be polynomially moderate and satisfies
$\omega _0\in S^{(\omega _0)}$. In \cite{GrTo2}, it is only assumed that
$\omega _0$ is moderated by a GRS weight, but instead it is here
required that $\omega _0$ is \emph{radial in each phase shift}, i.{\,}e.
$\omega _0$ should satisfy
$$
\omega _0(x_1,\dots ,x_d,\xi _1,\dots ,\xi _d) = \vartheta (r_1,\dots ,r_d),
\qquad r_j = |(x_j,\xi _j)|,
$$
for some weight $\vartheta$.

\par

The approaches in \cite{GrTo1,GrTo2} are also different. In \cite{GrTo2},
the lifting properties for $\tp (\omega _0)$ is reached by using the links between
modulation spaces and Bargmann-Foch spaces in combination of suitable
estimates for a sort of generalised gamma-functions. The approach in
\cite{GrTo1} relies on corresponding lifting properties for pseudo-differential
operators, as follows:
\begin{enumerate}
\item $\tp (\omega _0)=\op ^w(c)$ for some $c\in S^{(\omega _0)}$;

\vrum

\item by the definitions, it follows by straightforward computations that
if $\vartheta =\omega _0^{\frac 12}$, then $\tp (\omega _0)$ is a homeomorphism
from $M^{p,q}_{(\vartheta )}$ to $M^{p,q}_{(\vartheta )}$;

\vrum

\item combining \eqref{Eq:IntrInverses1} with Wiener's lemma for
$(S^{(1)},\wpr )$ ensures that the inverse of $\tp (\omega _0)$
in (2) is a pseudo-differential operator $\op ^w(b)$ with the symbol
$b$ in $S^{(1/\omega _0)}$;

\vrum

\item by (1), (3) and duality,
$$
T_1 \equiv \op ^w(b)\circ \tp (\omega _0)
\quad \text{and}\quad
T_2\equiv \tp (\omega _0)\circ \op ^w(b)
$$
are both the identity operator on
$\mascS '(\rr d)$, since $T_1$ is the identity operator on
$M^{p,q}_{(\vartheta )}$, $T_2$ is the identity operator on
$M^{p,q}_{(1/\vartheta )}$, and
$\mascS \subseteq M^{p,q}_{(\vartheta )}\cap M^{p,q}_{(1/\vartheta )}$.

\vrum

\item by (4), $T_1=T_2=\op ^w(1)$ is the identity operator on each
$M^{p,q}_{(\omega )}$. Since
$$
\tp (\omega _0) = \op ^w(c)\, : \, M^{p,q}_{(\omega _0\omega )}
\to M^{p,q}_{(\omega )}
\quad \text{and}\quad
\op ^w(b)\, : \, M^{p,q}_{(\omega )}
\to M^{p,q}_{(\omega _0\omega )}
$$
are continuous (cf. \cite{To7,To9}) and inverses to each other,
it follows that they are homeomorphisms.
\end{enumerate}

\medspace

In the first part of the paper we deduce an analog of
\eqref{Eq:IntrInverses1} for the
Gevrey type symbol classes $\Gamma ^{(\omega _0)}_{s}$ and
$\Gamma ^{(\omega _0)}_{0,s}$ of orders $s\ge 1$, the set of all
$a\in C^\infty$ such that
$$
|\partial ^\alpha a (X)| \lesssim h^{|\alpha |}\alpha !^s\omega (X)
$$
for some $h>0$, respectively for every $h>0$, considered in \cite{CaTo}.
That is, in Section \ref{sec3} we show that there exist symbols $a$ and
$b$ such that
\begin{equation}\label{Eq:IntrInverses2}
\op ^w(a)\circ \op ^w(b) = \op ^w(b)\circ \op ^w(a) = I ,
\quad a\in \Gamma ^{(\omega _0}_{s},\ b\in \Gamma ^{(1/\omega _0}_{s},
\end{equation}
and similarly when $\Gamma ^{(\omega _0)}_{s}$ and $\Gamma ^{(1/\omega _0)}_{s}$
are replaced by 
$\Gamma ^{(\omega _0)}_{0,s}$ and $\Gamma ^{(1/\omega _0)}_{0,s}$, respectively.

\par

For general $\omega _0$ it is clear that $\Gamma ^{(\omega _0)}_{0,s}
\subseteq \Gamma ^{(\omega _0)}_{s}\subseteq S^{(\omega _0)}$. On the other
hand, for the weights $\omega _1$, $\omega _2$ and $\omega _3$
in
$\Gamma ^{(\omega _1)}_{0,s}$, $\Gamma ^{(\omega _2)}_{s}$ and
$S^{(\omega _3)}$ we always assume that they belong to
$\mascP _{E,s}(\rr {2d})$, $\mascP _{E,s}^0(\rr {2d})$ and
$\mascP (\rr {2d})$, respectively. That is, they should satisfy
\begin{gather*}
\omega _1(X+Y)\lesssim \omega _1(X)e^{r_1|Y|^{\frac 1s}},
\quad
\omega _2(X+Y)\lesssim \omega _2(X)e^{r_2|Y|^{\frac 1s}},
\\[1ex]
\text{and}\quad
\omega _3(X+Y)\lesssim \omega _3(X)(1+|Y|)^N,
\end{gather*}
for some $r_1>0$ and $N>0$, and every $r_2>0$. Since it is clear
that $\mascP \subseteq \mascP _{E,s}^0\subseteq \mascP _{E,s}$,
it follows by straightforward computations that there are admissible
$a_1\in \Gamma ^{(\omega _1)}_{0,s}$ which are not contained in any admissible
$\Gamma ^{(\omega _0)}_{s}$ and $S^{(\omega _0)}$, and admissible
$a_2\in \Gamma ^{(\omega _2)}_{s}$ which are not contained in
any $S^{(\omega _0)}$.

\par

As in \cite{BoCh}, \eqref{Eq:IntrInverses2} is obtained by
proving that the evolution equation
\begin{equation}\label{Eq:IntrSymeqDiff2}
(\partial_t a)(t,\cdo)=(b+\log \vartheta )\wpr a(t, \cdo),
\qquad a(0,\cdo)=a_0\in \Gamma ^{(\omega )}_{s},\ \vartheta \in
\Gamma ^{(\vartheta )}_{s},
\end{equation}
analogous to \eqref{Eq:IntrSymeqDiff}, has a unique solution
$a(t,\cdo )$ which belongs to $\Gamma ^{(\omega\vartheta^t )}_{s}$
(and similarly when the $\Gamma ^{(\omega )}_{s}$-spaces are replaced
by corresponding $\Gamma ^{(\omega )}_{0,s}$-spaces), given in Section 
\ref{sec3}.

\par

In Sections \ref{sec4} and \ref{sec5} we use the framework in
\cite{GrTo1} in combination with \eqref{Eq:IntrInverses2} to extend the
lifting properties in \cite{GrTo1} in such ways that the involved weights
are allowed to belong to $\mascP _{E,s}^0$ or in $\mascP _{E,s}$ instead 
of the
smaller set $\mascP$ which is the assumption in \cite{GrTo1}.

\par

Our main result, which is similar to \cite[Theorem 0.1]{GrTo1},
can be stated as follows.

\par

\begin{thm}\label{originalgoal}
  Let $s\ge 1$, $\omega ,\omega _0\in \mascP _{E,s}^0(\rr {2d})$,
  $p,q\in (0,\infty ]$ and let $\phi \in
  \maclS _s(\rr{d})$. Then the Toeplitz operator $\tp _\phi (\omega
  _0)$ is an isomorphism from $M^{p,q}_{(\omega )}(\rr {2d})$ onto
  $M^{p,q}_{(\omega /\omega _0)}(\rr {2d})$.
\end{thm}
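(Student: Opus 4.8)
The plan is to follow the five-step strategy outlined in the introduction for the polynomial case from \cite{GrTo1}, now using the Gevrey-type inverse result \eqref{Eq:IntrInverses2} established in Section \ref{sec3} in place of the Bony--Chemin machinery, together with the continuity of pseudo-differential operators on modulation spaces. First I would record that the Toeplitz operator can be realized as a Weyl operator: there exists a symbol $c$ with $\tp _\phi (\omega _0) = \op ^w(c)$, and the key point to verify is that the Gevrey regularity of $\phi \in \maclS _s(\rr d)$ together with $\omega _0 \in \mascP _{E,s}^0$ forces $c \in \Gamma ^{(\omega _0)}_{0,s}$ (rather than merely $c \in S^{(\omega _0)}$). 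Concretely, $c$ is obtained by convolving $\omega _0$ against the Wigner transform of $\phi$, so one estimates the derivatives of $c$ by distributing derivatives onto the two factors and exploiting that $|\partial ^\alpha (W\phi)|$ satisfies subexponential Gevrey bounds while $\omega _0$ is $\mascP _{E,s}^0$-moderate; this yields the $\Gamma ^{(\omega _0)}_{0,s}$ estimate for every $h>0$.

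Next I would invoke \eqref{Eq:IntrInverses2} for the class $\Gamma ^{(\omega _0)}_{0,s}$: since $c \in \Gamma ^{(\omega _0)}_{0,s}$ is admissible, there exist Weyl symbols $a,b$ with $\op ^w(a)\circ \op ^w(b) = \op ^w(b)\circ \op ^w(a) = I$ and $a\in \Gamma ^{(\omega _0)}_{0,s}$, $b\in \Gamma ^{(1/\omega _0)}_{0,s}$. The mechanism here is to apply the solvability of the symbolic evolution equation \eqref{Eq:IntrSymeqDiff2} with $\vartheta = \omega _0$, $\omega = 1$, $t=1$, which produces an inverse symbol $b\in \Gamma ^{(1/\omega _0)}_{0,s}$ of $c$ itself under the Weyl product (a Gevrey-analogue of Wiener's lemma for $(\Gamma ^{(1)}_{0,s},\wpr)$). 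Thus $\op ^w(b)$ is a two-sided inverse of $\tp _\phi (\omega _0)=\op ^w(c)$ as operators on $\mascS _s'(\rr d)$.

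The continuity step then finishes the argument. By the continuity results for pseudo-differential operators on modulation spaces with symbols in Gevrey classes of type $\mascP _{E,s}^0$, the operator $\op ^w(c)$ maps $M^{p,q}_{(\omega )}$ continuously into $M^{p,q}_{(\omega /\omega _0)}$, while its inverse $\op ^w(b)$, having symbol in $\Gamma ^{(1/\omega _0)}_{0,s}$, maps $M^{p,q}_{(\omega /\omega _0)}$ continuously back into $M^{p,q}_{(\omega )}$; here it is essential that $\omega \in \mascP _{E,s}^0$ and $\omega /\omega _0 \in \mascP _{E,s}^0$ so that both modulation spaces are well defined and the mapping properties hold for all $p,q\in (0,\infty ]$. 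Since these two continuous maps are inverse to one another (the composition is $\op ^w(1)=I$, which restricts to the identity on each modulation space because $\maclS _s$ is densely and continuously embedded), $\tp _\phi (\omega _0)$ is an isomorphism of the required spaces.

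The main obstacle I anticipate is the first step: establishing that the Toeplitz symbol $c$ lands precisely in the \emph{Beurling}-type Gevrey class $\Gamma ^{(\omega _0)}_{0,s}$, with the $h$-independent estimates holding for every $h>0$. This requires a careful tracking of the constants in the convolution $c = \omega _0 * W\phi$, matching the subexponential decay of the window's Wigner transform against the $e^{r|Y|^{1/s}}$-type moderateness of $\omega _0$ for arbitrarily small $r$, and is where the Gelfand--Shilov regularity of $\phi$ is used in an essential way rather than ordinary smoothness.
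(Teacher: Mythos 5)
There is a genuine gap in your second step. The evolution equation \eqref{Eq:IntrSymeqDiff2} with $\omega =a_0=1$, $\vartheta =\omega _0$ and $t=1$ produces \emph{some} pair $a\in \Gamma ^{(\omega _0)}_{s}$, $b\in \Gamma ^{(1/\omega _0)}_{s}$ with $a\wpr b=b\wpr a=1$, but the symbol $a=a(1,\cdo )$ it produces is the time-one solution of a specific ODE and is not the Toeplitz symbol $c=(2\pi )^{-d/2}\omega _0*W_{\phi ,\phi}$. So \eqref{Eq:IntrInverses2} hands you an invertible operator \emph{in the same symbol class} as $\op ^w(c)$, not an inverse of $\op ^w(c)$ itself; the assertion that ``$\op ^w(b)$ is a two-sided inverse of $\tp _\phi (\omega _0)$'' does not follow. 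Nor does your Wiener-lemma remark close the gap: spectral invariance (Proposition \ref{Thm:specinv}) only says that \emph{if} an operator with symbol in the algebra is invertible on $L^2$, \emph{then} its inverse stays in the algebra --- it presupposes invertibility, which is nowhere established for $\tp _\phi (\omega _0)$ in your argument, and it applies to the unweighted class $\Gamma ^{(1)}$, so one must first conjugate $\op ^w(c)$ by the group elements to reduce to an order-zero operator.

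The missing ingredient is exactly where the positivity of the weight $\omega _0$ enters. The paper first proves (Lemma \ref{Abijections}) that $\tp _\phi (\omega _0)$ is an isomorphism from $M^2_{(\vartheta )}$ onto $M^2_{(1/\vartheta )}$ with $\vartheta =\omega _0^{1/2}$, using the identity $(\tp _\phi (\omega _0)f,g)_{L^2}=(\omega _0V_\phi f,V_\phi g)_{L^2}$: this yields $\nm {\tp _\phi (\omega _0)f}{M^2_{(1/\vartheta )}}\asymp \nm f{M^2_{(\vartheta )}}$ (injectivity with closed range) and, by self-adjointness and duality, dense range. Theorem \ref{thm:identification}{\,}(2) then bootstraps this single $M^2$ isomorphism to all $M(\omega ,\mascB )$: one conjugates $\op ^w(c)$ by the invertible group operators from Section \ref{sec3} to obtain an operator with symbol in $\Gamma ^{(1)}_{s}$ that is invertible on $L^2$, applies Proposition \ref{Thm:specinv}, and unwinds. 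Your first and third steps (realizing $\tp _\phi (\omega _0)=\op ^w(c)$ with $c$ in a Gevrey class, and the continuity of $\op ^w(c)$ and of the inverse on the modulation spaces) are consistent with the paper, though with $\phi \in \maclS _s$ one only gets the Roumieu class $\Gamma ^{(\omega _0)}_{s}$ rather than the Beurling class $\Gamma ^{(\omega _0)}_{0,s}$ for every $h$; that is what Proposition \ref{Aomegaproperties} actually provides, and it is all that is needed.
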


\par

We note that, in contrast to \cite{GrTo1,GrTo2}, our lifting properties
also hold for modulation spaces which may fail to be Banach spaces,
since $p$ and $q$ in Theorem \ref{originalgoal} are allowed to be
smaller than $1$.

\par

We will establish several related result. Firstly, the window
function may be chosen in certain modulation spaces that are much
larger than the Gelfand-Shilov
space $\maclS _s$. Secondly, the theorem holds for a more general
family of modulation spaces that includes the classical modulation spaces.
Finally, we also establish isomorphisms given by
pseudo-differential operators rather than Toeplitz operators.

\par

In contrast to \cite{GrTo2}, we do not impose in Theorem 
\ref{originalgoal} and in its
related results in Section \ref{sec5} that $\omega _0$ should
be radial in each phase shift (cf. e.{\,}g.
\cite[Theorem 4.3]{GrTo2}). Summing up, our lifting
results in Section \ref{sec5} extend the lifting results
in \cite{GrTo1,GrTo2}.

\par

The paper is organised as follows. In Section \ref{sec1} we introduce
some notation, and discuss modulation spaces and Gelfand-Shilov spaces of
functions and distributions, and pseudo-differential calculus. In Section
\ref{sec2} we introduce and discuss basic properties for confinements of
symbols in $\Gamma ^{(\omega _0)}_{s}$ and in $\Gamma ^{(\omega _0)}_{0,s}$.
These considerations are related to the discussions in \cite{BoCh,Le2},
but here adapted to symbols that possess Gevrey regularity, e.{\,}g.
when the symbols belong to $\Gamma ^{(\omega _0)}_{s}$ or
$\Gamma ^{(\omega _0)}_{0,s}$. 

\par

In contrast to the classical H{\"o}rmander symbol classes $S^r_{1 ,0}$
and the SG-classes $\operatorname{SG}^{m,\mu}_{1,1}$, techniques on
asymptotic expansions are absent for symbols in the classes
$\Gamma ^{(\omega _0)}_{s}$ or in $\Gamma ^{(\omega _0)}_{0,s}$,
and might be absent for symbols in the general H{\"o}rmander class
$S(m,g)$. The approach with confinements is, roughly speaking,
a sort of stand-in of these absent asymptotic expansion techniques.

\par

In Section \ref{sec3} we show that the \eqref{Eq:IntrSymeqDiff2}
has a unique solution with the requested properties, which leads to
\eqref{Eq:IntrInverses2}. In Sections \ref{sec4} and \ref{sec5}
we use the results
from Section \ref{sec3} to deduce lifting properties for modulation
spaces under pseudo-differential operators and Toeplitz operators
with symbols in $\Gamma ^{(\omega _0)}_{s}$ or
in $\Gamma ^{(\omega _0)}_{0,s}$

\par

\section{Preliminaries}\label{sec1}

\par

In this section we recall some basic facts on
modulation spaces, Gelfand-Shilov spaces of functions
and distributions and pseudo-differential calculus
(cf. \cite{Fe3,Fe4,Fe5,Fe5.5,Fe6,FG1,FG2,Gc2,GS06,Ho1,Ho3,Le2,
PiTe04,Sj1,Te2,To5,To7,To8,To9,To10}).

\par

\subsection{Weight functions}
A \emph{weight} on $\rr d$ is a positive function $\omega \in  L^\infty _{loc}(\rr d)$
such that $1/\omega \in  L^\infty _{loc}(\rr d)$. If $\omega$ and $v$ are weights on
$\rr d$, then $\omega$ is called \emph{moderate} or \emph{$v$-moderate}, if
	\begin{equation}\label{eq:2}
		\omega(x+y)\le C\omega(x)v(y),\quad x,y\in\rr{d},
	\end{equation}
for some constant $C$. The set of all moderate weights on $\rr d$ is denoted
by $\mascP _E(\rr d)$.
The weight $v$ on $\rr d$ is called submultiplicative, if it is even and \eqref{eq:2}
holds for $\omega =v$ and $C=1$. From now on, $v$ always denote a submultiplicative
weight if nothing else is stated. In particular,
if \eqref{eq:2} holds and $v$ is submultiplicative, then it follows
by straightforward computations that
\begin{equation}\label{eq:2Next}
\begin{gathered}
C^{-1}\frac {\omega (x)}{v(y)} \le \omega(x+y) \le C\omega(x)v(y),
\\[1ex]
v(x+y) \le v(x)v(y)
\quad \text{and}\quad v(x)=v(-x)\ge 1,
\quad x,y\in\rr{d}.
\end{gathered}
\end{equation}

\par

If $\omega$ is a moderate weight on $\rr d$, then there is a
submultiplicative weight
$v$ on $\rr d$ such that \eqref{eq:2} and \eqref{eq:2Next}
hold (cf. \cite{To5,To7,To14}). Moreover if $v$ is
submultiplicative on $\rr d$, then
\begin{equation}\label{Eq:CondSubWeights}
1\lesssim v(x) \lesssim e^{r|x|}
\end{equation}
for some constant $r>0$ (cf. \cite{Gc2.5}). Here and in what follows we write 
$A(\theta )\lesssim B(\theta )$, $\theta \in \Omega$,
if there is a constant $c>0$ such that $A(\theta )\le cB(\theta )$
for all $\theta \in \Omega$. In particular, if $\omega$ is moderate, then
\begin{equation}\label{Eq:ModWeightProp}
\omega (x+y)\lesssim \omega (x)e^{r|y|}
\quad \text{and}\quad
e^{-r|x|}\le \omega (x)\lesssim e^{r|x|},\quad
x,y\in \rr d
\end{equation}
for some $r>0$.

\par

Next we introduce suitable subclasses of $\mascP _E$.

\par

\begin{defn}\label{Definition:Weightclasses}
Let $s>0$. The set $\mascP _{E,s}(\rr d)$ ($\mascP _{E,s}^0(\rr d)$) consists of
		all $\omega \in \mascP _E(\rr d)$ such that
		\begin{equation}\label{eq:gsmodw}
			\omega(x+y)\lesssim \omega(x) e^{r|y|^\frac{1}{s}}, \quad x,y\in\rr{d};
		\end{equation}
		holds for some (every) $r>0$.
\end{defn}

\par

By \eqref{Eq:ModWeightProp} it follows that $\mascP _{E,s_1}^0=\mascP _{E,s_2}
=\mascP _E$ when $s_1<1$ and $s_2\le 1$. For convenience we set
$\mascP^0_E(\rr d)=\mascP^0_{E,1}(\rr d)$.


\par

\par

\subsection{Gelfand-Shilov spaces}\label{subs:gs}

\par

We let $\mathscr F$ be the Fourier transform given by
$$
(\mathscr Ff)(\xi )= \widehat f(\xi ) \equiv (2\pi )^{-d/2}\int _{\rr
{d}} f(x)e^{-i\scal  x\xi }\, dx
$$
when $f\in L^1(\rr d)$. Here $\scal \cdo \cdo$ denotes the
usual scalar product on $\rr d$. 

\par


\par

\begin{defn}\label{def:gsspaces}
	The Gelfand-Shilov space $\Srr$ of Roumieu type ($\Sigma_s^\sigma(\rr{d})$ of Beurling type), $\sigma>0$, $s>0$, consists
	of all $f\in\mascS(\rr{d})$ such that
	\begin{equation}\label{eq:srra}
		|f(x)|\lesssim e^{-r|x|^\frac{1}{s}}
		\qquad \text{and}\qquad
		|(\mascF f)(\xi)|\lesssim e^{-r|\xi|^\frac{1}{\sigma}},\quad x, \xi\in\rr{d}
	\end{equation}
	for some $r>0$ (for all $r>0$). We set $\maclS_s = \maclS_s^s$ and
	$\Sigma_s=\Sigma _s^s$.
\end{defn}
The classes $\Srr$ and related generalizations were widely studied, and used in the applications
to partial differential equations, see for example \cite{BiGr03, Mi70, PiTe04, CoPiRoTe05, ChChKi96, GrZi04}. We recall the following characterisations of $\Srr$.

\par

\begin{prop}\label{thm:srrequiv}
	Let $s,\sigma>0$, $p\in[1,\infty]$ and
	let $f\in \mascS (\rr{d})$.
	Then the following conditions are equivalent:
	\begin{enumerate}
		\item $f\in \Srr$ ($f\in \Sigma _s^\sigma (\rr d)$);
		%
		%
		\item for some (every) $h>0$ it holds
		$$
		\|x^\alpha  f\|_{L^p}\lesssim h^{|\alpha|}\alpha!^s \quad \text{and}\quad
			\|\xi^\beta  \widehat f\|_{L^p}\lesssim h^{|\beta|}\beta!^\sigma ,
			\quad \alpha ,\beta\in\nn{d};
		$$
		\item for some (every) $h>0$ it holds
		$$
		\|x^\alpha f\|_{L^p}\lesssim h^{|\alpha|}\alpha!^s
		\quad \text{and}\quad 
		\|\partial^\beta f\|_{L^p}\lesssim h^{|\beta|}\beta!^\sigma,
		\quad \alpha ,\beta\in\nn{d};
		$$
		\item for some (every) $h>0$ it holds
		\[
			\|x^\alpha \partial^\beta f(x)\|_{L^p}\lesssim h^{|\alpha+\beta|}
			\alpha!^s\,\beta!^\sigma,
			\quad\alpha,\beta\in\nn{d};
		\]
		%
		%
		\item for some (every) $h,r>0$ it holds
		\label{pt:ptvi}
		\[
			\|e^{r|\cdo|^\frac{1}{s}}\partial^\alpha f\|_{L^p}\lesssim h^{|\alpha|}(\alpha!)^\sigma\,
			\alpha\in\nn{d}.
		\] 
	\end{enumerate}
\end{prop}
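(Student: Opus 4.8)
The plan is to prove Proposition \ref{thm:srrequiv} by establishing a cycle of implications among conditions (1)--(5), treating the ``for some'' and ``for every'' variants in parallel (so Roumieu and Beurling are handled simultaneously). The underlying principle is that decay of $f$ is dual, under $\mascF$, to smoothness of $\widehat f$, and the growth-rate bookkeeping is governed by the elementary inequalities relating $\sup_\alpha h^{-|\alpha|}\alpha!^{-s}|x^\alpha|$ to $e^{r|x|^{1/s}}$. I would first record this comparison as a preliminary lemma: for a multi-index power, $\sup_{\alpha} h^{-|\alpha|}(\alpha!)^{-s}|x^\alpha|$ is comparable to $e^{r|x|^{1/s}}$ with $r$ and $h$ related through the Stirling-type estimate $\alpha!\sim |\alpha|^{|\alpha|}e^{-|\alpha|}$, together with $|x^\alpha|\le |x|^{|\alpha|}$ after summing over $\alpha$. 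This single computation converts pointwise exponential bounds into weighted-monomial $L^p$ bounds and back, and is the engine for the whole proposition.

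Next I would run the implications. The equivalence of (2) and (3) is the Fourier side-switch: $\|\xi^\beta\widehat f\|_{L^p}$ equals $\|\widehat{\partial^\beta f}\,\|_{L^p}$ up to constants, and Hausdorff--Young together with the fact that all the Gelfand--Shilov bounds are stable under $\mascF$ lets one pass between $\|\partial^\beta f\|_{L^p}$ and $\|\xi^\beta\widehat f\|_{L^p}$; the only care needed is that the Fourier transform is bounded $L^p\to L^{p'}$ rather than $L^p\to L^p$, so I would argue that the estimates are in fact $p$-independent (this is where having \emph{both} $x^\alpha$ and $\partial^\beta$ control lets one interpolate/bootstrap to a fixed $p$, e.g.\ $p=2$ via Plancherel, and then back out to arbitrary $p$). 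The equivalence of (3) and (4) is the standard fact that controlling $\|x^\alpha f\|$ and $\|\partial^\beta f\|$ separately is equivalent to controlling the mixed norm $\|x^\alpha\partial^\beta f\|$: one direction is the Leibniz rule applied to $\partial^\beta(x^\gamma f)$, the other is immediate specialisation. Condition (5) is then obtained from (4) via the preliminary lemma, summing the monomial weights $x^\alpha$ against $\partial^\beta f$ to assemble the exponential weight $e^{r|\cdo|^{1/s}}$, and conversely dominating each $\|x^\alpha\partial^\beta f\|$ by $\|e^{r|\cdo|^{1/s}}\partial^\beta f\|$.

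Finally I would close the loop to condition (1). The pointwise decay and Fourier-decay estimates in \eqref{eq:srra} are matched to the $L^\infty$ instances of the monomial bounds: $|f(x)|\lesssim e^{-r|x|^{1/s}}$ is equivalent, via the preliminary lemma again, to $\sup_x|x^\alpha f(x)|\lesssim h^{|\alpha|}\alpha!^s$, i.e.\ condition (2) with $p=\infty$ on the spatial side, and symmetrically on the Fourier side with exponent $\sigma$. To upgrade from $p=\infty$ to general $p\in[1,\infty]$ (and vice versa) I would again use that once both $x^\alpha f$ and $\partial^\beta f$ are controlled, one can trade a couple of extra derivatives/powers for an $L^1$--$L^\infty$ Sobolev embedding factor, which is harmless for the ``some $h$''/``every $h$'' quantifier since it only perturbs $h$ and $r$ by fixed multiplicative constants. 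The main obstacle I anticipate is precisely this $p$-independence: making sure that the quantifier structure (``for some $h$'' in the Roumieu case, ``for every $h$'' in the Beurling case) is preserved under every switch between $L^p$ norms and under the Fourier transform, since each passage perturbs the constants $h,r$, and one must verify that these perturbations never collapse or inflate the quantifier. Once the comparison lemma is stated cleanly with explicit control of how $h$ and $r$ transform, the rest is bookkeeping.
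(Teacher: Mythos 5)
The paper itself gives no proof of Proposition \ref{thm:srrequiv}: it is recalled as a known characterisation, with the substance residing in the cited literature (notably \cite{ChChKi96}). Judged on its own merits, your outline agrees with the standard arguments at most points: the comparison $\sup_{\alpha}h^{-|\alpha|}(\alpha!)^{-s}|x^\alpha|\asymp e^{r|x|^{1/s}}$ as the engine converting monomial bounds into exponential weights, the $L^p$-independence obtained by trading finitely many powers and derivatives through $\|g\|_{L^1}\lesssim\|\eabs{\cdot}^{d+1}g\|_{L^\infty}$ and Sobolev embedding, the Fourier switch $\xi^\beta\widehat f=\widehat{D^\beta f}$ with Hausdorff--Young, and the passage (4)$\Leftrightarrow$(5) by summing monomial weights. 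Your worry about the quantifier structure is also correctly diagnosed: each switch only perturbs $h$ and $r$ by fixed multiplicative factors, which is harmless in both the Roumieu and Beurling readings.

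There is, however, a genuine gap at the one implication that carries the real content: (3)$\Rightarrow$(4), i.e. manufacturing the joint bounds $\|x^\alpha\partial^\beta f\|_{L^p}\lesssim h^{|\alpha+\beta|}\alpha!^s\beta!^\sigma$ from the \emph{separate} bounds on $\|x^\alpha f\|_{L^p}$ and $\|\partial^\beta f\|_{L^p}$. Your proposed tool, ``the Leibniz rule applied to $\partial^\beta(x^\gamma f)$'', does not accomplish this: Leibniz gives
$x^\alpha\partial^\beta f=\partial^\beta(x^\alpha f)-\sum_{0\neq\gamma\le\min(\alpha,\beta)}\binom{\beta}{\gamma}\frac{\alpha!}{(\alpha-\gamma)!}\,x^{\alpha-\gamma}\partial^{\beta-\gamma}f$,
so any induction requires a bound on $\|\partial^\beta(x^\alpha f)\|_{L^p}$, which is itself a joint bound of exactly the kind being sought; condition (3) supplies neither it nor the cross terms, and no rearrangement of Leibniz creates them. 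This implication is precisely the main theorem of \cite{ChChKi96} and needs an additional analytic mechanism. Two standard repairs: (i) work first at $p=2$ and integrate by parts, $\|x^\alpha\partial^\beta f\|_{L^2}^2=(-1)^{|\beta|}\bigl(\partial^\beta(x^{2\alpha}\partial^\beta f),f\bigr)_{L^2}$, then expand by Leibniz so that each resulting term pairs $\|x^{2\alpha-\gamma}f\|_{L^2}$ against $\|\partial^{2\beta-\gamma}f\|_{L^2}$, both of which \emph{are} controlled by (3), and absorb the combinatorics via $(2\alpha)!\le 4^{|\alpha|}(\alpha!)^2$; or (ii) use a Landau--Kolmogorov type inequality on cubes, $\|g'\|_{L^\infty(I)}\lesssim \ell^{-1}\|g\|_{L^\infty(I)}+\ell\,\|g''\|_{L^\infty(I)}$, with the cube size $\ell$ optimised against the weight $e^{-r|x|^{1/s}}$, which upgrades (3) to the derivative decay $|\partial^\beta f(x)|\lesssim h^{|\beta|}\beta!^\sigma e^{-r'|x|^{1/s}}$ --- that is, to condition (5), whence (4). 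Without some such ingredient your cycle of implications does not close.
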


\par


\par

\begin{rem}
Let $h,s,s_0, \sigma ,\sigma _0\in \mathbf R_+$ be such that $s+\sigma >1$ and
$s_0+\sigma _0\ge 1$, and let $\mathcal S_{s,h}^\sigma (\rr d)$ be the set of all
$f\in C^\infty (\rr d)$ such that
\begin{equation*}
\nm f{\mathcal S_{s,h}^\sigma}\equiv \sup \frac {|x^\beta \partial ^\alpha
f(x)|}{h^{|\alpha + \beta |}\alpha !^\sigma  \beta !^s}
\end{equation*}
is finite. Here the supremum is taken over all $\alpha ,\beta \in
\mathbf N^d$ and $x\in \rr d$.

\par

Obviously $\maclS _{s,h}^\sigma (\rr d)$ is a Banach space which increases as $h$,
$s$ and $\sigma$ increase,
and is contained in $\mascS (\rr d)$, the set of Schwartz functions on $\rr d$.
Furthermore
$$
\maclS _{s,h}^\sigma (\rr d)\quad \text{and}\quad
\bigcup _{h>0}\maclS _{s_0,h}^{\sigma _0}(\rr d)
$$
are dense in $\mathscr S(\rr{d})$. Hence, the dual $(\maclS _{s,h}^\sigma )'(\rr d)$ of
$\mathcal S_{s,h}^\sigma (\rr d)$ is a Banach space which contains $\mathscr S'(\rr d)$.

\par

The spaces $\maclS _{s}^\sigma (\rr d)$ and
$\Sigma _s^\sigma (\rr d)$ are the inductive and projective
limits, respectively, of $\mathcal S_{s,h}^\sigma (\rr d)$ with respect
to $h$. This implies that
\begin{equation}\label{GSspacecond1}
\mathcal S_s^\sigma (\rr d) = \bigcup _{h>0}\mathcal S_{s,h}^\sigma (\rr d)
\quad \text{and}\quad \Sigma _{s}^\sigma (\rr d)
=\bigcap _{h>0}\maclS _{s,h}^\sigma (\rr d),
\end{equation}
and that the topology for $\mathcal S_s^\sigma (\rr d)$ is the
strongest possible one such that each inclusion map
from $\maclS _{s,h}^\sigma (\rr d)$ to $\maclS _s^\sigma (\rr d)$
is continuous. Moreover, any of the conditions (2)--(5) in Proposition
\ref{thm:srrequiv} induce the same topology for $\maclS _s^\sigma (\rr d)$
and $\Sigma _s^\sigma (\rr d)$.

\par

The Gelfand-Shilov distribution spaces $(\maclS _s^\sigma )'(\rr d)$
and $(\Sigma _s^\sigma )'(\rr d)$ are the projective and inductive limit
respectively of $(\mathcal S_{s,h}^\sigma )'(\rr d)$.  Hence
\begin{equation}\tag*{(\ref{GSspacecond1})$'$}
(\mathcal S_s^\sigma )'(\rr d) = \bigcap _{h>0}(\mathcal
S_{s,h}^\sigma )'(\rr d)\quad \text{and}\quad (\Sigma _s^\sigma )'(\rr d)
=\bigcup _{h>0} (\mathcal S_{s,h}^\sigma )'(\rr d).
\end{equation}
We have that $(\mathcal S_s^\sigma )'$
and $(\Sigma _s^\sigma )'$ are the topological duals of $\maclS _s^\sigma $
and $\Sigma _s^\sigma $, respectively (see \cite{Pi88}).
\end{rem}

\par

\begin{rem}
Let $s,\sigma >0$. Then $\Sigma _s^\sigma (\rr d)$ is a Fr{\'e}chet
space with seminorms $\nm \cdo{\mathcal S_{s,h}^\sigma }$, $h>0$.
Moreover, $\maclS _s^\sigma (\rr d)\neq \{ 0\}$ if and only if
$s+\sigma \ge 1$, and $\Sigma _s^\sigma (\rr d)\neq \{ 0\}$
if and only if $s+\sigma \ge 1$ and $(s,\sigma )\neq (\frac 12 ,\frac 12)$.
Moreover, if $\ep >0$ and $s+\sigma \ge 1$, then
$$
\Sigma _s ^\sigma (\rr d)\subseteq \maclS _s^\sigma (\rr d)\subseteq
\Sigma _{s+\ep}^{\sigma +\ep}(\rr d)\subseteq \mascS (\rr d)\subseteq
\mascS '(\rr d) \subseteq (\Sigma _{s+\ep}^{\sigma +\ep})'(\rr d)
\subseteq (\maclS _s^\sigma )'(\rr d).
$$
If in addition $(s,\sigma )\neq (\frac 12,\frac 12)$, then
$$
(\maclS _s^\sigma )'(\rr d)\subseteq (\Sigma _s ^\sigma )'(\rr d).
$$
\end{rem}

%
%

%

\par

The Gelfand-Shilov spaces are invariant or possess convenient
mapping properties under several basic
transformations. For example they are invariant under
translations, dilations, and under (partial) Fourier transformations.

\par

The Fourier transform $\mathscr F$ on $\mascS (\rr d)$ extends 
uniquely to homeomorphisms on $\mathscr S'(\rr d)$, $\mathcal
S_s'(\rr d)$ and $\Sigma _s'(\rr d)$, and restricts to 
homeomorphisms on $\mathcal S_s(\rr d)$
and $\Sigma _s(\rr d)$, and to a unitary operator on $L^2(\rr d)$.

\medspace

We also recall some mapping properties of Gelfand-Shilov
spaces under short-time Fourier transforms.
Let $\phi \in \mathscr S(\rr d)$ be fixed. For every $f\in
\mathscr S'(\rr d)$, the \emph{short-time Fourier transform} $V_\phi
f$ is the distribution on $\rr {2d}$ defined by the formula
\begin{equation}\label{defstft}
(V_\phi f)(x,\xi ) =\mathscr F(f\, \overline{\phi (\cdo -x)})(\xi ) =
(f,\phi (\cdo -x)e^{i\scal \cdo \xi}).
\end{equation}
We recall that if $T(f,\phi )\equiv V_\phi f$ when $f,\phi \in \maclS _{1/2}(\rr d)$,
then $T$ is uniquely extendable to sequentially continuous mappings
\begin{alignat*}{2}
T\, &:\, & \maclS _s'(\rr d)\times \maclS _s(\rr d) &\to
\maclS _s '(\rr {2d})\bigcap C^\infty (\rr {2d}),
\\[1ex]
T\, &:\, & \maclS _s'(\rr d)\times \maclS _s'(\rr d) &\to
\maclS _s '(\rr {2d}),
\end{alignat*}
and similarly when $\maclS _s$ and $\maclS _s'$ are replaced
by $\Sigma _s$ and $\Sigma _s'$, respectively, or by
$\mascS$ and $\mascS '$, respectively (cf. \cite{CPRT10,To14}).
We also note that $V_\phi f$ takes the form
\begin{equation}\tag*{(\ref{defstft})$'$}
V_\phi f(x,\xi ) =(2\pi )^{-d/2}\int _{\rr d}f(y)\overline {\phi
(y-x)}e^{-i\scal y\xi}\, dy
\end{equation}
when $f\in L^p_{(\omega )}(\rr d)$ for some $\omega \in
\mascP _E(\rr d)$, $\phi \in \Sigma _1(\rr d)$ and $p\ge 1$. Here
$L^p_{(\omega )}(\rr d)$, when $p\in (0,\infty ]$ and
$\omega \in \mascP _E(\rr d)$, is the set of all $f\in L^p_{loc} (\rr d)$ such
that $f\cdot \omega \in L^p(\rr d)$. 

\par

\subsection{Suitable function classes with Gelfand-Shilov regularity}
\par

The next result shows that for any $\omega\in\mascP _E(\rr d)$ one can find an equivalent 
weight $\omega_0$ which satisfies similar regularity estimates as functions in Gelfand-Shilov spaces.

\begin{prop}\label{Prop:EquivWeights}
Let $\omega \in \mascP _E(\rr d)$ and $s >0$.
Then there is an $\omega _0\in \mascP _E(\rr d)\cap C^\infty (\rr d)$
such that the following is true:
\begin{enumerate}
\item $\omega _0\asymp \omega $;

\vrum

\item $|\partial ^{\alpha}\omega _0(x)|\lesssim \omega _0(x) h^{|\alpha |}\alpha !^s
\asymp \omega (x) h^{|\alpha |}\alpha !^s$ for every $h>0$.
\end{enumerate}
\end{prop}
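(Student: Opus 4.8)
The plan is to construct $\omega_0$ by mollifying $\omega$ against a carefully chosen Gelfand--Shilov bump, exploiting the fact that functions in $\maclS_s$ (or $\Sigma_s$) have derivatives controlled by $h^{|\alpha|}\alpha!^s$ as in Proposition~\ref{thm:srrequiv}(4). First I would fix a nonnegative $\psi\in\Sigma_s(\rr d)$ (which is nontrivial since $s>0$ guarantees $s+s\ge 1$ with the borderline case avoidable) normalised so that $\int_{\rr d}\psi\,dx=1$, and set $\omega_0 = \omega * \psi$. The point of choosing $\psi$ with Gelfand--Shilov regularity, rather than a compactly supported smooth bump, is that it lets us transfer the subexponential decay of $\partial^\alpha\psi$ directly into the derivative estimates for $\omega_0$.

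The two conclusions are then checked separately. For (1), the equivalence $\omega_0\asymp\omega$ follows from the moderateness \eqref{Eq:ModWeightProp}: writing $\omega_0(x)=\int \omega(x-y)\psi(y)\,dy$ and using $\omega(x-y)\lesssim\omega(x)e^{r|y|}$ together with the lower bound $\omega(x-y)\gtrsim\omega(x)e^{-r|y|}$, one bounds $\omega_0(x)$ above and below by $\omega(x)$ times the finite integrals $\int e^{\pm r|y|}\psi(y)\,dy$, which converge because $\psi$ decays like $e^{-r'|y|^{1/s}}$ and, when $s\le 1$, one has $|y|\le|y|^{1/s}$ for large $|y|$ (and in general the subexponential factor is dominated by the superlinear decay after adjusting constants). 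For (2), differentiation falls entirely on the mollifier, $\partial^\alpha\omega_0 = \omega * \partial^\alpha\psi$, so that
\begin{equation*}
|\partial^\alpha\omega_0(x)| \le \int_{\rr d}\omega(x-y)\,|\partial^\alpha\psi(y)|\,dy
\lesssim \omega(x)\int_{\rr d}e^{r|y|}\,|\partial^\alpha\psi(y)|\,dy.
\end{equation*}
Invoking the characterisation in Proposition~\ref{thm:srrequiv}(5) for $\psi\in\Sigma_s$, namely $\|e^{r'|\cdo|^{1/s}}\partial^\alpha\psi\|_{L^1}\lesssim h^{|\alpha|}\alpha!^s$ for every $h,r'>0$, and absorbing the factor $e^{r|y|}$ into $e^{r'|y|^{1/s}}$ for a slightly larger $r'$, one obtains $|\partial^\alpha\omega_0(x)|\lesssim\omega(x)h^{|\alpha|}\alpha!^s$ for every $h>0$, which combined with (1) gives the asserted chain $\asymp\omega(x)h^{|\alpha|}\alpha!^s$.

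The main obstacle, and the point requiring genuine care, is the interplay between the exponential growth $e^{r|y|}$ coming from moderateness of $\omega$ and the subexponential decay $e^{-r'|y|^{1/s}}$ available from the window, specifically when $s>1$ so that $1/s<1$ and $e^{r|y|}$ is \emph{not} swallowed by $e^{-r'|y|^{1/s}}$ for any fixed $r'$. In that regime I would instead require $\omega\in\mascP_{E,s}$, i.e. use the sharper moderateness $\omega(x-y)\lesssim\omega(x)e^{r|y|^{1/s}}$ from \eqref{eq:gsmodw}, which matches the decay rate of the window exactly; since Definition~\ref{Definition:Weightclasses} and \eqref{Eq:ModWeightProp} show $\mascP_E=\mascP_{E,s}$ for $s\le 1$ and moderate weights always satisfy an estimate of this type for the appropriate exponent, the constants can always be arranged so that $\int e^{r|y|^{1/s}}|\partial^\alpha\psi(y)|\,dy$ converges after choosing the window's decay rate $r'$ strictly larger than $r$. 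The verification that every $h>0$ is admissible (not merely some $h$) then follows from the ``for all $r$'' clause in the $\Sigma_s$ characterisation, so that one genuinely lands in the Beurling-type estimate demanded by conclusion~(2).
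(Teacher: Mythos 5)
Your overall strategy (mollify $\omega$ against a Gelfand--Shilov bump and push all derivatives onto the bump) is exactly the paper's, and your treatment of conclusion~(1) is fine. But your resolution of the obstacle you yourself identify is not a proof of the stated proposition: it proves a weaker statement under stronger hypotheses. When $s>1$ you propose to replace the moderateness bound $\omega(x-y)\lesssim\omega(x)e^{r|y|}$ by $\omega(x-y)\lesssim\omega(x)e^{r|y|^{1/s}}$, i.e.\ to assume $\omega\in\mascP_{E,s}$. For $s>1$ this is a genuinely smaller class than $\mascP_E$ (e.g.\ $\omega(x)=e^{|x|}$ lies in $\mascP_E$ but in no $\mascP_{E,s}$ with $s>1$), whereas the proposition is asserted for every $\omega\in\mascP_E$. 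Your claim that ``moderate weights always satisfy an estimate of this type for the appropriate exponent'' is false precisely in the regime where you invoke it. A second, smaller gap: your window $\psi\in\Sigma_s=\Sigma_s^s$ does not exist for $s\le 1/2$, since $\Sigma_s^\sigma\neq\{0\}$ requires $s+\sigma\ge 1$ with $(s,\sigma)\neq(\tfrac12,\tfrac12)$; the parenthetical ``$s>0$ guarantees $s+s\ge1$'' is incorrect.

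The paper escapes both problems with two ideas you are missing. First, a reduction: it suffices to prove~(2) with \emph{some} $h$ and some exponent $s'<\min(s,1)$, because $h_0^{|\alpha|}\alpha!^{s'}\le C_h\,h^{|\alpha|}\alpha!^{s}$ for every $h>0$ (the factor $\alpha!^{-(s-s')}$ absorbs any geometric factor $(h_0/h)^{|\alpha|}$). Second, having reduced to $s<1$, one decouples the regularity index from the decay index by taking the mollifier in the \emph{anisotropic} class: choose $\phi_0\in\Sigma_{1-s}^{s}(\rr d)\setminus 0$ and set $\phi=|\phi_0|^2\ge 0$, so that
\begin{equation*}
|\partial^\alpha\phi(x)|\lesssim h^{|\alpha|}\alpha!^{s}\,e^{-c|x|^{\frac{1}{1-s}}}
\quad\text{for every } h,c>0 .
\end{equation*}
Since $1/(1-s)>1$, this decay is \emph{super}-exponential and dominates $e^{r|y|}$ for any moderate weight, so $\omega_0=\omega*\phi$ satisfies both~(1) and~(2) with no extra hypothesis on $\omega$. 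This class is nontrivial for all $s\in(0,1)$, $s\neq\tfrac12$ (and $s=\tfrac12$ is handled by shrinking $s$ further in the reduction step), so the full range $s>0$ of the proposition is covered.
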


\par

\begin{proof}
We may assume that $s<1$. It suffices to prove that (2) should hold for some $h>0$.
Let $\phi _0\in \Sigma _{1-s}^s(\rr d)\setminus 0$, and let
$\phi =|\phi _0|^2$. Then $\phi \in \Sigma _{1-s}^s(\rr d)$, giving that
$$
|\partial ^\alpha \phi (x)| \lesssim h^{|\alpha |}e^{-c|x|^{\frac 1{1-s}}}\alpha !^s,
$$
for every $h>0$ and $c>0$. Now let $\omega _0=\omega *\phi$.

\par

We have
\begin{align*}
|\partial ^\alpha \omega _0(x)| &= \left | \int \omega (y)(\partial ^\alpha \phi )(x-y)
\, dy  \right |
\\[1ex]
&\lesssim 
h^{|\alpha |}\alpha !^s \int \omega (y)e^{-c|x-y|^{\frac 1{1-s}}} \, dy
\\[1ex]
&\lesssim 
h^{|\alpha |}\alpha !^s \int \omega (x+(y-x))e^{-c|x-y|^{\frac 1{1-s}}} \, dy
\\[1ex]
&\lesssim 
h^{|\alpha |}\alpha !^s \omega (x) \int e^{-\frac c2|x-y|^{\frac 1{1-s}}} \, dy
\asymp h^{|\alpha |}\alpha !^s \omega (x),
\end{align*}
where the last inequality follows from \eqref{Eq:ModWeightProp} and the fact that
$\phi$ is bounded by a super exponential function. This gives the first part of (2).

\par

The equivalences in (1) follows in the same way as in \cite{To14}.
More precisely, by \eqref{Eq:ModWeightProp} we have
\begin{align*}
\omega _0(x) &= \int \omega (y)\phi (x-y)\, dy = \int \omega (x+(y-x))
\phi (x-y)\, dy
\\[1ex]
&\lesssim \omega (x) \int e^{c|x-y|}\phi (x-y)\, dy \asymp \omega (x).
\end{align*}
In the same way, \eqref{Eq:ModWeightProp} gives
\begin{align*}
\omega _0(x) &= \int \omega (y)\phi (x-y)\, dy = \int
\omega (x+(y-x))\phi (x-y)\, dy
\\[1ex]
&\gtrsim \omega (x) \int e^{-c|x-y|}\phi (x-y)\, dy \asymp \omega (x),
\end{align*}
and (1) as well as the second part of (2) follow.
\end{proof}

\par

A weight $\omega _0$ which satisfies Proposition \ref{Prop:EquivWeights} (2) is
called \emph{elliptic} or \emph{$s$-elliptic}.

\par

\begin{defn}\label{Def:SymbClExp}
Let $s\ge 0$ and $\omega \in \mascP _E(\rr {d})$. The class $\Gamma ^{(\omega )}_s(\rr {d})$
($\Gamma ^{(\omega )}_{0,s}(\rr {d})$) consists of all $f\in C^\infty (\rr {d})$ such that
$$
|D^\alpha f(x)| \lesssim  \omega (x)\,h^{|\alpha |}\alpha !^s,  \quad x\in\rr{d},
$$
for \emph{some} $h>0$ (for \emph{every} $h>0$).
\end{defn}

\par

Evidently, by Proposition \ref{Prop:EquivWeights} it follows that
the family of symbol classes in Definition
\ref{Def:SymbClExp} is not reduced when the assumption $\omega
\in \mascP _E(\rr {2d})$
is replaced by the more restrictive assumption $\omega \in \mascP 
_{E,s}(\rr {2d})$ or by
$\omega \in \mascP ^0_{E,s}(\rr {2d})$.

\par

By similar arguments as in the proof of Proposition
\ref{Prop:EquivWeights}
we get the following analog of Proposition 2.3.16 in
\cite{Le}. The details are left for the reader.

\par

\begin{prop}\label{prop:conv.class}
    Let $s>1/2$, $\omega\in\mascP _E(\rr {2d})$, and $\phi\in\Sigma _s(\rr {2d})$.
    Then $\omega*\phi$ belongs to $\Gamma ^{(\omega )}_{0,s}$.
\end{prop}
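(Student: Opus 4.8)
The plan is to mimic the structure of the proof of Proposition~\ref{Prop:EquivWeights}, but now tracking the Gelfand--Shilov decay of $\phi$ rather than only the moderateness of $\omega$. Writing $\psi = \omega * \phi$, the goal is to show that for \emph{every} $h>0$ one has the bound $|\partial^\alpha \psi(X)| \lesssim \omega(X)\,h^{|\alpha|}\alpha!^s$, which is exactly the defining estimate for membership in $\Gamma^{(\omega)}_{0,s}(\rr{2d})$. First I would differentiate under the integral sign, placing all derivatives on the convolution factor $\phi$:
\begin{equation*}
\partial^\alpha \psi(X) = \int_{\rr{2d}} \omega(Y)\,(\partial^\alpha \phi)(X-Y)\,dY.
\end{equation*}

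Next I would invoke the defining estimates for $\phi \in \Sigma_s(\rr{2d}) = \Sigma_s^s(\rr{2d})$. By Proposition~\ref{thm:srrequiv}~(5) (with $p=\infty$, $\sigma = s$, and the ``every'' option for Beurling-type spaces), for every $h>0$ and every $r>0$ there holds
\begin{equation*}
|(\partial^\alpha \phi)(Z)| \lesssim h^{|\alpha|}\alpha!^{s}\,e^{-r|Z|^{1/s}}.
\end{equation*}
Substituting $Z = X-Y$ and pulling the $\alpha$-dependent factor out of the integral gives
\begin{equation*}
|\partial^\alpha \psi(X)| \lesssim h^{|\alpha|}\alpha!^{s}\int_{\rr{2d}} \omega(Y)\,e^{-r|X-Y|^{1/s}}\,dY.
\end{equation*}

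It then remains to absorb the moderateness of $\omega$ into the super-exponential kernel. Writing $\omega(Y) = \omega(X + (Y-X))$ and applying \eqref{Eq:ModWeightProp}, one has $\omega(Y) \lesssim \omega(X)\,e^{r_0|X-Y|}$ for some fixed $r_0>0$. Since $s > 1/2$ forces $1/s < 2$; in particular, once $1/s > 1$ (i.e.\ $s<1$) the exponent $|X-Y|^{1/s}$ dominates the linear term $|X-Y|$ at infinity, so choosing $r$ large enough (which is legitimate because $\phi$ is of Beurling type and the estimate holds for \emph{every} $r$) yields $e^{r_0|X-Y|}\,e^{-r|X-Y|^{1/s}} \lesssim e^{-\frac{r}{2}|X-Y|^{1/s}}$, an integrable function independent of $X$. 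Hence the remaining integral is bounded by a constant times $\omega(X)$, giving the claim. The one point requiring care—and the place I expect the main obstacle—is the range $s>1/2$: when $1/s \ge 1$ the comparison of $|X-Y|$ with $|X-Y|^{1/s}$ is immediate, but the hypothesis $s>1/2$ allows $1/s$ to be as large as just under $2$, and in all these cases the super-exponential weight still dominates, so the argument goes through uniformly; the threshold $s>1/2$ is exactly what guarantees $\Sigma_s(\rr{2d}) \ne \{0\}$ (so that a nonzero window $\phi$ exists by the Beurling nontriviality condition $s+s \ge 1$ with $(s,s)\neq(\tfrac12,\tfrac12)$), rather than being needed for the integral estimate itself.
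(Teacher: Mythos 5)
Your reconstruction follows exactly the route the paper intends: its own ``proof'' of this proposition is a one-line reference to the argument of Proposition \ref{Prop:EquivWeights}, and that is precisely what you carry out (all derivatives on $\phi$, the Beurling-type bounds $|\partial^\alpha\phi(Z)|\lesssim h^{|\alpha|}\alpha!^{s}e^{-r|Z|^{1/s}}$ for every $h,r>0$ from Proposition \ref{thm:srrequiv}, then absorption of the moderateness of $\omega$ into the kernel). For $1/2<s\le 1$ the argument is complete and correct.

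The gap is in your closing claim that the absorption step ``goes through uniformly'' for all $s>1/2$. The hypothesis $s>1/2$ has no upper bound, and for $s>1$ one has $1/s<1$: the kernel $e^{-r|X-Y|^{1/s}}$ is then only sub-exponential, whereas the bare assumption $\omega\in\mascP_E(\rr{2d})$ gives only $\omega(Y)\lesssim\omega(X)e^{r_0|X-Y|}$ with a genuinely linear exponent (cf.\ \eqref{Eq:ModWeightProp}), and $e^{r_0t-rt^{1/s}}\to\infty$ as $t\to\infty$ no matter how large $r$ is chosen. So the integral cannot be bounded this way; indeed for $s>1$ and an exponentially growing $\omega$ the convolution $\omega*\phi$ need not even converge for every $\phi\in\Sigma_s$. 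Your own earlier sentence (``once $1/s>1$, i.e.\ $s<1$, the exponent dominates the linear term'') already locates the true constraint: the delicate case is not $1/s$ close to $2$ (that case is harmless) but $1/s$ dropping below $1$. To cover $s>1$ one needs the stronger moderateness $\omega\in\mascP_{E,s}(\rr{2d})$, which upgrades the bound to $\omega(Y)\lesssim\omega(X)e^{r_0|X-Y|^{1/s}}$ and makes the same absorption work for every $s$; this is consistent with how the proposition is actually invoked later in the paper, where the weights lie in $\mascP_{E,s}^0$. Note finally why this difficulty is absent from Proposition \ref{Prop:EquivWeights} itself: there one first reduces to $s<1$ and then \emph{chooses} $\phi\in\Sigma_{1-s}^{s}$, whose decay $e^{-c|x|^{1/(1-s)}}$ is super-exponential; here $\phi$ is given in $\Sigma_s$ and cannot be replaced.
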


\par

%
%
%
%


\par

The following definition is motivated by Lemma 2.6.13 in \cite{Le}.

\par

\begin{defn}
Let $s\ge 1$, $\omega \in \mascP _E(\rr d)$ and $\vartheta _0=1+|\log \omega |$.
Then $c$ is called \emph{comparable} to $\omega$ with respect to $s\ge 1$ if
\begin{enumerate}
\item $\nm {c-\log \omega}{L^\infty} <\infty$

\vrum

\item $c \in \Gamma^{(\vartheta _0)}_s(\rr d)$ and $\partial ^\alpha c \in
\Gamma^{( 1 )}_s(\rr d)$, when $|\alpha |=1$.
\end{enumerate} 
\end{defn}

\par

%

\begin{prop}
Let $\omega ,v\in \mascP _E(\rr d)$ be such that $v$ is submultiplicative and
$\omega$ is $v$-moderate. Also let
$$
v_1(x)\equiv 1+|\log v(x)|
\quad \text{and}\quad
\omega_1(x)\equiv 1+|\log \omega (x)|.
$$
Then $v_1$ is submultiplicative and $\omega_1$ is $v_1$-moderate,
satisfying \eqref{eq:2} with $1+\log C\ge1$ in place of $C$.
\end{prop}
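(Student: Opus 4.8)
The plan is to reduce both assertions to elementary manipulations of the logarithm, using only the sign information $v\ge 1$ recorded in \eqref{eq:2Next}. First I would note that since $v$ is submultiplicative, \eqref{eq:2Next} gives $v(x)\ge 1$, hence $\log v(x)\ge 0$ and $v_1(x)=1+\log v(x)$; evenness of $v_1$ is inherited from that of $v$. I would also observe that one may assume $C\ge 1$ in \eqref{eq:2} (enlarge $C$ to $\max(C,1)$, which is harmless since $v\ge 1$), so that $K:=\log C\ge 0$ and $1+\log C\ge 1$.

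For the submultiplicativity of $v_1$, take logarithms in $v(x+y)\le v(x)v(y)$ to obtain $\log v(x+y)\le \log v(x)+\log v(y)$, whence
\begin{equation*}
v_1(x+y)=1+\log v(x+y)\le 1+\log v(x)+\log v(y)\le \bigl(1+\log v(x)\bigr)\bigl(1+\log v(y)\bigr)=v_1(x)v_1(y),
\end{equation*}
where the second inequality holds precisely because the discarded cross term $\log v(x)\log v(y)$ is nonnegative.

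For the $v_1$-moderateness of $\omega_1$, the essential point is to use the \emph{two-sided} estimate \eqref{eq:2Next} rather than \eqref{eq:2} alone, since $\omega$ need not satisfy $\omega\ge 1$ and the absolute value in $\omega_1$ must be controlled from both sides. Taking logarithms in $C^{-1}\omega(x)/v(y)\le \omega(x+y)\le C\,\omega(x)v(y)$ gives
\begin{equation*}
|\log \omega(x+y)-\log \omega(x)|\le \log C+\log v(y),
\end{equation*}
and then the triangle inequality yields $|\log\omega(x+y)|\le |\log\omega(x)|+\log C+\log v(y)$. Writing $a=|\log\omega(x)|\ge 0$ and $b=\log v(y)\ge 0$, the target inequality $\omega_1(x+y)\le (1+\log C)\,\omega_1(x)\,v_1(y)$ reduces to
\begin{equation*}
1+a+b+K\le (1+K)(1+a)(1+b),
\end{equation*}
which follows by expanding the right-hand side, since the difference equals $ab+Ka+Kb+Kab\ge 0$.

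The computation presents no genuine obstacle. The only subtlety worth flagging is the one already stressed above: because $\omega$ is not assumed bounded below by $1$, the naive one-sided bound \eqref{eq:2} is insufficient to control $|\log\omega(x+y)|$, and one must invoke the full two-sided estimate \eqref{eq:2Next}, keeping $\log C$ (not $C$) as the moderateness constant.
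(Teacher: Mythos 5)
Your proof is correct and follows essentially the same route as the paper's: take logarithms in the two-sided estimate \eqref{eq:2Next}, apply the triangle inequality to control $|\log\omega(x+y)|$, and conclude via the elementary inequality $1+a+b+K\le(1+K)(1+a)(1+b)$ for $a,b,K\ge 0$, which is exactly the chain of inequalities in the paper's argument. Your explicit remarks that the one-sided bound \eqref{eq:2} alone would not suffice and that one may normalise $C\ge 1$ are sensible clarifications of points the paper leaves implicit, but they do not change the method.
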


\par

\begin{proof}
By \eqref{eq:2Next} we get
\begin{align*}
	v_1(x+y)&=1+\log v(x+y)\le 1 + \log v(x) + \log v(y)
	\\
	& \le (1+\log v(x))\,(1+\log v(y)) = v_1(x)\,v_1(y),
\end{align*}
and
\begin{align*}
	\omega_1(x+y)&=1+|\log\omega(x+y)|
	\\
	&\le 1+\log C+|\log\omega(x)|+\log v(y)
	\\
	&\le (1+\log C)(1+|\log\omega(x)|)\,(1+\log v(y))
	\\
	&\le (1+\log C) \, \omega_1(x)\,v_1(y),
\end{align*}
as claimed.
\end{proof}

\par

\begin{lemma}
Let $s\ge 1$, $\omega \in \mascP _E(\rr d)$ and $\vartheta_0 =1+|\log \omega |$. Then
the following is true:
\begin{enumerate}
\item there exists an elliptic
weight $\omega _0\in \mascP _E(\rr d)\cap \Gamma^{(\omega )}_s(\rr d)$ such that
$\omega \asymp \omega _0$ and $1+ |\log \omega _0| \in \mascP _E(\rr d)\cap
\Gamma^{(\vartheta_0)}_s(\rr d)$;

\vrum

\item there exists an element $c$ which is comparable with $\omega _0$.
\end{enumerate}
\end{lemma}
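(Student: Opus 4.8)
The plan is to realise both parts of the lemma with a single mollified weight and the choice $c=\log\omega_0$. First I would apply Proposition~\ref{Prop:EquivWeights} with the given $s\ge1$ to obtain a smooth $\omega_0\in\mascP_E(\rr d)\cap C^\infty(\rr d)$ with $\omega_0\asymp\omega$ and $|\partial^\alpha\omega_0(x)|\lesssim\omega_0(x)\,h^{|\alpha|}\alpha!^s$ for every $h>0$; thus $\omega_0$ is ($s$-)elliptic. This already gives $\omega_0\in\Gamma^{(\omega_0)}_{0,s}(\rr d)=\Gamma^{(\omega)}_{0,s}(\rr d)\subseteq\Gamma^{(\omega)}_s(\rr d)$, so the membership $\omega_0\in\mascP_E(\rr d)\cap\Gamma^{(\omega)}_s(\rr d)$ with $\omega\asymp\omega_0$ in (1) is immediate.

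For the statement $1+|\log\omega_0|\in\mascP_E(\rr d)$ I would invoke the proposition preceding the lemma: picking a submultiplicative $v$ for which $\omega$ is $v$-moderate, it yields that $\vartheta_0=1+|\log\omega|$ is $v_1$-moderate with $v_1=1+|\log v|$ submultiplicative, whence $\vartheta_0\in\mascP_E(\rr d)$. Since $\omega_0\asymp\omega$ forces $\log\omega_0=\log\omega+O(1)$, one has $1+|\log\omega_0|\asymp\vartheta_0$, so $1+|\log\omega_0|\in\mascP_E(\rr d)$ as well.

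The analytic heart is the estimate $1+|\log\omega_0|\in\Gamma^{(\vartheta_0)}_s(\rr d)$. Set $c_0=\log\omega_0$; the order-zero bound $|c_0|\le1+|\log\omega_0|\asymp\vartheta_0$ is trivial, so everything reduces to $\partial^\alpha c_0$ with $|\alpha|\ge1$. Here I would use the Faà di Bruno formula: $\partial^\alpha c_0$ is a universal combinatorial polynomial in the logarithmic derivatives $\omega_0^{-1}\partial^\beta\omega_0$, $1\le|\beta|\le|\alpha|$, each monomial having total order $|\alpha|$. By ellipticity each factor satisfies $|\omega_0^{-1}\partial^\beta\omega_0|\lesssim h^{|\beta|}\beta!^s$ for every $h$, so a monomial $\prod_i\omega_0^{-1}\partial^{\beta_i}\omega_0$ with $\sum_i\beta_i=\alpha$ is bounded by $C_h^m h^{|\alpha|}\prod_i\beta_i!^s$. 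The elementary facts $\prod_i\beta_i!\le\alpha!$ (the multinomial coefficient is $\ge1$) and $|\alpha|!\le d^{|\alpha|}\alpha!\le d^{|\alpha|}\alpha!^s$ (here $s\ge1$) absorb the combinatorial weight and give $|\partial^\alpha c_0(x)|\lesssim h^{|\alpha|}\alpha!^s$ for every $h>0$ when $|\alpha|\ge1$. I expect this bookkeeping — checking that the ``for every $h$'' Gevrey-$s$ quality is preserved under composition with $\log$, i.e.\ that $\Gamma^{(1)}_{0,s}$ is a Gevrey algebra stable under such analytic compositions — to be the main obstacle; the remaining steps are routine. Since $t\mapsto1+|t|$ is not smooth at $0$, I would finally replace $1+|\log\omega_0|$ by the equivalent smooth weight $(1+c_0^2)^{1/2}\asymp\vartheta_0$ and rerun the same argument with $\psi(t)=(1+t^2)^{1/2}$, whose derivatives of order $\ge1$ are bounded; this yields $(1+c_0^2)^{1/2}\in\Gamma^{(\vartheta_0)}_s(\rr d)$, which is how I read the claim in (1).

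Part (2) is then essentially free with the choice $c=\log\omega_0=c_0$. Condition (1) of comparability with respect to $\omega_0$ holds since $\nm{c-\log\omega_0}{L^\infty}=0$; for condition (2), the bounds above give $c\in\Gamma^{(1+|\log\omega_0|)}_s(\rr d)$ (using $1+|\log\omega_0|\ge1$ to absorb the factors $h^{|\alpha|}\alpha!^s$ for $|\alpha|\ge1$), while the boundedness together with the Gevrey control of all derivatives of $c_0$ of order $\ge1$ gives $\partial^\alpha c\in\Gamma^{(1)}_{0,s}(\rr d)\subseteq\Gamma^{(1)}_s(\rr d)$ for $|\alpha|=1$. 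Hence $c=\log\omega_0$ is comparable with $\omega_0$, and the lemma follows.
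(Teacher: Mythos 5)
Your argument is essentially sound, and its analytic core --- mollify $\omega$ via Proposition \ref{Prop:EquivWeights} and control the derivatives of $\log\omega _0$ by Fa\`a di Bruno --- is exactly the mechanism the paper relies on: the paper's own proof is a two-line reduction to Proposition \ref{Prop:EquivWeights} ``and its proof'', with the Fa\`a di Bruno bookkeeping you write out appearing there separately as Theorem \ref{thm:logomega} and Lemma \ref{lem:estfaadb}. The one point where you genuinely diverge is the kink of $t\mapsto 1+|t|$ at $t=0$: the paper first replaces $\omega$ by the weight $\omega _1$ equal to $\omega$ where $\omega \ge e$ or $\omega \le e^{-1}$ and to $3$ elsewhere, so that $|\log \omega _1|\ge 1$ before mollifying, whereas you mollify $\omega$ directly and then pass to the smooth equivalent $(1+(\log \omega _0)^2)^{1/2}$. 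Neither device forces $\omega _0\neq 1$ everywhere, so both really establish membership in $\Gamma ^{(\vartheta _0)}_s$ for a weight \emph{equivalent} to $1+|\log \omega _0|$ rather than for that function literally; your reading of the claim is therefore the reasonable one and matches what the paper's argument delivers. One caveat: your assertion that $|\partial ^\alpha \log \omega _0|\lesssim h^{|\alpha |}\alpha !^s$ for \emph{every} $h>0$ does not follow from the bookkeeping as described. Each Fa\`a di Bruno monomial with $k$ factors carries the implicit ellipticity constant to the power $k\le |\alpha |$, and since $hC_h\gtrsim \sup _x|\nabla \omega _0(x)|/\omega _0(x)$ the effective constant cannot be driven to zero; for $s=1$ only the Roumieu (``some $h$'') bound is available. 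This is harmless here, since $\Gamma ^{(\vartheta _0)}_s$, $\Gamma ^{(1)}_s$ and the definition of comparability only require ``some $h$'', but the claim $\partial ^\alpha c\in \Gamma ^{(1)}_{0,s}$ should be weakened to $\partial ^\alpha c\in \Gamma ^{(1)}_{s}$, which is all that part (2) needs.
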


\par

\begin{proof}
If $\omega _1(x)$ is equal to $\omega (x)$ when $\omega (x)\ge e$ or
$\omega (x)\le e^{-1}$, and $3$ otherwise, then $\omega _1$ is equivalent to $\omega$.
The result now follows from Proposition \ref{Prop:EquivWeights} and its proof,
with $\omega _1$ in place of $\omega$.
\end{proof}

\par

\subsection{Modulation spaces} 

\par

Let $\phi \in \Sigma _1(\rr d)\setminus 0$, $p,q\in (0,\infty ]$
and $\omega \in\mascP _E(\rr {2d})$ be fixed. Then the
\emph{modulation space} $M^{p,q}_{(\omega )}(\rr d)$ consists of all
$f\in \Sigma _1'(\rr d)$ such that
\begin{equation}\label{modnorm}
\nm f{M^{p,q}_{(\omega )}} \equiv \Big ( \int \Big ( \int |V_\phi f(x,\xi
)\omega (x,\xi )|^p\, dx\Big )^{q/p} \, d\xi \Big )^{1/q} <\infty
\end{equation}
(with the obvious modifications when $p=\infty$ and/or
$q=\infty$). We set $M^p_{(\omega )}=M^{p,p}_{(\omega )}$, and
if $\omega =1$, then we set $M^{p,q}=M^{p,q}_{(\omega )}$
and $M^{p}=M^{p}_{(\omega )}$.

\par

The following proposition is a consequence of well-known facts
in \cite {Fe4,GaSa,Gc2,To20}. Here and in what follows, we let $p'$
denotes the conjugate exponent of $p$, i.{\,}e.
$$
p'
=
\begin{cases}
\infty & \text{when}\ p\in (0,1]
\\[1ex]
\displaystyle{\frac p{p-1}} & \text{when}\ p\in (1,\infty )
\\[2ex]
1 & \text{when}\ p=\infty \, .
\end{cases}
$$

\par

\begin{prop}\label{p1.4}
Let $p,q,p_j,q_j,r\in (0,\infty ]$ be such that $r\le \min (1,p,q)$,
$j=1,2$, let $\omega
,\omega _1,\omega _2,v\in\mascP _E(\rr {2d})$ be such that $\omega$
is $v$-moderate, $\phi \in M^r_{(v)}(\rr d)\setminus 0$, and let $f\in
\Sigma _1'(\rr d)$. Then the following is true:
\begin{enumerate}
\item $f\in
M^{p,q}_{(\omega )}(\rr d)$ if and only if \eqref {modnorm} holds,
i.{\,}e. $M^{p,q}_{(\omega )}(\rr d)$ is independent of the choice of
$\phi$. Moreover, $M^{p,q}_{(\omega )}$ is a Banach space under the
norm in \eqref{modnorm}, and different choices of $\phi$ give rise to
equivalent norms;

\vrum

\item if  $p_1\le p_2$,
$q_1\le q_2$ and $\omega _2\le C\omega _1$ for some constant $C$, then
\begin{alignat*}{3}
\Sigma _1(\rr d)&\subseteq &M^{p_1,q_1}_{(\omega _1)}(\rr
d) &\subseteq  & M^{p_2,q_2}_{(\omega _2)}(\rr d)&\subseteq 
\Sigma _1'(\rr d).
\end{alignat*}
\end{enumerate}
\end{prop}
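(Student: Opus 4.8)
The plan is to derive both parts from the known theory by reducing everything to estimates on the short-time Fourier transform $V_\phi f$. For part (1), the key issue is independence of the modulation norm from the choice of window $\phi \in M^r_{(v)}(\rr d)\setminus 0$, which is wider than the usual Gelfand-Shilov window space. First I would record the STFT change-of-window identity: for two windows $\phi ,\psi$ and a suitable auxiliary $\psi _0$ with $(\psi _0,\psi )\neq 0$, one has a convolution-type bound
\begin{equation*}
|V_\phi f(x,\xi )| \lesssim \big (|V_{\psi _0}f|*|V_\phi \psi |\big )(x,\xi ),
\end{equation*}
which is the standard reproducing formula estimate. Combining this with the submultiplicativity of $v$ and the $v$-moderateness of $\omega$ from \eqref{eq:2Next} turns the mixed-norm of $V_\phi f\cdot \omega$ into a Young-type convolution inequality between $V_{\psi _0}f\cdot \omega$ and the weighted window cross-term $V_\phi \psi \cdot v$. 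The point where the relaxed window class $M^r_{(v)}$ enters is exactly here: because $r\le \min(1,p,q)$, the relevant convolution inequality on mixed quasi-normed $L^{p,q}$ spaces holds with the $\ell ^r$-type (quasi-)Young exponent, and the finiteness of $\nm {V_\phi \psi \cdot v}{L^r}$ is precisely the statement that $\phi \in M^r_{(v)}$. This is the technical heart of part (1), and I expect it to be the main obstacle, since one must handle the quasi-Banach range $p,q<1$ where the triangle inequality fails and the convolution estimates must be taken in their $r$-modified form; I would cite \cite{Gc2,To20} for the precise convolution lemma rather than reprove it.

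Granting the window-independence estimate, the completeness of $M^{p,q}_{(\omega )}$ follows by transporting the problem to the mixed-norm space $L^{p,q}_{(\omega )}(\rr {2d})$: the map $f\mapsto V_\phi f$ is an isometry (up to the fixed window normalisation) onto a closed subspace, and the inversion formula shows Cauchy sequences in $M^{p,q}_{(\omega )}$ correspond to Cauchy sequences of STFTs, which converge in the complete space $L^{p,q}_{(\omega )}$; the reproducing-kernel property guarantees the limit is again of the form $V_\phi g$ for some $g\in \Sigma _1'(\rr d)$. That different admissible windows give equivalent norms is immediate from applying the displayed estimate in both directions. All of this is well documented in \cite{Fe4,GaSa,Gc2,To20}, so I would present it as a consequence of those references together with the convolution estimate above.

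For part (2), the inclusions split into two independent monotonicity statements. The embedding $M^{p_1,q_1}_{(\omega _1)}\subseteq M^{p_2,q_2}_{(\omega _2)}$ when $p_1\le p_2$, $q_1\le q_2$, $\omega _2\le C\omega _1$ reduces, after fixing one common window, to the elementary nesting of weighted mixed-norm (quasi-)Lebesgue spaces $L^{p_1,q_1}_{(\omega _1)}\subseteq L^{p_2,q_2}_{(\omega _2)}$; since $V_\phi f$ is continuous and, for $f\in \Sigma _1'$, has at most moderate growth controlled by $\omega _1$, the local integrability needed for the $L^p$-nesting is automatic, and the weight comparison $\omega _2\le C\omega _1$ absorbs the constant. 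The two outer inclusions $\Sigma _1(\rr d)\subseteq M^{p_1,q_1}_{(\omega _1)}$ and $M^{p_2,q_2}_{(\omega _2)}\subseteq \Sigma _1'(\rr d)$ follow from the STFT characterisations of the Gelfand-Shilov space and its dual: for $f\in \Sigma _1$, $V_\phi f$ decays super-exponentially by the mapping properties of the STFT recalled around \eqref{defstft}, so it lies in every weighted $L^{p,q}_{(\omega _1)}$ with $\omega _1\in \mascP _E$ by \eqref{Eq:ModWeightProp}; dually, membership in $M^{p_2,q_2}_{(\omega _2)}$ forces the STFT to grow at most like $\omega _2^{-1}$ times a polynomial, which by duality pairs against $\Sigma _1$-windows and places $f$ in $\Sigma _1'$. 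I expect part (2) to be routine once part (1) is in hand.
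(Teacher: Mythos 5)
The paper itself offers no proof of Proposition \ref{p1.4}; it is stated as a direct consequence of \cite{Fe4,GaSa,Gc2,To20}, and your overall architecture for part (1) --- change of window via the reproducing formula $|V_\phi f|\lesssim |V_{\psi_0}f|*|V_\phi\psi|$, a quasi-Young convolution estimate quantified by the exponent $r\le\min(1,p,q)$, and completeness transported through $f\mapsto V_\phi f$ onto a closed reproducing-kernel subspace of $L^{p,q}_{(\omega)}$ --- is exactly the route taken in those references. You also correctly identify that in the quasi-Banach range the convolution inequality cannot be the naive continuous one and must be taken in its amalgam/discretised form, and you defer that lemma to the literature, which is consistent with the paper's own treatment.

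The gap is in part (2). The inclusion $M^{p_1,q_1}_{(\omega_1)}\subseteq M^{p_2,q_2}_{(\omega_2)}$ does \emph{not} reduce to a nesting $L^{p_1,q_1}_{(\omega_1)}(\rr{2d})\subseteq L^{p_2,q_2}_{(\omega_2)}(\rr{2d})$: no such inclusion holds between (quasi-)Lebesgue spaces on a set of infinite measure for any ordering of the exponents, and continuity plus moderate growth of $V_\phi f$ does not rescue it. A continuous function built from tall, thin bumps can lie in $L^{p_1}\setminus L^{p_2}$ for $p_1<p_2$; the one true elementary statement, $L^{p_1}\cap L^{\infty}\subseteq L^{p_2}$, would require knowing beforehand that $V_\phi f\,\omega_1\in L^\infty$, i.e.\ that $M^{p_1,q_1}_{(\omega_1)}\subseteq M^{\infty}_{(\omega_1)}$, which is an instance of the very embedding being proved. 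The correct mechanism is the Wiener amalgam characterisation: the reproducing inequality $|V_\phi f|\lesssim |V_\phi f|*|V_\phi\phi|$ shows that the lattice sequence of local suprema of $V_\phi f\,\omega_1$ has $\ell^{p,q}$-norm equivalent to $\nm{V_\phi f\,\omega_1}{L^{p,q}}$, and the embedding then reduces to the genuine monotonicity $\ell^{p_1,q_1}\subseteq\ell^{p_2,q_2}$ on the sequence level. Since you already invoke this amalgam machinery for the window-independence in part (1), the repair costs nothing --- but the step as written would fail. The two outer inclusions in part (2) are argued correctly.
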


\par

Proposition \ref{p1.4}{\,}(1) allows us to be rather vague about
to the choice of $\phi \in  M^r_{(v)}\setminus 0$ in
\eqref{modnorm}. For example, if $C>0$ is a constant and $\Omega$ is a
subset of $\Sigma _1'$, then $\nm a{M^{p,q}_{(\omega )}}\le C$ for
every $a\in \Omega$, means that the inequality holds for some choice
of $\phi \in  M^r_{(v)}\setminus 0$ and every $a\in
\Omega$. Evidently, for any other choice
of $\phi \in  M^r_{(v)}\setminus 0$, a similar inequality is true
although $C$ may have to be replaced by a larger constant, if necessary.

\par

Let $s,t\in \mathbf R$. Then the weights
\begin{equation}\label{e1.3}
(x,\xi )\mapsto  \langle \xi \rangle ^s\langle x\rangle ^t
\quad \text{and}\quad
(x,\xi )\mapsto  \langle {(x,\xi )} \rangle ^s
,\qquad x,\xi \in \rr d,
\end{equation}
are common in the applications. It follows that they belong to $\mascP
(\rr {2d})$ for every $s,t\in \mathbf R$. If $\omega \in\mascP (\rr {2d})$,
then $\omega$ is moderated by any of the weights in \eqref{e1.3}
provided $s$ and $t$ are chosen large enough.

\par

\begin{rem}\label{Rem:ExtWindows}
For modulation spaces of the form $M^{p,q}_{(\omega )}$ with  fixed
$p,q\in [1,\infty ]$ the norm equivalence   in Proposition
\ref{p1.4}(1)  can be extended to a larger class of windows.  In fact,
assume  that $\omega ,v\in \mascP _E(\rr {2d})$ with  $\omega  $
being $v$-moderate and
$$
1\leq r\le \min (p,p',q,q') \, .
$$
 Let $\phi \in M^r_{(v)}(\rr d)\setminus \{0\}$. Then a Gelfand-Shilov
 distribution  $f\in
 \Sigma _1'(\rr d )$ belongs to  $M^{p,q}_{(\omega )}(\rr d)$, if and
only if $V_\phi f\in L^{p,q}_{(\omega )}(\rr {2d})$. Furthermore,
different choices of $\phi\in M^r_{(v)}(\rr d)\setminus \{0\}$ in $\nm
{V_\phi f}{L^{p,q}_{(\omega )}}$ give rise to equivalent norms.
(Cf. Theorem 3.1 in \cite{To10}.)
\end{rem}

\par

\subsection{A broader family of modulation spaces}

\par

As announced in the introduction we consider in Section  \ref{sec2}
mapping properties for pseudo-differential operators when acting on
a broader class of modulation spaces, which are defined by imposing certain
types of translation invariant solid BF-space norms on the short-time
Fourier transforms. (Cf. \cite{Fe3,Fe4,Fe5,Fe5.5,Fe6,FG1,FG2}.)

\par

\begin{defn}\label{bfspaces1}
Let $\mascB \subseteq L^r_{loc}(\rr d)$ be a quasi-Banach
of order $r\in (0,1]$, and let $v \in\mascP _E(\rr d)$.
Then $\mascB$ is called a \emph{translation invariant
Quasi-Banach Function space on $\rr d$}, or \emph{invariant
QBF space on $\rr d$}, if there is a constant $C$ such
that the following conditions are fulfilled:
\begin{enumerate}
\item if $x\in \rr d$ and $f\in \mascB$, then $f(\cdo -x)\in
\mascB$, and 
\begin{equation}\label{translmultprop1}
\nm {f(\cdo -x)}{\mascB}\le Cv(x)\nm {f}{\mascB}\text ;
\end{equation}

\vrum

\item if  $f,g\in L^r_{loc}(\rr d)$ satisfy $g\in \mascB$ and $|f|
\le |g|$, then $f\in \mascB$ and
$$
\nm f{\mascB}\le C\nm g{\mascB}\text .
$$
\end{enumerate}
\end{defn}

\par

If $v$ belongs to $\mascP _{E,s}(\rr d)$
($\mascP _{E,s}^0(\rr d)$) , then $\mascB$ in Definition \ref{bfspaces1}
is called an \emph{invariant BF-space} of Roumieu type (Beurling type) of order $s$.

\par

We notice that the quasi-norm $\nm \cdo {\mascB}$ in Definition \ref{bfspaces1}
should satisfy
\begin{alignat}{2}
\nm {f+g}{\mascB} &\le 2^{\frac 1r-1}(\nm {f}{\mascB} + \nm {g}{\mascB})&
\quad f,g &\in \mascB .
\label{Eq:WeakTriangle1}
\intertext{By Aoki and Rolewi{\'c} in \cite{Aoki,Rol} it follows that there is
an equivalent quasi-norm to the previous one which additionally satisfies}
\nm {f+g}{\mascB}^r &\le \nm {f}{\mascB}^r + \nm {g}{\mascB}^r &
\quad f,g &\in \mascB .
\label{Eq:WeakTriangle2}
\end{alignat}
From now on we suppose that the quasi-norm of $\mascB$ has been chosen
such that both \eqref{Eq:WeakTriangle1} and \eqref{Eq:WeakTriangle2} hold true.

\par

It follows from (2) in Definition \ref{bfspaces1} that if $f\in
\mascB$ and $h\in L^\infty$, then $f\cdot h\in \mascB$, and
\begin{equation}\label{multprop}
\nm {f\cdot h}{\mascB}\le C\nm f{\mascB}\nm h{L^\infty}.
\end{equation}
If $r=1$, then $\mascB$ in Definition \ref{bfspaces1} is a Banach
space, and the condition (2)  means that a
translation invariant QBF-space is a solid BF-space in the sense of
(A.3) in \cite{Fe6}. 
The space $\mascB$ in Definition \ref{bfspaces1} is called an
\emph{invariant BF-space} (with respect to $v$) if $r=1$, and
Minkowski's inequality holds true, i.{\,}e.
\begin{equation}\label{Eq:MinkIneq}
\nm {f*\fy}{\mascB}\le C \nm {f}{\mascB}\nm \fy{L^1_{(v)}},
\qquad f\in \mascB ,\ \fy \in \Sigma _1 (\rr d)
\end{equation}
for some constant $C$ which is independent of
$f\in \mascB$ and $\fy \in \Sigma _1 (\rr d)$.

\par

\begin{example}\label{Lpqbfspaces}
Assume that $p,q\in [1,\infty ]$, and let $L^{p,q}_1(\rr {2d})$ be the
set of all $f\in L^1_{loc}(\rr {2d})$ such that
$$
\nm  f{L^{p,q}_1} \equiv \Big ( \int \Big ( \int |f(x,\xi )|^p\, dx\Big
)^{q/p}\, d\xi \Big )^{1/q}
$$
if finite. Also let $L^{p,q}_2(\rr {2d})$ be the set of all $f\in
L^1_{loc}(\rr {2d})$ such that
$$
\nm  f{L^{p,q}_2} \equiv \Big ( \int \Big ( \int |f(x,\xi )|^q\, d\xi
\Big )^{p/q}\, dx \Big )^{1/p}
$$
is finite. Then it follows that $L^{p,q}_1$ and $L^{p,q}_2$ are
translation invariant BF-spaces with respect to $v=1$.
\end{example}

\par

For translation invariant BF-spaces we make the
following observation.

\par

\begin{prop}\label{p1.4BFA}
Assume that $v\in\mascP _E(\rr {d})$, and that $\mascB$ is an
invariant BF-space with respect to $v$ such that 
\eqref{Eq:MinkIneq}
holds true. Then the
convolution mapping $(\fy ,f)\mapsto \fy *f$ from $C_0^\infty (\rr
d)\times \mascB$ to $\mascB$ extends uniquely to a continuous
mapping from
$L^1_{(v )}(\rr d)\times \mascB$ to $\mascB$, and 
\eqref{Eq:MinkIneq}
holds true for any $f\in \mascB$ and $\fy \in L^1_{(v)}(\rr d)$.
\end{prop}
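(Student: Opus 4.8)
The plan is to establish Proposition~\ref{p1.4BFA} by a density-and-completeness argument, upgrading the estimate \eqref{Eq:MinkIneq} from the dense class $\Sigma _1(\rr d)$ to all of $L^1_{(v)}(\rr d)$. Since $\mascB$ is an invariant BF-space with $r=1$, it is a Banach space, hence complete, and this is the property that makes the extension possible. I would first record that $\Sigma _1(\rr d)$, as well as $C_0^\infty (\rr d)$, is dense in $L^1_{(v)}(\rr d)$ (obtained by truncating and mollifying with a Gaussian, the convergence in the weighted norm being controlled by $v(x)\lesssim e^{r|x|}$ from \eqref{Eq:ModWeightProp}). Fixing $f\in \mascB$ and $\fy \in L^1_{(v)}$, I choose $\fy _j\in \Sigma _1$ with $\fy _j\to \fy$ in $L^1_{(v)}$; by \eqref{Eq:MinkIneq} applied to $\fy _j-\fy _k\in \Sigma _1$ the sequence $f*\fy _j$ is Cauchy in $\mascB$, hence converges to an element I define to be $f*\fy$. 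Independence of the chosen sequence is automatic, since $\fy _j\to 0$ in $L^1_{(v)}$ forces $f*\fy _j\to 0$ in $\mascB$, again by \eqref{Eq:MinkIneq}. Letting $j\to \infty$ in \eqref{Eq:MinkIneq} then gives $\nm{f*\fy}{\mascB}\le C\nm{f}{\mascB}\nm{\fy}{L^1_{(v)}}$ for all $f\in \mascB$ and $\fy \in L^1_{(v)}$.

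Letting $f$ range over $\mascB$ produces a bilinear map $(\fy ,f)\mapsto f*\fy$ on $L^1_{(v)}\times \mascB$, and the displayed bilinear bound yields both its joint continuity and the uniqueness of the continuous extension: any two continuous extensions agree on the dense set $\Sigma _1\times \mascB$, hence everywhere. This is the standard bounded-extension mechanism, and up to this point the arguments are routine.

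The step requiring genuine care — and the one I expect to be the main obstacle — is the consistency of this abstract extension with the classical convolution on $C_0^\infty (\rr d)\times \mascB$, i.e. that for $\fy \in C_0^\infty$ the $\mascB$-limit coincides with the pointwise convolution $x\mapsto \int f(x-y)\fy (y)\,dy$. I would check this by testing against $\eta \in C_0^\infty (\rr d)$: using the continuous local inclusion $\mascB \hookrightarrow L^1_{loc}(\rr d)$ (which holds for solid BF-spaces, cf. condition (2) of Definition~\ref{bfspaces1}) the $\mascB$-convergence $f*\fy _j\to f*\fy$ also holds in $\mathcal D'(\rr d)$, and Fubini's theorem turns the test pairings into $\scal{f}{\check{\fy}_j*\eta}$ and $\scal{f}{\check{\fy}*\eta}$, where $\check{\fy}(x)=\fy (-x)$. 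After one further application of Fubini the difference is dominated by $\nm{\fy _j-\fy}{L^1_{(v)}}$ times $\sup _w (|f|*|\check{\eta}|)(w)/v(w)$, and the crucial local estimate
\[
\int _{B(w,\rho )}|f(x)|\,dx \;\lesssim\; v(w)\,\nm{f}{\mascB},
\]
obtained from the local embedding combined with the translation bound \eqref{translmultprop1} applied to the translate $f(\cdo +w)$, shows this supremum is finite. Hence the difference tends to $0$, identifying the limit with the classical convolution and confirming that the construction genuinely extends the convolution map on $C_0^\infty (\rr d)\times \mascB$. The heart of the matter is thus this weighted local-integrability control of elements of $\mascB$; once it is in place, the remaining steps are bookkeeping.
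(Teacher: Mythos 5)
Your argument is correct and takes essentially the same route as the paper, which disposes of this proposition in a single line by observing that it is ``a straightforward consequence of the fact that $C_0^\infty$ is dense in $L^1_{(v)}$''; your proof is simply the careful execution of that density-and-completeness argument. The consistency check identifying the abstract $\mascB$-limit with the classical convolution for $\fy \in C_0^\infty(\rr d)$, which you rightly flag as the only delicate point, is left entirely implicit in the paper.
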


\par

The result is a straightforward consequence of the fact that 
$C_0^\infty$
is dense in $L^1_{(v)}$.

\par

The quasi-Banach space $\mascB$ above is usually a
mixed quasi-normed Lebesgue space, given as follows.
Let $E$ be a non-degenerate parallelepiped in $\rr d$ which is 
spanned by the
ordered basis $\kappa (E)=\{ e_1,\dots,e_d \}$. That is,
$$
E = \sets{x_1e_1+\cdots+x_de_d}{(x_1,\dots,x_d)\in\rr d,\ 0\leq 
x_k\leq 1,\
k=1,\dots,d}.
$$
The corresponding lattice, dual parallelepiped and dual lattice are
given by
\begin{align*}
\Lambda _E &=\sets{j_1e_1+\cdots +j_de_d}{(j_1,\dots,j_d)\in
\zz d},
\\[1ex]
E' &=\sets {\xi _1e'_1+\cdots+\xi _de'_d}{(\xi _1,\dots ,\xi _d)
\in \rr d,
\ 0\leq \xi _k\leq 1,\ k=1,\dots ,d},
\intertext{and}
\Lambda'_E &= \Lambda_{E'}=\sets{\iota _1e'_1+\cdots +\iota _de'_d}
{(\iota _1,\dots ,\iota _d) \in \zz d},
\end{align*}
respectively, where the ordered basis $\kappa (E') =\{
e'_1,\dots ,e'_d\}$ of $E'$ satisfies
$$
\scal {e_j} {e'_k} = 2\pi \delta_{jk}
\quad \text{for every}\quad
j,k =1,\dots, d.
$$
Note here that the Fourier
analysis with respect to general biorthogonal bases has recently been
developed in \cite{RuTo}.

\par

The basis $e'_1,\dots ,e'_d$ is called the \emph{dual basis} of
$e_1,\dots ,e_d$.
We observe that there is a matrix $T_E$ such that
$e_1,\dots ,e_d$ and $e_1',\dots ,e_d'$ are the images of
the standard basis under $T_E$ and
$T_{E'}= 2\pi(T^{-1}_E)^t$, respectively.

\par

In the following we let
$$
\max \mabfq =\max (q_1,\dots ,q_d)
\quad \text{and}\quad
\min \mabfq =\min (q_1,\dots ,q_d)
$$
when $\mabfq =(q_1,\dots ,q_d)\in (0,\infty ]^d$,
and $\chi _\Omega$ be the characteristic function of $\Omega$.

\par

\begin{defn}\label{Def:MixedLebSpaces}
Let $E$ be a non-degenerate parallelepiped in $\rr d$
spanned by the ordered set $\kappa (E)\equiv \{ e_1,\dots ,e_d\}$
in $\rr d$, $\mabfp =(p_1,\dots ,p_d)\in (0,\infty ]^{d}$ and $r=\min (1,\mabfp )$.
If  $f\in L^r_{loc}(\rr d)$, then
$$
\nm f{L^{\mabfp }_{\kappa (E)}}\equiv
\nm {g_{d-1}}{L^{p_{d}}(\mathbf R)}
$$
where  $g_k(z_k)$, $z_k\in \rr {d-k}$,
$k=0,\dots ,d-1$, are inductively defined as
\begin{align*}
g_0(x_1,\dots ,x_{d}) &\equiv |f(x_1e_1+\cdots +x_{d}e_d)|,
\\[1ex]
\intertext{and}
g_k(z_k) &\equiv
\nm {g_{k-1}(\cdo ,z_k)}{L^{p_k}(\mathbf R)},
\quad k=1,\dots ,d-1.
\end{align*}
The space $L^{\mabfp }_{\kappa (E)}(\rr d)$ consists
of all $f\in L^r_{loc}(\rr d)$ such that
$\nm f{L^{\mabfp}_{\kappa (E)}}$ is finite, and is called
\emph{$E$-split Lebesgue space (with respect to $\mabfp$ and
$\kappa (E)$)}.
\end{defn}

\par

\begin{defn}\label{Def:MixedPhaseShiftLebSpaces}
Let $E_0\subseteq \rr d$ be a non-degenerate
parallelepiped with dual
parallelepiped $E'_0$, and $E\subseteq \rr {2d}$ be a
parallelepiped spanned by the ordered set $\kappa (E)\equiv \{ e_1,\dots ,e_{2d}\}$.
Then $E$ is called a \emph{phase-shift split parallelepiped} (with respect to $E_0$)
if $E$ is non-degenerate
and $d$ of the vectors in $\{ e_1,\dots ,e_{2d}\}$ span $E_0$
and the other $d$ vectors is the corresponding dual basis which span $E_0'$. 
\end{defn}

\par

\par

Next we consider the extended class of modulation spaces
which we are interested
in. 

\par

\begin{defn}\label{bfspaces2}
Assume that $\mascB$ is a translation
invariant QBF-space on $\rr {2d}$, $\omega \in\mascP _E(\rr {2d})$,
and that $\phi \in
\Sigma _1(\rr d)\setminus 0$. Then the set $M_{(\omega)} =
M(\omega ,\mascB )$ consists of all $f\in
\Sigma _1'(\rr d)$ such that
$$
\nm f{M_{(\omega )}}=\nm f{M(\omega ,\mascB )}
\equiv \nm {V_\phi f\, \omega }{\mascB}
$$
is finite.
\end{defn}

\par

Obviously, we have
$
M^{p,q}_{(\omega )}(\rr d)=M(\omega ,\mascB )$
when  $\mascB =L^{p,q}_1(\rr {2d})$ (cf. Example \ref{Lpqbfspaces}).
It follows that many properties which are valid for the classical modulation
spaces also hold for the spaces of the form $M(\omega ,\mascB )$.
For example we have the following proposition, which shows that
the definition of $M(\omega ,\mascB )$ is independent of the
choice of $\phi$ when $\mascB$ is a Banach space. We omit the proof
since it follows by similar arguments as
in the proof of Proposition 11.3.2 in \cite{Gc2}.
(See also \cite{PfTo} for topological aspects of $M(\omega ,\mascB)$.)

\par

\begin{prop}\label{p1.4BF}
Let $\mascB$ be an invariant BF-space with
respect to $v_0\in \mascP _E(\rr {2d})$ for $j=1,2$. Also let
$\omega ,v\in\mascP _E(\rr {2d})$ be such that $\omega$ is
$v$-moderate, $M(\omega ,\mascB )$ is the same as in Definition
\ref{bfspaces2}, and let $\phi \in M^1_{(v_0v)}(\rr d)\setminus
0$ and $f\in \Sigma _1'(\rr d)$. Then $M(\omega ,\mascB )$
is a Banach space, and $f\in M(\omega ,\mascB )$
if and only if $V_\phi f\, \omega \in \mascB$, and
different choices of $\phi$ gives rise to equivalent norms in
$M(\omega ,\mascB )$.
\end{prop}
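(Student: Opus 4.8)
The plan is to follow the classical scheme used for the window independence of ordinary modulation spaces (as in Proposition 11.3.2 of \cite{Gc2}), adapting it to the abstract invariant BF-space setting. The whole argument rests on the reconstruction/covering identity for the short-time Fourier transform: for windows $\phi ,\psi \in \Sigma _1(\rr d)$ with $\scal \phi \psi \neq 0$, one has the pointwise bound
\begin{equation*}
|V_\psi f(x,\xi )| \le (2\pi )^{-d/2}\scal \phi \psi ^{-1}\,
\bigl( |V_\phi f|*|V_\psi \phi | \bigr)(x,\xi ),
\qquad (x,\xi )\in \rr {2d}.
\end{equation*}
First I would establish this inequality, which is a direct consequence of the STFT mapping properties for Gelfand-Shilov distributions recalled after \eqref{defstft} together with the standard integral identity relating $V_\psi f$ to $V_\phi f$ convolved with $V_\psi \phi$.

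The key step is then to pass from this pointwise estimate to a norm estimate in $\mascB$. Multiplying by $\omega$ and using that $\omega$ is $v$-moderate, so that $\omega (x,\xi )\lesssim \omega (y,\eta )\,v((x,\xi )-(y,\eta ))$ under the convolution, I would bound $|V_\psi f\cdot \omega |$ pointwise by a constant times $(|V_\phi f\cdot \omega |)*(|V_\psi \phi |\cdot v)$. Applying the solidity condition (2) of Definition \ref{bfspaces1} followed by Minkowski's inequality \eqref{Eq:MinkIneq} in the form furnished by Proposition \ref{p1.4BFA} yields
\begin{equation*}
\nm {V_\psi f\cdot \omega}{\mascB} \lesssim
\nm {V_\phi f\cdot \omega}{\mascB}\,
\nm {|V_\psi \phi |\cdot v}{L^1_{(1)}} =
\nm {V_\phi f\cdot \omega}{\mascB}\,
\nm {V_\psi \phi}{L^1_{(v)}}.
\end{equation*}
Here the hypothesis $\phi \in M^1_{(v_0v)}(\rr d)\setminus 0$ is precisely what guarantees the finiteness of $\nm {V_\psi \phi}{L^1_{(v)}}$: since $V_\psi \phi \in L^1_{(v_0v)}$ by the membership of $\phi$ in the appropriate modulation space and $\psi \in \Sigma _1$, the factor $v$ is absorbed. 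Swapping the roles of $\phi$ and $\psi$ gives the reverse estimate, which together establish that all admissible windows produce equivalent finite quantities $\nm {V_\phi f\cdot \omega}{\mascB}$, so that the definition of $M(\omega ,\mascB )$ and the corresponding norm are indeed independent of $\phi$ up to equivalence.

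The hard part will be verifying the integrability of the covering kernel $\nm {V_\psi \phi}{L^1_{(v)}}$ under the stated weakened window hypothesis $\phi \in M^1_{(v_0v)}$ rather than $\phi \in \Sigma _1$, and checking that Proposition \ref{p1.4BFA} applies with the correct weight bookkeeping (the interplay between the space weight $v_0$ of $\mascB$ and the moderation weight $v$ of $\omega$). For the completeness assertion, I would argue by a standard Cauchy-sequence transfer: if $(f_k)$ is Cauchy in $M(\omega ,\mascB )$, then $(V_\phi f_k\cdot \omega )$ is Cauchy in the complete space $\mascB$, its limit is of the form $V_\phi f\cdot \omega$ for some $f\in \Sigma _1'(\rr d)$ by the inversion formula and the sequential continuity of the STFT on Gelfand-Shilov distribution spaces, and hence $f_k\to f$ in $M(\omega ,\mascB )$. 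I expect both of these to be routine once the basic convolution estimate and the role of $v_0v$ are pinned down, so the genuine obstacle is the correct weight accounting in the single covering inequality above.
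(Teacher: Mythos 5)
Your plan follows essentially the same route the paper intends: the authors omit the proof of Proposition \ref{p1.4BF} entirely, stating that it ``follows by similar arguments as in the proof of Proposition 11.3.2 in \cite{Gc2}'', and that is precisely the change-of-window covering argument you describe. One slip in your displayed estimate: Minkowski's inequality \eqref{Eq:MinkIneq} for $\mascB$ carries the weight $v_0$ of the BF-space, so the kernel norm should be $\nm{|V_\psi \phi |\cdot v}{L^1_{(v_0)}}=\nm{V_\psi \phi}{L^1_{(v_0v)}}$ rather than $\nm{V_\psi \phi}{L^1_{(v)}}$ --- which is exactly why the hypothesis reads $\phi \in M^1_{(v_0v)}(\rr d)\setminus 0$, as you correctly observe in the sentence that follows.
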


\par

We refer to \cite {Fe3,Fe4,Fe5,Fe5.5,Fe6,FG1,FG2,GaSa,Gc2,RSTT,To20}
for more facts about modulation spaces.

\par

\subsection{Pseudo-differential operators}

\par

We use the notation $\GL (d,\Omega )$ for the set of $d\times d$-matrices with
entries in the set $\Omega$. Let $s\ge 1/2$, $a\in \maclS _s 
(\rr {2d})$, and $A\in \GL (d,\mathbf R)$ be fixed. Then, the
pseudo-differential operator $\op _A(a)$
is the linear and continuous operator on $\maclS _s (\rr d)$
given by
\begin{equation}\label{eq:psido}
	(\op _A(a)u)(x)=(2\pi)^{-d}\iint 
	a(x-A(x-y),\xi ) \, f(y)\, e^{i\scal {x-y}\xi}\,dyd\xi
\end{equation}
when $f\in\maclS _s(\rr{d})$. For general $a\in \maclS _s'(\rr {2d})$, the
pseudo-differential operator $\op _A(a)$ is defined as the continuous
operator from $\maclS _s(\rr d)$ to $\maclS _s'(\rr d)$ with
distribution kernel given by
\begin{equation}\label{atkernel}
K_{a,A}(x,y)=(2\pi )^{-d/2}(\mascF _2^{-1}a)(x-A(x-y),x-y).
\end{equation}
Here $\mascF _2F$ is the partial Fourier transform of $F(x,y)\in
\maclS _s'(\rr {2d})$ with respect to the $y$ variable. This
definition makes sense, since the mappings
\begin{equation}\label{homeoF2tmap}
\mascF _2\quad \text{and}\quad F(x,y)\mapsto F(x-A(x-y),y-x)
\end{equation}
are homeomorphisms on $\maclS _s'(\rr {2d})$.
In particular, the map $a\mapsto K_{a,A}$ is a homeomorphism on
$\maclS _s'(\rr {2d})$.

\par

\begin{rem}\label{Rem:KernelThm}
For any $K\in \maclS '_s(\rr {d_1+d_2})$, let $T_K$ be the
linear and continuous mapping from $\maclS _s(\rr {d_1})$
to $\maclS _s'(\rr {d_2})$, defined by the formula
\begin{equation}\label{pre(A.1)}
(T_Kf,g)_{L^2(\rr {d_2})} = (K,g\otimes \overline f )_{L^2(\rr {d_1+d_2})}.
\end{equation}
It is well-known that if $t\in \mathbf R$, then it follows from e.{\,}g.
\cite{LozPerTask,ChSiTo}
that the Schwartz kernel theorem also holds in the context of Gelfans-Shilov spaces.
That is, the mappings $K\mapsto T_K$ and $a\mapsto \op _t(a)$ are bijective
from $\maclS _s '(\rr {2d})$
to the set of linear and continuous mappings from $\maclS _s(\rr d)$ to
$\maclS _s'(\rr d)$. Similar facts hold true if $\maclS _s$ and $\maclS _s'$
are replaced by $\Sigma _s$ and $\Sigma _s'$, respectively (or by 
$\mascS$ and $\mascS '$, respectively).
\end{rem}

\par

As a consequence of Remark \ref{Rem:KernelThm} it follows that
for each $a_1\in \maclS _s '(\rr {2d})$ and $A_1,A_2\in
\GL (d,\mathbf R)$, there is a unique $a_2\in \maclS _s '(\rr {2d})$ such that
$\op _{A_1}(a_1) = \op _{A_2} (a_2)$. The relation between $a_1$ and $a_2$
is given by
\begin{equation}\label{calculitransform}
\op _{A_1}(a_1) = \op _{A_2}(a_2) \qquad \Leftrightarrow \qquad
a_2(x,\xi )=e^{i\scal {(A_1-A_2)D_\xi}{D_x }}a_1(x,\xi ).
\end{equation}
(Cf. \cite{Ho1}.) Note here that the right-hand side makes sense, since
it is equivalent to $\widehat a_2(\xi ,x)
= e^{i(A_1-A_2)\scal x \xi }\widehat a_1(\xi ,x)$,
and that the map $a\mapsto e^{i\scal {Ax} \xi }a$ is continuous on
$\maclS _s '$ when $A\in \GL (d,\mathbf R)$.

\par

Let $A\in \GL (d,\mathbf R)$ and $a\in \maclS _s '(\rr {2d})$ be
fixed. Then $a$ is called a rank-one element with respect to $A$, if
the corresponding pseudo-differential operator is of rank-one,
i.{\,}e.
\begin{equation}\label{trankone}
\op _A(a)f=(f,f_2)f_1, \qquad f\in \maclS _s(\rr d),
\end{equation}
for some $f_1,f_2\in \maclS _s '(\rr d)$. By
straightforward computations it follows that \eqref{trankone}
is fulfilled, if and only if $a=(2\pi
)^{d/2}W_{f_1,f_2}^{A}$, where $W_{f_1,f_2}^{A}$
it the $A$-Wigner distribution defined by the formula
\begin{equation}\label{wignertdef}
W_{f_1,f_2}^{A}(x,\xi ) \equiv \mascF (f_1(x+A\cdo
)\overline{f_2(x-(I-A)\cdo )} )(\xi ),
\end{equation}
which takes the form
$$
W_{f_1,f_2}^{A}(x,\xi ) =(2\pi )^{-d/2} \int
f_1(x+Ay)\overline{f_2(x-(I-A)y) }e^{-i\scal y\xi}\, dy,
$$
when $f_1,f_2\in \maclS _s (\rr d)$. Here $I\in \GL (d,\mathbf R)$ is
the identity matrix. By combining these facts
with \eqref{calculitransform} it follows that
\begin{equation}\label{wignertransf}
W_{f_1,f_2}^{A_2} = e^{i\scal {{\color{red}{(A_1-A_2)}}D_{\xi}}{D_x }}
W_{f_1,f_2}^{A_1},
%
%
%
\end{equation}
for each $f_1,f_2\in \maclS _s '(\rr d)$ and $A_1,A_2\in \GL (d,\mathbf R)$. Since
the Weyl case is particularly important, we set
$W_{f_1,f_2}^{A}=W_{f_1,f_2}$ when $A=\frac 12I$, i.{\,}e.
$W_{f_1,f_2}$ is the usual (cross-)Wigner distribution of $f_1$ and
$f_2$.

\par

For future references we note the link
\begin{multline}\label{tWigpseudolink}
(\op _A(a)f,g)_{L^2(\rr d)} =(2\pi )^{-d/2}(a,W_{g,f}^A)_{L^2(\rr {2d})},
\\[1ex]
a\in \maclS _s'(\rr {2d}) \quad\text{and}\quad f,g\in \maclS _s(\rr d)
\end{multline}
between pseudo-differential operators and Wigner distributions,
which follows by straightforward computations (see also e.{\,}g.
\cite{To24}).

\medspace

Next we discuss the Weyl
product, the twisted convolution and related objects. Let
$s\ge 1/2$ and let $a,b\in
\maclS _s '(\rr {2d})$. Then the Weyl product $a\wpr
b$ between $a$
and $b$ is the function or distribution which fulfills $\op ^w(a\wpr
b) = \op ^w(a)\circ \op ^w(b)$, provided the right-hand side
makes sense as a continuous operator from $\maclS _s(\rr d)$ to
$\maclS _s'(\rr d)$. More generally, if $A\in \GL (d,\mathbf R)$,
then the product $\wpr _A$ is defined by the formula
\begin{equation}\label{wprtdef}
\op _A(a\wpr _A b) = \op _A(a)\circ \op _A(b),
\end{equation}
provided the right-hand side makes sense as a continuous operator from
$\maclS _s (\rr d)$ to $\maclS _s '(\rr d)$, in which case $a$ and $b$
are called \emph{suitable} or \emph{admissible}. We also use the notation
$\wpr$ instead of $\wpr _A$ when $A= \frac 12 I$ (i.{\,}e. in the Weyl case).

\par

The Weyl product can also, in a convenient way, be expressed in terms
of the symplectic Fourier transform and the twisted convolution. More
precisely, let $s\ge 1/2$. Then the \emph{symplectic Fourier transform} for $a \in
\maclS _s (\rr {2d})$ is defined by the formula
\begin{equation*}
(\mascF _\sigma a) (X)
= \pi^{-d}\int a(Y) e^{2 i \sigma(X,Y)}\,  dY,
\end{equation*}
where $\sigma$ is the symplectic form given by
$$
\sigma(X,Y) = \scal y \xi -
\scal x \eta ,\qquad X=(x,\xi )\in \rr {2d},\ Y=(y,\eta )\in \rr {2d}.
$$

\par

We note that $\mascF _\sigma = T\circ (\mascF \otimes (\mascF ^{-1}))$, when
$(Ta)(x,\xi) =a(\xi ,x)$. In particular, ${\mascF _\sigma}$ is continuous on
$\maclS _s (\rr {2d})$, and extends uniquely to a homeomorphism on
$\maclS _s '(\rr {2d})$, and to a unitary map on $L^2(\rr {2d})$, since similar
facts hold for $\mascF$. Furthermore, $\mascF _\sigma^{2}$ is the identity
operator.

\par

Let $s\ge 1/2$ and $a,b\in \maclS _s (\rr {2d})$. Then the \emph{twisted
convolution} of $a$ and $b$ is defined by the formula
\begin{equation}\label{twist1}
(a \ast _\sigma b) (X)
= \left (\frac 2\pi \right )^{\frac d2}
\int a(X-Y) b(Y) e^{2 i \sigma(X,Y)}\, dY.
\end{equation}
The definition of $*_\sigma$ extends in different ways. For example,
it extends to a continuous multiplication on $L^p(\rr {2d})$ when $p\in
[1,2]$, and to a continuous map from $\maclS _s '(\rr {2d})\times
\maclS _s (\rr {2d})$ to $\maclS _s '(\rr {2d})$. If $a,b \in
\maclS _s '(\rr {2d})$, then $a \wpr b$ makes sense if and only if $a
*_\sigma \widehat b$ makes sense, and then
\begin{equation}\label{tvist1}
a \wpr b = (2\pi)^{-\frac d2} a \ast_\sigma (\mascF _\sigma {b}).
\end{equation}
We also remark that for the twisted convolution we have
\begin{equation}\label{weylfourier1}
\mascF _\sigma (a *_\sigma b) = (\mascF _\sigma a) *_\sigma b =
\check{a} *_\sigma (\mascF _\sigma b),
\end{equation}
where $\check{a}(X)=a(-X)$ (cf. \cite{To3,To10,To16}). A
combination of \eqref{tvist1} and \eqref{weylfourier1} gives
\begin{equation}\label{weyltwist2}
\mascF _\sigma (a\wpr b) = (2\pi )^{-\frac d2}(\mascF _\sigma
a)*_\sigma (\mascF _\sigma b).
\end{equation}

We now define the subspace of symbols in $\mascS '(\rr {2d})$ which give rise to
$L^2(\rr d)$-bounded pseudo-differential operators,
which will be useful in the sequel.

\par

\begin{defn}\label{def:swinf}
	The set $s_\infty ^w(\rr {2d})$ consists of all
	$a\in \mascS '(\rr {2d})$ such that $\op ^w(a)$ is linear and continuous on
	$L^2(\rr d)$, and we set
	$$
		\nm a{s_\infty ^w}\equiv \nm {\op ^w(a)}{L^2(\rr d)\to L^2(\rr d)}.
	$$
\end{defn}

\par 

\begin{rem}
	By Remark \ref{Rem:KernelThm} it follows that the map
	$a\mapsto \op ^w(a)$ is an isometric bijection from
	$s_\infty ^w(\rr {2d})$ to the set of linear continuous operators on
	$L^2(\rr d)$.
\end{rem}

\par

\begin{rem}
We remark that the relations in this subsection hold true after
$\maclS _s$, $\maclS _s'$ and $s\ge \frac 12$ are replaced by
$\Sigma _s$, $\Sigma _s'$ and $s>\frac 12$ respectively, in each
place.
\end{rem}

\par

Next we recall some algebraic properties and characterisations
of $\Gamma ^{(\omega )}_s(\rr {2d})$ and $\Gamma ^{(\omega )}_{0,s}(\rr {2d})$, and
begin with the following. We refer to \cite{CaTo} for its proof.

\par

\begin{prop}\label{Prop:Gamma(omega)}
Let $s\ge 1$, $\omega _j\in \mascP _{E,s}^0(\rr {2d})$, $A_j\in
\GL (d, \mathbf R)$ for $j=1,2$, and let
$\omega _{0,r}(X,Y)=\omega _0(X)e^{-r|Y|^{\frac 1s}}$ when
$r> 0$. Then the following is true:
\begin{enumerate}
\item If $a_1,a_2\in \Sigma _s'(\rr {2d})$ satisfy $\op _{A_1}(a_1)=\op
_{A_2}(a_2)$, then $a_1\in \Gamma ^{(\omega _0)}_{s}(\rr {2d})$ if and only if
$a_2\in \Gamma ^{(\omega _0)}_{s}(\rr {2d})$.

\vrum

\item $\Gamma ^{(\omega _1)}_{s}\wpr \Gamma ^{(\omega _2)}_{s} \subseteq
\Gamma ^{(\omega _1\omega _2)}_{s}$.

\vrum

\item $\displaystyle {\Gamma ^{(\omega _0)}_{s} =\bigcup _{r> 0}M^{\infty
,1}_{(1/\omega _{0,r})}=\bigcup _{r\ge 0}\splM ^{\infty
,1}_{(1/\omega _{0,r})}}$.
\end{enumerate}
\end{prop}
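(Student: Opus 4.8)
The proposition characterizes the Gevrey symbol class $\Gamma^{(\omega)}_s$ of Roumieu type via three equivalent facts: (1) invariance under change of quantization (change of the matrix $A$), (2) the Weyl-product algebra property $\Gamma^{(\omega_1)}_s \wpr \Gamma^{(\omega_2)}_s \subseteq \Gamma^{(\omega_1\omega_2)}_s$, and (3) an identification with a union of weighted modulation spaces $M^{\infty,1}_{(1/\omega_{0,r})}$ (and their $\EuScript M$-variants). The paper explicitly defers the proof to \cite{CaTo}, so the plan below is a reconstruction of how such a result is proved.

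**Overall strategy.** The plan is to treat item (3) as the engine that drives the other two, since realizing $\Gamma^{(\omega_0)}_s$ as a union of modulation spaces converts all the analytic Gevrey estimates into time-frequency/short-time Fourier transform estimates, where translation-invariance and convolution-type structure make the remaining claims almost automatic.

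\emph{Step one: prove (3).} I would start from the defining estimate $|D^\alpha a(X)| \lesssim \omega_0(X)\, h^{|\alpha|}\alpha!^s$ for some $h>0$ and translate it into a bound on the short-time Fourier transform $V_\phi a$ with a window $\phi \in \Sigma_s(\rr{2d})$. The characterization (5) in Proposition \ref{thm:srrequiv}, applied in each variable, relates Gevrey-type derivative estimates to super-exponential decay $e^{-r|Y|^{1/s}}$ of $V_\phi a(X,Y)$ in the frequency variable $Y$, uniformly modulated by the weight $\omega_0(X)$. This is precisely the statement that $a\,\omega_{0,r}^{-1}$, i.e.\ $a(X)\,\omega_0(X)^{-1}e^{r|Y|^{1/s}}$, lies in $L^{\infty,1}$ on the $(X,Y)$ side for a suitable $r>0$, which is the definition of membership in $M^{\infty,1}_{(1/\omega_{0,r})}$. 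The converse direction reconstructs the Gevrey derivative bounds from the STFT decay by the inversion formula. The union over $r>0$ (respectively $r\ge 0$ for the $\EuScript M$-spaces) accounts for the ``for some $h$'' in the Roumieu definition.

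\emph{Step two: deduce (1).} With (3) in hand, I would use the fact recorded in \eqref{calculitransform}–\eqref{wignertransf} that changing $A$ amounts to applying the operator $e^{i\scal{(A_1-A_2)D_\xi}{D_x}}$, a metaplectic-type transformation acting on symbols. The key point is that such transformations preserve the modulation-space scale $M^{\infty,1}_{(1/\omega_{0,r})}$ up to an adjustment of $r$ (since they act boundedly on these spaces, essentially by the known invariance of modulation spaces under the metaplectic representation, with controlled loss in the Gevrey weight). Hence membership in $\bigcup_r M^{\infty,1}_{(1/\omega_{0,r})}$, and therefore in $\Gamma^{(\omega_0)}_s$, is independent of $A$.

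\emph{Step three: deduce (2).} Here I would pass through the twisted convolution via \eqref{tvist1} and \eqref{weyltwist2}, reducing the Weyl product to $a\asts (\mascF_\sigma b)$. The algebra property then follows from a convolution estimate on the modulation-space side: the twisted convolution is bounded $M^{\infty,1}_{(1/\omega_{1,r})}\times M^{\infty,1}_{(1/\omega_{2,r})}\to M^{\infty,1}_{(1/(\omega_1\omega_2)_{r'})}$, with the product weight $\omega_1\omega_2$ emerging because the $X$-dependence multiplies and the super-exponential factors combine (using that $e^{-r|Y|^{1/s}}$ behaves submultiplicatively up to constants, by the weight-class assumption $\omega_j\in\mascP^0_{E,s}$). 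The GRS/Beurling-type moderateness of the weights is what guarantees the output weight is again admissible.

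\textbf{Main obstacle.} I expect the hardest part to be the precise bookkeeping of the Gevrey parameter $r$ (equivalently $h$) through each operation. The decay rate $e^{-r|Y|^{1/s}}$ is not preserved exactly: the change-of-quantization operator and the twisted convolution both degrade $r$ to some smaller $r'>0$. Controlling this degradation — showing it stays strictly positive so that one remains inside the \emph{union} over $r>0$ — requires the submultiplicativity-type inequalities for $e^{-r|Y|^{1/s}}$ and the weight moderateness in $\mascP^0_{E,s}$, together with sharp convolution estimates of super-exponentially weighted $L^{\infty,1}$ functions. This is where the subexponential (rather than polynomial) nature of the weights genuinely matters and where the proof departs from the classical H\"ormander-class arguments.
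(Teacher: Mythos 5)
The paper gives no proof of this proposition; it simply cites \cite{CaTo}, so there is nothing internal to compare against line by line. Your reconstruction nevertheless follows the route actually taken in that reference: establish the modulation--space identification (3) first by converting the Gevrey bounds into $e^{-r|Y|^{1/s}}$ decay of $V_\Phi a$ modulated by $\omega _0(X)$, then read off (1) from the boundedness of $e^{i\scal {(A_1-A_2)D_\xi}{D_x}}$ on the spaces $M^{\infty ,1}_{(1/\omega _{0,r})}$ (with a controlled loss in $r$), and (2) from a Weyl/twisted-convolution estimate of the type of Proposition \ref{Prop:Weylprodmod} after verifying the weight inequality for $1/\omega _{1,r}$, $1/\omega _{2,r}$ and $1/(\omega _1\omega _2)_{r'}$. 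One point to tighten: you cannot apply Proposition \ref{thm:srrequiv}{\,}(5) directly to $a$, since $a$ grows like $\omega _0$ rather than decaying; the correct move is to localize first, i.e.\ show that $\omega _0(X)^{-1}a\,\overline{\phi (\cdo -X)}$ is a bounded family in a Gelfand--Shilov space uniformly in $X$ (this is exactly what Lemma \ref{lem:conf}{\,}(1) and Proposition \ref{prop:charact} encode) and only then invoke the Fourier-side Gevrey characterization to get the decay in $Y$. Also note that the submultiplicativity of $e^{r|\cdot |^{1/s}}$ comes from subadditivity of $t\mapsto t^{1/s}$ for $s\ge 1$, not from the hypothesis $\omega _j\in \mascP ^0_{E,s}$, which is instead what controls the $X$-shifts; and the second identity in (3), with the union extended to $r=0$ for the $\splM ^{\infty ,1}$-spaces, still needs a separate (short) argument that you have not sketched.
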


\par

\begin{prop}\label{Prop:Gamma(omega)B}
Let $s\ge 1$, $\omega _j\in \mascP _{E,s}(\rr {2d})$, $A_j\in
\GL (d, \mathbf R)$ for $j=1,2$, and let
$\omega _{0,r}(X,Y)=\omega _0(X)e^{-r|Y|^{\frac 1s}}$ when
$r> 0$. Then the following is true:
\begin{enumerate}
\item If $a_1,a_2\in \Sigma _s'(\rr {2d})$ satisfy $\op _{A_1}(a_1)=\op
_{A_2}(a_2)$, then $a_1\in \Gamma ^{(\omega _0)}_{0,s}(\rr {2d})$ if and only if
$a_2\in \Gamma ^{(\omega _0)}_{0,s}(\rr {2d})$.

\vrum

\item $\Gamma ^{(\omega _1)}_{0,s}\wpr \Gamma ^{(\omega _2)}_{0,s} \subseteq
\Gamma ^{(\omega _1\omega _2)}_{0,s}$.

\vrum

\item $\displaystyle {\Gamma ^{(\omega _0)}_{0,s} =\bigcap _{r> 0}M^{\infty
,1}_{(1/\omega _{0,r})}=\bigcap _{r\ge 0}\splM ^{\infty
,1}_{(1/\omega _{0,r})}}$.
\end{enumerate}
\end{prop}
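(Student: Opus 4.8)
The plan is to establish part (3) first---the time-frequency characterisation is the structural core---and then to read off (1) and (2) from it, mirroring the Roumieu-type argument for Proposition \ref{Prop:Gamma(omega)} in \cite{CaTo} while systematically replacing ``for some $h$ (resp. $r$)'' by ``for every $h$ (resp. $r$)'' and every union over $r$ by the corresponding intersection. Throughout I may, by Proposition \ref{Prop:EquivWeights}, assume that $\omega _0$ is smooth and $s$-elliptic, and I fix a window $\Phi \in \Sigma _s(\rr {2d})\setminus 0$; the choice of a \emph{Beurling}-type window (decay for every rate) is essential, since it must beat the $\mascP _{E,s}$-growth of the weight (growth for some rate). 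Note also that $\mascP _{E,s}(\rr {2d})$ is stable under products, so $\omega _1\omega _2\in \mascP _{E,s}$ in part (2).

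For part (3) I would prove $\Gamma ^{(\omega _0)}_{0,s}=\bigcap _{r>0}M^{\infty ,1}_{(1/\omega _{0,r})}$ by converting Gevrey derivative bounds into frequency decay of $V_\Phi a$ and back. If $a\in \Gamma ^{(\omega _0)}_{0,s}$, so that $|D^\alpha a(X)|\lesssim \omega _0(X)h^{|\alpha |}\alpha !^s$ for every $h>0$, then integrating by parts in the defining integral of $V_\Phi a(X,\Xi )$ and applying Leibniz' rule, one obtains $|\Xi ^\beta V_\Phi a(X,\Xi )|\lesssim \omega _0(X)h^{|\beta |}\beta !^s$ for every $h>0$; here the $\mascP _{E,s}$-growth $\omega _0(Y)\lesssim \omega _0(X)e^{r_1|Y-X|^{1/s}}$ is absorbed by the super-exponential decay of $\Phi$, and the ``every rate'' character of both $a$ and $\Phi$ yields the ``every $h$''. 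A Stirling optimisation over $\beta$ then turns this into $|V_\Phi a(X,\Xi )|\lesssim \omega _0(X)e^{-r|\Xi |^{1/s}}$ for every $r>0$, which after taking $\sup _X$ and integrating in $\Xi$ is precisely membership in $\bigcap _{r>0}M^{\infty ,1}_{(1/\omega _{0,r})}$. Conversely, feeding an element of the intersection into the inversion formula $a=\iint V_\Phi a\,(\text{time-frequency shift})\Phi\,dX\,d\Xi$, differentiating under the integral and estimating $D^\alpha$ of the shifted windows against the frequency decay recovers the Gevrey bounds for every $h$. The second equality, with the split spaces $\splM ^{\infty ,1}$ and with $r\ge 0$ allowed, follows because the slack between $M$ and $\splM$ and between $r=0$ and small $r>0$ is washed out inside the intersection.

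Granting (3), part (1) reduces to showing that the transfer map $a_1\mapsto a_2=e^{i\scal {(A_1-A_2)D_\xi}{D_x}}a_1$ of \eqref{calculitransform} preserves $\bigcap _{r>0}M^{\infty ,1}_{(1/\omega _{0,r})}$; on the short-time Fourier side this is a smooth unimodular chirp multiplier that leaves the decay profile $e^{-r|\Xi |^{1/s}}$ essentially unchanged, hence maps each space into itself up to a harmless adjustment of $r$. For part (2) I would express the Weyl product through the twisted convolution via \eqref{tvist1}, $a\wpr b=(2\pi )^{-d/2}a*_\sigma (\mascF _\sigma b)$, and reduce the claim to a convolution-type estimate $M^{\infty ,1}_{(1/\omega _{1,r})}*_\sigma M^{\infty ,1}_{(1/\omega _{2,r})}\subseteq M^{\infty ,1}_{(1/(\omega _1\omega _2)_{r'})}$: the phase-space weight factorises as $\omega _1\omega _2$, while the frequency factors combine through the submultiplicativity of $e^{-r|\cdo|^{1/s}}$, so that an arbitrarily large decay rate $r'$ on the product is produced by taking $r$ large in each factor, matching the intersection characterisation of $\Gamma ^{(\omega _1\omega _2)}_{0,s}$.

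I expect the main obstacle to be the quantitative equivalence in part (3): passing from ``$|D^\alpha a|\lesssim \omega _0\,h^{|\alpha |}\alpha !^s$ for every $h$'' to ``$|V_\Phi a|\lesssim \omega _0\,e^{-r|\Xi |^{1/s}}$ for every $r$'', and back, with the Stirling optimisation producing exactly the \emph{intersection} (not a union) and with $\sup _X$ uniformly controlled by $\omega _0$. This is where the hypothesis $\omega _0\in \mascP _{E,s}$ together with the choice $\Phi \in \Sigma _s$ is decisive, as it lets the window decay dominate the weight growth in the cross terms; once (3) is in place, parts (1) and (2) are formal consequences of it combined with the known mapping properties of the change-of-quantization operator and of the twisted convolution on modulation spaces.
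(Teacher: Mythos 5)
The paper does not actually prove this proposition: it is stated together with Proposition \ref{Prop:Gamma(omega)} under the remark ``We refer to \cite{CaTo} for its proof'', so there is no internal argument to compare yours against. That said, your route --- establish the time-frequency characterisation (3) first by converting the ``every $h$'' Gevrey bounds into the decay $|V_\Phi a(X,\Xi )|\lesssim \omega _0(X)e^{-r|\Xi |^{1/s}}$ for every $r$ and back, and then deduce (1) and (2) from the known mapping properties of $e^{i\scal {(A_1-A_2)D_\xi}{D_x}}$ and of the Weyl product on $M^{\infty ,1}_{(\kappa )}$-spaces --- is exactly the strategy of \cite{CaTo}, with the Beurling-type quantifiers correctly threaded through (intersections instead of unions, $\Phi \in \Sigma _s$ so that the window decay dominates the fixed $\mascP _{E,s}$-growth rate of $\omega _0$, and the observation that in part (2) one produces an arbitrary $r'$ for the product by taking $r=r'+\max (r_1,r_2)$ in the factors). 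Two points deserve more care than your sketch gives them. First, in the converse direction of (3), membership in $\bigcap _{r>0}M^{\infty ,1}_{(1/\omega _{0,r})}$ only gives an $L^1$-in-$\Xi$ bound on $\sup _X$ of the normalised STFT; before you can differentiate under the inversion integral you need to upgrade this to a pointwise bound $|V_\Phi a(X,\Xi )|\lesssim \omega _0(X)e^{-r|\Xi |^{1/s}}$, which is done by the standard change-of-window inequality $|V_{\Phi _1}a|\lesssim |V_{\Phi _2}a|*|V_{\Phi _1}\Phi _2|$ at the cost of an $\ep$-loss in $r$ that the intersection absorbs. Second, the operator $e^{i\scal {(A_1-A_2)D_\xi}{D_x}}$ is not a multiplier on the STFT side; it induces a shear of the form $X\mapsto X+B\Xi$ (together with a change of window), so the weight $e^{r|\Xi |^{1/s}}/\omega _0(X)$ is transformed into $e^{r|\Xi |^{1/s}}/\omega _0(X+B\Xi )\asymp e^{(r\pm r_1|B|^{1/s})|\Xi |^{1/s}}/\omega _0(X)$ --- again only an adjustment of $r$, so your conclusion stands, but the mechanism is a coordinate change rather than a ``chirp multiplier''. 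With these two steps made explicit the proposal is a correct reconstruction of the cited proof.
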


\par

In time-frequency analysis one also considers mapping properties for 
pseudo-differential
operators between  modulation spaces  or with
symbols in  modulation spaces. Especially we need the following two
results, where the first
one is a generalisation of \cite[Theorem 2.1]{Ta} by Tachizawa, and the
second one is a
weighted version of \cite[Theorem 14.5.2]{Gc2}. We refer to \cite{To28}
for the proof of the first two propositions and to \cite{To28} for
the proof of the third one.

\par

\begin{prop}\label{Prop:p3.2}
Assume that $A\in \GL (d,\mathbf R)$, $s\ge 1$, $\omega ,\omega _0\in \mascP
_{E,s}^0(\rr {2d})$, $a\in
\Gamma ^{(\omega )}_{s}(\rr {2d})$, and that $\mathscr B$
is an invariant BF-space on $\rr {2d}$ of Beurling type. Then
$\op _A(a)$ is continuous from $M(\omega _0\omega ,\mathscr
B)$ to $M(\omega _0 ,\mathscr B)$, and also continuous on
$\maclS _s(\rr{d})$ and on $\maclS _s'(\rr{d})$.
\end{prop}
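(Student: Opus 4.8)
The plan is to establish the continuity of $\op_A(a)$ from $M(\omega_0\omega,\mathscr B)$ to $M(\omega_0,\mathscr B)$ by reducing it to a kernel estimate on the short-time Fourier transform side and then invoking the solidity and convolution properties of the invariant BF-space $\mathscr B$. First I would use Proposition \ref{Prop:Gamma(omega)}(3), which characterises $\Gamma^{(\omega)}_s$ as $\bigcup_{r>0} M^{\infty,1}_{(1/\omega_r)}$ with $\omega_r(X,Y)=\omega(X)e^{-r|Y|^{1/s}}$; this expresses the symbol hypothesis $a\in\Gamma^{(\omega)}_s$ directly in terms of a modulation-space membership, which is precisely the form in which operator-kernel estimates are most naturally stated. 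Since by \eqref{calculitransform} and Proposition \ref{Prop:Gamma(omega)}(1) the class $\Gamma^{(\omega)}_s$ is invariant under changes of the calculus parameter $A$, it suffices to treat a single convenient value (say the Weyl case $A=\tfrac12 I$), the general case following by the continuity of $a\mapsto e^{i\langle AD_\xi,D_x\rangle}a$ on $\maclS_s'$.

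The heart of the argument is a Gabor-analytic estimate. Fixing a window $\phi\in\Sigma_s(\rr d)\setminus 0$ and using the link \eqref{tWigpseudolink} between $\op_A(a)$ and Wigner distributions, I would express $V_\phi(\op_A(a)f)$ through the STFT of $a$ and of $f$ against suitably constructed windows on $\rr{2d}$ and $\rr d$. Concretely, one obtains a pointwise bound of the form
\begin{equation*}
|V_\phi(\op_A(a)f)(X)\,\omega_0(X)|
\lesssim \int \Theta(X,Y)\,|V_\Phi a(\cdots)|\,|V_\psi f(Y)\,\omega_0(Y)\omega(Y)|\,dY,
\end{equation*}
where $\Theta$ is a convolution-type kernel controlled by $\sup_Z |V_\Phi a(Z)\,\omega(\cdot)^{-1}e^{r|\cdot|^{1/s}}|$, which is finite precisely because $a\in M^{\infty,1}_{(1/\omega_r)}$ for some $r>0$ by the characterisation above. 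The weight arithmetic here rests on the moderateness estimates \eqref{Eq:ModWeightProp} and \eqref{eq:gsmodw} for $\omega,\omega_0\in\mascP_{E,s}^0$: the subexponential factor $e^{-r|Y|^{1/s}}$ coming from the symbol dominates the subexponential growth $e^{r'|Y|^{1/s}}$ of the moderate weights (one uses that $r$ can be taken after $r'$, exploiting that $\mascP_{E,s}^0$ allows \eqref{eq:gsmodw} for every $r'$), so that $\Theta(X,\cdot)$ lies in $L^1_{(v)}$ for the controlling submultiplicative weight $v$ of $\mathscr B$.

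With this bound in hand, the continuity follows by applying the solidity condition (2) of Definition \ref{bfspaces1} followed by the convolution inequality \eqref{Eq:MinkIneq}, extended to $L^1_{(v)}$ kernels by Proposition \ref{p1.4BFA}: the $\mathscr B$-norm of the left-hand side is dominated by $\nm{\Theta(X,\cdot)}{L^1_{(v)}}$ times $\nm{V_\psi f\cdot\omega_0\omega}{\mathscr B}\asymp\nm f{M(\omega_0\omega,\mathscr B)}$, which is the desired estimate. The continuity on $\maclS_s(\rr d)$ and $\maclS_s'(\rr d)$ then follows either by a density argument together with the mapping properties of the STFT recalled in Subsection \ref{subs:gs}, or by observing that these spaces are intersections respectively unions of the weighted modulation spaces already treated. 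I expect the main obstacle to be the bookkeeping in the kernel estimate: producing the pointwise domination of $V_\phi(\op_A(a)f)$ by a genuine convolution against an $L^1_{(v)}$ kernel requires carefully tracking how the window $\phi$ on $\rr d$ induces an auxiliary window $\Phi$ on phase space through the Wigner map \eqref{wignertdef}, and then verifying that the Gevrey-type decay of $V_\Phi a$ compensates the weight growth uniformly in $X$ — this is where the precise exponent $1/s$ and the quantifier ``for every $r>0$'' in the definition of $\mascP_{E,s}^0$ are genuinely used.
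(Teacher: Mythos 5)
The paper does not actually prove Proposition \ref{Prop:p3.2}: it is imported as a known result, with the proof referred to \cite{To28}. Measured against what such a proof must contain, your outline is the standard and essentially correct argument for the main claim. Reducing to the Weyl case via Proposition \ref{Prop:Gamma(omega)}(1), trading $a\in \Gamma ^{(\omega )}_{s}$ for $a\in M^{\infty ,1}_{(1/\omega _r)}$ with some fixed $r>0$ via Proposition \ref{Prop:Gamma(omega)}(3), dominating $|V_\phi (\op ^w(a)f)\cdot \omega _0|$ pointwise by a convolution of $|V_\psi f\cdot \omega _0\omega |$ against a kernel controlled by $Y\mapsto \sup _X|V_\Phi a(X,Y)|\omega (X)^{-1}$, and closing with solidity plus \eqref{Eq:MinkIneq} and Proposition \ref{p1.4BFA} is exactly how results of this type are established. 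You also put your finger on the right quantifier interplay, although you state it backwards: the symbol fixes \emph{some} decay rate $r$, and it is the weights' growth rates that may then be chosen smaller than $r$, which is possible precisely because $\omega ,\omega _0$ and the controlling weight $v$ of the Beurling-type space $\mascB$ all satisfy their estimates for \emph{every} rate. This quantifier pattern is also why the statement pairs the Roumieu symbol class $\Gamma ^{(\omega )}_{s}$ with Beurling weights, and $\Gamma ^{(\omega )}_{0,s}$ with Roumieu weights in Proposition \ref{Prop:p3.2B}.

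The one genuine weak point is the final claim about continuity on $\maclS _s(\rr d)$ and $\maclS _s'(\rr d)$. Your second suggested route --- realizing these spaces as intersections/unions of ``modulation spaces already treated'' --- does not go through as stated: the weights $e^{r|\cdot |^{1/s}}$ needed to exhaust $\maclS _s$ and $\maclS _s'$ belong to $\mascP _{E,s}(\rr {2d})$ but \emph{not} to $\mascP _{E,s}^0(\rr {2d})$ (take $X=0$ in \eqref{eq:gsmodw} to see this), so the first part of the proposition, which you have only proved for $\omega _0\in \mascP _{E,s}^0$, does not apply to them; indeed a naive union argument would reintroduce the quantifier clash between the fixed decay rate of the symbol and a fixed growth rate of the weight. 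The correct route is a direct estimate: for $f\in \maclS _s$ the datum $V_\psi f$ decays like $e^{-r_0|\cdot |^{1/s}}$ for some $r_0>0$ depending on $f$, and convolving this against the kernel (which decays like $e^{-r|\cdot |^{1/s}}$) still yields decay at some positive rate, which suffices for Roumieu-type membership; this is in any case contained in the continuity results of \cite{CaTo}. This affects only the secondary assertion, not the lifting statement that the rest of the paper relies on.
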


\par

\begin{prop}\label{Prop:p3.2B}
Assume that $A\in \GL (d,\mathbf R)$, $s\ge 1$, $\omega ,\omega _0\in \mascP
_{E,s}(\rr {2d})$, $a\in
\Gamma ^{(\omega )}_{0,s}(\rr {2d})$, and that $\mathscr B$
is an invariant BF-space on $\rr {2d}$ of Roumieu type. Then
$\op _A(a)$ is continuous from $M(\omega _0\omega ,\mathscr
B)$ to $M(\omega _0 ,\mathscr B)$, and also continuous on
$\Sigma _s(\rr{d})$ and on $\Sigma _s'(\rr{d})$.
\end{prop}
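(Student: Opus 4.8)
The plan is to realise $\op _A(a)$ as an integral operator on the time-frequency side and reduce the statement to a weighted convolution inequality in $\mascB$. Fix windows $\phi ,\Phi \in \Sigma _s(\rr d)\setminus 0$ and write $\pi (z)g=e^{i\scal{\cdo}{\xi}}g(\cdo -x)$ for $z=(x,\xi )\in \rr {2d}$. For $f\in M(\omega _0\omega ,\mascB )\subseteq \Sigma _s'(\rr d)$ the Gabor reconstruction $f=c\int V_\phi f(z)\,\pi (z)\phi \,dz$ holds in $\Sigma _s'$; applying $\op _A(a)$ (continuous on $\Sigma _s'$, which is a standard consequence of $a$ and its formal adjoint lying in Gevrey classes preserving $\Sigma _s$, cf. \eqref{atkernel} and Remark \ref{Rem:KernelThm}) and pairing with $\pi (w)\Phi$ gives
\begin{equation*}
V_\Phi (\op _A(a)f)(w)=c\int K(w,z)\,V_\phi f(z)\,dz,
\qquad
K(w,z)=\scal{\op _A(a)\pi (z)\phi}{\pi (w)\Phi}.
\end{equation*}
Absolute convergence of this integral will be a byproduct of the kernel estimate below together with $f\in M(\omega _0\omega ,\mascB )$.

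The core step is the kernel estimate. I would first reduce to the Weyl case $A=\tfrac 12 I$ via \eqref{calculitransform} and Proposition \ref{Prop:Gamma(omega)B}(1), which leaves the symbol class unchanged and makes the covariance as transparent as possible. By the link \eqref{tWigpseudolink}, $K(w,z)=(2\pi )^{-d/2}\scal{a}{W^A_{\pi (w)\Phi ,\pi (z)\phi}}$, and $W^A_{\pi (w)\Phi ,\pi (z)\phi}$ is, up to a unimodular factor, a time-frequency translate in $\rr {2d}$ of the fixed window $\Psi \equiv W^A_{\Phi ,\phi}$ (cf. \eqref{wignertransf}). Since $\Phi ,\phi \in \Sigma _s(\rr d)$ we have $\Psi \in \Sigma _s(\rr {2d})$, an admissible window for the short-time Fourier transform of $a$, and the covariance computation yields
\begin{equation*}
|K(w,z)|=(2\pi )^{-d/2}\,|(V_\Psi a)(X(w,z),Y(w,z))|,
\end{equation*}
where $X(w,z)$ is an $A$-dependent mean of $w$ and $z$, so $|X(w,z)-z|\lesssim |w-z|$, and $Y(w,z)=J(w-z)$ with $J(y,\eta )=(\eta ,-y)$ the standard symplectic matrix, so $|Y(w,z)|=|w-z|$.

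Now invoke Proposition \ref{Prop:Gamma(omega)B}(3) with weight $\omega$: since $a\in \Gamma ^{(\omega )}_{0,s}$ we have $a\in M^{\infty ,1}_{(1/\omega _r)}$ for \emph{every} $r>0$, where $\omega _r(X,Y)=\omega (X)e^{-r|Y|^{1/s}}$. Setting $F(Y)=\sup _X |(V_\Psi a)(X,Y)|/\omega (X)$, this means $\int e^{r|Y|^{1/s}}F(Y)\,dY<\infty$ for all $r>0$. Combining the kernel estimate with the moderateness \eqref{eq:gsmodw} of $\omega ,\omega _0\in \mascP _{E,s}$ (to replace $\omega (X(w,z))$ by $\omega (z)$ and $\omega _0(w)$ by $\omega _0(z)$, at the cost of factors $e^{r'|w-z|^{1/s}}$) gives
\begin{equation*}
\omega _0(w)\,|V_\Phi (\op _A(a)f)(w)|
\lesssim
\big(G*(\omega _0\omega \,|V_\phi f|)\big)(w),
\qquad
G(u)=e^{r_1|u|^{1/s}}F(Ju),
\end{equation*}
for a suitable $r_1>0$ absorbing the moderateness exponents. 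As $\mascB$ is an invariant BF-space of Roumieu type, its translation weight $v$ satisfies $v(u)\lesssim e^{r_v|u|^{1/s}}$ for some $r_v>0$; since $|Ju|=|u|$, the change of variables $v=Ju$ gives $\nm G{L^1_{(v)}}\lesssim \int e^{(r_1+r_v)|u|^{1/s}}F(Ju)\,du<\infty$ by the membership for every $r$. Minkowski's inequality \eqref{Eq:MinkIneq} then yields
\begin{equation*}
\nm{\op _A(a)f}{M(\omega _0,\mascB )}
\lesssim
\nm G{L^1_{(v)}}\,\nm f{M(\omega _0\omega ,\mascB )}.
\end{equation*}

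The main obstacle is exactly this last matching of quantifiers: the Roumieu BF-space contributes a weight $v$ with an exponential rate $r_v$ that cannot be chosen in advance, so the symbol must supply decay $e^{r|Y|^{1/s}}$ for \emph{arbitrarily large} $r$, which is precisely the content of $a\in \Gamma ^{(\omega )}_{0,s}$ rather than $\Gamma ^{(\omega )}_s$. This is dual to Proposition \ref{Prop:p3.2}, where a Roumieu symbol (decay for \emph{some} $r$) is paired with a Beurling BF-space (weight rate below \emph{every} $\ep $). Finally, continuity on $\Sigma _s$ and $\Sigma _s'$ follows from the same estimate by letting $\mascB$ range over the weighted mixed-norm spaces whose projective, respectively inductive, limits characterise $\Sigma _s$ and $\Sigma _s'$, the latter obtained from the former by duality.
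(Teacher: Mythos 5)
The paper does not prove Proposition \ref{Prop:p3.2B} at all: it is quoted as a known result and the proof is deferred to \cite{To28}. Your argument is the standard Gabor-matrix proof of such statements and is correct: the kernel estimate via \eqref{tWigpseudolink} and the covariance of the $A$-Wigner distribution, the identification of $\Gamma^{(\omega)}_{0,s}$ with $\bigcap_r M^{\infty,1}_{(1/\omega_{0,r})}$ from Proposition \ref{Prop:Gamma(omega)B}(3), and the resulting weighted convolution inequality handled by \eqref{Eq:MinkIneq} are exactly the ingredients one expects, and your quantifier bookkeeping (Beurling symbol class supplying $e^{r|Y|^{1/s}}$-decay for every $r$ to absorb the \emph{some}-$r$ growth of the Roumieu translation weight $v$) is the right mechanism and mirrors Proposition \ref{Prop:p3.2} correctly. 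The only presentational wrinkle is a mild circularity: you invoke continuity of $\op_A(a)$ on $\Sigma_s'$ to set up the integral representation and then list that continuity among the conclusions; this is easily repaired by first deriving the kernel estimate for $f\in\Sigma_s$ (where the pairing $V_\Phi(\op_A(a)f)(w)=(\op_A(a)f,\pi(w)\Phi)$ needs only the kernel-theorem continuity $\Sigma_s\to\Sigma_s'$), deducing continuity on $\Sigma_s$ from the weighted estimates, and extending to $\Sigma_s'$ and to $M(\omega_0\omega,\mascB)$ by duality and density.
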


\par

\begin{prop}\label{Prop:pseudomod}
Assume that $p,q\in (0,\infty ]$, $r\le \min (p,q,1)$,
$\omega \in \mathscr P _{E}(\rr {2d}\oplus \rr {2d})$ and $\omega
_1,\omega _2\in \mathscr P_{E}(\rr {2d})$ satisfy
\begin{equation}\label{e5.9}
\frac {\omega _2(X-Y)}{\omega _1
(X+Y)} \le C \omega (X ,Y), \qquad \quad X,Y \in \rr{2d}, 
\end{equation}
for some constant $C$. If $a\in \splM ^{\infty ,r}_{(\omega )}(\rr
{2d})$, then $\op ^w(a)$ extends uniquely to a continuous map from
$M^{p,q}_{(\omega _1)}(\rr d)$ to $M^{p,q}_{(\omega _2)}(\rr d)$.
\end{prop}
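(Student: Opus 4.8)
The plan is to reduce the operator bound to a weighted convolution inequality for short-time Fourier transforms, by means of the identity \eqref{tWigpseudolink} linking $\op ^w(a)$ with the Wigner distribution. Fix $\phi \in \Sigma _1(\rr d)\setminus 0$ and set $\Phi =W_{\phi ,\phi}\in \Sigma _1(\rr {2d})$, which is an admissible window for $\splM ^{\infty ,r}_{(\omega )}(\rr {2d})$ and for the spaces $M^{p,q}_{(\omega _j)}(\rr d)$. Since $\op ^w(a)$ is already well defined and continuous on $\Sigma _1'(\rr d)$ (cf. Remark \ref{Rem:KernelThm}), and, by the extended mapping properties of the short-time Fourier transform recalled after \eqref{defstft}, all the objects below make sense for $f\in \Sigma _1'(\rr d)$, it suffices to establish the a priori estimate $\nm {\op ^w(a)f}{M^{p,q}_{(\omega _2)}}\lesssim \nm a{\splM ^{\infty ,r}_{(\omega )}}\nm f{M^{p,q}_{(\omega _1)}}$ for every $f\in M^{p,q}_{(\omega _1)}(\rr d)\subseteq \Sigma _1'(\rr d)$. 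The uniqueness of the extension then follows, since the operator is the restriction of the already fixed continuous map $\Sigma _1'\to \Sigma _1'$.

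The first step is the fundamental kernel representation. Taking $g=\phi (\cdo -x)e^{i\scal \cdo \xi}$ in \eqref{tWigpseudolink} gives $V_\phi (\op ^w(a)f)(X)=(2\pi )^{-d/2}(a,W_{g,f})$ with $X=(x,\xi )$. Expanding $f$ through the reconstruction formula $f=c_\phi \int V_\phi f(Y)\,\phi (\cdo -y)e^{i\scal \cdo \eta }\, dY$ and invoking the covariance of the cross-Wigner distribution under time-frequency shifts, the pairing $(a,W_{g,f})$ becomes an integral against $V_\Phi a$ evaluated at the phase-space point $\Theta (X,Y)=\big(\tfrac {X+Y}2,\tfrac {Y-X}2\big)$ determined, up to the fixed invertible symplectic normalisation of the Weyl calculus and unimodular phase factors, by the average and the difference of $X$ and $Y$. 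Keeping track only of absolute values, this yields
\begin{equation*}
|V_\phi (\op ^w(a)f)(X)|\lesssim \int _{\rr {2d}}\Big |V_\Phi a\Big (\tfrac {X+Y}2,\tfrac {Y-X}2\Big )\Big |\,|V_\phi f(Y)|\, dY .
\end{equation*}

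The decisive point is that \eqref{e5.9} is exactly the compatibility condition matching this evaluation point. Writing $\Theta _1=\tfrac {X+Y}2$ and $\Theta _2=\tfrac {Y-X}2$, one has $\Theta _1-\Theta _2=X$ and $\Theta _1+\Theta _2=Y$, so \eqref{e5.9} applied at $(\Theta _1,\Theta _2)$ reads $\omega _2(X)\le C\,\omega (\Theta _1,\Theta _2)\,\omega _1(Y)$. Inserting this bound, and using that the multiplied integrand $\omega (\Theta _1,\Theta _2)\,|V_\Phi a(\Theta _1,\Theta _2)|$ is controlled, after taking the supremum over the first (central) slot and keeping the second (difference) slot, by a function $G(\Theta _2)$ with $\nm G{L^r}\lesssim \nm a{\splM ^{\infty ,r}_{(\omega )}}$, the separated weight ratio collapses to a constant and we obtain
\begin{equation*}
\omega _2(X)\,|V_\phi (\op ^w(a)f)(X)|\lesssim \int _{\rr {2d}} G\Big (\tfrac {Y-X}2\Big )\,\omega _1(Y)\,|V_\phi f(Y)|\, dY=\big (\widetilde G*(\omega _1|V_\phi f|)\big )(X),
\end{equation*}
where $\widetilde G(U)=G(-U/2)$ still lies in $L^r(\rr {2d})$.

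It then remains to close with a quasi-Banach Young inequality. Taking $L^{p,q}$-quasinorms over $\rr {2d}$ and using that this mixed-norm space is translation invariant and solid, convolution with $\widetilde G\in L^r$ is bounded on $L^{p,q}$ precisely because $r\le \min (1,p,q)$; in the range $\min (p,q)<1$ this is exactly where the $r$-triangle inequality \eqref{Eq:WeakTriangle2} is used, by splitting the convolution into translates and summing their $r$-th powers. This gives $\nm {\omega _2\,V_\phi (\op ^w(a)f)}{L^{p,q}}\lesssim \nm {\widetilde G}{L^r}\nm {\omega _1\,V_\phi f}{L^{p,q}}$, which is the desired operator estimate. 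I expect the main obstacle to be the first step together with the weight-matching in the third: verifying that the symplectic change of variables $(X,Y)\mapsto \big(\tfrac {X+Y}2,\tfrac {Y-X}2\big)$ intrinsic to the Weyl calculus converts the separated quotient $\omega _2(X)/\omega _1(Y)$ into exactly the form controlled by \eqref{e5.9} (carefully tracking the fixed invertible normalisation and the factors of $2$ absorbed into the moderation constants), and confirming that the quasi-Young step genuinely requires the hypothesis $r\le \min (1,p,q)$.
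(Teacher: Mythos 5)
The paper does not prove this proposition itself; it is quoted from \cite{To28} (and, for the quasi-Banach range, ultimately rests on \cite{GaSa,To20}). Your architecture is the standard one used there: represent $V_\phi(\op^w(a)f)(X)$ as an integral of $|V_\Phi a|$ evaluated at the Weyl point $\bigl(\tfrac{X+Y}{2},\cdot\bigr)$ against $|V_\phi f(Y)|$, observe that with $\Theta_1=\tfrac{X+Y}2$, $\Theta_2=\tfrac{Y-X}2$ one has $\Theta_1-\Theta_2=X$, $\Theta_1+\Theta_2=Y$, so that \eqref{e5.9} applied at $(\Theta_1,\Theta_2)$ yields exactly $\omega_2(X)\le C\,\omega(\Theta_1,\Theta_2)\,\omega_1(Y)$, and then close with a convolution inequality. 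That weight-matching computation is correct (the use of $\splM$ rather than $M$ for the symbol space is precisely what makes the second slot come out as a multiple of $Y-X$ with no symplectic rotation, so your normalisation caveat resolves favourably).

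The one step that fails as literally written is the last one: \emph{convolution with $\widetilde G\in L^r$ is not bounded on $L^{p,q}$ when $r<1$}, and indeed $\widetilde G * F$ need not even be defined, since an $L^r$ function with $r<1$ can fail to be locally integrable. The hypothesis $r\le\min(1,p,q)$ is necessary but not sufficient at the level of Lebesgue spaces; what saves the argument is that $G(V)=\sup_U \omega(U,V)\,|V_\Phi a(U,V)|$ is not an arbitrary $L^r$ function but the envelope of a short-time Fourier transform, hence satisfies a pointwise reproducing inequality $|V_\Phi a|\lesssim |V_{\Phi'}a|*|V_{\Phi'}\Phi|$ which places $G$ in the Wiener amalgam space $W(L^\infty,\ell^r)$. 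It is the inclusion $W(L^\infty,\ell^r)*L^{p,q}\subseteq L^{p,q}$ for $r\le\min(1,p,q)$ (proved by discretising onto a lattice and using \eqref{Eq:WeakTriangle2}, as in \cite{GaSa,To20}) that you must invoke, not Young's inequality for $L^r$. A second, smaller imprecision: for general $a\in\Sigma_1'(\rr{2d})$ the operator $\op^w(a)$ maps $\Sigma_1(\rr d)$ to $\Sigma_1'(\rr d)$, not $\Sigma_1'$ to $\Sigma_1'$, so when $p=\infty$ or $q=\infty$ (where $\Sigma_1$ is not dense in $M^{p,q}_{(\omega_1)}$) the uniqueness of the extension requires an extra word, e.g.\ weak$^*$ (narrow) approximation; the a priori estimate alone does not settle it.
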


\par

Finally we need the following result concerning mapping properties of
modulation spaces under the Weyl product. The result is a special case
of Theorem 0.3 in \cite{CoToWa}.

\par

\begin{prop}\label{Prop:Weylprodmod}
Assume that $\omega _j\in \mathscr P_{E}(\rr {2d}\oplus \rr {2d})$
for $j=0,1,2$ satisfy
\begin{equation}\label{Eq:weightprodmod}
\omega _0(X,Y)\le C\omega _1(X-Y+Z,Z)\omega _2(X+Z,Y-Z),
\end{equation}
for some constant $C>0$  independent of $X,Y,Z\in \rr {2d}$. Then the
map $(a,b)\mapsto a\wpr b$ from $\Sigma _1(\rr {2d})\times \Sigma _1(\rr {2d})$
to $\Sigma _1(\rr {2d})$ extends uniquely to a mapping  from $\splM
^{\infty ,1}_{(\omega _1)}(\rr {2d})\times \splM ^{\infty
  ,1}_{(\omega _2)}(\rr {2d})$ to $\splM ^{\infty ,1}_{(\omega
  _0)}(\rr {2d})$. 
\end{prop}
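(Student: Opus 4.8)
The plan is to reduce the statement to a single pointwise estimate for the short-time Fourier transform (STFT) of the Weyl product, and then close with a mixed-norm Young inequality; this is the time-frequency route that underlies the algebra property of Sj\"ostrand-type classes. Throughout I use that, since $p=\infty$ and $q=1$ yield Banach spaces, by the window-independence for such Banach modulation spaces (cf. Proposition~\ref{p1.4BF}) the norm of $\splM^{\infty,1}_{(\omega)}(\rr{2d})$ is, up to equivalence,
\begin{equation*}
\nm a{\splM^{\infty,1}_{(\omega)}} \asymp \int_{\rr{2d}}\Big(\sup_{X\in\rr{2d}}|V_\Phi a(X,Y)|\,\omega(X,Y)\Big)\,dY
\end{equation*}
for any fixed $\Phi\in\Sigma _1(\rr{2d})\setminus 0$. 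In particular I am free to choose windows $\Phi_0,\Phi_1,\Phi_2\in\Sigma _1(\rr{2d})$ that are mutually compatible for the Weyl product (for instance with $\Phi_0$ a fixed multiple of $\Phi_1\wpr\Phi_2$), which is what makes the computation below close.

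The crux is the pointwise identity, valid for $a,b\in\Sigma _1(\rr{2d})$ with these windows: there is a real phase $\Psi$ such that
\begin{equation*}
V_{\Phi_0}(a\wpr b)(X,Y)=\int_{\rr{2d}}V_{\Phi_1}a(X-Y+Z,Z)\,V_{\Phi_2}b(X+Z,Y-Z)\,e^{i\Psi(X,Y,Z)}\,dZ .
\end{equation*}
I would obtain it by writing $a\wpr b=(2\pi)^{-d/2}a\asts(\mascF _\sigma b)$ via \eqref{tvist1}, expanding $V_{\Phi_0}(a\wpr b)$ through the definition \eqref{defstft}, and reorganising the resulting oscillatory integrals by Fubini; equivalently, reading $\op ^w(a\wpr b)=\op ^w(a)\circ\op ^w(b)$ as a composition of operators, writing each STFT as a matrix coefficient against time-frequency shifted windows, and inserting a reproducing (resolution of identity) formula between the two factors. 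Either way, the affine substitution in $Z$ forces precisely the arguments $(X-Y+Z,Z)$ and $(X+Z,Y-Z)$. Taking absolute values removes the phase and leaves
\begin{equation*}
|V_{\Phi_0}(a\wpr b)(X,Y)|\le\int_{\rr{2d}}|V_{\Phi_1}a(X-Y+Z,Z)|\,|V_{\Phi_2}b(X+Z,Y-Z)|\,dZ .
\end{equation*}

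With this in hand, the weight enters through hypothesis \eqref{Eq:weightprodmod}, whose arguments match the substitution exactly: multiplying by $\omega_0(X,Y)$ and bounding it by $C\,\omega_1(X-Y+Z,Z)\,\omega_2(X+Z,Y-Z)$ distributes the weight onto the two factors. Setting $g_1(Z)=\sup_{U}|V_{\Phi_1}a(U,Z)|\omega_1(U,Z)$ and $g_2(W)=\sup_{V}|V_{\Phi_2}b(V,W)|\omega_2(V,W)$, I bound the integrand by $g_1(Z)g_2(Y-Z)$ uniformly in $X$ before taking the supremum, so that
\begin{equation*}
\sup_{X}|V_{\Phi_0}(a\wpr b)(X,Y)|\,\omega_0(X,Y)\lesssim (g_1*g_2)(Y).
\end{equation*}
Integrating in $Y$ and using Young's inequality ($L^1*L^1\to L^1$) gives $\nm{a\wpr b}{\splM^{\infty,1}_{(\omega_0)}}\lesssim\nm{g_1}{L^1}\nm{g_2}{L^1}=\nm a{\splM^{\infty,1}_{(\omega_1)}}\nm b{\splM^{\infty,1}_{(\omega_2)}}$. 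Finally, $\Sigma _1(\rr{2d})$ is weak-$*$ dense in each $\splM^{\infty,1}_{(\omega_j)}(\rr{2d})$, and this estimate together with the sequential continuity of $\wpr$ on Gelfand--Shilov distributions yields the unique extension to the asserted spaces.

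I expect the main obstacle to be the pointwise identity: arranging the window compatibility and verifying that the change of variables produces exactly the arguments $(X-Y+Z,Z)$ and $(X+Z,Y-Z)$ demanded by \eqref{Eq:weightprodmod}, rather than some other affine images. The weighted and Young steps are then routine, precisely because the hypothesis on the weights is tailored to this convolution geometry (genuine convolution in the frequency slot $Z\mapsto Y-Z$, and a locked translation in the space slot that is absorbed by the supremum over $X$).
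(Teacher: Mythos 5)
The paper does not prove this proposition itself but quotes it as a special case of Theorem 0.3 in \cite{CoToWa}, and your argument is precisely the one used there: the twisted-convolution identity for the (symplectic) short-time Fourier transform of $a\wpr b$ with compatible windows $\Phi_0=c\,\Phi_1\wpr\Phi_2$, whose affine arguments $(X-Y+Z,Z)$ and $(X+Z,Y-Z)$ are exactly matched by the weight hypothesis \eqref{Eq:weightprodmod}, followed by taking the supremum in $X$ and applying Young's inequality in the $Y$-variable. Your proposal is correct and takes essentially the same approach as the cited proof.
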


\par

In the proof of our main theorem we will need the following
consequence of Proposition~\ref{Prop:Weylprodmod}.

\par

\begin{prop} \label{Prop:CorWeyl}
Let $s\ge 1$, $\omega _0, v_0, v_1 \in \mathscr{P}_{E,s}^0
(\rr {2d}\oplus \rr {2d})$ be such that $\omega _0$ is $v_0$-moderate.
Set $\vartheta = \omega _0^{1/2}$, and  
\begin{eqnarray}
\omega _1(X,Y)&=&\frac{v_0(2Y)^{1/2}v_1(2Y)}{\vartheta (X+Y) \vartheta(X-Y)} \, , \notag
\\[1ex]
\omega _2(X,Y) &=&\vartheta (X-Y)\vartheta(X+Y)v_1(2Y) \, ,
\notag
\\[1ex]
v_2(X,Y) &= & v_1(2Y)\, .
\end{eqnarray}
Then 
\begin{eqnarray}
  \label{Eq:hn1}
  \Gamma ^{(1/\vartheta )}_{s} \wpr \mathcal{M}^{\infty , 1} _{(\omega _1)}
  \wpr \Gamma ^{(1/\vartheta )}_{s} & \subseteq & \mathcal{M}^{\infty , 1}
  _{(v_2)}\, , \label{Eq:ch123}
  \\[1ex]
  \Gamma ^{(1/\vartheta )}_{s} \wpr \mathcal{M}^{\infty , 1} _{(v_2)}
  \wpr \Gamma ^{(1/\vartheta )}_{s} & \subseteq & \mathcal{M}^{\infty , 1}
  _{(\omega _2)} \, . \label{Eq:ch124}
\end{eqnarray}
\par

The same holds with $\mascP _{E,s}$ and $\Gamma _{0,s}^{(1/\vartheta )}$
in place of $\mascP _{E,s}^0$ and $\Gamma _{s}^{(1/\vartheta )}$
respectively, at each occurrence.
\end{prop}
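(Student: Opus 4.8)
The plan is to derive Proposition~\ref{Prop:CorWeyl} directly from Proposition~\ref{Prop:Weylprodmod} together with the characterisation of $\Gamma^{(\omega)}_s$ as a union of weighted modulation spaces given in Proposition~\ref{Prop:Gamma(omega)}(3). The strategy is to read off, for each of the two inclusions \eqref{Eq:ch123} and \eqref{Eq:ch124}, the weight inequality \eqref{Eq:weightprodmod} that Proposition~\ref{Prop:Weylprodmod} requires, and to verify it by a direct pointwise estimate using only the $v_0$-moderateness of $\omega_0$ and submultiplicativity of $v_0,v_1$. Since \eqref{Eq:ch123} is a triple Weyl product, I would first record that the Weyl product is associative on the admissible symbols at hand (so that the triple product is unambiguous), and then apply Proposition~\ref{Prop:Weylprodmod} twice, once to combine the leftmost factor with the middle factor, and once to combine that result with the rightmost factor.

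\par

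First I would express the two $\Gamma^{(1/\vartheta)}_s$ factors as members of $\splM^{\infty,1}_{(1/\vartheta_r)}$ with $\vartheta_r(X,Y)=\vartheta(X)e^{-r|Y|^{1/s}}$ for suitable $r>0$, via Proposition~\ref{Prop:Gamma(omega)}(3); the exponential factor $e^{-r|Y|^{1/s}}$ is harmless because $\vartheta=\omega_0^{1/2}\in\mascP^0_{E,s}$, so the Gevrey-type decay it carries can be absorbed into the $v_0,v_1$-moderateness bounds at the cost of enlarging constants. For the first combination in \eqref{Eq:ch123} I would check that with $\omega_1^{\mathrm{left}}$ the weight of the left $\Gamma$-factor, $\omega_1$ the middle weight, and a target weight $\omega_{\mathrm{mid}}$, the inequality
\begin{equation*}
\omega_{\mathrm{mid}}(X,Y)\le C\,\omega_1^{\mathrm{left}}(X-Y+Z,Z)\,\omega_1(X+Z,Y-Z)
\end{equation*}
holds; here the denominators $\vartheta(X+Y)\vartheta(X-Y)$ in $\omega_1$ are designed precisely so that they cancel against the $\vartheta$-factors produced by the $\Gamma^{(1/\vartheta)}_s$ symbols after the argument shifts, leaving a clean power of $v_0,v_1$ evaluated at $2Y$. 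Repeating the computation for the second Weyl multiplication then yields the target $v_2(X,Y)=v_1(2Y)$ in \eqref{Eq:ch123}. The inclusion \eqref{Eq:ch124} is handled by the same bookkeeping, now with $v_2$ as the middle weight and $\omega_2$ as the target, where the $\vartheta(X-Y)\vartheta(X+Y)$ factor in $\omega_2$ records the two $\vartheta$-factors that survive rather than cancel.

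\par

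The main obstacle I anticipate is purely combinatorial rather than conceptual: verifying that the argument substitutions $X\mapsto X-Y+Z$, $Y\mapsto Z$ (and the symmetric ones) in \eqref{Eq:weightprodmod} make the $\vartheta$-factors telescope correctly, so that the phase-space arguments of $\vartheta$ at each stage match up and either cancel or accumulate into exactly the stated weights. One must track carefully how $|(X\pm Y)|$-type quantities transform under the convolution-variable shift $Z$ and bound the resulting cross terms using submultiplicativity $v(A+B)\le v(A)v(B)$ and moderateness $\omega_0(X+Y)\lesssim\omega_0(X)v_0(Y)$; the split of $v_0(2Y)^{1/2}$ in $\omega_1$ is there to balance the two applications of Proposition~\ref{Prop:Weylprodmod}. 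Finally, for the Beurling-type statement I would simply note that replacing $\mascP^0_{E,s}$ by $\mascP_{E,s}$ and $\Gamma^{(1/\vartheta)}_s$ by $\Gamma^{(1/\vartheta)}_{0,s}$ switches the union in Proposition~\ref{Prop:Gamma(omega)}(3) to the intersection in Proposition~\ref{Prop:Gamma(omega)B}(3), and the same weight inequalities go through verbatim since they are uniform in the parameter $r$.
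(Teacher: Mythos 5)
Your proposal follows essentially the same route as the paper's proof: both convert the $\Gamma^{(1/\vartheta)}_{s}$ factors into $\splM^{\infty,1}$-spaces via Proposition \ref{Prop:Gamma(omega)}(3) and then apply Proposition \ref{Prop:Weylprodmod} twice through an explicit intermediate weight, verifying \eqref{Eq:weightprodmod} pointwise from the moderateness of $\omega_0$ and the submultiplicativity of $v_0,v_1$, with the Roumieu/Beurling variant handled by switching union to intersection. The only discrepancies are cosmetic --- the paper pairs the middle factor with the rightmost one first, and the correct weight for the $\Gamma^{(1/\vartheta)}_{s}$ factors is $\vartheta(X)e^{r|Y|^{1/s}}$ rather than your $\vartheta(X)^{-1}e^{r|Y|^{1/s}}$ (a sign slip in naming $\vartheta_r$ that you should fix, though your description of the cancellations shows you have the right mechanism in mind).
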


\par

\begin{proof}
Since $\Gamma ^{(1/\vartheta )}_{s} = \bigcup _{r\ge 0}\splM ^{\infty
,1}_{(\vartheta _r)} $ with $\vartheta _r(X,Y) = \vartheta (X)
e^{r|Y|^{\frac 1s}}$ (Proposition~\ref{Prop:Gamma(omega)}(3)), it suffices
to argue with the symbol class $\splM ^{\infty , 1}
_{(\vartheta _r)}$ for some sufficiently large $r$ instead of
$\Gamma ^{(1/\vartheta )}_{s}$.

\par

Introducing the intermediate weight  
$$
\omega _3(X,Y) = \frac {v_1(2Y)\vartheta (X+Y)}{\omega _0(X-Y)}.
$$
we will show that for suitable $r$
\begin{align}
\omega _3(X,Y) &\le C\omega _1(X-Y+Z,Z)\vartheta
_r(X+Z,Y-Z)\label{Eq:omegacond2}
\\[1ex] 
v_1(2Y) &\le C\vartheta _r(X-Y+Z,Z)\omega
_3(X+Z,Y-Z)\label{Eq:omegacond3} \, .
\end{align}
Proposition~\ref{Prop:Weylprodmod} applied to \eqref{Eq:omegacond2} shows that
$\splM ^{\infty ,1} _{(\omega _1)} \wpr \Gamma ^{(1/\vartheta )}_{s} \subseteq
\splM ^{\infty ,1} _{(\omega _3)}$, and \eqref{Eq:omegacond3} implies  that
$\Gamma ^{(1/\vartheta )}_{s}  \wpr \splM ^{\infty ,1} _{(\omega _3)} \subseteq
\splM ^{\infty ,1} _{(v_2)}$,  and \eqref{Eq:ch123} follows. 

\par

Since $\vartheta $ is $v_0 ^{1/2}$-moderate and $v_0\in \mascP _{E,s}^0$,
we have
$$
\vartheta (X-Y)^{-1} \leq v_0(2Z)^{1/2} \vartheta (X-Y+2Z)^{-1}
\quad \text{and}\quad
\vartheta (X+Y) \leq \vartheta (X+Z) e^{r|Y-Z|^{\frac 1s}}
$$
for suitable $r> 0$. Using these inequalities
repeatedly in the following, a  straightforward computation yields
\begin{align*}
\omega _3(X,Y) & = \frac {v_1(2Y)\vartheta (X+Y)}{\vartheta (X-Y)^2}
\\[1ex]
& \le C_1 \frac {v_0(2Z) ^{1/2}v_1(2Z)\vartheta (X+Z) 
e^{r|Y-Z|^{\frac 1s}}}{\vartheta(X-Y+2Z)\vartheta(X-Y)}
\\[1ex]
& =C_1 \omega _1(X-Y+Z,Z)\vartheta _r(X+Z,Y-Z),
\end{align*}
for some $C_1>0$ and $r>0$.

\par

Likewise we obtain 
\begin{align*}
v_1(2Y) &= \frac {\vartheta(X-Y) v_1(2Y)\vartheta
  (X-Y)}{\vartheta(X-Y) ^2}
\\[1ex]
&\le C_1 \frac {\vartheta(X-Y) v_0(2Y)^{1/2}v_1(2Y)\vartheta (X+Y)}{\vartheta(X-Y)^2}
\\[1ex]
&\le C_2 \frac {\vartheta(X-Y+Z)
e^{r|Z|^{\frac 1s}}v_0(2(Y-Z))^{1/2}v_1(2(Y-Z))\vartheta(X+Y)}{\vartheta(X-Y+2Z)^2}
\\[1ex]
& = C_2 \vartheta _r(X-Y+Z,Z)\omega _3(X+Z,Y-Z) \, .
\end{align*}

\par

The twisted convolution relation \eqref{Eq:ch124} is proved similarly. Let 
$$
\omega _4(X,Y) = \vartheta(X-Y)v_1(2Y)
$$
be the intermediate weight. Then the inequality 
\begin{align*}
\omega _4(X,Y) & = \vartheta(X-Y)v_1(2Y) \le C\vartheta (X-Y+Z)
e^{r|Z|^{\frac 1s}}v_1(2(Y-Z)) 
\\[1ex]
&= C \vartheta _r(X-Y+Z,Z)v_2(X+Z,Y-Z)
\end{align*}
implies that $\Gamma ^{(1/\vartheta   )} _{s}\wpr \splM ^{\infty ,1} _{(v_2)}
\subseteq  \splM ^{\infty ,1} _{(\omega _4)}$. 

\par

Similarly we obtain
\begin{align*}
\omega _2(X,Y) &\le C \vartheta(X-Y)v_1(2Z)\vartheta (X+Z)e^{r|Z-Y|^{\frac 1s}}
\\[1ex]
&= C\omega _4(X-Y+Z,Z)\vartheta(X+Z)e^{r|Z-Y|^{\frac 1s}}
\\[1ex]
& = C\omega _4(X-Y+Z,Z)\vartheta _r(X+Z,Y-Z),
\end{align*}
and thus $\mathcal  M^{\infty ,1} _{(\omega _4)} \wpr \Gamma ^{(1/\vartheta  )}_{s}
\subseteq \mathcal  M^{\infty ,1} _{(\omega _2)}$.

\par

The case $\mascP _{E,s}$ and $\Gamma _{0,s}^{(1/\vartheta )}$
in place of $\mascP _{E,s}^0$ and $\Gamma _{s}^{(1/\vartheta )}$
respectively, at each occurrence, is treated in similar ways and is
left for the reader.
\end{proof}

\par

\subsection{The Wiener Algebra Property}

\par


\par

As a further crucial tool in our study of  the isomorphism property of
Toeplitz operators we need to combine these continuity results with
convenient invertibility properties. The so-called Wiener algebra property of
certain symbol classes asserts that the inversion of a pseudo-differential
operator preserves the symbol class and is often referred to as the
spectral invariance of a symbol class.

\par

\begin{prop}\label{Thm:specinv}
Let $A\in \GL (d,\mathbf R)$. Then the following are true:
\begin{enumerate}
\item If $s>1$, $a\in \Gamma ^{(1)}_{0,s}(\rr {2d})$ and $\op _A(a)$ is invertible
on $L^2(\rr d)$, then $\op _A(a)^{-1}=\op _A(b)$ for some $b \in \Gamma ^{(1)}_{0,s}(\rr {2d})$. 

\vrum

\item If $s\ge 1$, $a\in \Gamma ^{(1)}_{s}(\rr {2d})$ and $\op _A(a)$ is invertible
on $L^2(\rr d)$, then $\op _A(a)^{-1}=\op _A(b)$ for some $b \in \Gamma ^{(1)}_{s}(\rr {2d})$. 

\vrum

\item If $s\ge 1$, $v_0\in \mathscr P_{E,s}^0(\rr {2d})$ is submultiplicative, $v(X,Y)\equiv v_0(Y)$,
$X,Y\in \rr {2d}$, $a\in M^{\infty ,1}_{(v)}(\rr {2d})$ and  $\op _A(a)$ is invertible on
$L^2(\rr d)$, then $\op _A(a)^{-1}=\op _A(b)$, for some
$b\in M^{\infty ,1}_{(v)}(\rr {2d})$.

\vrum

\item If $s> 1$, $v_0\in \mathscr P_{E,s}(\rr {2d})$ is submultiplicative, $v(X,Y)\equiv v_0(Y)$,
$X,Y\in \rr {2d}$, $a\in M^{\infty ,1}_{(v)}(\rr {2d})$ and  $\op _A(a)$ is invertible on
$L^2(\rr d)$, then $\op _A(a)^{-1}=\op _A(b)$, for some
$b\in M^{\infty ,1}_{(v)}(\rr {2d})$.
\end{enumerate}
\end{prop}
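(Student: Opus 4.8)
The plan is to establish the two modulation-space statements (3) and (4) first, and then to read off the Gevrey-class statements (1) and (2) from the characterisations in Proposition~\ref{Prop:Gamma(omega)}(3) and Proposition~\ref{Prop:Gamma(omega)B}(3). In every case I would first reduce to the Weyl quantisation $A=\frac 12 I$: by the calculus transformation \eqref{calculitransform}, the passage $a_1\mapsto a_2$ with $\op _A(a_1)=\op ^w(a_2)$ is a homeomorphism of $\maclS _s'(\rr {2d})$ that preserves each of the classes $\Gamma ^{(1)}_s$, $\Gamma ^{(1)}_{0,s}$ (Proposition~\ref{Prop:Gamma(omega)}(1) and Proposition~\ref{Prop:Gamma(omega)B}(1)) and, by the same token, preserves $M^{\infty ,1}_{(v)}$ when $v(X,Y)=v_0(Y)$ depends only on the second variable. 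The uniqueness of the symbol (Remark~\ref{Rem:KernelThm}) then guarantees that the inverse operator has a single well-defined symbol $b$, independent of any auxiliary parameter, so it suffices to argue for $\op ^w$.

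For (3) and (4) the engine is the inverse-closedness of the Banach $*$-algebra $(M^{\infty ,1}_{(v)},\wpr )$, with involution $a\mapsto \overline a$, inside the $L^2(\rr d)$-bounded symbols $s_\infty ^w(\rr {2d})$ of Definition~\ref{def:swinf} (note $v_0\ge 1$ gives $M^{\infty ,1}_{(v)}\subseteq M^{\infty ,1}\subseteq s_\infty ^w$). That $M^{\infty ,1}_{(v)}$ is $\wpr $-closed follows from Proposition~\ref{Prop:Weylprodmod} with $\omega _0=\omega _1=\omega _2=v$, since submultiplicativity of $v_0$ yields $v_0(Y)\le v_0(Z)v_0(Y-Z)$. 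Inverse-closedness in $B(L^2)$ is the Wiener/spectral-invariance property for Sj\"ostrand-type classes, which holds as soon as $v_0$ is submultiplicative and satisfies the GRS condition $\lim _{n\to \infty }v_0(nY)^{1/n}=1$; this is the Gr\"ochenig--Leinert theorem on twisted convolution algebras, transported to pseudo-differential operators through Gr\"ochenig's almost-diagonalisation. The one nonroutine point is the GRS verification, and it is exactly here that the two statements diverge: for (3) a weight $v_0\in \mascP _{E,s}^0$ with $s\ge 1$ obeys $v_0(Y)\lesssim e^{\varepsilon |Y|^{1/s}}$ for \emph{every} $\varepsilon >0$, whence $v_0(nY)^{1/n}\to 1$; for (4) a weight $v_0\in \mascP _{E,s}$ with $s>1$ obeys $v_0(Y)\lesssim e^{r|Y|^{1/s}}$ for some $r>0$, and since $1/s<1$ one gets $v_0(nY)^{1/n}\lesssim e^{r\, n^{1/s-1}|Y|^{1/s}}\to 1$. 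At $s=1$ a Roumieu weight such as $e^{r|Y|}$ genuinely violates GRS, which is precisely why (4) excludes $s=1$.

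Granting (3) and (4), I would deduce (1) and (2) from $\Gamma ^{(1)}_{0,s}=\bigcap _{r>0}M^{\infty ,1}_{(v_r)}$ and $\Gamma ^{(1)}_s=\bigcup _{r>0}M^{\infty ,1}_{(v_r)}$, where $v_r(X,Y)=e^{r|Y|^{1/s}}$ comes from the cited characterisations with $\omega _0=1$; each $v_{0,r}(Y)=e^{r|Y|^{1/s}}$ is even, submultiplicative for $s\ge 1$, and lies in $\mascP _{E,s}$. For (1) with $s>1$: if $a\in \bigcap _r M^{\infty ,1}_{(v_r)}$, then (4) gives, for every $r$, an inverse symbol $b_r\in M^{\infty ,1}_{(v_r)}$; by symbol uniqueness all $b_r$ coincide with one $b$, so $b\in \bigcap _r M^{\infty ,1}_{(v_r)}=\Gamma ^{(1)}_{0,s}$. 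For (2) with $s>1$: if $a\in M^{\infty ,1}_{(v_{r_0})}$ for some $r_0$, then (4) directly yields $b\in M^{\infty ,1}_{(v_{r_0})}\subseteq \Gamma ^{(1)}_s$.

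The \emph{main obstacle} is the endpoint $s=1$ of statement (2), where $\Gamma ^{(1)}_1=\bigcup _{r>0}M^{\infty ,1}_{(v_r)}$ with $v_r(X,Y)=e^{r|Y|}$: no single such weight is GRS, so (4) is unavailable, and $e^{r_0|Y|}$ cannot be absorbed into any Roumieu class $\mascP _{E,s'}$ with $s'>1$, so one cannot trade $s$ upward. The resolution I propose exploits that $\Gamma ^{(1)}_1$ is a \emph{union} and can therefore tolerate a loss in the decay rate: via almost-diagonalisation, $\op ^w(a)$ with $a\in M^{\infty ,1}_{(v_{r_0})}$ has a Gabor matrix with off-diagonal decay $e^{-r_0|\lambda -\mu |}$, and since $\op ^w(a)$ is invertible on $L^2$, Jaffard's theorem on exponentially localised matrices shows the inverse matrix decays like $e^{-r_1|\lambda -\mu |}$ for some $r_1\in (0,r_0]$; reconstructing the symbol gives $b\in M^{\infty ,1}_{(v_{r_1})}\subseteq \Gamma ^{(1)}_1$. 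This controlled degradation of the rate is exactly what the union permits and what the fixed-weight statement (4) forbids, and the delicate step is the bookkeeping of the passage to the Gabor matrix and back, so that the exponential weight in the second variable $Y$ matches the off-diagonal index $\lambda -\mu $.
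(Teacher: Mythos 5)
Your proposal is correct and follows essentially the same route as the paper: reduction to the Weyl case, the Gr\"ochenig Wiener-algebra property of $M^{\infty ,1}_{(v)}$ for GRS weights (the paper cites \cite[Corollary 5.5]{Gc3}) for the cases where GRS holds, and, at the endpoint $s=1$ of (2), a Jaffard-type spectral invariance result that permits a loss in the exponential rate --- which is exactly what the paper invokes via \cite[Theorem 4.4]{FeGaTo} --- combined with the union/intersection characterisations of $\Gamma ^{(1)}_{s}$ and $\Gamma ^{(1)}_{0,s}$. Your GRS bookkeeping explaining why (4) excludes $s=1$ while (3) does not is a correct elaboration of what the paper leaves implicit.
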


\par

\begin{proof}
The results follows essentially from \cite[Corollary 5.5]{Gc3} or \cite{Gc4}. More precisely,
Suppose $s>1$, $a\in \Gamma ^{(1)}_{s}(\rr {2d})$, $\op _A(a)$ is invertible
on $L^2(\rr d)$, and let $v_r(X,Y)=e^{r|Y|^{frac 1s}}$ when $r\ge 0$. Then
$a\in M^{\infty,1}_{(v_r)}(\rr {2d})$ for some $r>0$. By \cite[Corollary 5.5]{Gc3},
$\op (M^{\infty,1}_{(v_r)}(\rr {2d}))$ is a Wiener algebra, giving that
$\op (a)^{-1}=\op (b)$ for some $b\in M^{\infty ,1}_{(v_r)}(\rr {2d})\subseteq
\Gamma ^{(1)}_{s}(\rr {2d})$. This gives (2) in the case $s>1$.

\par

If instead $s=1$, then by \cite[Theorem 4.4]{FeGaTo} there is an $r_0>0$ such that
$\op (a)^{-1}=\op (b)$ for some $b\in M^{\infty ,1}_{(v_{r_0})}(\rr {2d})\subseteq
\Gamma ^{(1)}_{1}(\rr {2d})$,
and (2) follows for general $s\ge 1$.

\par

By similar arguments, (1), (3) and (4) follow. The details are left for the reader.
%
\end{proof}

\par

\subsection{Toeplitz Operators.}  Fix a symbol  $a\in
\mathscr S(\rr {2d})$ and a window $\fy \in \mathscr S(\rr d)$. Then the
Toeplitz operator $\tp _{\fy}(a)$ is defined by the formula
\begin{equation}\label{toeplitz}
(\tp _{\fy}(a)f_1,f_2)_{L^2(\rr d)} = (a\, V_{
\fy}f_1,V_{ \fy}f_2)_{L^2(\rr {2d})}\, ,
\end{equation}
when $f_1,f_2\in \mathscr S(\rr d)$. Obviously, $\tp _{\fy}(a)$ is
well-defined and extends uniquely to a  continuous operator from $\mathscr S'(\rr d)$ to $\mathscr
S(\rr d)$.

\par

The definition of Toeplitz operators can be extended to more general
classes of windows and symbols by using appropriate estimates for the 
short-time Fourier transforms in \eqref{toeplitz}. 

\par

We state two possible ways of extending \eqref{toeplitz}. 
The first result follows from \cite[Corollary
4.2]{CG1} and its proof, and the second result is a special case
of \cite[Theorem 3.1]{TB}. We use the notation $\mathcal
L(V_1,V_2)$ for the set of linear and continuous mappings from the
topological vector space  $V_1$ into the topological vector space
$V_2$. We also set
\begin{equation}\label{omega0t}
  \omega _{0,t}(X,Y)=v_1(2Y)^{t-1}\omega _0(X) \qquad \text{ for } X,Y
  \in \rr {2d} \, .
\end{equation}

\par

\begin{prop}\label{Tpcont1}
Let $0\le t\le 1$, $p,q\in [1,\infty]$, and  $\omega ,\omega _0,v_1,v_0\in
\mathscr P(\rr {2d})$ be such that $v_0$ and $v_1$ are submultiplicative,
$\omega _0$ is $v_0$-moderate and $\omega $ is $v_1$-moderate. Set 
\begin{equation*}
v=v_1^tv_0 \quad \text{and}\quad \vartheta = \omega _0^{1/2} \,,  
\end{equation*}
and  let  $\omega _{0,t}$ be as in \eqref{omega0t}. Then the following
are true:

\par

\begin{enumerate}
\item The definition of $(a,\fy )\mapsto \tp _\fy (a)$ from $\mathscr
S(\rr {2d})\times \mathscr S(\rr d)$ to $\mathcal L(\mathscr S(\rr
d),\mathscr S'(\rr d))$ extends uniquely to a continuous map from
$\mathcal M^\infty _{(1/\omega _{0,t})}(\rr {2d})\times M^1_{(v)}(\rr
d)$ to $\mathcal L(\mathscr S(\rr d),\mathscr S'(\rr d))$.

\vrum

\item If $\fy \in M^{1}_{(v )}(\rr d)$ and $a\in \mathcal
M^{\infty }_{(1/\omega _{0,t})}(\rr {2d})$, then $\tp _\fy (a)$
extends uniquely to a continuous map from $M_{(\vartheta \omega
)}^{p,q}(\rr d)$ to $M_{(\omega /\vartheta )}^{p,q}(\rr d)$.
\end{enumerate}
\end{prop}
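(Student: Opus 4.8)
The plan is to reduce the assertions about the Toeplitz operator $\tp _\fy (a)$ to the continuity results for pseudo-differential operators on modulation spaces recorded above, via the standard identification of a localization operator as a Weyl operator. Concretely, I would start from the convolution identity
$$
\tp _\fy (a) = \op ^w(b), \qquad b = (2\pi )^{-d/2}\, a * W_{\fy ,\fy},
$$
with $W_{\fy ,\fy}$ the Wigner distribution from \eqref{wignertdef} (up to a reflection in $W_{\fy ,\fy}$, the precise constant being fixed by \eqref{toeplitz} and \eqref{tWigpseudolink}). For $a\in \mathscr S(\rr {2d})$ and $\fy \in \mathscr S(\rr d)$ this follows by combining \eqref{toeplitz} with \eqref{tWigpseudolink}, and the task is then to extend it by continuity to the larger classes in the statement.

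Next I would place the Weyl symbol $b$ in an appropriate modulation space. For $\fy \in M^1_{(v)}(\rr d)$ with $v=v_1^tv_0$, the bilinear Wigner transform maps $\fy$ into $M^1_{(\widetilde v)}(\rr {2d})$ for a weight $\widetilde v$ on $\rr {4d}$ determined by $v$; this is a standard mapping property of $W_{\fy ,\fy}$ on modulation spaces. Combining this with $a\in \mathcal{M}^\infty _{(1/\omega _{0,t})}(\rr {2d})$ and the convolution relations for modulation spaces yields $b\in \mathcal{M}^{\infty ,1}_{(\omega _b)}(\rr {2d})$ for a weight $\omega _b$ on $\rr {4d}$ that can be computed explicitly from $\omega _{0,t}$, $v$ and $\vartheta = \omega _0^{1/2}$.

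For part (2) I would then invoke Proposition \ref{Prop:pseudomod} with $\omega _1 = \vartheta \omega$, $\omega _2 = \omega /\vartheta$, and with $\omega _b$ in the role of the weight on $\rr {2d}\oplus \rr {2d}$; since $b$ lies in $\mathcal{M}^{\infty ,1}$ we take $r=1$, which is why $p,q\ge 1$ is needed. The crucial point is to verify the condition \eqref{e5.9}, namely $\omega _2(X-Y)/\omega _1(X+Y)\lesssim \omega _b(X,Y)$. Since $\omega _{0,t}(X,Y)=v_1(2Y)^{t-1}\omega _0(X)$ and $\vartheta ^2=\omega _0$, the two factors of $\vartheta$ on opposite sides of \eqref{e5.9} recombine into $\omega _0$, while the $v_1$-moderation of $\omega$ together with the submultiplicativity of $v_0,v_1$ absorbs the remaining $Y$-dependence; this is exactly where the exponent $t$ and the splitting of $v_1$ between the window ($v_1^t$) and the symbol ($v_1^{t-1}$) enter. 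Part (1) follows from the same computation, since the bilinear map $(a,\fy )\mapsto b$ is continuous from $\mathcal{M}^\infty _{(1/\omega _{0,t})}\times M^1_{(v)}$ into $\mathcal{M}^{\infty ,1}_{(\omega _b)}$ and symbols in the latter class give Weyl operators in $\mathcal L(\mathscr S,\mathscr S')$; uniqueness of the extension comes from the density of $\mathscr S\times \mathscr S$. Alternatively, both parts may be read off directly from \cite[Corollary 4.2]{CG1} and its proof, the only additional input being the identification of $\widetilde v$ and $\omega _b$.

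The main obstacle I anticipate is precisely this weight bookkeeping: pinning down the correct $\widetilde v$ for $W_{\fy ,\fy}$, tracking the convolution so that $b$ lands on exactly $\omega _b$, and matching $\omega _b$ against the factorization $\omega _0=\vartheta \cdot \vartheta$ so that \eqref{e5.9} holds. The division of the submultiplicative weight $v_1$ according to the parameter $t$ is delicate and must be reconciled carefully with the two occurrences of $\vartheta$ in the target weights $\vartheta \omega$ and $\omega /\vartheta$.
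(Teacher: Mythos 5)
Your proposal is correct and follows essentially the same route the paper takes: the paper itself only cites \cite[Corollary 4.2]{CG1} for Proposition \ref{Tpcont1}, but its proof of the generalization (Proposition \ref{Tpcont1}$'$, via Lemma \ref{Prop2.1inTo8}) is exactly your argument — write $\tp _\fy (a)=\op ^w(a*W_{\fy ,\fy})$ as in \eqref{toeplweyl}, do the weight bookkeeping to place the Weyl symbol in $\splM ^{\infty ,1}_{(\omega _1)}$ with $\omega _1$ as in \eqref{Tomega}, and then verify \eqref{e5.9} so that Proposition \ref{Prop:pseudomod} yields the continuity from $M^{p,q}_{(\vartheta \omega )}$ to $M^{p,q}_{(\omega /\vartheta )}$. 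Your weight verification (the $v_1$-moderation of $\omega$ absorbing the $Y$-dependence and the two $\vartheta$-factors pairing with the source and target weights) matches the computation the paper carries out in the proof of Proposition \ref{Tpcont1}$'$.
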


\par

\begin{prop}\label{Tpcont2}
Let $\omega ,\omega _1,\omega _2, v\in \mathscr P(\rr
{2d})$ be such that $\omega _1$ is $v$-moderate, $\omega_2$ is
$ v$-moderate and $\omega =\omega _1/\omega _2$. Then the
following are true:
\begin{enumerate}
\item The mapping $(a,\fy )\mapsto \tp _\fy (a)$ extends uniquely to
a continuous map from
$L^\infty _{(\omega )}(\rr {2d})\times M^2_{(v)}(\rr d)$ to
$\mathcal L(\mathscr S(\rr d),\mathscr S'(\rr d))$.

\vrum

\item If $a\in L^\infty _{(1/\omega )}(\rr {2d})$ and $\fy \in
M^2_{(v)}(\rr d)$, then  $\tp _\fy (a)$ extends uniquely to a
continuous operator from $M^2_{(\omega _1)}(\rr d)$ to
$M^2_{(\omega _2)}(\rr d)$.
\end{enumerate}
\end{prop}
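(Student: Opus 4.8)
The plan is to reduce both assertions to a single absolutely convergent integral and one Cauchy--Schwarz estimate. Writing out the defining identity \eqref{toeplitz}, for $f_1,f_2\in \mascS (\rr d)$ one has
\[
(\tp _\fy (a)f_1,f_2)_{L^2(\rr d)}=\int _{\rr {2d}}a(X)\,V_\fy f_1(X)\,\overline{V_\fy f_2(X)}\,dX ,
\]
so everything comes down to estimating this sesquilinear form by weighted $L^2$-norms of $V_\fy f_1$ and $V_\fy f_2$. The recurring tools are the pointwise bound $|a(X)|\le \nm a{L^\infty _{(\omega )}}\,\omega (X)^{-1}=\nm a{L^\infty _{(\omega )}}\,\omega _2(X)/\omega _1(X)$ in the setting of part (1) (respectively $|a(X)|\le \nm a{L^\infty _{(1/\omega )}}\,\omega (X)=\nm a{L^\infty _{(1/\omega )}}\,\omega _1(X)/\omega _2(X)$ for part (2)), together with the identity $\nm f{M^2_{(\mu )}}=\nm {\mu\, V_\fy f}{L^2}$, valid for any admissible window $\fy$.

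For part (1) I would split $|a(X)|\,|V_\fy f_1(X)|\,|V_\fy f_2(X)|\le \nm a{L^\infty _{(\omega )}}\bigl(\omega _2(X)|V_\fy f_1(X)|\bigr)\bigl(\omega _1(X)^{-1}|V_\fy f_2(X)|\bigr)$ and apply Cauchy--Schwarz to obtain
\[
|(\tp _\fy (a)f_1,f_2)_{L^2}|\le \nm a{L^\infty _{(\omega )}}\,\nm {f_1}{M^2_{(\omega _2)}}\,\nm {f_2}{M^2_{(1/\omega _1)}} .
\]
Since $\omega _1,\omega _2\in \mascP (\rr {2d})$ are polynomially moderate, $\mascS (\rr d)$ embeds continuously into $M^2_{(\omega _2)}$ and into $M^2_{(1/\omega _1)}$, so the right-hand side is finite and dominated by finitely many Schwartz seminorms of $f_1,f_2$. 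This shows $\tp _\fy (a)\in \mathcal L(\mascS ,\mascS ')$, that the map $(a,\fy )\mapsto \tp _\fy (a)$ is continuous, and that it agrees with \eqref{toeplitz} on Schwartz data, giving the claimed unique extension.

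For part (2) the same estimate is used, but I would factor so as to match the target norms: from $|a(X)|\,|V_\fy f_1(X)|\,|V_\fy f_2(X)|\le \nm a{L^\infty _{(1/\omega )}}\bigl(\omega _1(X)|V_\fy f_1(X)|\bigr)\bigl(\omega _2(X)^{-1}|V_\fy f_2(X)|\bigr)$ one gets
\[
|(\tp _\fy (a)f_1,f_2)_{L^2}|\le \nm a{L^\infty _{(1/\omega )}}\,\nm {f_1}{M^2_{(\omega _1)}}\,\nm {f_2}{M^2_{(1/\omega _2)}} .
\]
Invoking the duality $(M^2_{(\omega _2)})'=M^2_{(1/\omega _2)}$ under the $L^2(\rr d)$ pairing, I would conclude
\[
\nm {\tp _\fy (a)f_1}{M^2_{(\omega _2)}}=\sup _{\nm {f_2}{M^2_{(1/\omega _2)}}\le 1}|(\tp _\fy (a)f_1,f_2)_{L^2}|\le \nm a{L^\infty _{(1/\omega )}}\,\nm {f_1}{M^2_{(\omega _1)}} ,
\]
and the continuous extension from the dense subspace $\mascS (\rr d)\subseteq M^2_{(\omega _1)}(\rr d)$ then finishes the argument.

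The main obstacle is the legitimacy of replacing the weighted $L^2$-norms of $V_\fy f_j$ by modulation-space norms with the \emph{single fixed} window $\fy$: the Toeplitz operator forces the same window into both factors, whereas the identity $\nm f{M^2_{(\mu )}}=\nm {\mu\, V_\fy f}{L^2}$ requires $\fy$ to be admissible for each weight $\mu$ that occurs. I would settle this by appealing to Remark \ref{Rem:ExtWindows} with $p=q=2$, where $\min (p,p',q,q')=2$ so that the order $r=2$ is allowed; the hypothesis $\fy \in M^2_{(v)}(\rr d)$ then furnishes a common admissible window for all four weights $\omega _1,\omega _2,1/\omega _1,1/\omega _2$, each of which is $v$-moderate (for the reciprocals this follows from the $v$-moderateness of $\omega _1,\omega _2$, after replacing $v$ by an even submultiplicative majorant). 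This reduction to the window-independence of the modulation-space norm is the step I expect to require the most care.
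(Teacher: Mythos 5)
Your argument is correct. Note that the paper does not prove Proposition \ref{Tpcont2} at all: it is quoted as a special case of Theorem 3.1 in \cite{TB}, and the proof there is essentially the one you give --- Cauchy--Schwarz applied to the defining sesquilinear form $(a\,V_\fy f_1,V_\fy f_2)_{L^2(\rr{2d})}$ after distributing the weight $\omega=\omega_1/\omega_2$ between the two short-time Fourier transform factors, combined with the extended window class of Remark \ref{Rem:ExtWindows} (where $r=2$ is admissible precisely because $p=q=2$) and the duality $(M^2_{(\omega_2)})'=M^2_{(1/\omega_2)}$. You correctly identify the only delicate point, namely that the single window $\fy\in M^2_{(v)}$ must be admissible simultaneously for $\omega_1,\omega_2,1/\omega_1,1/\omega_2$, and your resolution is right (by the paper's standing convention $v$ is submultiplicative, hence even, so the reciprocal weights are $v$-moderate with no enlargement of $v$ needed).
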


\par

\subsection{Weyl formulation of Toeplitz operators.} We finish
this section by recalling some important relations
between Weyl operators, Wigner distributions, and Toeplitz
operators. For instance, the Weyl symbol of a Toeplitz operator
is the convolution between the Toeplitz symbol and a Wigner
distribution. More precisely,  if  $a\in \mathscr S(\rr {2d})$ and
$\fy \in \mathscr S(\rr d)$, then
\begin{equation}
\label{toeplweyl}
\tp _\fy (a) = (2\pi )^{-d/2}\op ^w(a*W_{\fy  ,\fy} ) \, .  
\end{equation}

\par

Our analysis of Toeplitz operators is
based on the pseudo-differential operator representation  given
by \eqref{toeplweyl}. Furthermore, any extension of the definition of
Toeplitz operators to cases which are not covered by Propositions
\ref{Tpcont1} and \ref{Tpcont2} is based on this representation. Here
we remark that this leads to situations were certain mapping
properties for the pseudo-differential operator representation make
sense, whereas  similar interpretations are difficult or impossible to
make in the framework of \eqref{toeplitz} (see Remark
\ref{extensionremark} in Section \ref{sec2}). We refer to \cite{To9}
or Section \ref{sec2} for extensions of Toeplitz operators in context
of pseudo-differential operators.

\par

\section{Confinement of the symbol classes  $\Gamma ^{(\omega )}_s(\rr {d})$ and
$\Gamma ^{(\omega)}_{0,s}(\rr {d})$} \label{sec2}

\par

In this section
we introduce and discuss basic properties for confinements for
symbols in $\Gamma ^{(\omega _0)}_{s}$ and in $\Gamma ^{(\omega _0)}_{0,s}$.
These considerations follow lines similar to the discussions in \cite{BoCh,Le2},
but are here adapted to symbols that possess Gevrey regularity. 
In particular, this requires the deduction of various types of
delicate estimates, concerning the compositions of symbols that are confined in different ways.

\par

We recall that if $\sigma$ is the (standard) symplectic form on $\rr {2d}$, namely,
$$
\sigma (X,Y) = \scal y\xi -\scal x\eta ,\qquad X=(x,\xi )\in \rr {2d},\ Y=(y,\eta )\in \rr {2d},
$$
then the \emph{symplectic Fourier transform} $\mascF _\sigma$ is defined by
$$
(\mascF _\sigma a)(X) = \widehat a(X)\equiv \pi ^{-d}\int a(Y)e^{2i\sigma (X,Y)}\, dY,
$$
and the twisted convolution $*_\sigma$ is given by
$$
(a*_\sigma b)(X)\equiv \left ( \frac 2\pi \right ) ^{\frac d2}\int a(X-Y)b(Y)e^{2i\sigma (X,Y)}\, dY,
$$
for suitable $a,b\in \maclS _{1/2}'(\rr {2d})$. The twisted convolution is linked to the Weyl product
by the formula
$$
a\wpr b = (2\pi )^{-\frac d2} a*_\sigma (\mascF _\sigma b),
$$
hence,
$$
(a\wpr b)(X) = \pi ^{-d}\int a(X-Y)\widehat b(Y)e^{2i\sigma (X,Y)}\, dY.
$$
We also note that $\mascF _\sigma ^2$ is the identity map.

\par

In what follows we let $a_Y = a(\cdo -Y)$ when $a\in \maclS _{1/2}'(\rr {2d})$ and $Y\in \rr {2d}$,
and in analogous ways, $b_Y$, $\phi _Y$, $\fy _Y$, $\psi _Y$ etc. are defined when $b,\phi
,\fy ,\psi \in \maclS _{1/2}'(\rr {2d})$. For admissible $a$ and $b$ we have
\begin{equation}\label{Eq:TranslWeylPr}
(a\wpr b)_Y = a_Y\wpr b_Y .
\end{equation}
We also recall that if $\fy \in \maclS _s(\rr {2d})$, then there are functions
$\phi ,\psi \in \maclS _s(\rr {2d})$ such that $\fy =\phi \wpr \psi $
(cf. \cite{ChSiTo,ToKrNiNo}). The same is true
if $\maclS _s$ is replaced by $\Sigma _s$ or by $\mascS$. In particular,
by choosing $\fy$ such that
$\int \fy (X)\, dX=1$, \eqref{Eq:TranslWeylPr} gives the following.

\par

\begin{prop}
Let $s\ge \frac 12$. Then there are $\phi ,\psi \in \maclS _s(\rr {2d})$ such that
\begin{equation}\label{eq:pu}
	\int _{\rr {2d}} \psi _Y\wpr\phi _Y \, dY=1.
\end{equation}
The same holds true with $\Sigma _s$ or $\mascS$ in place of $\maclS _s$,
provided $s>\frac{1}{2}$.
\end{prop}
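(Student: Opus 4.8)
The plan is to reduce \eqref{eq:pu} to the factorisation of Gelfand--Shilov functions into Weyl products recalled just above, together with the translation covariance \eqref{Eq:TranslWeylPr}. First I would fix a window $\fy \in \maclS _s(\rr {2d})$ normalised so that $\int _{\rr {2d}}\fy (X)\, dX = 1$; the Gaussian $X\mapsto e^{-|X|^2}$ lies in $\maclS _{1/2}(\rr {2d})\subseteq \maclS _s(\rr {2d})$ and has positive, finite integral, so after rescaling such a $\fy$ exists. By the factorisation result applied to $\fy$ there are $\psi ,\phi \in \maclS _s(\rr {2d})$ with $\fy =\psi \wpr \phi$ (it suffices to name the two Weyl factors so that $\psi \wpr \phi$, rather than $\phi \wpr \psi$, equals $\fy$).

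Next I would apply \eqref{Eq:TranslWeylPr} factorwise. For each fixed $Y\in \rr {2d}$ the translates $\psi _Y,\phi _Y$ again lie in $\maclS _s(\rr {2d})$, and
\begin{equation*}
\psi _Y\wpr \phi _Y = (\psi \wpr \phi )_Y = \fy _Y = \fy (\cdo -Y).
\end{equation*}
Evaluating the left-hand side of \eqref{eq:pu} pointwise at $X\in \rr {2d}$ then gives
\begin{equation*}
\Big ( \int _{\rr {2d}}\psi _Y\wpr \phi _Y\, dY\Big )(X)
= \int _{\rr {2d}}\fy (X-Y)\, dY
= \int _{\rr {2d}}\fy (Z)\, dZ = 1,
\end{equation*}
by the change of variables $Z=X-Y$ and the normalisation of $\fy$. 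Since this holds for every $X$, the integral equals the constant function $1$, which is precisely \eqref{eq:pu}.

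The only point requiring a little care is the meaning and convergence of the function-valued integral $\int \psi _Y\wpr \phi _Y\, dY$. Since $\fy =\psi \wpr \phi \in \maclS _s(\rr {2d})$ decays super-exponentially, the family $Y\mapsto \fy (X-Y)$ is absolutely integrable with $\int _{\rr {2d}}|\fy (X-Y)|\, dY = \nm \fy {L^1}$ independent of $X$; hence the pointwise interpretation above is unambiguous and the passage from $\psi _Y\wpr \phi _Y$ to $\fy _Y$ via \eqref{Eq:TranslWeylPr} is legitimate. For the Beurling case one replaces $\maclS _s$ by $\Sigma _s$ and uses that the Gaussian lies in $\Sigma _s(\rr {2d})$ as soon as $s>\frac{1}{2}$, so that $|X|^2$ dominates $r|X|^{1/s}$ for every $r>0$; the $\mascS$ case is identical, with a Schwartz window and the corresponding factorisation. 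No genuine obstacle arises beyond these bookkeeping points, the essential mechanism being the covariance identity \eqref{Eq:TranslWeylPr}.
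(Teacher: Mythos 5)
Your proposal is correct and follows essentially the same route as the paper: the paper also obtains \eqref{eq:pu} by choosing $\fy$ with $\int \fy \, dX=1$, factorising it as a Weyl product of two $\maclS _s$ (resp.\ $\Sigma _s$, $\mascS$) functions via the cited factorisation result, and then applying the translation covariance \eqref{Eq:TranslWeylPr} together with the change of variables in the $Y$-integral. Your added remarks on the normalised Gaussian, the ordering of the two factors, and the absolute convergence of the function-valued integral are harmless elaborations of the same argument.
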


\par

For independent translations in Weyl products we have the following.

\par

\begin{prop}\label{Prop:DilWeylprod}
Let $s\ge \frac 12$ and let $\phi ,\psi \in \maclS _s(\rr {2d})$. Then
$$
(\phi _Y\wpr \psi _Z)(X) = \Psi (X-Y,X-Z)
$$
for some $\Psi \in \maclS _s(\rr {2d}\times \rr {2d})$.
The same holds true with $\Sigma _s$ or $\mascS$ in place of $\maclS _s$.
\end{prop}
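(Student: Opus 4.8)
The plan is to reduce, via the covariance identity \eqref{Eq:TranslWeylPr}, to a \emph{single} translation, and then to recognise the resulting object as a partial symplectic Fourier transform of a Gelfand-Shilov function on the doubled phase space, so that membership in $\maclS _s$ follows from a short chain of standard invariance properties of Gelfand-Shilov spaces.

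First I would apply \eqref{Eq:TranslWeylPr}. Since $\psi _Z=(\psi _{Z-Y})_Y$, it gives $(\phi _Y\wpr \psi _Z)(X)=(\phi \wpr \psi _{Z-Y})_Y(X)=(\phi \wpr \psi _{Z-Y})(X-Y)$. Setting $G(X,V)=(\phi \wpr \psi _V)(X)$ and observing that $Z-Y=(X-Y)-(X-Z)$, I define $\Psi (\xi _1,\xi _2)=G(\xi _1,\xi _1-\xi _2)$, so that $(\phi _Y\wpr \psi _Z)(X)=\Psi (X-Y,X-Z)$, exactly as required. Because $(\xi _1,\xi _2)\mapsto (\xi _1,\xi _1-\xi _2)$ is an invertible linear substitution on $\rr {4d}$ and $\maclS _s(\rr {4d})$ is invariant under such substitutions, it suffices to prove that $G\in \maclS _s(\rr {2d}\times \rr {2d})$.

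Next I would compute $G$ explicitly from the Weyl-product formula. Using $\widehat{\psi _V}(W)=e^{2i\sigma (W,V)}\widehat \psi (W)$ (the hat denoting $\mascF _\sigma$), one finds $G(X,V)=\pi ^{-d}\int F_X(W)\,e^{2i\sigma (W,V)}\,dW$, where $F_X(W)=\phi (X-W)\,\widehat \psi (W)\,e^{2i\sigma (X,W)}$; in other words $G(X,\cdo )$ is, up to the reflection $V\mapsto -V$, the symplectic Fourier transform of $F_X$ in the variable $W$. I then treat $F=F(X,W)$ as a function on $\rr {4d}$ and assemble it by invariance: $\widehat \psi =\mascF _\sigma \psi \in \maclS _s(\rr {2d})$, the tensor product $\phi \otimes \widehat \psi$ lies in $\maclS _s(\rr {4d})$, composing with the linear bijection $(X,W)\mapsto (X-W,W)$ keeps it in $\maclS _s(\rr {4d})$, and finally $G$ is recovered from $F$ by a partial symplectic Fourier transform in $W$ followed by a reflection in $V$, both of which preserve $\maclS _s(\rr {4d})$. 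The Beurling case $\Sigma _s$ (for $s>\frac 12$) and the Schwartz case $\mascS$ follow verbatim from the corresponding invariance statements.

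The only step in this chain that is not a purely formal invariance is the passage from $\phi \otimes \widehat \psi$ to $F$, namely multiplication by the unimodular bilinear chirp $e^{2i\sigma (X,W)}$. The hard part will be to verify that chirp multiplication by a bilinear phase preserves $\maclS _s(\rr {4d})$ all the way down to the critical index $s\ge \frac 12$ (and $\Sigma _s$ for $s>\frac 12$): here a genuine estimate is needed, balancing the polynomial factors produced by differentiating $e^{2i\sigma (X,W)}$ against the super-exponential decay of $\phi \otimes \widehat \psi$, while keeping the Gevrey factorial bookkeeping uniform in the order of differentiation. For $\mascS$ this multiplication is elementary. Once this is established, the chain above yields $G\in \maclS _s$, hence $\Psi \in \maclS _s(\rr {2d}\times \rr {2d})$, which completes the proof.
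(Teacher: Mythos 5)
Your proposal is correct in substance and, at bottom, follows the same strategy as the paper: express $\phi _Y\wpr \psi _Z$ as a partial symplectic Fourier transform of (a linear-substitution image of) $\phi \otimes \widehat \psi$, and then invoke the invariance of $\maclS _s$ under tensor products, invertible linear changes of variables and (partial) symplectic Fourier transformation. The one genuine difference is the residual bilinear chirp $e^{2i\sigma (X,W)}$ that you isolate in $F_X(W)=\phi (X-W)\widehat \psi (W)e^{2i\sigma (X,W)}$ and flag as ``the hard part''. This detour is avoidable: in your own formula the two phases combine, $e^{2i\sigma (X,W)}e^{2i\sigma (W,V)}=e^{2i\sigma (X-V,W)}$, so that after your substitution $V=\xi _1-\xi _2$ one gets exactly
$$
\Psi (\xi _1,\xi _2)=\pi ^{-d}\int \phi (\xi _1-W)\,\widehat \psi (W)\,
e^{2i\sigma (\xi _2,W)}\, dW
=(\mascF _{\sigma ,2}\circ T)(\phi \otimes \widehat \psi )(\xi _1,\xi _2),
\qquad (T\Phi )(X,Z)=\Phi (X-Z,Z),
$$
with no chirp left at all; this is precisely how the paper argues, and it reduces the whole proof to the three ``purely formal'' invariances. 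If you do insist on your route, the chirp lemma you would need is true down to the critical index (multiplication by $e^{iQ}$ for a real quadratic form $Q$ preserves $\maclS _{1/2}^{1/2}$, the key estimate being $\sup _{t>0}t^ke^{-rt^{1/s}}\lesssim C^k(k!)^s$ to absorb the polynomial factors produced by differentiating the phase), but as written you have only announced that estimate rather than carried it out, so you should either supply it or, better, reorganise the phases so that it is not needed.
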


\par

\begin{proof}
We only prove the result when $\phi ,\psi \in \maclS _s(\rr {2d})$. The other
cases follow by similar arguments and are left for the reader.

\par

We have
\begin{multline*}
(\phi _Y\wpr \psi _Z)(X) = \pi ^{-d}\int \phi (X-Y-Y_1)\widehat \psi (Y_1)
e^{2i\sigma (Y_1,Z)} e^{2i\sigma (X,Y_1)}\, dY_1
\\[1ex]
=
\pi ^{-d}\int \phi ((X-Y)-Y_1)\widehat \psi (Y_1)
e^{2i\sigma (X-Z,Y_1)}\, dY_1
=\Psi (X-Y,X-Z),
\end{multline*}
where
$$
\Psi (X,Z)= \pi ^{-d}\int \phi (X-Y_1)\widehat \psi (Y_1)
e^{2i\sigma (Z,Y_1)}\, dY_1.
$$
We note that
$$
\Psi =(\mascF _{\sigma ,2} \circ T)(\phi \otimes \widehat \psi ),
$$
where $(T\Phi )(X,Z)= \Phi (X-Z,Z)$ when $\Phi \in \maclS _s(\rr {2d}\times \rr {2d})$,
and $\mascF _{\sigma ,2}\Phi$ is the partial symplectic Fourier transform of
$\Phi (X,Z)$ with respect to the $Z$ variable.
Since $(\phi ,\psi)\mapsto \phi \otimes \widehat \psi$ is continuous from
$\maclS _s(\rr {2d})\times \maclS _s(\rr {2d})$ to $\maclS _s(\rr {2d}\times \rr {2d})$,
and $T$ and $\mascF _{\sigma ,2}\Phi$ are continuous on $\maclS _s(\rr {2d}\times \rr {2d})$,
it follows that $\Psi \in \maclS _s(\rr {2d}\times \rr {2d})$.
\end{proof}

\par

Since $\Psi $ in Proposition \ref{Prop:DilWeylprod} belongs to similar types of spaces as $\phi$
and $\psi$, it follows that estimates of the form
$$
|D^\alpha \Psi (X,Y)| \lesssim h^{|\alpha |}\alpha !^s e^{-(|X|^{\frac 1s}+|Y|^{\frac 1s})/h}
$$
hold true. In particular, the following is an immediate consequence of Proposition \ref{Prop:DilWeylprod}
and some standard manipulations in Gelfand-Shilov theory.

\par

\begin{cor}\label{cor:DerEstLocWeylprod}
Let $s\ge \frac 12$. If $\phi ,\psi \in \maclS _s(\rr {2d})$ ($\phi ,\psi \in \Sigma _s(\rr {2d})$), then
\begin{equation}\label{Eq:DerEstLocWeylprod}
|D_X^\alpha D_Y^\beta D_Z^\gamma (\phi _Y\wpr \psi _Z)(X)|
\lesssim h^{|\alpha +\beta +\gamma |}(\alpha !\beta !\gamma !)^s e^{-(|X-Y|^{\frac 1s}+|X-Z|^{\frac 1s})/h}
\end{equation}
for some $h>0$ (for every $h>0$).
\end{cor}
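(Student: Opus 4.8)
The plan is to derive \eqref{Eq:DerEstLocWeylprod} directly from Proposition~\ref{Prop:DilWeylprod}, which already tells us that $(\phi _Y\wpr \psi _Z)(X) = \Psi (X-Y,X-Z)$ for a single function $\Psi \in \maclS _s(\rr {2d}\times \rr {2d})$ (respectively $\Sigma _s$). So the entire task reduces to translating the Gelfand-Shilov membership of $\Psi$ into the stated pointwise derivative bound, and then tracking how the two arguments $X-Y$ and $X-Z$ respond to the three derivatives $D_X^\alpha$, $D_Y^\beta$, $D_Z^\gamma$.

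First I would invoke the standard characterisation of $\maclS _s$ in the two-variable setting. Since $\Psi \in \maclS _s(\rr {2d}\times \rr {2d})$, the equivalences recorded in Proposition~\ref{thm:srrequiv} (in particular the form combining the super-exponential decay in part~(5) with Gevrey-type control of derivatives) yield, for the Roumieu case, constants $h_0>0$ and $c>0$ such that
\begin{equation*}
|D_U^{\mu}D_V^{\nu}\Psi (U,V)|\lesssim h_0^{|\mu +\nu|}(\mu !\,\nu !)^s\,
e^{-c(|U|^{\frac 1s}+|V|^{\frac 1s})}
\end{equation*}
for all multi-indices $\mu ,\nu$, where $U$ and $V$ denote the two $\rr {2d}$-slots; in the Beurling case this holds for every $h_0>0$ and every $c>0$. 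This is just the elementary unpacking of what it means for $\Psi$ to lie in the corresponding Gelfand-Shilov space, using that $\maclS _s=\maclS _s^s$ and that the seminorms of Remark/Proposition~\ref{thm:srrequiv} all induce the same topology.

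Next I would substitute $U=X-Y$ and $V=X-Z$ and apply the chain rule. The variable $Y$ appears only through $U$, the variable $Z$ only through $V$, while $X$ appears in both; hence differentiating $\Psi (X-Y,X-Z)$ in $Y$ produces $D_U$-derivatives (up to sign), differentiating in $Z$ produces $D_V$-derivatives, and differentiating in $X$ distributes via the Leibniz/multinomial expansion into a sum of mixed $D_U^{\mu}D_V^{\nu}$-terms with $|\mu|+|\nu|=|\alpha|$. Collecting, $D_X^\alpha D_Y^\beta D_Z^\gamma (\phi _Y\wpr \psi _Z)(X)$ is a finite linear combination, with bounded combinatorial coefficients, of terms $(D_U^{\mu +\beta}D_V^{\nu +\gamma}\Psi)(X-Y,X-Z)$ with $\mu +\nu$ running over splittings of $\alpha$. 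Feeding each such term into the displayed estimate for $\Psi$ gives a factor $h_0^{|\mu +\beta|+|\nu +\gamma|}((\mu +\beta)!(\nu +\gamma)!)^s$ times the decay $e^{-c(|X-Y|^{\frac 1s}+|X-Z|^{\frac 1s})}$, which is exactly the target decay with $h=1/c$ after adjusting the constant.

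The only genuinely nonroutine point, and the one I would be careful about, is the bookkeeping of the factorial weights: I must check that summing the terms $h_0^{|\mu +\beta +\nu +\gamma|}((\mu +\beta)!(\nu +\gamma)!)^s$ over the (finitely many) splittings of $\alpha$ can be absorbed into a clean bound of the form $h^{|\alpha +\beta +\gamma|}(\alpha !\beta !\gamma !)^s$. This is standard Gelfand-Shilov manipulation and uses only the submultiplicativity-type inequality $(\mu +\beta)!\lesssim C^{|\mu +\beta|}\mu !\,\beta !$ together with $\sum_{\mu +\nu=\alpha}\binom{\alpha}{\mu}^s\lesssim C^{|\alpha|}$, after which the new $h$ merely swallows the finitely many constants $h_0$, $c$, and the binomial factors. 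In the Beurling variant the same computation goes through for every $h>0$ because the bound on $\Psi$ holds for every $h_0$; this is precisely the phrase ``some standard manipulations in Gelfand-Shilov theory'' in the statement, so the proof is genuinely short once Proposition~\ref{Prop:DilWeylprod} is in hand.
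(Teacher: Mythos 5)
Your proposal is correct and follows essentially the same route as the paper: reduce to $\Psi(X-Y,X-Z)$ via Proposition~\ref{Prop:DilWeylprod}, split $D_X^\alpha$ over the two slots by the Leibniz/binomial expansion, and absorb the resulting factorials using $(n+k)!\le 2^{n+k}\,n!\,k!$ together with $\sum_{\delta\le\alpha}\binom{\alpha}{\delta}=2^{|\alpha|}$. The paper's proof carries out exactly this bookkeeping (arriving at the constant $2\cdot 4^{s}h$ in place of $h$), so there is nothing to add.
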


\par

\begin{proof}
Obviously, as stated in the previous Proposition \ref{Prop:DilWeylprod}, we can write $\phi _Y\wpr
\psi _Z$ as $\Psi (X-Y,X-Z)$ for some $\Psi\in \maclS _s(\rr {2d}
\times \rr {2d})$. Thus 
\begin{align*}
|D_X^\alpha D_Y^\beta &D_Z^\gamma \Psi (X-Y,X-Z)|
=
\left |D_X^\alpha
\left( D_1^\beta D_2^\gamma \Psi \right)(X-Y,X-Z) \right |
\\
&\leq
\sum_{\delta\leq\alpha}\binom{\alpha}{\delta} \left |
\left( D_1^{\beta+\delta} D_{2}^{\gamma+\alpha-\delta} \Psi \right)(X-Y,X-Z)
\right |
\\
&\leq h^{|\alpha+\beta+\gamma|}\sum_{\delta\leq\alpha}
\binom{\alpha}{\delta}
\left((\beta+\delta)!(\gamma+\alpha-\delta)!\right)^s e^{-r\left(|X-Y|^{\frac{1}{s}}+|X-Z|^{\frac{1}{s}}\right)}.
\end{align*}
Moreover, we have
\[
\sum_{\delta\leq\alpha}\binom{\alpha}{\delta}\left((\beta+\delta)!(\gamma+\alpha-\delta)!\right)^s\leq 2^{|\alpha|}4^{s|\alpha+\beta+\gamma|}\left(\alpha!\beta!\gamma!\right)^s.
\]
Indeed, using the fact that $\sum\limits_{\delta\leq\alpha}\binom{\alpha}{\delta}=2^{|\alpha|}$ and that $n!\leq 2^n(n-k)!k!$, which implies  $(n+k)!\leq 2^{n+k}n!k!$, then 
\[
(\alpha+\beta+\gamma)!= \prod\limits_{j=1}^d(\alpha_j+\beta_j+\gamma_j)! \leq \prod\limits_{j=1}^d 4^{\alpha_j+\beta_j+\gamma_j}\alpha_j!\beta_j!\gamma_j!=4^{|\alpha+\beta+\gamma|}\alpha!\beta!\gamma!.
\]
Thus, inequality \eqref{Eq:DerEstLocWeylprod} holds with $2\cdot 4^sh$ in place of $h$.
\end{proof}
\par

The next fundamental result is a consequence of Theorem 4.12 in \cite{CaTo}.

\begin{prop}\label{prop:prcont}
	Let $s\ge\frac{1}{2}$ and $\vartheta\in\mascP _E(\rr{2d})$.
	Then, the map $(\phi,a)\mapsto \phi\wpr a$ is continuous from $\Sigma_s(\rr{2d})\times\Gamma_s^{(\vartheta)}(\rr{2d})$
	to $\maclS_s(\rr{2d})$.
\end{prop}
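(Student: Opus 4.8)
The plan is to reduce the asserted joint continuity to a single family of quantitative seminorm estimates, and then to feed these into the Weyl-product machinery together with Theorem 4.12 in \cite{CaTo}. Recall that $\Sigma _s(\rr {2d})$ is a Fr\'echet space exhausted from above by the Banach pieces $\maclS _{s,h}$, while $\maclS _s(\rr {2d})$ is the corresponding inductive limit. By Proposition \ref{thm:srrequiv}(5) I would describe both spaces through the Gevrey symbol classes of Definition \ref{Def:SymbClExp}, namely $\Sigma _s(\rr {2d})=\bigcap _{r>0}\Gamma ^{(m_r)}_{0,s}(\rr {2d})$ and $\maclS _s(\rr {2d})=\bigcup _{r>0}\Gamma ^{(m_r)}_{s}(\rr {2d})$, where $m_r(X)=e^{-r|X|^{1/s}}\in \mascP _{E,s}(\rr {2d})$. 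It then suffices to produce, for the given $a$, some $r>0$ together with a bound of a fixed $\maclS _{s,h}$-seminorm of $\phi \wpr a$ by the product of a $\Sigma _s$-seminorm of $\phi$ and a $\Gamma ^{(\vartheta )}_s$-seminorm of $a$; such a bilinear seminorm estimate yields the claimed joint continuity at once.

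For the core estimate I would run the weight bookkeeping through the algebra property for the Weyl product. Realizing $\phi \in \Gamma ^{(m_r)}_{0,s}$ for every $r>0$ and $a\in \Gamma ^{(\vartheta )}_s$, the natural output weight is the product $m_r\vartheta$: the Weyl-product algebra property of Propositions \ref{Prop:Gamma(omega)} and \ref{Prop:Gamma(omega)B}, in the refined form supplied by Theorem 4.12 in \cite{CaTo}, gives $\phi \wpr a\in \Gamma ^{(m_r\vartheta )}_s$, with the implicit constant controlled by the pertinent seminorms of the two factors. It remains to reabsorb the output weight into the scale $\{ m_{r'}\} _{r'>0}$: since $\vartheta \in \mascP _E(\rr {2d})$ grows at most exponentially and $r$ is at our disposal (as $\phi \in \Sigma _s$), one has $m_r(X)\vartheta (X)\lesssim m_{r'}(X)$ for a suitable $r'>0$, whence $\phi \wpr a\in \Gamma ^{(m_{r'})}_s\subseteq \maclS _s(\rr {2d})$. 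This reabsorption is immediate when $s\le 1$, because then $|X|\le |X|^{1/s}$ and the at-most-exponential growth of $\vartheta$ is genuinely dominated by the order-$1/s$ decay furnished by $\phi$.

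The hard part will be precisely this balance when $s>1$, where $|X|^{1/s}$ grows strictly slower than $|X|$, so that the order-$1/s$ decay of $\phi$ no longer dominates the exponential growth permitted by $\vartheta \in \mascP _E$; indeed, the crude absolute-value estimate obtained by confining $a$ (as in Corollary \ref{cor:DerEstLocWeylprod}) and integrating against the resolution of identity \eqref{eq:pu} already diverges in this regime. The resolution is to \emph{exploit the oscillation} in the twisted-convolution kernel rather than bounding it in modulus, which is exactly the content of the confinement estimates underlying Theorem 4.12 in \cite{CaTo}; these upgrade the algebra property of Propositions \ref{Prop:Gamma(omega)} and \ref{Prop:Gamma(omega)B} so that it remains valid when one factor lies in $\Sigma _s$ while the other carries only a $\mascP _E$-moderate weight. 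Once that refined product estimate is in hand, the seminorm-tracking form of the argument delivers simultaneously the membership $\phi \wpr a\in \maclS _s(\rr {2d})$ and the joint continuity of $(\phi ,a)\mapsto \phi \wpr a$, completing the proof.
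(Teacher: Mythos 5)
Your skeleton --- confine $\phi$ against the scale $m_r(X)=e^{-r|X|^{1/s}}$, invoke an algebra property for $\wpr$, and reabsorb the output weight --- is the same mechanism the paper runs through Lemma \ref{lem:conf} and Proposition \ref{prop:confest} (the proposition itself is only attributed to \cite[Theorem 4.12]{CaTo}). For $s\le 1$ your argument is essentially sound, modulo two technical points: for $s<1$ the weights $m_r$ are not moderate, so $\Gamma^{(m_r)}_{s}$ falls outside Definition \ref{Def:SymbClExp}; and Propositions \ref{Prop:Gamma(omega)}(2) and \ref{Prop:Gamma(omega)B}(2) are stated only for $s\ge 1$ with weights in $\mascP_{E,s}^0$ resp. $\mascP_{E,s}$, so the mixed product $\Gamma^{(m_r)}_{0,s}\wpr\Gamma^{(\vartheta)}_{s}$ is not literally covered by them. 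What you actually need is the uniform confinement estimate \eqref{Eq:LocEst2} of Lemma \ref{lem:conf}, which at $Y=0$ already yields $|D^\alpha(\phi\wpr a)(X)|\lesssim h^{|\alpha|}\alpha!^{s}e^{-|X|^{1/s}/h}\min(\vartheta(X),\vartheta(0))$ and hence the conclusion, with the seminorm dependence needed for joint continuity.

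The genuine gap is your treatment of $s>1$. The claim that ``exploiting the oscillation'' rescues the reabsorption step is not substantiated, and it is in fact false. Integrating by parts against the phase $e^{2i\sigma(Y,Z)}$ using Gevrey-$s$ regularity gains at best a factor obtained by optimizing $C^{N}(N!)^{2s}(1+|Z|)^{-2N}$ over $N$, i.e. $e^{-c|Z|^{1/s}}$; for $s>1$ this is sub-exponential and cannot dominate the growth $e^{c'|Z|}$ that a general $\vartheta\in\mascP_E$ permits. Indeed the statement itself cannot hold in that regime: take $1<s'<s$, $g(x)=e^{-\langle x\rangle^{1/s'}}\in\Sigma_s(\rr d)$, $\phi=(2\pi)^{d/2}W_{g,g}\in\Sigma_s(\rr{2d})$, and $a(x,\xi)=\vartheta(x,\xi)=e^{\langle x\rangle}$, which is submultiplicative (hence in $\mascP_E(\rr{2d})$) and lies in $\Gamma^{(\vartheta)}_{1}\subseteq\Gamma^{(\vartheta)}_{s}$. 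Then $\op^w(\phi)\circ\op^w(a)$, which would be $\op^w(\phi\wpr a)$, acts as $f\mapsto (f,ag)_{L^2}\,g$ with $ag=e^{\langle\cdo\rangle-\langle\cdo\rangle^{1/s'}}\notin L^2$; this is not bounded on $L^2$ (and not even everywhere defined on $\maclS_s(\rr d)$), whereas any symbol in $\maclS_s(\rr{2d})$ has a Hilbert--Schmidt, in particular $L^2$-bounded, Weyl quantization. So no proof of the proposition as literally stated can close for $s>1$: the missing hypothesis is $\vartheta(X)\lesssim e^{r|X|^{1/s}}$ (equivalently $\vartheta\in\mascP_{E,s}$), which is automatic for $s\le 1$ and is precisely the assumption the paper adds in Proposition \ref{prop:confest}. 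Under that hypothesis the ``crude'' absolute-value estimate you dismiss converges, and no oscillation argument is needed.
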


\par

The next lemma concerns uniform estimates of the Weyl product
between elements in sets $\Omega _{j,Y_j}$, $Y_j\in \rr {2d}$, $j=1,2$,
such that related sets $\Omega _j^\cup$, given by
\begin{equation}\label{Eq:BoundedSymbSets}
\Omega _j^\cup \equiv
\bigcup _{Y\in \rr {2d}} \sets {a(\cdo +Y)}{a\in \Omega _{j,Y}},
\end{equation}
are bounded in $\maclS _s(\rr {2d})$ or in $\Sigma _s(\rr {2d})$, $j=1,2$.

\par

\begin{lemma}\label{Lemma:BoundedGSFamWeylComp}
Let $s\ge \frac 12$ and let  $\Omega _{j,Y}\subseteq
\mascS (\rr {2d})$, $Y\in \rr {2d}$, $\Omega _j^\cup$ be as in
\eqref{Eq:BoundedSymbSets}, $a_j\in \Omega _j^\cup$
and choose $Y_j\in \rr {2d}$ such that
$a_j\in \Omega _{j,Y_j}$, $j=1,2$. Then the following is true:
\begin{enumerate}
\item if $\Omega _j^\cup$ are bounded
in $\maclS _s(\rr {2d})$, then there are constants $C>0$ and $h>0$
which are independent of $Y_j\in \rr {2d}$ and $a_j$, $j=1,2$, and such
that
\begin{align}
|(a_1^{(\alpha _1)}\wpr a_2^{(\alpha _2)})(X)|
&\le
Ch^{|\alpha _1+\alpha _2|}(\alpha _1!\alpha _2!)^s
e^{-(|X-Y_1|^{\frac 1s}+|X-Y_2|^{\frac 1s}+|Y_1-Y_2|^{\frac 1s})/h}
\label{Eq:UniformEst1}
\intertext{and}
|D^\alpha(a_1\wpr a_2)(X)|
&\le
Ch^{|\alpha |}\alpha !^s
e^{-(|X-Y_1|^{\frac 1s}+|X-Y_2|^{\frac 1s}+|Y_1-Y_2|^{\frac 1s})/h}
\label{Eq:UniformEst2}
\end{align}

\vrum

\item if $\Omega _j^\cup$ are bounded
in $\Sigma _s(\rr {2d})$, then for every $h>0$, there is a constant $C>0$
which is independent of $Y_j\in \rr {2d}$ and $a_j$, $j=1,2$, and such
that \eqref{Eq:UniformEst1} and \eqref{Eq:UniformEst2} hold.
\end{enumerate}
\end{lemma}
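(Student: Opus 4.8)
The plan is to reduce both estimates to the representation of a Weyl product with two independent translations obtained in Proposition~\ref{Prop:DilWeylprod}, made uniform over the bounded families, and then to create the extra factor $e^{-|Y_1-Y_2|^{\frac 1s}/h}$ by a triangle-inequality splitting of the exponential. First I would unpack the hypotheses: since $\Omega _1^\cup$ and $\Omega _2^\cup$ are bounded in $\maclS _s(\rr {2d})$ (resp. in $\Sigma _s(\rr {2d})$), the characterisation in Proposition~\ref{thm:srrequiv}\,(5) gives constants $C,h_0>0$ (resp. for every $h_0>0$ a constant $C>0$) such that
\[
|D^\gamma b(X)|\le Ch_0^{|\gamma|}\gamma !^s e^{-|X|^{\frac 1s}/h_0}
\]
for all $b$ in the respective family. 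Writing $\phi _j:=a_j(\cdo +Y_j)\in \Omega _j^\cup$, so that $a_j=(\phi _j)_{Y_j}$ and $a_j^{(\alpha _j)}=(D^{\alpha _j}\phi _j)_{Y_j}$, the factorial inequality $(\gamma +\alpha )!\le 2^{|\gamma +\alpha |}\gamma !\,\alpha !$ (as in the proof of Corollary~\ref{cor:DerEstLocWeylprod}) shows that the rescaled family $\{(2^sh_0)^{-|\alpha |}(\alpha !)^{-s}D^\alpha \phi _j\}$ stays bounded in the same Gelfand--Shilov space.

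Next I would apply Proposition~\ref{Prop:DilWeylprod} with $\phi =D^{\alpha _1}\phi _1$ and $\psi =D^{\alpha _2}\phi _2$, which yields
\[
a_1^{(\alpha _1)}\wpr a_2^{(\alpha _2)}
=(D^{\alpha _1}\phi _1)_{Y_1}\wpr (D^{\alpha _2}\phi _2)_{Y_2}
=\Psi ^{\alpha _1,\alpha _2}(X-Y_1,X-Y_2),
\]
where $\Psi ^{\alpha _1,\alpha _2}=(\mascF _{\sigma ,2}\circ T)(D^{\alpha _1}\phi _1\otimes \widehat{D^{\alpha _2}\phi _2})$. As established in the proof of Proposition~\ref{Prop:DilWeylprod}, the bilinear map $(\phi ,\psi )\mapsto (\mascF _{\sigma ,2}\circ T)(\phi \otimes \widehat \psi )$ is continuous from $\maclS _s(\rr {2d})\times \maclS _s(\rr {2d})$ to $\maclS _s(\rr {2d}\times \rr {2d})$, hence maps bounded sets to bounded sets; combined with the rescaling of the previous paragraph this produces, uniformly in $a_1,a_2$, in $Y_1,Y_2$ and in $\alpha _1,\alpha _2$, a bound
\[
|\Psi ^{\alpha _1,\alpha _2}(U,V)|\le Ch^{|\alpha _1+\alpha _2|}(\alpha _1!\,\alpha _2!)^s e^{-(|U|^{\frac 1s}+|V|^{\frac 1s})/h}.
\]
Putting $U=X-Y_1$ and $V=X-Y_2$ gives exactly \eqref{Eq:UniformEst1} but with the term $|Y_1-Y_2|^{\frac 1s}$ absent from the exponent.

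The missing decay is then recovered by splitting the exponential: from $|Y_1-Y_2|\le |X-Y_1|+|X-Y_2|$ and $(a+b)^{\frac 1s}\le C_s(a^{\frac 1s}+b^{\frac 1s})$ with $C_s=\max (1,2^{\frac 1s-1})$ one has $|X-Y_1|^{\frac 1s}+|X-Y_2|^{\frac 1s}\ge C_s^{-1}|Y_1-Y_2|^{\frac 1s}$, so halving the exponent and estimating one half by $|Y_1-Y_2|^{\frac 1s}$ upgrades the previous bound to \eqref{Eq:UniformEst1} after enlarging $h$. Finally, \eqref{Eq:UniformEst2} follows from \eqref{Eq:UniformEst1} via the Leibniz rule for the Weyl product,
\[
D^\alpha (a_1\wpr a_2)=\sum _{\beta \le \alpha }\binom {\alpha}{\beta}a_1^{(\beta )}\wpr a_2^{(\alpha -\beta )},
\]
which holds because $\op ^w(\partial a)$ is a commutator of $\op ^w(a)$ with $X_j$ or $D_j$ and commutators are derivations on the operator algebra; together with $\beta !(\alpha -\beta )!\le \alpha !$ and $\sum _{\beta \le \alpha}\binom {\alpha}{\beta}=2^{|\alpha |}$ this costs only a further factor $2^{|\alpha |}$ in $h$. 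The Beurling case~(2) runs identically, with ``for every $h>0$'' in place of ``for some $h>0$'' throughout, since boundedness in $\Sigma _s$ controls all seminorms simultaneously. The main obstacle is the uniform control of the kernel $\Psi ^{\alpha _1,\alpha _2}$ with the precise product weight $(\alpha _1!\,\alpha _2!)^s$: here the boundedness of the families (rather than a single pair $\phi ,\psi$) must be matched with the bounded-to-bounded property of the continuous bilinear map and the rescaling by $(2^sh_0)^{|\alpha |}(\alpha !)^s$; once this two-variable estimate is secured, the passage to the full statement, including the extra $|Y_1-Y_2|^{\frac 1s}$ decay, is routine.
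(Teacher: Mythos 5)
Your proposal is correct and follows essentially the same route as the paper's proof: both express the translated Weyl product through the continuous bilinear map $(\phi ,\psi )\mapsto (\mascF _{\sigma ,2}\circ T)(\phi \otimes \widehat \psi )$ evaluated at $(X-Y_1,X-Y_2)$, use boundedness of the families to make the two-variable Gelfand--Shilov estimate uniform, recover the $|Y_1-Y_2|^{\frac 1s}$ decay from the quasi-triangle inequality, and deduce \eqref{Eq:UniformEst2} from \eqref{Eq:UniformEst1} via the Leibniz rule for $\wpr$. The only cosmetic difference is that the paper differentiates the kernel $G$ in its two arguments while you feed the derivatives $D^{\alpha _j}\phi _j$ into the bilinear map and track the factorials by rescaling; these are equivalent bookkeeping.
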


\par

\begin{proof}
We only prove (2). The assertion (1) follows by similar arguments and is
left for the reader.

\par

We have $D_X(a_1\wpr a_2) = (D_Xa_1)\wpr a_2+a_1\wpr (D_Xa_2)$,
which implies that the Leibnitz rule is valid for the Weyl product. Hence,
\eqref{Eq:UniformEst2} follows for every $h>0$ if we prove that
\eqref{Eq:UniformEst1} holds for every $h>0$.

\par

Let $Y=Y_1$, $Z=Y_2$, $a=a_1$, $b=a_2$, $a_Y=a(\cdo +Y)$
and $b_Z=b(\cdo +Z)$.
Then
\begin{multline*}
a_1\wpr a_2(X)
=
\pi ^{-d} \int a_Y((X-Y)-Y_1) \mascF _\sigma (b_Z(\cdo -Z))(Y_1)
e^{2i\sigma (X,Y_1)}\, dY_1
\\[1ex]
=
\pi ^{-d} \int a_Y((X-Y)-Y_1) \mascF _\sigma (b_Z)(Y_1)
e^{2i\sigma (X-Z,Y_1)}\, dY_1
\\[1ex]
=\mascF _\sigma (a_Y(X_1-\cdo )(\mascF _\sigma b_Z))(X_2),
\end{multline*}
where $X_1=X-Y$ and $X_2=X-Z$. That is
$a_1\wpr a_2(X) = G(X_1,X_2)$,
where
$$
G = G_{Y,Z}=(\mascF _{\sigma ,2}\circ T)(a_Y\otimes b_Z).
$$
Here $\mascF _{\sigma ,2}F$ is the partial symplectic Fourier transform
of $F(\cdo ,X_2)$ with respect to the variable $X_2\in \rr {2d}$, and
$$
(TF)(X,Y) = F(X-Y,Y).
$$
Since both $\mascF _{\sigma ,2}$ and $T$ are continuous on
$\Sigma _s(\rr {2d}\times \rr {2d})$, it follows from the boundedness
of the sets $\Omega _j^\cup$ that for every $h>0$, there is a constant
$C_h$ which is independent of $a_j\in \Omega _j^\cup$ such that
$$
|D_{X_1}^{\alpha}D_{X_2}^{\beta}G(X_1,X_2)|
\le
C_h h^{|\alpha +\beta|}(\alpha \beta )^se^{-(|X_1|^{\frac 1s}+|X_2|^{\frac 1s})/h}.
$$
By straightforward computations it follows that
$$
|D_{X_1}^{\alpha}D_{X_2}^{\beta}G(X_1,X_2)|
=
|(a^{(\alpha )}\wpr b^{(\beta )})|,
$$
since
$$
|Y_1-Y_2|^{\frac 1s}\le c(|X-Y_1|^{\frac 1s}+|X-Y_2|^{\frac 1s})
$$
for some constant $c$ which only depends on $s$. The estimate
\eqref{Eq:UniformEst1} follows from these relations.
\end{proof}

\par

%
%
%
%
%
\begin{lemma}\label{lem:conf}
Let $s\ge \frac 12$, $\phi ,\psi \in \Sigma _s(\rr {2d})$, $\omega ,\vartheta
\in \mascP _{E}(\rr {2d})$, 
$\phi _Y=\phi (\cdo -Y)$, and
$\psi _Z=\psi (\cdo -Z)$. Then the following is true:
\begin{enumerate}
\item if $a\in \Gamma ^{(\omega )}_s(\rr {2d})$
($a\in \Gamma ^{(\omega )}_{0,s}(\rr {2d})$), then
\begin{align}
|D^\alpha _XD^\beta _Y(\phi _Ya)(X)|
\lesssim h_1^{|\alpha |}h_2^{|\beta |}(\alpha !\beta !)^s
e^{-|X-Y|^{\frac 1s}/h_1}\min (\omega (X),\omega (Y))
\label{Eq:LocEst1}
\intertext{and}
|D^\alpha _XD^\beta _Y(\phi _Y\wpr a)(X)|
\lesssim h_1^{|\alpha |}h_2^{|\beta |}(\alpha !\beta !)^s
e^{-|X-Y|^{\frac 1s}/h_1}\min (\omega (X),\omega (Y)),
\label{Eq:LocEst2}
\end{align}
for some $h_1>0$ (for every $h_1>0$) and every $h_2>0$; 

\vrum

\item if $a_1\in \Gamma ^{(\omega )}_s(\rr {2d})$ and
$a_2\in \Gamma ^{(\vartheta )}_s(\rr {2d})$
($a_1\in \Gamma ^{(\omega )}_{0,s}(\rr {2d})$ and
$a_2\in \Gamma ^{(\vartheta )}_{0,s}(\rr {2d})$),
then
\begin{multline*}
|D^\alpha _X D^\beta _Y D^\gamma _Z((\phi _Ya_1)\wpr (\psi _Za_2))(X)|
\\[1ex]
\lesssim h_1^{|\alpha +\beta |}h_2^{|\gamma |}(\alpha !\beta !\gamma !)^s
e^{-(|X-Y|^{\frac 1s}+|X-Z|^{\frac 1s}+|Y-Z|^{\frac 1s})/h_1}
\min _{X_1,X_2\in \{ X,Y,Z \} }\big (\omega (X_1)
\vartheta (X_2) \big ),
\end{multline*}
for some $h_1>0$ (for every $h_1>0$) and every $h_2>0$.
\end{enumerate}
\end{lemma}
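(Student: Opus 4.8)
The plan is to treat the three estimates in order of increasing nonlocality, using the pointwise bound \eqref{Eq:LocEst1} as the basic building block and Lemma~\ref{Lemma:BoundedGSFamWeylComp} as the engine for the Weyl products. For \eqref{Eq:LocEst1} I would argue directly by the Leibniz rule. Since $D_Y^\beta\phi_Y=(-1)^{|\beta|}(D^\beta\phi)_Y$, one has
\begin{equation*}
D_X^\alpha D_Y^\beta(\phi_Ya)(X)=(-1)^{|\beta|}\sum_{\delta\le\alpha}\binom{\alpha}{\delta}(D^{\beta+\delta}\phi)(X-Y)\,(D^{\alpha-\delta}a)(X).
\end{equation*}
Here $\phi\in\Sigma_s$ supplies, for every $h_2>0$ and every $R>0$, the bound $|D^{\beta+\delta}\phi(X-Y)|\lesssim h_2^{|\beta+\delta|}((\beta+\delta)!)^s e^{-R|X-Y|^{1/s}}$, while $a\in\Gamma^{(\omega)}_s$ (resp. $\Gamma^{(\omega)}_{0,s}$) supplies, for some (resp. every) $h_1>0$, the bound $|D^{\alpha-\delta}a(X)|\lesssim h_1^{|\alpha-\delta|}((\alpha-\delta)!)^s\omega(X)$. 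The factorial inequalities $(\beta+\delta)!\le 2^{|\beta+\delta|}\beta!\delta!$ and $\delta!(\alpha-\delta)!\le\alpha!$, together with $\sum_{\delta\le\alpha}\binom{\alpha}{\delta}=2^{|\alpha|}$, collapse the sum into $h_1^{|\alpha|}h_2^{|\beta|}(\alpha!\beta!)^s\omega(X)e^{-R|X-Y|^{1/s}}$ after adjusting $h_1,h_2$. Finally the factor $\min(\omega(X),\omega(Y))$ is produced from $\omega(X)e^{-R|X-Y|^{1/s}}$ by taking $R$ larger than the moderateness rate of $\omega$: since $\omega(X)\lesssim\omega(Y)e^{r|X-Y|^{1/s}}$, the quantity $\omega(X)e^{-R|X-Y|^{1/s}}$ is simultaneously $\lesssim\omega(X)e^{-(R-r)|X-Y|^{1/s}}$ and $\lesssim\omega(Y)e^{-(R-r)|X-Y|^{1/s}}$, hence dominated by $\min(\omega(X),\omega(Y))e^{-|X-Y|^{1/s}/h_1}$.

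For the Weyl product estimate \eqref{Eq:LocEst2} I would localise $a$ by a smooth resolution of the identity. Fixing $\chi\in\Sigma_s(\rr{2d})$ with $\int\chi\,dX=1$ gives $a=\int(\chi_Za)\,dZ$, whence $\phi_Y\wpr a=\int\phi_Y\wpr(\chi_Za)\,dZ$. By the already established \eqref{Eq:LocEst1} each $\chi_Za$ is confined near $Z$ with weight $\omega$, so the normalised family $\{(\chi_Za)/\omega(Z)\}_Z$ is bounded in $\Sigma_s$ in the sense of \eqref{Eq:BoundedSymbSets}; the same holds trivially for the single window $\phi$. Lemma~\ref{Lemma:BoundedGSFamWeylComp} then yields
\begin{equation*}
|D_X^\alpha(\phi_Y\wpr(\chi_Za))(X)|\lesssim\omega(Z)\,h_1^{|\alpha|}\alpha!^s\,e^{-R(|X-Y|^{1/s}+|X-Z|^{1/s}+|Y-Z|^{1/s})},
\end{equation*}
and integrating in $Z$, using the subadditivity $|X-Z|^{1/s}+|Y-Z|^{1/s}\ge|X-Y|^{1/s}$ and $\omega(Z)\lesssim\omega(X)e^{r|X-Z|^{1/s}}$ to absorb the weight while keeping the integral convergent, reproduces the one-centre bound $\omega(X)e^{-R'|X-Y|^{1/s}}$. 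The $\min$ is then inserted as before, and the $Y$-derivatives are produced by replacing $\phi$ with the $\Sigma_s$-bounded family $\{D^\beta\phi/(h_2^{|\beta|}\beta!^s)\}_\beta$, which attaches the factor $h_2^{|\beta|}\beta!^s$.

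For part (2) no further localisation is needed, since both factors are already confined. The derivatives in $Y$ and $Z$ act only on the windows, $D_Y^\beta D_Z^\gamma((\phi_Ya_1)\wpr(\psi_Za_2))=(-1)^{|\beta+\gamma|}((D^\beta\phi)_Ya_1)\wpr((D^\gamma\psi)_Za_2)$, and $D_X^\alpha$ distributes over the Weyl product by the Leibniz rule noted in the proof of Lemma~\ref{Lemma:BoundedGSFamWeylComp}. By part (1), $(D^\beta\phi)_Ya_1$ and $(D^\gamma\psi)_Za_2$ are confined near $Y$ and $Z$ with weights $\omega$ and $\vartheta$; after dividing by $\omega(Y)h_2^{|\beta|}\beta!^s$ and $\vartheta(Z)h_2^{|\gamma|}\gamma!^s$ they form $\Sigma_s$-bounded families, so Lemma~\ref{Lemma:BoundedGSFamWeylComp} gives the three-centre bound with prefactor $\omega(Y)\vartheta(Z)h_1^{|\alpha+\beta|}h_2^{|\gamma|}(\alpha!\beta!\gamma!)^s$ and decay $e^{-R(|X-Y|^{1/s}+|X-Z|^{1/s}+|Y-Z|^{1/s})}$. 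It then remains to convert $\omega(Y)\vartheta(Z)$ into the symmetric minimum: applying $\omega(Y)\lesssim\omega(X)e^{r|X-Y|^{1/s}}$, $\omega(Y)\lesssim\omega(Z)e^{r|Y-Z|^{1/s}}$ and the analogous inequalities for $\vartheta$, each along the matching exponential-decay direction, bounds $\omega(Y)\vartheta(Z)e^{-R(\dots)}$ by $\omega(X_1)\vartheta(X_2)e^{-R'(\dots)}$ for every choice of $X_1,X_2\in\{X,Y,Z\}$, hence by the minimum.

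The main obstacle is the very last step in each estimate: matching the rate of the decay produced by the windows against the moderateness rate of the weights so that the symmetric $\min$ survives. This is where the Roumieu/Beurling dichotomy is essential. In the Roumieu case the confined families are only bounded in $\maclS_s$, so Lemma~\ref{Lemma:BoundedGSFamWeylComp} delivers decay at one fixed rate $1/h_1$, and the passage to the $\min$ forces the weights to be moderate at \emph{every} rate, i.e. to lie in $\mascP_{E,s}^0$; in the Beurling case the families are bounded in $\Sigma_s$, the decay is available at every rate, and moderateness at a single rate (weights in $\mascP_{E,s}$) suffices. Keeping these two regimes aligned, together with the bookkeeping of the factorial constants so that the $\gamma$-derivatives stay attached to the free constant $h_2$ while $\alpha,\beta$ attach to $h_1$, is the only genuinely delicate part; the remaining manipulations are the routine Gelfand-Shilov estimates already packaged in Corollary~\ref{cor:DerEstLocWeylprod} and Lemma~\ref{Lemma:BoundedGSFamWeylComp}.
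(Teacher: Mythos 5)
Your proposal follows essentially the same route as the paper's proof: \eqref{Eq:LocEst1} by a direct Leibniz computation combined with moderateness of $\omega$ to produce the $\min$, and part (2) by normalising the confined families $D_Y^\beta(\phi_Y a_1)/(h^{|\beta|}\beta!^s\omega(Y))$ and $D_Z^\gamma(\psi_Z a_2)/(h^{|\gamma|}\gamma!^s\vartheta(Z))$ so that Lemma~\ref{Lemma:BoundedGSFamWeylComp} delivers the three-centre bound. The only (immaterial) difference is the order of the last two steps: you obtain \eqref{Eq:LocEst2} directly via the resolution $a=\int\chi_Z a\,dZ$ before proving (2), whereas the paper proves (2) first and recovers \eqref{Eq:LocEst2} as the special case $b=1$, $\vartheta=1$, $\gamma=0$ integrated in $Z$ --- the same computation in disguise.
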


\par

\begin{proof}
We only consider the case when $a\in \Gamma ^{(\omega )}_{0,s}(\rr {2d})$
and $b\in \Gamma ^{(\vartheta )}_{0,s}(\rr {2d})$. The other cases
follow by similar arguments and are left for the reader.

\par

Let
$$
\Psi (X,Y) = \phi (X-Y)a(X).
$$
By Leibniz rule we get
\begin{multline*}
|D^\alpha _XD^\beta _Y\Psi (X,Y)| \le \sum _{\gamma \le \alpha}
{\alpha \choose \gamma}
|\phi ^{(\alpha +\beta -\gamma)}(X-Y) a^{(\gamma )}(X)|
\\[1ex]
\lesssim
2^{|\alpha |}\sup _{\gamma \le \alpha}\left ( 
h^{|\alpha +\beta |}((\alpha +\beta -\gamma )!\gamma !)^s
e^{-|X-Y|^{\frac 1s}/h}\omega (X)
\right )
\\[1ex]
\le
(2^{1+s}h)^{|\alpha +\beta |}(\alpha !\beta !)^s
e^{-|X-Y|^{\frac 1s}/h}\omega (X)
\lesssim
(2^{1+s}h)^{|\alpha +\beta |}(\alpha !\beta !)^s
e^{-|X-Y|^{\frac 1s}/(2h)}
\omega (Y),
\end{multline*}
for every $h>0$ which is chosen small enough. Here we have used the
fact that for some $c>0$
$$
\omega (X)\lesssim \omega (Y)e^{c|X-Y|}\lesssim
\omega (Y)e^{|X-Y|^{\frac 1s}/(2h)},
$$
since $\omega$ is a moderate function. This gives \eqref{Eq:LocEst1}.

\par

%

Next we prove (2). Let 
\begin{align*}
\Omega _{1,Y} &= \Sets
{X\mapsto \frac {D_Y^\beta (\phi _Ya)}{h^{|\beta |}\beta !^s\omega (Y)}}
{\beta \in \nn d}
\\[1ex]
\Omega _{2,Z} &= \Sets
{X\mapsto \frac {D_Z^\gamma (\psi _Zb)}{h^{|\gamma |}\gamma !^s\vartheta (Z)}}
{\gamma \in \nn d},
\end{align*}
and let $\Omega _1^\cup$ and $\Omega _2^\cup$ be as in
\eqref{Eq:BoundedSymbSets}. By \eqref{Eq:LocEst1} it follows that
$\Omega _1^\cup$ and $\Omega _2^\cup$ are bounded in
$\Sigma _s(\rr {2d})$. Hence, Lemma
\ref{Lemma:BoundedGSFamWeylComp} shows that
\begin{multline*}
\left |
D_X^\alpha \left ( 
\left (
\frac {D_Y^\beta (\phi _Ya)}{h^{|\beta |}\beta !^s\omega (Y)}
\right )
\wpr
\left (
\frac {D_Z^\gamma (\psi _Zb)}{h^{|\gamma |}\gamma !^s\vartheta (Z)}
\right )
\right )(X)
\right |
\\[1ex]
\lesssim h^{|\alpha |}\alpha !^s e^{-(|X-Y|^{\frac 1s}+|X-Z|^{\frac 1s}
+|Y-Z|^{\frac 1s})/h}
\end{multline*}
for every $h>0$, or equivalently,
\begin{multline*}
| D_X^\alpha D_Y^\beta D_Z^\gamma
((\phi _Ya)\wpr (\psi _Zb))(X)|
\\[1ex]
\lesssim
h^{|\alpha +\beta +\gamma|}(\alpha ! \beta ! \gamma !)^s
e^{-(|X-Y|^{\frac 1s}+|X-Z|^{\frac 1s}
+|Y-Z|^{\frac 1s})/h}\omega (Y)\vartheta (Z).
\end{multline*}
The assertion now follows from the latter estimate and the fact
that $\omega$ and $\vartheta$ are moderate weights, giving that
$$
\omega (Y)\lesssim
\omega (X)e^{|X-Y|^{\frac 1s}/(2h)}
\lesssim
\omega (Z)e^{(|X-Y|^{\frac 1s}+|X-Z|^{\frac 1s})/(2h)},
$$
and similarly for $\vartheta$.

\par

Finally, the estimate in (2) also holds for $(\phi _Yb)\wpr (\psi _Za)$
in place of $(\phi _Ya)\wpr (\psi _Zb)$,
and \eqref{Eq:LocEst2} follows from this estimate by letting $\gamma =0$,
$b=1$, $\vartheta =1$, and then integrate with respect to $Z$. The proof
is complete.
\end{proof}

\par

Lemmas \ref{Lemma:BoundedGSFamWeylComp} and \ref{lem:conf} imply the
following characterisation of $\Gamma_s^{(\omega)}(\rr{2d})$.

\par

\begin{prop}\label{prop:charact}
    Let $s>1/2$, $\omega\in \mascP_E(\rr {2d})$, $a\in\Sigma_1'(\rr{2d})$, 
    $\phi\in\Sigma_s(\rr{2d})$ have non-vanishing integrals, and let $\phi_Y
    =\phi(\cdo-Y)$. Then the following conditions are equivalent:
    \begin{enumerate}
        \item $a\in\Gamma_s^{(\omega)}$;
        \item $\phi_Y a$ is smooth and satisfies \eqref{Eq:LocEst1} for some
        $h_1>0$ and every $h_2>0$;
        \item $\phi_Y\wpr a$ is smooth and satisfies \eqref{Eq:LocEst2} for
        some $h_1>0$ and every $h_2>0$;
        \item 
            \begin{align}
                 |D^\alpha _X(\phi _Ya)(X)|
                 \lesssim h_1^{|\alpha |}\alpha !^s
                  e^{-|X-Y|^{\frac 1s}/h_1}\min (\omega (X),\omega (Y)),
            \end{align}
            for some $h_1>0$;
        \item
            {\begin{align}
                 |D^\alpha _X(\phi _Y\wpr a)(X)|
                 \lesssim h_1^{|\alpha |}\alpha !^s
                  e^{-|X-Y|^{\frac 1s}/h_1}\min (\omega (X),\omega (Y)),
            \end{align}for some $h_1>0$.}
    \end{enumerate}
\end{prop}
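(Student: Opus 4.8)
The plan is to close two cycles of implications, $(1)\Rightarrow(2)\Rightarrow(4)\Rightarrow(1)$ and $(1)\Rightarrow(3)\Rightarrow(5)\Rightarrow(1)$, so that all five conditions become equivalent. The two forward implications $(1)\Rightarrow(2)$ and $(1)\Rightarrow(3)$ are nothing but the estimates \eqref{Eq:LocEst1} and \eqref{Eq:LocEst2} in Lemma \ref{lem:conf}(1), applied with the fixed window $\phi$; the smoothness of $\phi_Y a$ is clear, and that of $\phi_Y\wpr a$ follows e.g. from Proposition \ref{prop:prcont}. The two restrictions $(2)\Rightarrow(4)$ and $(3)\Rightarrow(5)$ are immediate upon specialising to $\beta=0$. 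Thus the only substantial points are the two reconstruction steps $(4)\Rightarrow(1)$ and $(5)\Rightarrow(1)$, which I will treat in parallel.

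Both reconstructions rest on a single elementary observation. Since $\phi$ has non-vanishing integral, $c:=\int_{\rr{2d}}\phi(X)\,dX\neq0$, and for every $X$ one has $\int_{\rr{2d}}\phi_Y(X)\,dY=\int_{\rr{2d}}\phi(X-Y)\,dY=c$. Hence $\int_{\rr{2d}}\phi_Y\,dY=c$ as a constant symbol. For the pointwise product this gives at once $\int_{\rr{2d}}(\phi_Y a)\,dY=c\,a$, so that $a=c^{-1}\int(\phi_Y a)\,dY$. For the Weyl product I will use that a constant symbol acts as the corresponding scalar multiple of the identity, i.e. $c\wpr a=c\,a$, together with the fact that $\wpr$ is separately continuous and commutes with integration in its first slot; this yields $\int(\phi_Y\wpr a)\,dY=\bigl(\int\phi_Y\,dY\bigr)\wpr a=c\wpr a=c\,a$, so again $a=c^{-1}\int(\phi_Y\wpr a)\,dY$. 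The identity can also be checked directly from the integral formula $(\phi_Y\wpr a)(X)=\pi^{-d}\int\phi_Y(X-W)\widehat a(W)e^{2i\sigma(X,W)}\,dW$, since integrating in $Y$ replaces $\phi_Y(X-W)$ by $c$ and leaves $c\,(\mascF_\sigma\widehat a)(X)=c\,(\mascF_\sigma^2 a)(X)=c\,a(X)$, using $\mascF_\sigma^2=\mathrm{id}$; this makes the interchange transparent for $a\in\Sigma_1'$.

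Granting these two formulas, the reconstruction is uniform and weight-clean. Consider case (5); case (4) is identical with the pointwise product $\phi_Y a$ in place of $\phi_Y\wpr a$. Differentiating under the integral sign (justified by the uniform-in-$Y$ bounds assumed in (5)) and using $\min(\omega(X),\omega(Y))\le\omega(X)$, I obtain
\begin{equation*}
|D^\alpha a(X)|\le|c|^{-1}\int|D^\alpha_X(\phi_Y\wpr a)(X)|\,dY\lesssim h_1^{|\alpha|}\alpha!^s\,\omega(X)\int_{\rr{2d}}e^{-|X-Y|^{\frac1s}/h_1}\,dY.
\end{equation*}
The last integral equals the constant $\int e^{-|Z|^{1/s}/h_1}\,dZ<\infty$, so $|D^\alpha a(X)|\lesssim h_1^{|\alpha|}\alpha!^s\omega(X)$ and $a\in\Gamma_s^{(\omega)}$, which is (1). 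The decisive simplification is that the confinement bounds carry the factor $\min(\omega(X),\omega(Y))$: selecting the $\omega(X)$ branch lets $\omega(X)$ leave the $Y$-integral and reduces it to a pure super-exponential constant, so that no moderateness of $\omega$ and no restriction relating $s$ to the growth of $\omega$ are needed, and the argument runs uniformly for all $s>1/2$ and all $\omega\in\mascP_E$.

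The main obstacle is therefore not an estimate but the rigorous handling of the reconstruction identity at the level of the distribution $a\in\Sigma_1'$: namely, justifying that $\int_Y(\phi_Y\wpr a)\,dY$ may be evaluated by pulling the integral through the Weyl product, that the constant symbol $c$ indeed acts as multiplication by $c$ in the Weyl calculus, and that differentiation may be exchanged with the $Y$-integration. Each of these follows from the continuity of $\wpr$ on the relevant spaces together with the absolute convergence furnished by the confinement bounds (the factor $\min(\omega(X),\omega(Y))e^{-|X-Y|^{1/s}/h_1}$ is integrable in $Y$ for every fixed $X$), but it is here that the argument must be written with care. A final bookkeeping point is to match the quantifiers on $h_1,h_2$ between Lemma \ref{lem:conf} and the statements (2)--(5) (\textquotedblleft for some $h_1$, every $h_2$\textquotedblright\ in the present Roumieu case), and to record that the Beurling variant, with $\Sigma_s$ and \textquotedblleft every $h_1$\textquotedblright\ throughout, is obtained by the same steps.
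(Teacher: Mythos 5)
Your proposal is correct in substance and its logical architecture (two cycles $(1)\Rightarrow(2)\Rightarrow(4)\Rightarrow(1)$ and $(1)\Rightarrow(3)\Rightarrow(5)\Rightarrow(1)$, with the forward arrows read off from Lemma \ref{lem:conf}) is sound. The paper gives no written proof, only the remark that the statement follows from Lemmas \ref{Lemma:BoundedGSFamWeylComp} and \ref{lem:conf}; the route this points to is the two-window resolution of the identity \eqref{eq:pu}, $\int \psi_Y\wpr\phi_Y\,dY=1$, so that $a=\int\psi_Y\wpr(\phi_Y\wpr a)\,dY$ and the reconstruction estimate is delivered by Lemma \ref{Lemma:BoundedGSFamWeylComp} applied to the confined families $\{\psi_Y\}$ and $\{\omega(Y)^{-1}(\phi_Y\wpr a)(\cdot+Y)\}$. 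You instead exploit the hypothesis that $\phi$ has non-vanishing integral to write $a=c^{-1}\int(\phi_Y a)\,dY$ and $a=c^{-1}\int(\phi_Y\wpr a)\,dY$ with $c=\int\phi\neq 0$, which is simpler (one window, no Weyl composition of two confined families) and makes $(4)\Rightarrow(1)$ genuinely elementary: under (4) the function $a$ is smooth where some $\phi_Y$ does not vanish, hence everywhere, and differentiation under the integral plus the $\omega(X)$ branch of the minimum finishes it. The price is paid in $(5)\Rightarrow(1)$: the identity $\int(\phi_Y\wpr a)\,dY=c\,a$ cannot be obtained by naively pulling the integral through $\wpr$ by ``separate continuity'', because $\int\phi_Y\,dY$ converges to the constant $c$, which does not lie in $\Sigma_s$, so the bilinear continuity of $\wpr$ on $\Sigma_s\times\Sigma_1'$ does not apply directly; one must either verify the identity weakly against test functions (using $\mascF_\sigma^2=\mathrm{id}$ and the adjoint relation for the Weyl product, with absolute convergence supplied by the bounds in (5)), or fall back on the paper's two-window identity, which suffers from the same issue and needs the same care. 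You flag exactly this point as the place requiring a careful write-up, which is the right diagnosis; with that step written out, the proof is complete and slightly more economical than the one the paper gestures at.
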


\par

\begin{lemma}\label{lem:confbis}
Let $s\ge\frac{1}{2}$. Assume that $\{a_1(\cdo+Y,Y)\}_{Y\in\rr{2d}}$ and $\{a_2(\cdo+Z,Z)\}_{Z\in\rr{2d}}$ are bounded families in $\maclS_s(\rr{2d})$. Then,
for any $X,Y,Z\in\rr{2d}$, $\alpha\in\nn{2d}$,
\begin{equation}\label{eq:estsharp}
	|D^\alpha _X (a_1(\cdo+Y,Y)\wpr  a_2(\cdo+Z,Z) )(X)|
	\lesssim h^{|\alpha |}\alpha! ^s e^{-r(|X-Y|^{\frac 1s}+|X-Z|^{\frac 1s})},  
\end{equation}
for some $h>0$.
\end{lemma}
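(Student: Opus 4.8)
The plan is to obtain \eqref{eq:estsharp} as a direct specialisation of Lemma~\ref{Lemma:BoundedGSFamWeylComp}(1) (the Roumieu part, which is the one matching the $\maclS _s$-boundedness in the hypothesis), the only real work being to repackage the two assumptions into the format of \eqref{Eq:BoundedSymbSets}. First I would fix, for each $Y\in\rr{2d}$, the singleton $\Omega _{1,Y}:=\{X\mapsto a_1(X,Y)\}$, and for each $Z\in\rr{2d}$ the singleton $\Omega _{2,Z}:=\{X\mapsto a_2(X,Z)\}$. With these choices the recentred sets in \eqref{Eq:BoundedSymbSets} become
$$
\Omega _1^\cup=\{\,a_1(\cdo+Y,Y)\;:\;Y\in\rr{2d}\,\}\qquad\text{and}\qquad \Omega _2^\cup=\{\,a_2(\cdo+Z,Z)\;:\;Z\in\rr{2d}\,\},
$$
which are exactly the families assumed to be bounded in $\maclS _s(\rr{2d})$. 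Thus the standing hypothesis of Lemma~\ref{Lemma:BoundedGSFamWeylComp}(1) is verified, with the confined factor $a_1(\cdo,Y)\in\Omega _{1,Y}$ located at $Y_1=Y$ and $a_2(\cdo,Z)\in\Omega _{2,Z}$ located at $Y_2=Z$.

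Next I would simply read off estimate \eqref{Eq:UniformEst2}. Since $\Omega _1^\cup$ and $\Omega _2^\cup$ are bounded in $\maclS _s$, Lemma~\ref{Lemma:BoundedGSFamWeylComp}(1) furnishes constants $C,h>0$, independent of $Y,Z$ and of the chosen factors, such that
$$
|D^\alpha(a_1(\cdo,Y)\wpr a_2(\cdo,Z))(X)|\le Ch^{|\alpha|}\alpha!^s\,e^{-(|X-Y|^{\frac1s}+|X-Z|^{\frac1s}+|Y-Z|^{\frac1s})/h}
$$
for all $X,Y,Z\in\rr{2d}$ and $\alpha\in\nn{2d}$, where the derivative already falls on the full Weyl product of the two confined pieces. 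Discarding the nonnegative summand $|Y-Z|^{\frac1s}$ in the exponent and writing $r=1/h$ then yields precisely \eqref{eq:estsharp}.

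The only point that needs attention---and the main, rather mild, obstacle---is the \emph{uniformity} of the constants: one must be sure that the $C$ and $h$ produced above depend solely on the $\maclS _s$-seminorm bounds of the families $\Omega _1^\cup,\Omega _2^\cup$ and not on the individual parameters $Y,Z$ labelling the confined factors. This is already built into the conclusion of Lemma~\ref{Lemma:BoundedGSFamWeylComp}, whose constants are asserted to be independent of the centres; consequently no estimate beyond the identification of the families above has to be carried out, and the Leibniz-type reduction used there (differentiating the Weyl product and bounding each term by \eqref{Eq:UniformEst1}) need not be repeated.
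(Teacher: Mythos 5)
Your proof is correct, but it routes through a different previously established result than the paper does: the paper deduces the lemma from Corollary \ref{cor:DerEstLocWeylprod} (the estimate for $\phi _Y\wpr \psi _Z$ with fixed $\phi ,\psi$), together with the observation that the constants in \eqref{Eq:DerEstLocWeylprod} depend continuously on $\phi ,\psi \in \maclS _s(\rr {2d})$, whereas you specialise Lemma \ref{Lemma:BoundedGSFamWeylComp}(1) by taking the singletons $\Omega _{1,Y}=\{a_1(\cdo ,Y)\}$ and $\Omega _{2,Z}=\{a_2(\cdo ,Z)\}$, so that $\Omega _1^\cup$ and $\Omega _2^\cup$ are exactly the families assumed bounded in $\maclS _s$. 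Both reductions ultimately rest on the same factorisation $a_1(\cdo ,Y)\wpr a_2(\cdo ,Z)(X)=G(X-Y,X-Z)$ with $G$ ranging over a bounded set in $\maclS _s(\rr {2d}\times \rr {2d})$, so neither is deeper than the other; but your choice is arguably the cleaner one, because Lemma \ref{Lemma:BoundedGSFamWeylComp} already builds the uniformity over the recentred families into its statement (no appeal to continuous dependence of constants is needed), and because \eqref{Eq:UniformEst2} hands you the stronger bound containing the extra factor $e^{-|Y-Z|^{1/s}/h}$, i.e.\ the refinement \eqref{eq:estsharpbis} that the paper only records afterwards as a separate remark; \eqref{eq:estsharp} then follows by discarding that nonnegative term and setting $r=1/h$, exactly as you say. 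Your reading of the left-hand side of \eqref{eq:estsharp} as the Weyl product of the confined symbols $a_1(\cdo ,Y)$ and $a_2(\cdo ,Z)$ (rather than of their recentred versions, which is what the displayed formula literally says) is the correct interpretation, consistent with how the lemma is applied in the proof of Theorem \ref{thm:symbeqdiff}.
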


\par

\begin{proof}
The result is a straightforward consequence of Corollary \ref{cor:DerEstLocWeylprod}
and its proof, since the involved constants on the right hand side of
\eqref{Eq:DerEstLocWeylprod} depend continuously on $\phi$ and
$\psi$ in $\maclS_s(\rr{2d}).$
\end{proof}

\par

We notice that, by straightforward computation, for some other $h, r>0$,
\eqref{eq:estsharp} gives, for any $X,Y,Z\in\rr{2d}$, $\alpha\in\nn{2d}$,
\begin{equation}\label{eq:estsharpbis}
	|D^\alpha _X (a_1(\cdo+Y,Y)\wpr  a_2(\cdo+Z,Z) )(X)|
	\lesssim h^{|\alpha |}\alpha! ^s
	e^{-r(|X-Y|^{\frac 1s}+|X-Z|^{\frac 1s}+|Y-Z|^{\frac 1s})}.
\end{equation}

\par

\subsection{A family related to $\Gamma ^{(1)}_s$ and $\Gamma ^{(1)}_{0,s}$}
Let $I_R=[-R,R]$ and $E^0=E^0_{h,s}=L^\infty(I_R\times\rr{2d};
	s_\infty ^w(\rr {2d}))$, with the symbol subspace $s_\infty ^w(\rr {2d})$
	from Definition \ref{def:swinf}.
	We shall consider
	suitable decreasing family $\{E^n_{h,s}\}_{n=0}^\infty$ of
	Banach spaces. To this aim, let
	\[
		G_n=\{(Y,T_1,\dots,T_n)\in\rr{2d(n+1)}\colon Y,T_j\in\rr{2d}
		\text{ with } |T_j|\le 1,\ j=1,\dots, n\} ,
		 n\in\zz {}_+.
	\]
	We define $E^n_{h,s}$, $n\ge1$, as the set of all $a\in E^0$ such that
	\[
		\|a\|^{(n)}=
		\sup_{1\le k\le n}\sup_{t\in I_R}\sup_{(Y,T_1,\dots,T_k)\in G_k}
		\frac{\| 
			\scal {T_1}{D_X} \cdots \scal{T_k}{D_X}a(t,Y,\cdo ) 
		\|_{s_\infty ^w}
		}
		{h^k (k!)^s}
		<\infty,
	\]
	with the norm
	\[
		\nm a{E^n_{h,s}}\equiv \|a\|_{E^0}+\nmm a ^{(n)}.
	\]
	We also let $E^\infty _{h,s}$ be the set of all
	$$
	a\in \bigcap _{n\ge 0}E^n_{h,s}
	$$
	such that
	$$
	\nm a{E^\infty _{h,s}} \equiv \sup _{n\ge 0}\nm a{E^n _{h,s}}
	$$
	is finite.
	
\par

\begin{lemma}\label{Lemma:EhnBanach}
    Let $n\ge 0$. Then $E^n_{h,s}$ is a Banach space.
\end{lemma}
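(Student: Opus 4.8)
The foundation for the whole argument consists of two facts. First, $s_\infty ^w(\rr {2d})$ is a Banach space: by Definition \ref{def:swinf} and the remark following it, $a\mapsto \op ^w(a)$ is an isometric bijection of $s_\infty ^w(\rr {2d})$ onto the Banach space of bounded linear operators on $L^2(\rr d)$. Second, $s_\infty ^w(\rr {2d})$ embeds continuously into $\mascS '(\rr {2d})$; this follows from the Wigner representation \eqref{tWigpseudolink}, since operator-norm convergence of $\op ^w(a_m)$ forces convergence of the pairings $(a_m,W_{g,f})$ for all $f,g\in \mascS (\rr d)$, and finite linear combinations of such Wigner distributions are total in $\mascS (\rr {2d})$. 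Granting the first fact, the case $n=0$ is immediate: $E^0=L^\infty (I_R\times \rr {2d};s_\infty ^w(\rr {2d}))$ is the space of essentially bounded functions valued in a Banach space, hence complete under the essential supremum norm.

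For $n\ge 1$ I would take a Cauchy sequence $(a_m)_m$ in $E^n_{h,s}$. Since $\nm{\cdot}{E^0}\le \nm{\cdot}{E^n_{h,s}}$, it is Cauchy in $E^0$ and converges there to some $a\in E^0$. For each fixed $k\le n$, $t\in I_R$ and $(Y,T_1,\dots ,T_k)\in G_k$, the bound
\[
\nm{\scal{T_1}{D_X}\cdots \scal{T_k}{D_X}(a_m-a_{m'})(t,Y,\cdo )}{s_\infty ^w}\le h^k(k!)^s\,\nmm{a_m-a_{m'}}^{(n)}
\]
shows that the iterated directional derivatives of the $a_m$ are Cauchy in $s_\infty ^w(\rr {2d})$, uniformly in $(t,Y,T_1,\dots ,T_k)$; by completeness of $s_\infty ^w$ they converge to some $b^{(k)}(t,Y,T_1,\dots ,T_k)$.

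The decisive step is to identify $b^{(k)}$ with $\scal{T_1}{D_X}\cdots \scal{T_k}{D_X}a(t,Y,\cdo )$. Using the continuous inclusion $s_\infty ^w\hookrightarrow \mascS '$, convergence $a_m(t,Y,\cdo )\to a(t,Y,\cdo )$ in $s_\infty ^w$ also holds in $\mascS '$, and since each $\scal{T_j}{D_X}$ is continuous on $\mascS '$, the derivatives $\scal{T_1}{D_X}\cdots \scal{T_k}{D_X}a_m(t,Y,\cdo )$ converge in $\mascS '$ to $\scal{T_1}{D_X}\cdots \scal{T_k}{D_X}a(t,Y,\cdo )$. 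As these same derivatives converge in $s_\infty ^w$, hence in $\mascS '$, to $b^{(k)}$, uniqueness of limits in $\mascS '$ gives $b^{(k)}=\scal{T_1}{D_X}\cdots \scal{T_k}{D_X}a(t,Y,\cdo )$, which therefore lies in $s_\infty ^w$. Finally I would let $m'\to \infty$ in the displayed inequality: the left-hand side tends to $\nm{\scal{T_1}{D_X}\cdots \scal{T_k}{D_X}(a_m-a)(t,Y,\cdo )}{s_\infty ^w}$, so dividing by $h^k(k!)^s$ and taking the supremum over $k\le n$, $t$ and $(Y,T_1,\dots ,T_k)$ yields $\nmm{a_m-a}^{(n)}\le \limsup_{m'}\nmm{a_m-a_{m'}}^{(n)}$. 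Combined with $a_m\to a$ in $E^0$, this shows $a\in E^n_{h,s}$ and $a_m\to a$ in $E^n_{h,s}$, proving completeness. I expect this identification to be the main obstacle, as it is precisely the point where the unbounded operators $\scal{T_j}{D_X}$ must be interchanged with the limit, which is what the embedding into $\mascS '$ and the continuity of differentiation there are designed to accomplish.
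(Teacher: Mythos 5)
Your proposal is correct and follows essentially the same route as the paper: convergence in $E^0$, convergence of the iterated directional derivatives in $s_\infty^w$ to limits $b^{(k)}$, and identification of $b^{(k)}$ with the corresponding derivatives of the limit $a$ via the continuous embedding $s_\infty^w(\rr{2d})\hookrightarrow\mascS'(\rr{2d})$ and the continuity of $\scal{T_j}{D_X}$ on $\mascS'$. Your write-up is in fact somewhat more complete than the paper's, since you also justify the completeness of $s_\infty^w$, the embedding into $\mascS'$, the $n=0$ case, and the final passage to the limit in the quotient defining $\nmm{\cdot}^{(n)}$.
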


\par

\begin{proof}
    Let $\{a_j\}_{j\ge0}$ be a Cauchy sequence in $E^n_{h,s}$, $n\ge1$. By
    definition, this sequence clearly has a limit $a\in E^0$,
	and for some $X\mapsto b_k(t,Y,T_1,\dots ,T_k,X)\in
	s_\infty ^w(\rr {2d})$ we have
$$
\lim _{j\to \infty} \sup \frac{\| 
			\scal {T_1}{D_X} \cdots \scal{T_k}{D_X}a_j(t,Y,\cdo ) 
		- b_k(t,Y,T_1,\dots ,T_k,\cdo )\|_{s_\infty ^w}
		}
		{h^k (k!)^s}=0,
$$
where the supremum is taken over all
$$
k\in \{ 1,\dots ,n\} ,\quad t\in I_R
\quad \text{and}\quad
(Y,T_1,\dots ,T_k)\in G_k.
$$
	We need to prove that $a\in E^n_{h,s}$, and $a_j\to a$ in $E^n_{h,s}$.
	
	\par
	
%
The conditions here above are equivalent to
\begin{gather}
\lim _{j\to \infty} \sup _{t\in I_R}\sup _{Y\in \rr {2d}}
\nm {a_j(t,Y,\cdo )-a(t,Y,\cdo )}{s_\infty ^w}
=0\label{Eq:Limit0Case}
\intertext{and}
\lim _{j\to \infty} \sup \frac 
{\nm {(-1)^k\scal {T_1}D\cdots \scal {T_k}Da_j(t,Y,\cdo )
-b_k(t,Y,T_1,\dots ,T_k,\cdo )}{s_\infty ^w}}{h^k (k!)^s}
=0,\label{Eq:Limit1TonCase}
\end{gather}
where the latter supremum should be taken over all
$$
k\in \{ 1,\dots ,n\} ,\quad t\in I_R
\quad \text{and}\quad
(Y,T_1,\dots ,T_k)\in G_k.
$$

\par

Since $s_\infty ^w(\rr {2d})$ is continuously embedded in $\mascS '(\rr {2d})$,
it follows from \eqref{Eq:Limit0Case} and \eqref{Eq:Limit1TonCase}
that
$$
X\mapsto (-1)^k\scal {T_1}{D_X}\cdots \scal {T_k}{D_X} a_j(t,Y,X)
$$
has the limit
$$
X\mapsto (-1)^k\scal {T_1}{D_X}\cdots \scal {T_k}{D_X} a(t,Y,X)
$$
in $\mascS '(\rr {2d})$, and the limit
$$
X\mapsto b_k(t,Y,T_1,\dots ,T_k,X)
$$
in $s_\infty ^w(\rr {2d})$, and thereby in $\mascS '(\rr {2d})$, as $j$
tends to $\infty$. Hence
$$
b_k(t,Y,T_1,\dots ,T_k,X) = (-1)^k\scal {T_1}{D_X}\cdots \scal {T_k}{D_X}
a(t,Y,X)
$$
and it follows that $E_{h,s}^n$ is a Banach space for every $h>0$, $s>0$ and
integer $n\ge 0$.
\end{proof}

\par

The spaces $E^\infty _{h,s}$ can be related
to $\Gamma ^{(1)}_s$ and $\Gamma ^{(1)}_{0,s}$, as the following
lemma shows. The details are left for the reader.

\par

\begin{lemma}\label{Lemma:GammaLink}
Let $a\in L^\infty (I_R\times \rr {2d};s^w_\infty (\rr {2d}))$. Then
$\{ a(t,Y,\cdo ) \} _{t\in I_R,Y\in \rr {2d}}$ is a uniformly
bounded family in $\Gamma ^{(1)}_s(\rr {2d})$
({\,}$\Gamma ^{(1)}_{0,s}(\rr {2d})$), if and only if
$$
\nm a{E^\infty _{h,s}}<\infty 
$$
for some $h>0$ (for every $h>0$).
\end{lemma}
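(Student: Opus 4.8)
The plan is to reduce the statement to a two-sided comparison between the operator norm $\nm{\cdot}{s_\infty ^w}$ and the $L^\infty$-norms of the $X$-derivatives, uniformly in the parameters $t\in I_R$ and $Y\in\rr{2d}$. Writing $b_{t,Y}(X)=a(t,Y,X)$, the two analytic facts I would invoke are the Calderón--Vaillancourt estimate
\[
\nm{c}{s_\infty ^w}\lesssim \sum_{|\beta|\le N}\nm{\partial^\beta c}{L^\infty},
\]
valid for a fixed dimensional integer $N=N(d)$, together with its converse
\[
\nm{c}{L^\infty}\lesssim \sum_{|\beta|\le N}\nm{\partial^\beta c}{s_\infty ^w},
\]
for all $c\in C^\infty(\rr{2d})$ with the respective right-hand sides finite. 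The first is classical. The second I would obtain by testing $\op^w(c)$ against time--frequency shifts of a fixed Gaussian $\phi_0$ and using the link \eqref{tWigpseudolink}, which identifies the resulting matrix coefficients with a Gabor transform $V_\Psi c$ (with $\Psi$ a Gaussian) and hence yields the embedding of $s_\infty ^w$ into $M^\infty$; controlling $c$ and its derivatives in $M^\infty$ and absorbing the frequency weight $\langle\Xi\rangle^N$ into derivatives then upgrades this to $M^{\infty,1}\hookrightarrow L^\infty$ control of $c$ itself.

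The remaining ingredient is bookkeeping between directional and pure multi-index derivatives. Since $\scal{T_1}{D_X}\cdots\scal{T_k}{D_X}=\sum_{i_1,\dots,i_k}(T_1)_{i_1}\cdots(T_k)_{i_k}\,D_{i_1}\cdots D_{i_k}$ and $|T_j|\le 1$ forces $|(T_j)_i|\le 1$, each directional product is a combination of at most $(2d)^k$ terms $D^\gamma$ with $|\gamma|=k$ and coefficients bounded by $1$; conversely every $D^\gamma$ with $|\gamma|=k$ is realized by choosing each $T_j$ to be a coordinate vector $e_i$, which is admissible because $|e_i|=1$. I would combine this with the elementary factorial inequalities $(k+|\beta|)!\le 2^{k+|\beta|}k!\,|\beta|!$ and $|\alpha|!\le (2d)^{|\alpha|}\alpha!$ to pass between the $(k!)^s$ appearing in the $E^\infty_{h,s}$-norm and the $\alpha!^s$ appearing in the definition of $\Gamma^{(1)}_s$, and to absorb the finitely many extra derivatives of order $\le N$.

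With these in hand the two implications are symmetric. If $\{b_{t,Y}\}$ is uniformly bounded in $\Gamma^{(1)}_s$, i.e. $|D^\alpha_X b_{t,Y}(X)|\le Ch_0^{|\alpha|}\alpha!^s$ uniformly, then applying Calderón--Vaillancourt to $c=\scal{T_1}{D_X}\cdots\scal{T_k}{D_X}b_{t,Y}$ and commuting $\partial^\beta$ through the constant-coefficient directional operators bounds $\nm{c}{s_\infty ^w}$ by a sum of $\sup|D^\mu b_{t,Y}|$ with $|\mu|=k+|\beta|$, $|\beta|\le N$; the factorial inequalities then give $\nm{c}{s_\infty ^w}\lesssim h^k(k!)^s$ for a suitable $h\asymp h_0$, uniformly in $t,Y$ and the $T_j$, which is exactly $\nm a{E^\infty_{h,s}}<\infty$. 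Conversely, if $\nm a{E^\infty_{h,s}}<\infty$, then fixing $\alpha$ with $|\alpha|=k$ and applying the converse estimate to $c=D^\alpha b_{t,Y}$ gives $\nm{D^\alpha b_{t,Y}}{L^\infty}\lesssim\sum_{|\beta|\le N}\nm{D^{\alpha+\beta}b_{t,Y}}{s_\infty ^w}$; realizing each $D^{\alpha+\beta}$ as a product of $|\alpha+\beta|$ coordinate directional derivatives, using the $E^\infty_{h,s}$-bound and the factorial inequalities, yields $\nm{D^\alpha b_{t,Y}}{L^\infty}\lesssim h_1^{|\alpha|}\alpha!^s$ with $h_1\asymp h$, uniformly in $t,Y$. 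This is uniform boundedness in $\Gamma^{(1)}_s$, and the $L^\infty$-bounds on all derivative orders simultaneously force smoothness of each $b_{t,Y}$. The Beurling case $\Gamma^{(1)}_{0,s}$ is identical, with ``some $h$'' replaced by ``every $h$'', which is compatible with the proportionalities $h\asymp h_0$ and $h_1\asymp h$ above.

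The main obstacle is the converse Calderón--Vaillancourt estimate, namely recovering the pointwise size of a symbol from operator-norm control of its derivatives; once that is secured, the rest is standard differentiation identities and factorial manipulations.
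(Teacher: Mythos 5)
Your argument is correct and is essentially the route the paper intends: the lemma is precisely the uniform-in-$(t,Y)$ version of the equivalence (1)$\Leftrightarrow$(4) in Theorem \ref{thm:gamma1}, whose proof runs exactly through the two-sided comparison of $\nm{\cdot}{s_\infty^w}$ with $L^\infty$-norms of derivatives (Lemma \ref{lem:estoprnorms}, which the paper cites from the literature rather than re-proving via Gaussian matrix coefficients as you sketch) together with the same $(2d)^k$-term directional-derivative bookkeeping and factorial estimates. The only cosmetic difference is that you rebuild the converse Calder\'on--Vaillancourt bound from scratch where the paper simply invokes it.
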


\par

Later on we also need the following result of differential
equations with functions depending on a real variable with
values in $E^\infty _{h,s}$. The proof is omitted since
the result can be considered as a part of the standard theory
of ordinary differential equations of first order in Banach spaces.

\par

\begin{lemma}\label{Lemma:ODE}
Suppose $s\ge 0$ and $T>0$, and let $\maclK$ be an operator
from $E^\infty _{h,s}$ to $E^\infty _{h,s}$ for every
$h>0$ such that
\begin{equation}\label{Eq:KEstLemma}
\nm {\maclK a}{E^\infty _{h,s}} \le C
\nm {a}{E^\infty _{h,s}},\qquad a\in E^\infty _{h,s},
\end{equation}
for some constant $C$ which only depend on $h>0$. Then
$$
\frac{dc(t)}{dt} =\maclK(c(t)) ,\qquad c(0)\in E^\infty _{h,s},
$$
has a unique solution $t\mapsto c(t)$ from $[-T,T]$ to
$E^\infty _{h,s}$ which satisfies
$$
\nm {c(t)}{E^\infty _{h,s}}\le \nm {c(0)}{E^\infty _{h,s}}
e^{CT},
$$
where $C$ is the same as in \eqref{Eq:KEstLemma}.
\end{lemma}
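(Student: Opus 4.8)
The plan is to read this as a \emph{linear} evolution equation in the single Banach space $E^\infty _{h,s}$ (with $h>0$ fixed) and to produce the solution as the exponential of the generator $\maclK$. First I would record that, for each fixed $h$, the space $E^\infty _{h,s}$ is complete: it is the intersection $\bigcap _{n\ge 0}E^n_{h,s}$ equipped with the norm $\sup _n\nm {\cdot}{E^n_{h,s}}$, and each factor $E^n_{h,s}$ is a Banach space by Lemma \ref{Lemma:EhnBanach}. A Cauchy sequence in $E^\infty _{h,s}$ is Cauchy in every $E^n_{h,s}$, its limits there coincide as elements of the ambient space $E^0_{h,s}\hookrightarrow \mascS'$, and the uniform-in-$n$ Cauchy bound yields a common limit in $E^\infty _{h,s}$; thus completeness of the sup-normed intersection is routine. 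Since the relevant $\maclK$ is linear (in the application it is the Weyl multiplication $a\mapsto (b+\log \vartheta )\wpr a$), the hypothesis \eqref{Eq:KEstLemma} is precisely the operator-norm bound $\nm {\maclK}{E^\infty _{h,s}\to E^\infty _{h,s}}\le C$.

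Next I would build the solution by the operator exponential. Define
$$
c(t)=e^{t\maclK}c(0)=\sum _{n=0}^\infty \frac {t^n}{n!}\maclK ^n c(0).
$$
From $\nm {\maclK ^n c(0)}{E^\infty _{h,s}}\le C^n\nm {c(0)}{E^\infty _{h,s}}$ the series converges absolutely in $E^\infty _{h,s}$, uniformly for $t\in [-T,T]$, and defines a $C^1$ (indeed real-analytic) curve. Term-by-term differentiation gives $c'(t)=\maclK c(t)$, and $c(0)$ is the prescribed datum, so $c$ solves the Cauchy problem. The growth estimate is then immediate:
$$
\nm {c(t)}{E^\infty _{h,s}}\le \Big (\sum _{n=0}^\infty \frac {|t|^n}{n!}C^n\Big )\nm {c(0)}{E^\infty _{h,s}}=e^{|t|C}\nm {c(0)}{E^\infty _{h,s}}\le e^{CT}\nm {c(0)}{E^\infty _{h,s}},
$$
for $|t|\le T$, which is exactly the asserted bound with the stated constant $C$.

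For uniqueness I would subtract two solutions $c_1,c_2$ sharing the same initial value; then $d=c_1-c_2$ satisfies $d'=\maclK d$, $d(0)=0$, hence the integral identity $d(t)=\int _0^t\maclK d(\tau )\,d\tau$. Taking norms and applying \eqref{Eq:KEstLemma} gives $\nm {d(t)}{E^\infty _{h,s}}\le C\int _0^{|t|}\nm {d(\tau )}{E^\infty _{h,s}}\,d\tau$, and Gr\"onwall's inequality forces $d\equiv 0$ on $[-T,T]$. Alternatively, existence and uniqueness could be obtained by running the Picard--Lindel\"of fixed-point scheme for the map $a\mapsto c(0)+\int _0^t\maclK a(\tau )\,d\tau$, which is a contraction on short time intervals and continues globally because the bound is global; this is the route that also covers a merely globally Lipschitz $\maclK$.

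I do not expect a genuine obstacle here: the content is the standard first-order linear ODE theory in a Banach space, and the authors are right to omit it. The only two points that require care are the completeness of $E^\infty _{h,s}$, which is supplied by Lemma \ref{Lemma:EhnBanach}, and the observation that \eqref{Eq:KEstLemma} is exactly an operator-norm estimate, so that the \emph{same} constant $C$ dominates every power $\maclK ^n$ and therefore delivers the clean exponential rate $e^{CT}$ rather than a weaker bound.
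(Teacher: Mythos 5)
Your proposal is correct and is exactly the standard first-order linear ODE argument in a Banach space that the paper invokes when it omits the proof of Lemma \ref{Lemma:ODE} as ``part of the standard theory.'' You rightly identify the two points that need checking --- completeness of $E^\infty_{h,s}$ (via Lemma \ref{Lemma:EhnBanach} and the sup-normed intersection) and the reading of \eqref{Eq:KEstLemma} as an operator-norm bound for a \emph{linear} $\maclK$ (which is how it is used in Theorem \ref{thm:symbeqdiff}, and without which a mere norm bound would not suffice in infinite dimensions) --- so the exponential series and the Gr\"onwall uniqueness argument go through as you describe.
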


\par

%
%

\par

\section{One-parameter group of elliptic symbols in
the classes $\Gamma ^{(\omega )}_s(\rr {d})$ }\label{sec3}

\par

In this section we show that for suitable $s$ and $\omega _0$, there are
elements $a\in \Gamma ^{(\omega _0)}_{s}$ and $b\in \Gamma
^{(1/\omega _0)}_{s}$
such that $a\wpr b = b\wpr a =1$. This is essentially a consequence of
Theorem \ref{thm:symbeqdiff},
where it is proved that
the evolution equation \eqref{Eq:IntrSymeqDiff2} has a unique solution
$a(t,\cdo )$ which belongs to $\Gamma ^{(\omega\vartheta^t )}_{s}$,
thereby deducing needed semigroup properties for scales
of pseudo-differential operators.
Similar facts hold for corresponding Beurling type spaces
(cf. Theorem \ref{thm:symbeqdiff2}).

\par

First we have the following result on certain logarithms of weight 
functions.

\par

\begin{thm}\label{thm:logomega}
	Let $\omega\in\mascP_E(\rr{2d})\cap\Gamma^{(\omega)}_{s_0}(\rr{2d})$,
	$s_0\in(0, 1]$, $v\in \mascP_E(\rr{2d})$, be such that $\omega$ is 
	$v$-moderate, $\vartheta(X)=1+\log v(X)$ 
	and let
	\[
		c(X,Y)=\log \frac{\omega(X+Y)}{\omega(Y)}.
	\]
	Then,
	\begin{itemize}
		\item $\{c(\cdo ,Y)\}_{Y\in\rr{2d}}$ is a uniformly bounded family in 
		$\Gamma^{(\vartheta)}_s(\rr{2d})$, $s\ge1$;
		\item for $\alpha\not=0$,
		$\{(\partial^\alpha_X c)(\cdo ,Y)\}_{Y\in\rr{2d}}$ is a uniformly
		bounded 
		family in $\Gamma^{(1)}_s(\rr{2d})$, $s\ge1$.
	\end{itemize}
\end{thm}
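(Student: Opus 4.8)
The plan is to write $c(X,Y)=\log\omega(X+Y)-\log\omega(Y)$ and to set $u=\log\omega$. Since for $\alpha\neq0$ the $X$-independent term $\log\omega(Y)$ is annihilated, one has $\partial^\alpha_X c(X,Y)=(\partial^\alpha u)(X+Y)$, so both the second bullet and the $|\alpha|\ge1$ part of the first bullet reduce to a single statement: $\partial^\alpha u\in\Gamma^{(1)}_s(\rr{2d})$ for $\alpha\neq0$. Because the weight in $\Gamma^{(1)}_s$ equals $1$, this class is translation invariant, so the shift $X\mapsto X+Y$ is harmless and the bounds it produces are automatically uniform in $Y$. Thus the whole theorem rests on an estimate for the $X$-derivatives of $\log\omega$ together with a zeroth-order bound for $c$ itself. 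Throughout I may assume, by the paper's standing convention on the letter $v$, that $v$ is submultiplicative, so that $v\ge1$ and hence $\vartheta=1+\log v\ge1$.

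For the derivatives of $u=\log\omega$ I would first record that $\omega\in\Gamma^{(\omega)}_{s_0}$ gives $|\partial^\gamma\omega(X)|\lesssim\omega(X)\,h_0^{|\gamma|}\gamma!^{s_0}$ for some $h_0>0$, whence, because $s_0\le1$, the normalised derivatives obey $|\partial^\gamma\omega|/\omega\lesssim h_0^{|\gamma|}\gamma!$. Differentiating the identity $\partial_j\omega=\omega\,\partial_j u$ by the Leibniz rule and isolating the top-order term gives the recursion
\[
\partial^{\beta+e_j}u=\frac{\partial^{\beta+e_j}\omega}{\omega}-\sum_{0<\gamma\le\beta}\binom{\beta}{\gamma}\frac{\partial^\gamma\omega}{\omega}\,\partial^{\beta-\gamma+e_j}u .
\]
An induction on $|\alpha|$ based on this identity --- equivalently, the Fa\`a di Bruno formula applied to $\log\circ\,\omega$, where the outer factors are $(\log)^{(k)}(\omega)=(-1)^{k-1}(k-1)!\,\omega^{-k}$ --- then yields $|\partial^\alpha u(X)|\lesssim h^{|\alpha|}\alpha!$ for some $h>0$, uniformly in $X$. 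In the one-dimensional model this is transparent: collecting partitions by their block-size multiplicities turns the Fa\`a di Bruno sum into $h_0^{n}\,n!\sum_{k}\tfrac1k\binom{n-1}{k-1}\le(2h_0)^{n}n!$, i.e.\ growth of Gevrey type exactly $1$. Since $s\ge1$ this places $\partial^\alpha u\in\Gamma^{(1)}_s(\rr{2d})$ for every $\alpha\neq0$, which, after translation, is the second bullet and the higher-order part of the first.

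For the zeroth-order bound I would invoke the two-sided moderateness estimate \eqref{eq:2Next} with $x=Y$ and $y=X$, namely $C^{-1}\omega(Y)/v(X)\le\omega(X+Y)\le C\omega(Y)v(X)$. Taking logarithms gives
\[
|c(X,Y)|=\Big|\log\frac{\omega(X+Y)}{\omega(Y)}\Big|\le\log C+\log v(X)\lesssim\vartheta(X),
\]
uniformly in $Y$, since $\log v\ge0$ and $\vartheta\ge1$. Combining this with the derivative bounds of the previous paragraph and using $\vartheta\ge1$ once more yields $|\partial^\alpha_X c(X,Y)|\lesssim\vartheta(X)\,h^{|\alpha|}\alpha!^s$ for all $\alpha$, uniformly in $Y$, which is precisely the assertion that $\{c(\cdo,Y)\}_{Y\in\rr{2d}}$ is a uniformly bounded family in $\Gamma^{(\vartheta)}_s(\rr{2d})$.

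The main obstacle is the combinatorial core of the second paragraph: extracting \emph{precisely} Gevrey type $1$ growth for the derivatives of $\log\omega$ out of the multiplicative Gevrey-$s_0$ control on $\omega$. The mechanism is that the factorials $(k-1)!$ produced by the derivatives of $\log$, multiplied by the partition counts, contribute a factor comparable to $\alpha!$ (type $1$), and this is absorbable exactly because $s_0\le1$ keeps $\gamma!^{s_0}\le\gamma!$ while the target exponent satisfies $s\ge1$. Making the multivariate bookkeeping of the multi-index Fa\`a di Bruno formula (or of the inductive recursion above) rigorous, with constants \emph{independent of $X$}, is the only delicate point; the ellipticity built into $\omega\in\Gamma^{(\omega)}_{s_0}$, i.e.\ the normalisation by $\omega$, is exactly what forces those constants to be uniform in $X$.
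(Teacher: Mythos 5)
Your proof is correct and follows essentially the same route as the paper: the zeroth-order bound $|c(X,Y)|\lesssim\vartheta(X)$ comes from the two-sided moderateness estimate exactly as in the paper, and the derivative bounds come from the Fa\`a di Bruno formula applied to $\log\circ\,\omega$ (with outer factors $(-1)^{k-1}(k-1)!\,\omega^{-k}$) combined with the combinatorial factorial estimate that the paper isolates as Lemma \ref{lem:estfaadb} and proves in the appendix by counting the decompositions $\beta_1+\cdots+\beta_k=\alpha$ --- the multivariate version of your $\binom{n-1}{k-1}$ count. The only presentational differences are that you first invoke translation invariance of $\Gamma^{(1)}_s$ to reduce to estimating $\partial^\alpha\log\omega$ at a single point rather than differentiating $c(X,Y)$ in $X$ at $X+Y$, and that you offer the Leibniz recursion for $\partial^{\beta+e_j}u$ as an equivalent alternative to Fa\`a di Bruno; neither changes the substance of the argument.
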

For the proof of Theorem \ref{thm:logomega} we will need the following multidimensional version
of the well-known Fa{\`a} di Bruno formula for the derivatives of composed functions. It can be found, 
e.g., setting $q=p=1$, $n=2d$, in equations (2.3) and (2.4) in \cite{Gzyl}.
\begin{lemma}\label{lem:faadb}
	Let $f\colon\rr{}\to\rr{}$, $g\colon\rr{2d} \to \rr{}$. Then, for any $\alpha\in\nn{2d}$, $\alpha\not=0$,
	\begin{equation}\label{eq:faadb}
		\frac{\partial^\alpha f(g(x))}{\alpha!}=\sum_{1\le k\le |\alpha|}\frac{f^{(k)}(g(x))}{k!}
		\sum_{\substack{\beta_1+\cdots+\beta_k=\alpha \\ \beta_j\not=0,\,  j=1,\dots,k}}
		\prod_{1\le j\le k} \frac{(\partial^{\beta_j}g)(x)}{\beta_j!}.
	\end{equation}
\end{lemma}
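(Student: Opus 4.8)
The plan is to prove \eqref{eq:faadb} by a formal power-series (equivalently, jet) computation, reading off the two sides as coefficients of $t^\alpha$ in the Taylor expansion of $f\circ g$ about $x$. First I would observe that iterating the elementary rules $\partial_i(f\circ g)=(f'\circ g)\,\partial_i g$, together with the product and chain rules, exactly $|\alpha|$ times expresses $\partial^\alpha(f\circ g)(x)$ as a fixed polynomial in the finitely many quantities $f^{(k)}(g(x))$, $0\le k\le |\alpha|$, and $(\partial^\beta g)(x)$, $|\beta|\le |\alpha|$. This dependence follows by a trivial induction on $|\alpha|$ and does \emph{not} presuppose the formula to be proved. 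Consequently both sides of \eqref{eq:faadb} are unchanged if $f$ and $g$ are replaced by their Taylor polynomials of degree $|\alpha|$ at $g(x)$ and at $x$, respectively, so it suffices to prove the identity when $f$ and $g$ are polynomials. In that case every series below terminates and each manipulation is a finite algebraic identity, so no convergence issue arises even though $f$ and $g$ are only assumed smooth.

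With $t\in\rr{2d}$ an auxiliary variable, Taylor's formula gives
$$
g(x+t)=g(x)+u(t),\qquad u(t)=\sum_{\beta\neq 0}\frac{(\partial^\beta g)(x)}{\beta!}\,t^\beta,
$$
so that $u(t)$ collects precisely the terms indexed by $\beta\neq 0$. Expanding $f$ about $g(x)$ then yields
$$
f\big(g(x+t)\big)=f\big(g(x)+u(t)\big)=\sum_{k\ge 0}\frac{f^{(k)}(g(x))}{k!}\,u(t)^k .
$$

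The key step is to extract the coefficient of $t^\alpha$ on the right-hand side. Raising $u(t)$ to the $k$-th power gives
$$
u(t)^k=\sum_{\beta_1,\dots,\beta_k\neq 0}\Big(\prod_{1\le j\le k}\frac{(\partial^{\beta_j}g)(x)}{\beta_j!}\Big)\,t^{\beta_1+\cdots+\beta_k},
$$
whence the coefficient of $t^\alpha$ in $u(t)^k$ equals the inner sum in \eqref{eq:faadb}, taken over $\beta_1+\cdots+\beta_k=\alpha$ with all $\beta_j\neq 0$. Since each such $\beta_j$ contributes at least $1$ to $|\beta_1+\cdots+\beta_k|=|\alpha|$, only $1\le k\le |\alpha|$ can give a nonzero contribution, and the term $k=0$ contributes only to $t^0$, which is excluded because $\alpha\neq 0$. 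Comparing this with the coefficient $\partial^\alpha(f\circ g)(x)/\alpha!$ of $t^\alpha$ on the left-hand side, read off directly from Taylor's formula, produces \eqref{eq:faadb}.

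The only delicate point is the interchange of the sum over $k$ with the extraction of the $t^\alpha$-coefficient, and this is exactly what the reduction to polynomials in the first paragraph legitimises, since there the sum over $k$ is finite and everything is a polynomial identity. I expect no genuine obstacle. As an alternative one could argue by induction on $|\alpha|$, writing $\alpha=\gamma+e_i$ and applying $\partial^{e_i}$ to the expression for $\partial^\gamma(f\circ g)$; the main difficulty there is the combinatorial bookkeeping required to recombine the resulting chain-rule and product-rule terms into the symmetric double sum, which the generating-function argument sidesteps entirely.
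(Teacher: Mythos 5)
Your proposal is correct. Note, however, that the paper itself offers no proof of Lemma \ref{lem:faadb} at all: it simply cites the multidimensional Fa{\`a} di Bruno formula from the literature (equations (2.3) and (2.4) in \cite{Gzyl}, with $q=p=1$, $n=2d$), so there is no internal argument to compare against. What you supply is a self-contained replacement for that citation, and it is sound: the inner sum in \eqref{eq:faadb} runs over \emph{ordered} tuples $(\beta_1,\dots,\beta_k)$ with $\beta_j\neq 0$, which is exactly what the multinomial expansion of $u(t)^k$ produces, with the $1/k!$ compensating for the ordering (a quick check on $\alpha=e_1+e_2$ confirms the normalisation). Your two reductions are the right ones and are handled cleanly: the preliminary induction showing that $\partial^\alpha(f\circ g)(x)$ is a universal polynomial in the jets $f^{(k)}(g(x))$, $k\le|\alpha|$, and $(\partial^\beta g)(x)$, $|\beta|\le|\alpha|$, legitimises truncating to Taylor polynomials without circularity, and once $f$ and $g$ are polynomials the expansion $f(g(x)+u(t))=\sum_{k}\frac{f^{(k)}(g(x))}{k!}u(t)^k$ is a finite algebraic identity, so the coefficient extraction at $t^\alpha$ needs no convergence argument; the observation that $k>|\alpha|$ contributes nothing (each $\beta_j\neq 0$ adds at least $1$ to $|\alpha|$) and that $k=0$ only hits $t^0$ closes the count. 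Compared with the paper's approach, yours buys a short, verifiable proof at the cost of a paragraph of jet bookkeeping, while the citation buys brevity; either is acceptable here, and your generating-function route is indeed lighter than the inductive recombination you mention as an alternative.
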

We will also need the next \textit{factorial estimate}, for expressions involving
decompositions of $\alpha\in\nn{2d}$, $\alpha\not=0$,
into the sum of $k$ nontrivial multiindeces $\beta_j$, $j=1,\dots,k$, as in
\eqref{eq:faadb}, and corresponding products of (powers of) factorials.
\begin{lemma}\label{lem:estfaadb}
	Let $s_0\in(0,1]$, $\alpha\in\nn{2d}$, $\alpha\not=0$. Then, for a suitable $C_0>0$,
	depending only in $d$,
	\begin{equation}\label{eq:estfaadb}
		\sum_{1\le k\le |\alpha|}
		\frac{1}{k}
		\sum_{\substack{\beta_1+\cdots+\beta_k=\alpha \\ \beta_j\not=0,\,  j=1,\dots,k}}
		\prod_{1\le j\le k}(\beta_j!)^{s_0-1}\lesssim C_0^{|\alpha|}.
	\end{equation}
\end{lemma}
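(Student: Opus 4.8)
The plan is to exploit the sign of the exponent $s_0-1$ and to reduce the estimate to a purely combinatorial count of the decompositions of $\alpha$. First I would observe that, since $s_0\le 1$ and $\beta_j!\ge 1$ for every multi-index $\beta_j$ (being a product of componentwise factorials, each $\ge 1$), one has $(\beta_j!)^{s_0-1}\le 1$, whence $\prod_{1\le j\le k}(\beta_j!)^{s_0-1}\le 1$. Consequently the left-hand side of \eqref{eq:estfaadb} is bounded by
\[
\sum_{1\le k\le |\alpha|}\frac 1k\, N_k(\alpha),
\qquad
N_k(\alpha)=\#\Sets{(\beta_1,\dots,\beta_k)\in(\nn{2d})^k}{\beta_1+\cdots+\beta_k=\alpha,\ \beta_j\neq 0},
\]
so that it remains to control $N_k(\alpha)$.

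Next I would bound $N_k(\alpha)$ by the number of \emph{all} ordered decompositions of $\alpha$ into $k$ parts, i.{\,}e.\ dropping the requirement $\beta_j\neq 0$, which can only increase the count. Working coordinate by coordinate, distributing $\alpha_i$ units into $k$ ordered slots is a stars-and-bars count, so
\[
N_k(\alpha)\le \prod_{i=1}^{2d}\binom{\alpha_i+k-1}{k-1}.
\]
I would then use the elementary bound $\binom{\alpha_i+k-1}{k-1}\le 2^{\alpha_i+k-1}$, which, together with $\sum_i\alpha_i=|\alpha|$, yields $N_k(\alpha)\le 2^{|\alpha|+2d(k-1)}$.

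Finally I would insert this estimate into the sum and evaluate a geometric series in $k$. Using $\tfrac 1k\le 1$ and summing,
\[
\sum_{1\le k\le |\alpha|}\frac 1k\,2^{|\alpha|+2d(k-1)}
\le 2^{|\alpha|}\sum_{k=1}^{|\alpha|}2^{2d(k-1)}
\le 2^{|\alpha|}\cdot 2^{2d|\alpha|}
=\big(2^{1+2d}\big)^{|\alpha|},
\]
where the middle inequality uses $2^{2d}-1\ge 1$ so that the geometric series is dominated by $2^{2d|\alpha|}$. This establishes \eqref{eq:estfaadb} with $C_0=2^{1+2d}$, a constant depending only on $d$ as required. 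I do not expect a genuine obstacle here: the only points needing care are the passage from the nonzero-part count to the stars-and-bars count and the choice to bound the binomial coefficients by powers of two, which collapses the $k$-sum to a convergent geometric series controlled by its last term.
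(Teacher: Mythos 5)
Your proof is correct and follows essentially the same route as the paper's: bound each $(\beta_j!)^{s_0-1}$ by $1$ using $s_0\le 1$, count the ordered decompositions of $\alpha$ coordinatewise by a binomial coefficient, bound the binomials by powers of $2$, and sum over $k$. If anything, your version is slightly cleaner, since you use the exact stars-and-bars count $\binom{\alpha_i+k-1}{k-1}$ and treat the multi-index case directly, whereas the paper's appendix works with the upper bound $\binom{\alpha_i+k}{k}$ and writes out only the scalar case.
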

The proof of Lemma \ref{lem:estfaadb} can be found in the Appendix.
\begin{proof}[Proof of Theorem \ref{thm:logomega}]
	We have to show that $c(\cdo , Y)$ satisfies $\Gamma^{(\vartheta)}_s$
	estimates, uniformly with respect to $Y\in\rr{2d}$. 
	If $c(X,Y)\ge0$, then it follows by submultiplicativity of $\omega$, that
	\begin{align*}
		c(X,Y)&=\log\omega(Y+X)-\log\omega(Y)\lesssim
		\log\omega(Y)+\log v(X)-\log\omega(Y)
		\\
		&\le \vartheta(X),
	\end{align*}
	for any $Y\in\rr{2d}$. Again by moderateness, when $c(X,Y)\le0$,
	recall that $\omega(X+Y)\ge\frac{\omega(Y)}{v(X)}$, so that
	\[
		c(X,Y)\gtrsim \log\frac{\omega(Y)}{v(X)}-\log\omega(Y)\ge-\log v(X)
		\ge-\vartheta(X),
	\]
	and we can conclude $|c(X,Y)|\lesssim\vartheta(X)$, $X\in\rr{2d}$.
	Now, for $\alpha\in\nn{2d}$, $\alpha\not=0$, \eqref{eq:faadb} gives
	\[
		\partial^\alpha_X c(X,Y)=
		\alpha! \sum_{1\le k\le |\alpha|}\frac{(-1)^{k+1}}{k\,[\omega(X+Y)]^k}
		\sum_{\substack{\beta_1+\cdots+\beta_k=\alpha \\ \beta_j\not=0,\,  j=1,\dots,k}}
		\prod_{1\le j\le k}
			\frac{(\partial^{\beta_j}\omega)(X+Y)}{\beta_j!}
	\]
	We can then estimate, in view of \eqref{eq:estfaadb},
	\begin{align*}
		|\partial^\alpha_X c(X,Y)|
		&\lesssim \alpha! \sum_{1\le k\le |\alpha|}
		\frac{1}{
		k\,[\omega(X+Y)]^{k}}
		\sum_{\substack{\beta_1+\cdots+\beta_k=\alpha \\ \beta_j\not=0,\,  j=1,\dots,k}}
		\prod_{1\le j\le k}
		\frac{\omega(X+Y) h^{|\beta_j|}(\beta_j!)^{s_0}}{\beta_j!}
		\\
		&=
		h^{|\alpha|} \, \alpha! 
		\sum_{1\le k\le |\alpha|}
		\frac{1}{k}
		\sum_{\substack{\beta_1+\cdots+\beta_k=\alpha \\ \beta_j\not=0,\,  j=1,\dots,k}}
		\prod_{1\le j\le k} (\beta_j!)^{s_0-1}\lesssim (C_0h)^{|\alpha|} (\alpha!)^{s},
	\end{align*}
	which concludes the proof.
\end{proof}
\begin{prop}\label{prop:confest}
	Assume $s>\frac{1}{2}$ and $\omega(X)\lesssim e^{r|X|^\frac{1}{s}}$ for some $r>0$.
	Let $\{a(\cdo,Y)\}_{Y\in\rr{2d}}$ be a uniformly bounded family in $\Sigma_s(\rr{2d})$
	and $\{c(\cdo,Z)\}_{Z\in\rr{2d}}$ be a bounded family in $\Gamma^{(\omega)}_s(\rr{2d})$.
	Then, 
	\[
		\{a(\cdo,Y)\wpr c(\cdo,Z)\}_{Y,Z\in\rr{2d}}
		\text{ and }
		\{c(\cdo,Z)\wpr a(\cdo, Y)\}_{Y,Z\in\rr{2d}}
	\]
	are bounded families in $\maclS_s(\rr{2d})$.
\end{prop}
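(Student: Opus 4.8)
The plan is to reduce the statement to the continuity of the Weyl product already recorded in Proposition \ref{prop:prcont}, and then to transfer it to families by a boundedness argument. First I would unwind the hypotheses into quantitative form. Saying that $\{a(\cdo,Y)\}_{Y}$ is uniformly bounded in $\Sigma_s(\rr{2d})$ means precisely that it is a bounded subset of the Fr\'echet space $\Sigma_s$, i.{\,}e. for every $h>0$ there is $C_h$ with $\nm{a(\cdo,Y)}{\maclS_{s,h}^s}\le C_h$ for all $Y$; equivalently, by Proposition \ref{thm:srrequiv}(5), for all $h,r>0$ there is $C_{h,r}$ with $|D^\alpha a(X,Y)|\le C_{h,r}\,h^{|\alpha|}\alpha!^s e^{-r|X|^{1/s}}$ uniformly in $Y$. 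Likewise, $\{c(\cdo,Z)\}_Z$ being a bounded family in $\Gamma^{(\omega)}_s$ means, since this space is a regular inductive limit of the Banach steps with norm $\sup_{X,\alpha}|D^\alpha f(X)|/(\omega(X)\,h^{|\alpha|}\alpha!^s)$, that all its members lie in and are bounded in one single step: there exist $h_0>0$ and $C_0$ with $|D^\alpha c(X,Z)|\le C_0\,\omega(X)\,h_0^{|\alpha|}\alpha!^s$ uniformly in $Z$. The target, a bounded family in $\maclS_s$, is the analogous one-step statement $|D^\alpha(a(\cdo,Y)\wpr c(\cdo,Z))(X)|\lesssim h^{|\alpha|}\alpha!^s e^{-r|X|^{1/s}}$ for some $h,r>0$, uniformly in $Y,Z$.

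The main step is then soft. By Proposition \ref{prop:prcont} the bilinear map $(\phi,c)\mapsto \phi\wpr c$ is continuous from $\Sigma_s(\rr{2d})\times\Gamma^{(\omega)}_s(\rr{2d})$ into $\maclS_s(\rr{2d})$; here the growth bound $\omega(X)\lesssim e^{r|X|^{1/s}}$ is exactly what guarantees that the image lands in the Roumieu space $\maclS_s$ and not in a larger one. Since $\Sigma_s$ is Fr\'echet and $\Gamma^{(\omega)}_s$ is a (DF)-space, this continuity, joint or at worst separate and hence hypocontinuous by the Fr\'echet/(DF) structure, forces the map to send a product of bounded sets to a bounded set. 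Applying this to the bounded sets $\{a(\cdo,Y)\}_Y$ and $\{c(\cdo,Z)\}_Z$ yields at once that $\{a(\cdo,Y)\wpr c(\cdo,Z)\}_{Y,Z}$ is bounded in $\maclS_s$, which is the first assertion.

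For the reversed order I would use the involutive symmetry of the Weyl product. From $\op ^w(a)^*=\op ^w(\overline a)$ one gets $\overline{\phi\wpr c}=\overline c\wpr\overline\phi$ for admissible $\phi,c$. Since $\omega$ is real valued, $|D^\alpha\overline{c(\cdo,Z)}|=|D^\alpha c(\cdo,Z)|$, so $\{\overline{c(\cdo,Z)}\}_Z$ is again bounded in $\Gamma^{(\omega)}_s$; similarly $\{\overline{a(\cdo,Y)}\}_Y$ is bounded in $\Sigma_s$, both spaces being conjugation invariant. Writing $c(\cdo,Z)\wpr a(\cdo,Y)=\overline{\overline{a(\cdo,Y)}\wpr\overline{c(\cdo,Z)}}$ and applying the first assertion to the conjugated families, then using that complex conjugation preserves boundedness in $\maclS_s$, gives that $\{c(\cdo,Z)\wpr a(\cdo,Y)\}_{Y,Z}$ is bounded in $\maclS_s$ as well.

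The main obstacle is the rigorous transfer ``continuous bilinear map $\Rightarrow$ bounded$\times$bounded maps to bounded'' in the non-normable category, since Proposition \ref{prop:prcont} is phrased for single elements and one must justify the hypocontinuity on the inductive limit. If one prefers to bypass these functional-analytic subtleties, the alternative and, I expect, more laborious route is to re-derive the uniform estimate by hand: insert the twisted-convolution representation $(\phi\wpr c)(X)=\pi^{-d}\int \phi(X-W)\,(\mascF_\sigma c)(W)\,e^{2i\sigma(X,W)}\,dW$, use the super-exponential decay $e^{-r|X|^{1/s}}$ of the $\Sigma_s$ factor, valid for every $r$, to absorb the growth $\omega(X)\lesssim e^{r|X|^{1/s}}$ of the $\Gamma^{(\omega)}_s$ factor, and control the derivatives and the oscillatory integral with the same Leibniz-and-factorial bookkeeping used in Corollary \ref{cor:DerEstLocWeylprod} and Lemma \ref{Lemma:BoundedGSFamWeylComp}, while tracking that all constants depend only on the one-step data $C_h$, $C_0$, $h_0$ and not on $Y,Z$.
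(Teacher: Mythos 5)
Your proof is correct, but your primary mechanism differs from the paper's. The paper proves this proposition by going back to \eqref{Eq:LocEst2} of Lemma \ref{lem:conf}: it records the single-element estimate $|D_X^\alpha(\phi\wpr a)(X)|\le Ch^{|\alpha|}\alpha!^s e^{-r|X|^{1/s}}$ and then observes that, by inspection of the proof of that lemma, the constants $C,h,r$ depend continuously on $\phi\in\Sigma_s(\rr{2d})$ and $a\in\Gamma^{(\omega)}_s(\rr{2d})$, hence are uniform over bounded families; this is essentially your "more laborious" fallback route via Corollary \ref{cor:DerEstLocWeylprod} and Lemma \ref{Lemma:BoundedGSFamWeylComp}. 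Your main route instead invokes Proposition \ref{prop:prcont} as a black box and upgrades it by the abstract principle that a (jointly, or separately and hence hypo-) continuous bilinear map between barrelled spaces carries products of bounded sets to bounded sets; this is legitimate (both $\Sigma_s$, a Fr\'echet space, and the inductive limit $\Gamma^{(\omega)}_s$ are barrelled, and for a jointly continuous bilinear map the transfer is immediate from absorption of bounded sets by neighbourhoods of the origin), and it buys a shorter argument at the cost of the functional-analytic subtleties you yourself flag, notably regularity of the inductive limit, which is in any case moot here since the paper's notion of a bounded family in $\Gamma^{(\omega)}_s$ is precisely one-step boundedness. Your conjugation identity $c\wpr a=\overline{\overline a\wpr\overline c}$ for the reversed order is a clean device the paper does not spell out (it relies instead on the symmetric treatment of both orders inside the proof of Lemma \ref{lem:conf}). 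One small correction: the hypothesis $\omega(X)\lesssim e^{r|X|^{1/s}}$ is not what forces the product into $\maclS_s$ --- Proposition \ref{prop:prcont} already lands in $\maclS_s$ for every $\vartheta\in\mascP_E(\rr{2d})$, the decay coming from the factor $\min(\omega(X),\omega(Y))$ in \eqref{Eq:LocEst2}, which is bounded for fixed $Y$ --- so your parenthetical justification of that assumption misattributes its role, though this does not affect the validity of the argument.
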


\par

\begin{proof}
An immediate consequence of \eqref{Eq:LocEst2} in Lemma \ref{lem:conf} is the following
\begin{equation}\label{Eq:LocEst2bis}
    |D_X^\alpha(\phi\wpr a)(X)|\leq C h^{|\alpha|}\alpha!^s e^{-r|X|^{\frac{1}{s}}},
\end{equation}
which implies that, for $\phi\in\Sigma_s$ and $a\in\Gamma ^{(\omega )}_s$,
\eqref{Eq:LocEst2bis} holds if and only if $\phi\wpr a$ belongs to $\maclS_s$.
Then by the proof of \eqref{Eq:LocEst2bis} it follows that the constant $C,h$
and $r$ can be chosen depending continuously on $\phi\in\Sigma _s(\rr {2d})$
and $a\in \Gamma ^{(\omega )}_s(\rr {2d})$. Hence if $\Omega_1$ is a bounded
family in $\Sigma _s(\rr {2d})$ and $\Omega_2$ is a bounded family in
$\Gamma^{(\omega )}_s(\rr {2d})$, then $\{\phi\wpr a\}_{\phi \in \Omega
_1,a\in\Omega_2}$ is a bounded family in $\maclS_s(\rr{2d})$.
\end{proof}

\par

The following result can be found, e.g., in \cite{To3}.
\begin{lemma}\label{lem:estoprnorms}
		Let $a\in\mascS '(\rr{2d})$. Then,
		\begin{align}\label{eq:estopa}
			\| a\|_{s_\infty ^w}
			&\le C
			\sum_{|\alpha|\le d+1}\|\partial^\alpha a\|_{L^\infty}
			\intertext{and}
			\label{eq:estopb}
			\|a\|_{L^\infty}
			&\le C
			\sum_{|\alpha|\le 2d+1}
			\|\partial^\alpha a\|_{s_\infty ^w},
		\end{align}
    for some constant $C>0$ depending only on the dimension
		$d$.
\end{lemma}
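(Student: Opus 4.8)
The plan is to prove both estimates by passing through the short-time Fourier transform on the phase space $\rr {2d}$ and exploiting the covariance identity \eqref{tWigpseudolink}, which ties the operator norm $\nm a{s_\infty ^w}=\nm {\op ^w(a)}{L^2(\rr d)\to L^2(\rr d)}$ to the size of $a$. The bridge is that $s_\infty ^w(\rr {2d})$ is sandwiched between the Sjöstrand class and $M^{\infty ,\infty}$. On one side, $\nm a{s_\infty ^w}\lesssim \nm a{M^{\infty ,1}}$, which is the case $p=q=2$, $\omega =\omega _1=\omega _2=1$, $r=1$ of Proposition \ref{Prop:pseudomod}. On the other side, choosing the window $\Phi$ so that $\overline \Phi$ is a cross-Wigner distribution $W_{g_0,f_0}$ and taking in \eqref{tWigpseudolink} the vectors $f,g$ to be phase-space translates and modulates of $f_0,g_0$, the covariance of $W_{g,f}$ shows that every value $V_\Phi a(X,\Xi)$ equals, up to a constant and a unimodular factor, a matrix coefficient $(\op ^w(a)f_{X,\Xi},g_{X,\Xi})_{L^2}$ with $\nm {f_{X,\Xi}}{L^2}=\nm {g_{X,\Xi}}{L^2}=1$. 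Hence $\nm a{M^{\infty ,\infty}}\lesssim \nm a{s_\infty ^w}$, the window being interchangeable since $M^{\infty ,\infty}$ is window-independent.

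For the second estimate I would first apply $\nm {\partial ^\alpha a}{M^{\infty ,\infty}}\lesssim \nm {\partial ^\alpha a}{s_\infty ^w}$ for each $\alpha$ with $|\alpha |\le 2d+1$. Differentiating under the short-time Fourier transform gives $V_\Phi (\partial _j a)=i\Xi _j V_\Phi a-V_{\partial _j\Phi}a$, so by induction on $|\alpha |$ one obtains $\eabs \Xi ^{2d+1}\sup _X|V_\Phi a(X,\Xi )|\lesssim \sum _{|\alpha |\le 2d+1}\nm {\partial ^\alpha a}{s_\infty ^w}$. The short-time Fourier inversion formula then yields $|a(Y)|\lesssim \iint |V_\Phi a(X,\Xi )|\,|\Phi (Y-X)|\,dX\,d\Xi$, and since $\int _{\rr {2d}}\eabs \Xi ^{-(2d+1)}\,d\Xi <\infty$ — the exponent $2d+1$ being exactly above the dimension $2d$ of the frequency variable $\Xi$ — I would conclude $\nm a{L^\infty}\lesssim \sum _{|\alpha |\le 2d+1}\nm {\partial ^\alpha a}{s_\infty ^w}$.

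For the first estimate the same circle of ideas already gives the result with $2d+1$ derivatives: $\nm a{s_\infty ^w}\lesssim \nm a{M^{\infty ,1}}=\int _{\rr {2d}}\sup _X|V_\Phi a(X,\Xi )|\,d\Xi$, and integration by parts in the defining integral of $V_\Phi a$ produces $\sup _X|V_\Phi a(X,\Xi )|\lesssim \eabs \Xi ^{-N}\sum _{|\alpha |\le N}\nm {\partial ^\alpha a}{L^\infty}$, which is $\Xi$-integrable as soon as $N=2d+1$. To reach the sharp count $d+1$ in the statement I would instead run the Calder\'on--Vaillancourt argument: fix a lattice partition of unity $\sum _{j\in \zz {2d}}\chi (\cdo -j)=1$ and write $a=\sum _j a\,\chi (\cdo -j)$, so that each $\op ^w(a\,\chi (\cdo -j))$ is bounded on $L^2$ with norm $\lesssim \sum _{|\alpha |\le d+1}\nm {\partial ^\alpha a}{L^\infty}$ uniformly in $j$. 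The Weyl-product structure of $\op ^w(a\,\chi (\cdo -j))^*\op ^w(a\,\chi (\cdo -k))$ makes these products decay in $|j-k|$, and $d+1$ derivatives provide exactly the decay needed for square-summability over the $2d$-dimensional lattice, so the Cotlar--Stein lemma delivers $\nm a{s_\infty ^w}\lesssim \sum _{|\alpha |\le d+1}\nm {\partial ^\alpha a}{L^\infty}$.

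The hard part will be precisely this last, sharp count. The soft route through $M^{\infty ,1}$ loses a factor and only yields $2d+1$ derivatives, so the gain down to $d+1$ rests on the $L^2$ (square-summable) almost-orthogonality in the Cotlar--Stein lemma rather than on $L^1$-integrability in $\Xi$; the delicate point is the bookkeeping of how many derivatives are consumed to produce summable decay of $\op ^w(a\,\chi (\cdo -j))^*\op ^w(a\,\chi (\cdo -k))$ over $\zz {2d}$, and the full computation is the one carried out in \cite{To3}. The second estimate, by contrast, is essentially routine once the two embeddings of $s_\infty ^w$ above are in hand.
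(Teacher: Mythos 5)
The paper does not actually prove this lemma: it is quoted as a known result from \cite{To3}, so there is no in-paper argument to measure you against. Judged on its own terms, your proposal is correct and complete for \eqref{eq:estopb}: the embedding $\nm a{M^{\infty ,\infty}}\lesssim \nm a{s_\infty ^w}$ via the covariance of the Wigner distribution and \eqref{tWigpseudolink}, the identity $V_\Phi (\partial _ja)=i\Xi _jV_\Phi a-V_{\partial _j\Phi}a$ applied $2d+1$ times, and STFT inversion together with $\int _{\rr {2d}}\eabs \Xi ^{-(2d+1)}\, d\Xi <\infty$ do give \eqref{eq:estopb}. For \eqref{eq:estopa}, what you actually prove is the bound with $2d+1$ derivatives (via $\nm a{s_\infty ^w}\lesssim \nm a{M^{\infty ,1}}$ from Proposition \ref{Prop:pseudomod} and integration by parts in $V_\Phi a$); the reduction to the sharp count $d+1$ is only sketched, and the sketch as stated does not close. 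Cotlar--Stein over the lattice $\zz {2d}$ requires $\sup _j\sum _k\nm {T_j^*T_k}{}^{1/2}<\infty$, hence $\nm {T_j^*T_k}{}\lesssim \eabs {j-k}^{-4d-\epsilon}$, whereas $d+1$ derivatives on each of the two factors yield at most roughly $2d+2$ powers of decay by integration by parts in the Weyl product --- insufficient for $d\ge 2$. The known proofs that reach derivative counts of order $d$ (Cordes, Coifman--Meyer, Hwang, or the argument in \cite{To3}) use finer decompositions than a unit-cube partition of unity, and you ultimately defer the computation to \cite{To3} yourself; so the constant $d+1$ in \eqref{eq:estopa} is not established by your argument.

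Two mitigating remarks. First, deferring the sharp form of \eqref{eq:estopa} to \cite{To3} is precisely what the paper does, so your proposal is not worse than the source in this respect. Second, and more substantively, the sharp count is never needed downstream: in the only places where the lemma is invoked (the proofs of Proposition \ref{prop:symbdefequiv} and Theorem \ref{thm:gamma1}), replacing $d+1$ by $2d+1$ merely changes the constant $h$ in estimates of the form $h^{|\alpha |}(\alpha !)^s$, and all conclusions there are stated ``for some $h>0$''. So the weaker inequality you do prove rigorously would suffice for every application in the paper; only the lemma as literally stated remains unproven by your route.
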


\par

\begin{prop}\label{prop:symbdefequiv}
	Let $a\in \mascS^\prime(\rr{2d})$, $s\ge\frac{1}{2}$ and set
	$b_{\alpha\beta}(X)=\partial^\alpha(X^\beta a(X))$ when
	$\alpha,\beta \in\nn {2d}$.
	Then the following conditions are equivalent:
	\begin{enumerate}
		\item $a\in\maclS_s(\rr{2d})$;
		\item for some $h>0$ it holds 
			\[
			\|b_{\alpha\beta}\|_{L^\infty}
			\lesssim h^{|\alpha+\beta|}(\alpha!\beta!)^s,
			\qquad \alpha,\beta\in\nn {2d}\text ;
			\]
		\item for some $h>0$ it holds 
			\[
			\|b_{\alpha\beta}\|_{s_\infty ^w}
			\lesssim h^{|\alpha+\beta|}(\alpha!\beta!)^s,
			\qquad \alpha,\beta\in\nn {2d}\text .
			\]
	\end{enumerate}
\end{prop}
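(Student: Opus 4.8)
The plan is to reduce all three conditions to the standard Gelfand--Shilov characterisation in Proposition \ref{thm:srrequiv}, and then to pass between the different orderings of multiplication and differentiation, and between the $L^\infty$- and $s_\infty ^w$-norms, by elementary means. Recall that, by Proposition \ref{thm:srrequiv}(4) applied with $p=\infty$ and $\sigma =s$, condition (1) is equivalent to the existence of $h>0$ with
\begin{equation}\label{Eq:stdGS}
\nm{X^\alpha \partial^\beta a}{L^\infty}\lesssim h^{|\alpha +\beta |}(\alpha !\beta !)^s,\qquad \alpha ,\beta \in \nn{2d};
\end{equation}
since this holds for \emph{all} multi-indices and the right-hand side is symmetric in $\alpha ,\beta$, it is equivalent to the same bound for $\nm{X^\beta \partial^\alpha a}{L^\infty}$. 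I would then split the proof into the equivalence (1)$\Leftrightarrow$(2), which is purely a matter of commuting $X^\beta$ past $\partial^\alpha$ by the Leibniz rule, and the equivalence (2)$\Leftrightarrow$(3), which rests on Lemma \ref{lem:estoprnorms}. The structural observation that makes both steps go through is that the family $\{b_{\alpha\beta}\}$ is stable under differentiation: $\partial^\delta b_{\alpha\beta}=\partial^{\alpha +\delta}(X^\beta a)=b_{\alpha +\delta ,\beta}$, so extra derivatives merely shift the first index.

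I would treat (2)$\Leftrightarrow$(3) first. Applying the two inequalities of Lemma \ref{lem:estoprnorms} to $b_{\alpha\beta}$ and using $\partial^\delta b_{\alpha\beta}=b_{\alpha +\delta ,\beta}$ gives
\[
\nm{b_{\alpha\beta}}{s_\infty ^w}\le C\sum_{|\delta |\le d+1}\nm{b_{\alpha +\delta ,\beta}}{L^\infty}
\quad\text{and}\quad
\nm{b_{\alpha\beta}}{L^\infty}\le C\sum_{|\delta |\le 2d+1}\nm{b_{\alpha +\delta ,\beta}}{s_\infty ^w}.
\]
Thus each of (2),(3) bounds the other family at a shifted index; feeding in the hypothesised estimate and using $(\alpha +\delta )!\le 2^{|\alpha +\delta |}\alpha !\,\delta !$ together with $|\delta |\le 2d+1$ --- so that $h^{|\delta |}$, $\delta !^{\,s}$ and $2^{s|\delta |}$ are all bounded by constants depending only on $d$ and $h$ --- one absorbs the finitely many extra derivatives into the constant and into a rescaling of $h$ (replacing $h$ by $2^s h$). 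This yields (2)$\Leftrightarrow$(3) with no difficulty.

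The core of the argument is (1)$\Leftrightarrow$(2). Expanding by the Leibniz rule,
\[
b_{\alpha\beta}=\partial^\alpha (X^\beta a)=\sum_{\gamma \le \min (\alpha ,\beta )}\binom{\alpha}{\gamma}\frac{\beta !}{(\beta -\gamma )!}\,X^{\beta -\gamma}\partial^{\alpha -\gamma}a,
\]
a finite, triangular sum whose leading term ($\gamma =0$) is $X^\beta \partial^\alpha a$. For (1)$\Rightarrow$(2) I would substitute \eqref{Eq:stdGS} into each summand and reorganise the combinatorial factors as
\[
\binom{\alpha}{\gamma}\binom{\beta}{\gamma}\gamma !\,\big((\alpha -\gamma )!(\beta -\gamma )!\big)^{s}
=\frac{\alpha !\,\beta !\,\big((\alpha -\gamma )!(\beta -\gamma )!\big)^{s-1}}{\gamma !}.
\]
For $s\ge 1$ this is $\le (\alpha !\beta !)^s/\gamma !$ and the residual series $\sum_\gamma h^{-2|\gamma |}/\gamma !$ converges; for $\tfrac12\le s<1$ one instead writes $\alpha !\beta !=(\alpha !\beta !)^s(\alpha !\beta !)^{1-s}$ and bounds $\alpha !/(\alpha -\gamma )!\le 2^{|\alpha |}\gamma !$, which leaves a harmless factor $(\gamma !)^{1-2s}\le 1$ and a geometric sum in $|\gamma |$. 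In both regimes the outcome is $\lesssim \tilde h^{\,|\alpha +\beta |}(\alpha !\beta !)^s$, i.e. (2). The converse (2)$\Rightarrow$(1) follows by inverting the same triangular relation: solving for the leading term,
\[
X^\beta \partial^\alpha a=b_{\alpha\beta}-\sum_{0\ne \gamma \le \min (\alpha ,\beta )}\binom{\alpha}{\gamma}\frac{\beta !}{(\beta -\gamma )!}\,X^{\beta -\gamma}\partial^{\alpha -\gamma}a,
\]
and an induction on $|\alpha |+|\beta |$, applied to the strictly lower-order terms on the right together with the bound on $b_{\alpha\beta}$ from (2), reproduces \eqref{Eq:stdGS}, hence (1).

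The main obstacle is precisely the factorial bookkeeping in (1)$\Leftrightarrow$(2): one must keep the estimates uniform across the whole range $s\ge \tfrac12$, and the sub-analytic regime $\tfrac12\le s<1$ (where the exponent $s-1$ is negative) has to be handled separately from $s\ge 1$, as indicated above. The remaining subtlety is the inductive inversion of the Leibniz relation in the direction (2)$\Rightarrow$(1); this is routine but must be organised so that the constants do not deteriorate as $|\alpha |+|\beta |$ grows, which is guaranteed by the triangular ($\gamma =0$ leading) structure of the expansion.
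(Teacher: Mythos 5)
Your argument for the equivalence of (2) and (3) is exactly the paper's: apply Lemma \ref{lem:estoprnorms} to $b_{\alpha\beta}$, use $\partial^\delta b_{\alpha\beta}=b_{\alpha+\delta,\beta}$, and absorb the finitely many extra derivatives via $(\alpha+\delta)!\le 2^{|\alpha+\delta|}\alpha!\,\delta!$ into a rescaled $h$. The equivalence of (1) and (2), which you work out via the Leibniz expansion and its triangular inversion, is simply declared ``well-known'' in the paper and not proved there; your sketch of it is sound, with the only delicate point being the one you yourself flag, namely keeping the constants uniform in the inductive step of (2)$\Rightarrow$(1) when $\tfrac12\le s<1$.
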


\par

\begin{proof}
	The equivalence between (1) and (2) is well-known. The proof of 
	the equivalence
	of (2) and (3) follows by a straightforward application of
	Lemma \ref{lem:estoprnorms}. In fact, assume that (2) holds true.
	Then \eqref{eq:estopa} gives
	\begin{align*}
			\| b_{\alpha\beta}\|_{s_\infty ^w}
			&\le C
			\sum_{|\gamma|\le d+1}
			\|\partial^\gamma b_{\alpha\beta}\|_{L^\infty(\rr{2d})}
			\lesssim
			\sum_{|\gamma|\le d+1}
			h^{|\alpha+\beta+\gamma|}((\alpha+\gamma)!\beta! )^s
			\\
			&=h^{|\alpha+\beta|}(\alpha!\beta!)^s
			\sum_{|\gamma|\le d+1}
			h^{|\gamma|}(\gamma!)^s
			\left (\frac{(\alpha+\gamma)!}{\alpha!\,\gamma!}
			\right )^s
			\\
			&\lesssim
			(2^sh)^{|\alpha+\beta|}(\alpha!\beta!)^s,
	\end{align*}
	with a constant depending only on $d$ and $h$,
	since
	%
	\begin{align*}
	\sum_{|\gamma|\le d+1}
			h^{|\gamma|}(\gamma!)^s
			\left(\frac{(\alpha+\gamma)!}{\alpha!\,\gamma!}\right)^s
			\le 
			C_1\cdot 2^{s(|\alpha |+d+1)}
			\le C_2 2^{s|\alpha +\beta|},
	\end{align*}
	where the constants $C_1$ and $C_2$ only depend on $d$ and $h$.
	Then, (3) holds true, as claimed. The proof of the 
	converse follows by similar argument,
	employing \eqref{eq:estopb}.
\end{proof}

\par

We have now the following.

\par
\begin{thm}\label{thm:gamma1}
		Let $a\in\mascS^\prime(\rr{2d})$ and $s>0$. Then the following
		conditions are equivalent:
		\begin{enumerate}
		\item $a\in\Gamma^{(1)}_s(\rr{2d})$;
		\item there exists $h>0$ such that
			\[
			\|\partial^\alpha a\|_{L^\infty(\rr{2d})}
			\lesssim h^{|\alpha|}(\alpha!)^s,
			\]
			for all $\alpha\in\zzp{2d}$;
		\item there exists $h>0$ such that
			\[
			\|\partial^\alpha a\|_{s_\infty ^w}
			\lesssim h^{|\alpha|}(\alpha!)^s,
			\]
			for all $\alpha\in\zzp{2d}$;
		\item there exists $h>0$ such that
			\[
				\| 
				  \scal {T_1}{D_X} \cdots \scal {T_m}{D_X} a
				\|_{s_\infty ^w}
				\lesssim h^m (m!)^s,
			\]
			for any $T_1,\dots,T_m\in\rr{2d}$ such that $|T_j|\le1$,
			$j=1,\dots,m$, $m\ge1$.
	\end{enumerate}
\end{thm}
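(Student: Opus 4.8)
The plan is to establish the chain of equivalences $(1)\Leftrightarrow(2)\Leftrightarrow(3)\Leftrightarrow(4)$. Since each condition is an existential ``for some $h>0$'' statement, every transition is allowed to replace $h$ by a constant multiple of itself, with the multiplicative factor depending only on the dimension $d$ and on $s$. First, $(1)\Leftrightarrow(2)$ is immediate from Definition \ref{Def:SymbClExp} with $\omega=1$: since $D^\alpha=(-i)^{|\alpha|}\partial^\alpha$ one has $\|\partial^\alpha a\|_{L^\infty}=\|D^\alpha a\|_{L^\infty}$, so the defining estimate $|D^\alpha a(X)|\lesssim h^{|\alpha|}\alpha!^s$ is literally condition (2).

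The equivalence $(2)\Leftrightarrow(3)$ follows from Lemma \ref{lem:estoprnorms} by the same computation as in the proof of Proposition \ref{prop:symbdefequiv}, now specialised to the case with no polynomial weight (that is, $\beta=0$). To pass from (2) to (3) I would apply \eqref{eq:estopa} to $\partial^\alpha a$, obtaining
$\|\partial^\alpha a\|_{s_\infty ^w}\le C\sum_{|\gamma|\le d+1}\|\partial^{\alpha+\gamma}a\|_{L^\infty}\lesssim\sum_{|\gamma|\le d+1}h^{|\alpha+\gamma|}((\alpha+\gamma)!)^s$,
and then absorb the surplus factors via $(\alpha+\gamma)!\le 2^{|\alpha+\gamma|}\alpha!\gamma!$, so that the finite sum over $|\gamma|\le d+1$ contributes only a constant depending on $d$ and $h$. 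This yields (3) with $2^sh$ in place of $h$. The converse is identical, using \eqref{eq:estopb} and summing over $|\gamma|\le 2d+1$.

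Finally, for $(3)\Leftrightarrow(4)$ I would pass through the coordinate derivatives. For $(3)\Rightarrow(4)$, expand each directional derivative as $\scal{T_j}{D_X}=\sum_{k}(T_j)_k D_k$, so that $\scal{T_1}{D_X}\cdots\scal{T_m}{D_X}a$ is a finite linear combination of terms $D_{k_1}\cdots D_{k_m}a=D^\alpha a$ with $|\alpha|=m$; using $|(T_j)_k|\le|T_j|\le 1$, the bound $\alpha!\le m!$, and $\sum_k|(T_j)_k|\le\sqrt{2d}$ (Cauchy--Schwarz), the weak triangle inequality for $\|\cdot\|_{s_\infty ^w}$ gives $\lesssim(\sqrt{2d}\,h)^m(m!)^s$. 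For $(4)\Rightarrow(3)$, given $\alpha$ with $|\alpha|=m$, choose the directions $T_1,\dots,T_m$ to be the standard unit vectors $e_1,\dots,e_{2d}$ repeated $\alpha_1,\dots,\alpha_{2d}$ times; since constant-coefficient derivatives commute, $\scal{T_1}{D_X}\cdots\scal{T_m}{D_X}a=D^\alpha a$, whence $\|\partial^\alpha a\|_{s_\infty ^w}=\|D^\alpha a\|_{s_\infty ^w}\lesssim h^{|\alpha|}(|\alpha|!)^s$.

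The one point to watch, and the only genuinely delicate step, is that $(|\alpha|!)^s$ is larger than $(\alpha!)^s$, so at first sight (4) looks weaker than (3). This gap is repaired by the multinomial inequality $|\alpha|!\le(2d)^{|\alpha|}\alpha!$ (equivalently $\sum_{|\alpha|=m}m!/\alpha!=(2d)^m$), which converts the bound into $((2d)^sh)^{|\alpha|}(\alpha!)^s$ and thereby establishes (3). Everything else reduces to a routine application of Lemma \ref{lem:estoprnorms}, the factorial bookkeeping already used for Proposition \ref{prop:symbdefequiv}, and the commutativity of partial derivatives.
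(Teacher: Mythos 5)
Your proof is correct and follows essentially the same route as the paper: (1)$\Leftrightarrow$(2) directly from Definition \ref{Def:SymbClExp}, (2)$\Leftrightarrow$(3) via Lemma \ref{lem:estoprnorms} exactly as in Proposition \ref{prop:symbdefequiv}, and (3)$\Leftrightarrow$(4) by expanding the directional derivatives $\scal{T_j}{D_X}$ in coordinate derivatives with the factorial bookkeeping $\alpha!\le|\alpha|!\le(2d)^{|\alpha|}\alpha!$. The only difference is that you spell out the implication (4)$\Rightarrow$(3) (coordinate directions plus the multinomial bound), which the paper dismisses as ``a completely similar argument''; this is a welcome but not substantively different elaboration.
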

\begin{proof}
	The equivalence between (1) and (2) is well known.	The equivalence of
	(2) and (3) is proved by an argument
	completely similar to the one employed in the proof of
	Proposition \ref{prop:symbdefequiv}, using Lemma	\ref{lem:estoprnorms}.
	It remains to prove only the equivalence
	with (4). Assume that (3) holds true, and let
	\[
		T_j=\sum_{l=1}^d t_{jl} \be_l + \sum_{l=1}^d \tau_{jl} \beps_l, 
	\]
	for the standard basis $(\be_l)_{l=1,\dots,d}$ of $\rr{d}$
	and the dual basis $(\beps_l)_{l=1,\dots,d}$. Recall that
	\[
		\scal {T_k}{D_X}a=
		  \sum_{l=1}^d t_{kl}\frac{\partial a}{\partial x_l}
		+
		  \sum_{l=1}^d \tau_{kl}\frac{\partial a}{\partial\xi_l},
	\]
	so that the symbol $\scal {T_1}{D_X} \cdots \scal {T_m}{D_X}a$
	is in the span of symbols of the form 
	\[
		\left(\prod_{j=1}^m t_j^{\beta_j}\tau_j^{\gamma_j}\right)
		\partial^\beta_x\partial^\gamma_\xi a,
		\quad \sum_{j=1}^m\beta_j=\beta, \sum_{j=1}^m\gamma_j=\gamma,
		|\beta+\gamma|=m,
	\]
	where the summation contains at most $(2d)^m$ terms.
	Since $|T_j|\le 1$, $j=1,\dots,m$, by \eqref{eq:estopa} we find
	\begin{align*}
				\| 
				   \scal {T_1}{D_X} \cdots \scal {T_m}{D_X} a
				\|_{s_\infty ^w}
				&\le (2d)^m\sup_{|\alpha|=m}
				\|\partial^\alpha a\|_{s_\infty ^w}
				\\
				&\lesssim\sup_{|\alpha|=m}
				\sum_{|\gamma|\le d+1}h^{|\alpha+\gamma|}
				\left ( (\alpha+\gamma)!\right )^s
				\\
				&=\sup_{|\alpha|=m} h^{|\alpha|}(\alpha!)^s
				\sum_{|\gamma|\le d+1} h^{|\gamma|}(\gamma !)^s
				\left(\frac{(\alpha+\gamma)!}{\alpha!\,\gamma !}\right)^s
				\\
				&\lesssim (2^{s+1}h)^m (m!)^s,
	\end{align*}
  	which proves that (4) holds true. 
  	As in the proof of Proposition \ref{prop:symbdefequiv}, the converse
  	implication is obtained by a completely similar argument, employing 
  	\eqref{eq:estopb}.
\end{proof}

\par

We are now ready to state and prove the first main result of this section, the following
Theorem \ref{thm:symbeqdiff}.
It deals with the existence of one-parameter groups of
pseudo-differential operators, obtained
as solutions to suitable evolution equations.

\par

\begin{thm}\label{thm:symbeqdiff}
	Let $s\ge1$, $\omega,\vartheta\in\mascP^0_{E,s}(\rr{2d})$ be such that 
	$\omega\in\Gamma^{(\omega)}_s(\rr{2d})$ and
	$\vartheta\in\Gamma^{(\vartheta)}_s(\rr{2d})$,
	and let
	$a_0\in\Gamma^{(\omega)}_s(\rr{2d})$, 
	$b\in\Gamma^{(1)}_s(\rr{2d})$.
	Then, there exists a unique smooth map
	$(t,X)\mapsto a(t,X)\in\cc{}$ 
	such that $a(t,\cdo)\in\Gamma_s^{(\omega\,\vartheta^t)}(\rr{2d})$
	for all $t\in\rr{}$, and
	\begin{equation}\label{eq:symeqdiff}
			\begin{cases}
				(\partial_t a)(t,\cdo)=(b+\log\vartheta)\wpr a(t, \cdo)
				\\
				a(0,\cdo)=a_0.
			\end{cases}
	\end{equation}
	If $\omega\equiv a_0\equiv 1$, then $a(t,X)$ also satisfies 
	\begin{equation}\label{eq:symeqdiffbis}
			\begin{cases}
				(\partial_t a)(t,\cdo)=a(t,\cdo)\wpr (b+\log\vartheta)
				\\
				a(0,\cdo)=a_0,
			\end{cases}
	\end{equation}
	and
	\begin{equation}\label{eq:sharprodsemigroupppt}
		a(t_1,\cdo)\wpr a(t_2,\cdo)=a(t_1+t_2,\cdo), \quad
		a(t,\cdo )\in \Gamma^{(\vartheta^t)}_s(\rr{2d}),\quad
		t,t_1,t_2\in \mathbf R.
	\end{equation}
\end{thm}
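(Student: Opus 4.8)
The plan is to convert the evolution equation \eqref{eq:symeqdiff} into a linear ordinary differential equation in one of the Banach spaces $E^\infty _{h,s}$ and then invoke Lemma \ref{Lemma:ODE}. Fix $R>0$, let $I_R=[-R,R]$, and choose $\phi ,\psi \in \Sigma _s(\rr {2d})$ with $\int \psi _Y\wpr \phi _Y\, dY=1$ as in \eqref{eq:pu}. For a candidate solution $a(t,\cdo )\in \Gamma ^{(\omega \vartheta ^t)}_s(\rr {2d})$ I would introduce the weight-normalised confinement
$$
c(t,Y,X) = \frac{(\phi _Y\wpr a(t,\cdo ))(X)}{\omega (Y)\,\vartheta (Y)^t}.
$$
By the characterisation of $\Gamma ^{(\omega \vartheta ^t)}_s$ through confinements in Proposition \ref{prop:charact}, together with the estimate \eqref{Eq:LocEst2} of Lemma \ref{lem:conf}, dividing by $\omega (Y)\vartheta (Y)^t$ shows that $\{c(t,Y,\cdo )\}$ is a uniformly bounded family in $\Gamma ^{(1)}_s$ (carrying the extra decay $e^{-|X-Y|^{\frac 1s}/h}$), i.e. $c\in E^\infty _{h,s}$ for some $h>0$ by Lemma \ref{Lemma:GammaLink}. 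Reconstruction in the reverse direction is legitimate precisely because of this decay, via $a(t,\cdo )=\int \psi _Y\wpr \big(\omega (Y)\vartheta (Y)^t c(t,Y,\cdo )\big)\, dY$. The initial datum $c(0,Y,\cdo )=(\phi _Y\wpr a_0)/\omega (Y)$ lies in $E^\infty _{h,s}$ by the same lemma, since $a_0\in \Gamma ^{(\omega )}_s$.

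Next I would differentiate $c$ in $t$. Writing $L=b+\log \vartheta$, using $\partial _t\vartheta (Y)^t=\log \vartheta (Y)\,\vartheta (Y)^t$, and inserting the partition of unity into $\partial _t a=L\wpr a$ after localising with $\phi _Z$, one obtains
$$
\partial _t c(t,Z,\cdo )=-\log \vartheta (Z)\, c(t,Z,\cdo )+\frac{1}{\omega (Z)\vartheta (Z)^t}\int \big(\phi _Z\wpr L\wpr \psi _Y\big)\wpr \big(\omega (Y)\vartheta (Y)^t c(t,Y,\cdo )\big)\, dY.
$$
The crucial point is the cancellation of the unbounded term: replacing $\log \vartheta$ by the constant $\log \vartheta (Z)$ inside the integral and using $\int \psi _Y\wpr (\phi _Y\wpr a)\, dY=a$ reproduces exactly $\log \vartheta (Z)\, c(t,Z,\cdo )$, which annihilates the prefactor $-\log \vartheta (Z)\,c$. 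Hence only $b$ and the confined correction $\log \vartheta (\cdo )-\log \vartheta (Z)$ survive, and this defines an operator $\maclK =\maclK _t$ with $\partial _t c=\maclK (c)$ whose kernel is built from $\phi _Z\wpr b\wpr \psi _Y$ and $\phi _Z\wpr (\log \vartheta -\log \vartheta (Z))\wpr \psi _Y$.

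The main obstacle is to verify that $\maclK _t$ maps $E^\infty _{h,s}$ into itself with a bound \eqref{Eq:KEstLemma} that is uniform for $t\in I_R$. Here I would use Theorem \ref{thm:logomega} (applied with its weight equal to our $\vartheta$), which places $\{\log \vartheta (\cdo +Z)-\log \vartheta (Z)\}_Z$ uniformly in a logarithmic class $\Gamma ^{(\vartheta _0)}_s$ with all nonzero derivatives uniformly in $\Gamma ^{(1)}_s$, together with $b\in \Gamma ^{(1)}_s$. Feeding these into the uniform Weyl-product confinement estimates of Lemmas \ref{Lemma:BoundedGSFamWeylComp}, \ref{lem:conf} and \ref{lem:confbis} (cf. \eqref{eq:estsharpbis}) shows that the $s_\infty ^w$-norms of $\phi _Z\wpr L\wpr \psi _Y$ and of their $\scal {T_1}{D_X}\cdots \scal {T_k}{D_X}$-derivatives decay like $e^{-r|Z-Y|^{\frac 1s}}$. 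Since $\omega ,\vartheta \in \mascP _{E,s}^0$, the weight ratio $\omega (Y)\vartheta (Y)^t/(\omega (Z)\vartheta (Z)^t)$ grows at most like $e^{r'|Z-Y|^{\frac 1s}}$ for \emph{every} $r'>0$, so for $h$ small it is absorbed by the decay and the $Y$-integral converges with the required uniformity over $t\in I_R$. With \eqref{Eq:KEstLemma} in hand, the (non-autonomous) Banach-space ODE theory underlying Lemma \ref{Lemma:ODE} yields a unique $t\mapsto c(t,\cdo ,\cdo )\in E^\infty _{h,s}$ on $I_R$; reconstructing $a$, verifying \eqref{eq:symeqdiff} and the membership $a(t,\cdo )\in \Gamma ^{(\omega \vartheta ^t)}_s$, and letting $R\to \infty$ give the unique smooth solution, uniqueness being inherited from the ODE since any solution in this class yields such a $c$.

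For the final assertions, assume $\omega \equiv a_0\equiv 1$, so $a(0,\cdo )=1$ and $a(t,\cdo )\in \Gamma ^{(\vartheta ^t)}_s$. To obtain the group law \eqref{eq:sharprodsemigroupppt}, fix $t_2$ and compare $g(t)=a(t,\cdo )\wpr a(t_2,\cdo )$ with $h(t)=a(t+t_2,\cdo )$: by associativity and the Leibniz rule for $\wpr$, both satisfy $\partial _t(\cdo )=L\wpr (\cdo )$, both equal $a(t_2,\cdo )$ at $t=0$, and by Proposition \ref{Prop:Gamma(omega)}(2) both lie in $\Gamma ^{(\vartheta ^{t+t_2})}_s$; since $\vartheta ^{t_2}$ again satisfies the hypotheses of the theorem, the uniqueness just established (with $\vartheta ^{t_2}$ and $a(t_2,\cdo )$ as data) forces $g=h$. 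In particular the family commutes under $\wpr$, so differentiating $a(t+\tau ,\cdo )=a(t,\cdo )\wpr a(\tau ,\cdo )$ in $\tau$ at $\tau =0$ and using $a(0,\cdo )=1$ yields the right-hand equation \eqref{eq:symeqdiffbis}, completing the proof.
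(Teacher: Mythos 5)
Your overall strategy is the paper's own: localise with $\phi _Y$, normalise by $\omega (Y)\vartheta (Y)^t$, cancel the unbounded term $\log \vartheta (Y)$ against the confined correction $\log \vartheta -\log \vartheta (Y)$, estimate the resulting kernel in $s^w_\infty$ via Theorem \ref{thm:logomega}, Proposition \ref{prop:confest} and Lemmas \ref{Lemma:BoundedGSFamWeylComp}--\ref{lem:confbis}, and solve the resulting linear ODE in $E^\infty _{h,s}$ by Lemma \ref{Lemma:ODE}; the group law is obtained exactly as in the paper, and your derivation of \eqref{eq:symeqdiffbis} by differentiating $a(t+\tau ,\cdo )=a(t,\cdo )\wpr a(\tau ,\cdo )$ at $\tau =0$ is a clean variant of the paper's differentiation of $a(t,\cdo )\wpr a(-t,\cdo )=1$.

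There is, however, one genuine gap, in the existence half of the argument. The kernel you derive carries the correction $\log \vartheta (\cdo )-\log \vartheta (Z)$ attached to the \emph{output} slot of $c(t,Z,\cdo )$ (this is the paper's $K_{Y,Z}$, built from $f(Y,\cdo )$ in \eqref{Eq:KKernelExp1}). That kernel is exactly right for uniqueness: any solution $a$ of \eqref{eq:symeqdiff} produces a $c$ solving this ODE, the ODE solution is unique, and $a$ is recovered by the reconstruction formula. But it does not yield existence: if you solve this ODE and set $a(t,\cdo )=\int \psi _Z\wpr \bigl (\omega (Z)\vartheta (Z)^t c(t,Z,\cdo )\bigr )\, dZ$, then in computing $\partial _ta$ the double integral contains $\psi _Z\wpr \phi _Z\wpr \bigl (b+\log \vartheta -\log \vartheta (Z)\bigr )\wpr \psi _Y\wpr (\cdots )$, where the correction depends on the very variable $Z$ over which the partition of unity $\int \psi _Z\wpr \phi _Z\, dZ=1$ must be summed; hence the factorisation fails and the verification of $\partial _ta=(b+\log \vartheta )\wpr a$ does not go through. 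The quantity $\int \log \vartheta (Z)\,\psi _Z\wpr \phi _Z\, dZ$ is a smoothed version of $\log \vartheta$, not $\log \vartheta$ itself, so the mismatch is real. This is precisely why the paper introduces the second kernel $\tK _{Y,Z}$, in which the correction $f(Z,\cdo )$ is indexed by the \emph{integration} variable of the ODE, proves the same $E^\infty _{h,s}$-bounds for the companion operator $\widetilde \maclK$, and constructs the solution from $c_2$ (with the $K$-equation and $c_1$ reserved for uniqueness). The estimates for $\widetilde \maclK$ are entirely analogous, so the repair is cheap, but it is a distinct step your proposal is missing.
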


\par

\begin{proof}
	First suppose that a solution $a(t,X)$ of \eqref{eq:symeqdiff} exists.
	Then,
	\[
		a(t,X)=a_0(X)+\int_0^t c(u,X)\, du
	\]
	with
	\[
		c(t,\cdo)=(b + \log\vartheta)\wpr a(t,\cdo)
		\in \Gamma_s^{(\omega \langle \log\vartheta\rangle \vartheta^t)}
		(\rr{2d}),
	\]
	in view of Theorem \ref{thm:logomega} and the properties of the Weyl product in the $\Gamma_s^{(\omega)}(\rr{2d})$ classes, see \cite{CaTo}.
	%
	%
	This implies that the map
	$t\mapsto a(t,\cdo)$ is $C^1$ from $[-R,R]$ into the symbol space
	\[
		\Gamma_s^{(\omega \langle \log\vartheta\rangle (\vartheta+\vartheta^{-1})^R)}(\rr{2d}).
	\]
	 Choose $s_0<s$, and 
	$\phi,\psi\in\maclS_{s_0}(\rr{2d})$ such that \eqref{eq:pu} holds true. Let
	\begin{equation}\label{c_1}
		c_1(t,Y,\cdo )=\omega(Y)^{-1}\vartheta(Y)^{-t}\,\phi_Y\wpr a(t,\cdo).
	\end{equation}
	By Lemma \ref{lem:conf} (1) we have that, for any $Y\in\rr{2d}$,
	$t\mapsto c_1(t,Y,\cdo )$ is a $C^1$ map
	from $[-R,R]$ into $\maclS_s(\rr{2d})$. 
	Moreover, 
	%
	\begin{align*}
		\partial_t c_1(t,Y ,\cdo )&=\omega(Y)^{-1}		\vartheta(Y)^{-t}\,\phi_Y\wpr (b+\log\vartheta)		\wpr a(t,\cdo)-c_1(t,Y ,\cdo ) \,\log\vartheta(Y).
	\end{align*}
	Let 
	\[
		f(Y,X)=b(X)+\log\frac{\vartheta(X)}{\vartheta(Y)}.
	\]
	Then,
	\begin{align*}
		(\partial_t c_1)(t,Y ,\cdo )&=\omega(Y)^{-1}\		\vartheta(Y)^t\int \phi_Y\wpr f(Y,\cdo )\wpr \psi_Z\wpr \phi_Z\wpr a(t,\cdo)
		\,dZ
		\\
		&=\int K_{Y,Z}(t,\cdo)\wpr c_1(t,Z ,\cdo )\,dZ
	\end{align*}
	with
	\begin{equation}\label{Eq:KKernelExp1}
		K_{Y,Z}(t,\cdo)=\frac{\omega(Z)\,\vartheta(Z)^t}
		{\omega(Y)\,\vartheta(Y)^t}\phi_Y\wpr f(Y,\cdo )\wpr \psi_Z,
	\end{equation}
	and
	\[
		c_1(0,Y,\cdo )=\omega(Y)^{-1}\phi_Y\wpr a_0.
	\]
	We also need to consider the similar equation where $f(Y,\cdo )$ is replaced
	by $f(Z,\cdo )$, that is
	\begin{equation}\label{c_2}
		\partial_t c_2(t,Y ,\cdo )=\int \tK_{Y,Z}(t,\cdo)\wpr c_2(t,Z ,\cdo )\,dZ,
	\end{equation}
	where
	\[
		\tK_{Y,Z}(t,\cdo)=\frac{\omega(Z)\,\vartheta(Z)^t}
		{\omega(Y)\,\vartheta(Y)^t}\phi_Y\wpr f(Z,\cdo )\wpr \psi_Z,
	\]
	and
	\begin{equation}\label{c_2(0)}
		c_2(0,Y,\cdo )=c_1(0,Y,\cdo )=\omega(Y)\phi_Y\wpr a_0.
	\end{equation}

	We consider the operators $\maclK$ and $\widetilde \maclK$
	on $E^0$, defined by
	\begin{align*}
		(\maclK a)(t,Y,X) &=\int (K_{Y,Z}(t,\cdo )\wpr a(t,Z,\cdo ))(X)\, dZ,
    \intertext{and}
		(\widetilde \maclK a)(t,Y,X)
		&=\int (\widetilde K_{Y,Z}(t,\cdo )\wpr a(t,Z,\cdo ))(X)\, dZ,
	\end{align*}
	and show that
	\begin{equation}\label{Eq:KtildeKEst}
    \begin{alignedat}{2}
	\nm {\maclK a}{E_{h,s}^n}&\le C(n+1)\nm a{E_{h,s}^n},&
	\quad
	\nm {\maclK a}{E_{h,s}^\infty}&\le C\nm a{E_{h,s}^\infty},
    \\[1ex]
    \nm {\widetilde \maclK a}{E_{h,s}^n}&\le C(n+1)\nm a{E_{h,s}^n} &
    \quad \text{and}\qquad
    \nm {\widetilde \maclK a}{E_{h,s}^\infty} &\le C\nm a{E_{h,s}^\infty}
    \end{alignedat}
	\end{equation}
	for some constant $C$, which is independent of $h$, $n$ and $s$.
	
	\par

In order to prove this, it is convenient to let $\maclP _k$ be the
family of all subsets of $\{1,\dots ,k\}$,
$k\ge 1$. For each $P\in\maclP _k$, $a\in s_\infty ^w(\rr {2d})$, we set
\[	
	H(a, P)=
	\begin{cases}
		a                                                                   & \text{ when } P=\emptyset,
		\\
		\scal {T_{j_1}}{D_X} \cdots \scal {T_{j_l}}{D_X} a & \text{ when }
		P=\{j_1 < \cdots < j_l\}, \; l\le k.
	\end{cases}
\]
We now estimate
\[
		\frac{\| 
			(\scal {T_1}{D_X} \cdots \scal {T_k}{D_X} \maclK a)(t,Y,\cdo ) 
		\|_{s_\infty ^w(\rr {2d})}
		}
		{h^k (k!)^s}.
\]
when $a\in E^n_{h,s}$. Since
\begin{multline*}
	(\scal {T_1}{D_X} \cdots \scal {T_k}{D_X} \maclK a)(t,Y,X)
	\\
	= 
	\scal {T_1}{D_X} \cdots \scal {T_k}{D_X}
    \int (K_{Y,Z}(t,\cdo)\wpr a(t,Z,\cdo ))(X)\,dZ
	\\
	=\sum_{P\in\maclP _k}
	\int  (H(K_{Y,Z}(t,\cdo), P)\wpr H(a(t,Z,\cdo ),P^c))(X)  \,dZ,
\end{multline*}
we find
%
\begin{multline}\label{Eq:EnMapCont}
			\frac{\| 
			(\scal {T_1}{D_X} \cdots \scal {T_k}{D_X} \maclK a)(t,Y,\cdo) 
		\|_{s_\infty ^w}
		}
		{h^k (k!)^s}
		\\
		\le
		\sum _{l=0}^k \sum _{|P| =l}
		{k \choose l}^{-s}
		\int
		\frac{\|H(K_{Y,Z}(t,\cdo), P)\|_{s_\infty ^w}}{h^l\,l!^s}
		\cdot \frac{\|H(a(t,Z,\cdo ),P^c)\|_{s_\infty ^w}}{h^{k-l} ((k-l)!)^s}  \,dZ
		\\
		\le
		\sum _{l=0}^k \sum _{|P| =l}\|a\|_{E^{k-l}_{h,s}} 
		{k \choose l}^{-s}
		\int  \frac{\|H(K_{Y,Z}(t,\cdo ), P)\|_{s_\infty ^w}}{h^l\,l!^s}  \,dZ
		\\
		\lesssim
		\|a\|_{E^{n}_{h,s}}\sum _{l=0}^k \sum _{|P| =l}
		{k \choose l}^{-1}
		\int  \frac{\|H(K_{Y,Z}(t,\cdo ), P)\|_{s_\infty ^w}}{h^l\,l!^s}  \,dZ ,
\end{multline}
where the last inequality follows from the fact that $s\ge 1$ and
$\|a\|_{E^{n}_{h,s}}$ increases with $n$.

\par

	We have now to estimate $\|H(K_{Y,Z}(t,\cdo), P)\|_{s_\infty ^w}$, and
	study the different quantities on the right-hand side of \eqref{Eq:KKernelExp1}.
	Since $\omega$ and $\vartheta$ belong to $\mascP _{E,s}^0$, it
	follows that for every $r>0$,
	\begin{multline}\label{Eq:WeightEstimates}
		\frac{\omega(Z)\,\vartheta(Z)^t}
		{\omega(Y)\,\vartheta(Y)^t} = \frac{\omega(Z)}{\omega(Y)}
		\left(\frac{\vartheta(Z)}{\vartheta(Y)}\right)^t
		\lesssim e^{r|Y-Z|^\frac{1}{s}} \,  \left( e^{r|Y-Z|^\frac{1}{s}}
		\right)^t
		\\
		\lesssim  e^{r(1+t)|Y-Z|^\frac{1}{s}}, \quad Y,Z\in\rr{2d}.
	\end{multline}
	For the Weyl product in \eqref{Eq:KKernelExp1} we have
	%
	\begin{align*}
		\phi_Y\wpr f(Y,\cdo )&=\phi(\cdo-Y)\wpr \Big(b(\cdo)+\log\frac{\vartheta(\cdo)}{\vartheta(Y)}\Big)
		\\
		&=\Big(\phi\wpr b(\cdo+Y) \Big)_Y+
		\phi(\cdo-Y)\wpr\Big(\log\frac{\vartheta(\cdo)}{\vartheta(Y)}\Big)
		\\
		&=\Big(\phi\wpr b(\cdo+Y) \Big)_Y+
		\Big(\phi\wpr \log\frac{\vartheta(\cdo+Y)}{\vartheta(Y)}\Big)_Y.
	\end{align*}
	By Theorem \ref{thm:logomega} and Proposition \ref{prop:confest},
	\begin{equation}\label{eq:phi.sharp.f.bound}
		\Big\{ \phi\wpr b(\cdo+Y) \Big\}_{Y\in\rr{2d}}
		\quad
		\text{and}\quad \Big\{\phi\wpr \log\frac{\vartheta(\cdo+Y)}{\vartheta(Y)}\Big\}_{Y\in\rr{2d}}
	\end{equation}
	are uniformly bounded families in $\maclS_s(\rr{2d})$.
	%
	Note that
	\[
		a_2(Z,X)=\psi_Z(X)\quad \Rightarrow \quad  \{a_2(Z,\cdo 
		+Z)\}_{Z\in\rr{2d}}=\{\psi\}_{Z\in\rr{2d}} ,
	\]
	which is evidently a uniformly bounded family in $\maclS_s(\rr{2d})$.
	Combining this last observation with
	the computations on $\phi_Y\wpr f(Y,\cdo )$ above, using Lemma
	\ref{lem:confbis} and \eqref{eq:estsharpbis}, we finally obtain,
	for some $h,r_0>0$,
	\begin{equation}\label{eq:symkest}
		\begin{aligned}
			|D^\alpha _X (\phi_Y\wpr f(Y,\cdo )\wpr \psi_Z)(X)|\lesssim \,\,&h^{|\alpha |}\alpha! ^s e^{-r_0(|X-Y|^{\frac 1s}+|X-Z|^{\frac 1s}+|Y-Z|^{\frac 1s})},
			\\
			&X,Y,Z\in\rr{2d},\alpha\in\nn{2d}.
		\end{aligned}
	\end{equation}

	\par
	
	By Theorem \ref{thm:gamma1}, \eqref{Eq:WeightEstimates} and
	\eqref{eq:symkest} we get for all $P\in\maclP _k$
	$Y,Z\in\rr{2d}$ and some $r_0,h>0$ that
	\[
		\|H(K_{Y,Z}(t,\cdo ), P)\|_{s_\infty ^w}\le Ch^{l}
		l! ^s e^{-r_0|Y-Z|^{\frac 1s}}, \quad l=|P|,
	\]
	where $C$ is independent of $k$. Hence, \eqref{Eq:EnMapCont} gives
	\[
		\|\maclK a(t,Y,\cdo )\|_{s_\infty ^w}
		\lesssim
		\|a\|_{E^{n}_{h,s}},
	\]
	and 
	\begin{multline*}
		\frac{\| 
			\scal {T_1}{D_X} \cdots \scal {T_k}{D_X} \maclK a(t,Y,\cdo ) 
		\|_{s_\infty ^w}
		}
		{h^k (k!)^s}
		\\
		\le
		C_1\|a\|_{E^{n}_{h,s}}\sum _{l=0}^k \sum _{|P| =l}
		{k \choose l}^{-1}
		\int  e^{-r_0|Y-Z|^{\frac 1s}}\, dZ
		\\
		=
		C_2\|a\|_{E^{n}_{h,s}}\sum _{l=0}^k \sum _{|P| =l}
		{k \choose l}^{-1} =C_2(k+1)\|a\|_{E^{n}_{h,s}}
		,\quad 1\le k\le n,
	\end{multline*}	
	as claimed, where $C_1$ and $C_2$ are independent of $k$, $n$ and $h$.
	
	\par
	
	By a completely similar argument, an analogous result can be obtained for
	$\widetilde{\maclK}$. In fact, by similar arguments that lead to
	\eqref{eq:phi.sharp.f.bound} it follows that 
	$$
	\left \{b(\cdo+Z)\wpr\psi \right \}_{Z\in\rr{2d}}
	\quad \text{and}\quad
	\left \{ \log
	\frac{\vartheta(\cdo+Z)}{\vartheta(Z)}\wpr \psi \right \}
	_{Z\in\rr{2d}}
	$$
	are bounded in
	$\maclS_s(\rr{2d})$, given that \eqref{eq:symkest} holds with $f(Z,\cdo )$ in
	place of $f(Y,\cdo )$. This gives the first and third inequalities in
	\eqref{Eq:KtildeKEst}. From these estimates we get
	$$
	\nm {\maclK a}{E_{h,s}^{n+1}}\le C\nm a{E_{h,s}^n}
	\quad \text{and}\quad
	\nm {\widetilde \maclK a}{E_{h,s}^{n+1}}\le C\nm a{E_{h,s}^n},
	$$
	which give the other inequalities in \eqref{eq:symkest}.
	
	\par
	
	We have proven that for any $T>0$, then
	$$
	\nm {\maclK_t}{E_{h,s}^n\to E_{h,s}^n}\le C(n+1),\qquad |t|\le T,
	$$
	where $C$ is independent of $n$. As a
	consequence, since $\omega(Y )^{-1}\phi _Y \wpr a_0$ belongs to
	$E_{h,s}^\infty$, the  equation
		\begin{equation}\label{eq:symeqdiffbis2}
	    	\frac{dc_1}{d t}=\maclK c_1,\qquad
	    	c_1(0)=\omega(Y)^{-1}\phi _Y \wpr a_0
		\end{equation}
		has a unique solution on $\re$ belonging to
		$E_{h,s}^\infty$, in view of Lemma \ref{Lemma:ODE}.
	By Proposition \ref{prop:charact} it follows that $c_1(t,Y,\cdo 
	)\in \Gamma^{(1)}_s(\rr{2d})$, uniformly in $Y$ and for bounded $t$.
	
	\par
	
	In order to prove the uniqueness of the solution $a$ of
	\eqref{eq:symeqdiff}, first we assume the existence and
	by what we have proven above i.e. that $c_1(t,Y,\cdo )$ in
	\eqref{c_1} satisfies \eqref{eq:symeqdiffbis2} which
	implies the uniqueness of the solution of \eqref{eq:symeqdiff},
	since
	\begin{equation}
	    a(t,\cdo )=\int_{\rr{2d}} \psi_Y\wpr \phi_Y\wpr a(t,\cdo )
	    \,dY
	    =
	    \int_{\rr{2d}} \omega(Y)\vartheta(Y)^t\psi_Y\wpr
	    c_1(t,Y,\cdo ) \,dY.
	\end{equation}	
	To prove the existence of a solution of \eqref{eq:symeqdiff}, we
	consider the solution $c_2(t,Y,\cdo )$ of \eqref{c_2} with the
	initial data \eqref{c_2(0)}, and we let
	\begin{equation}
	a(t,\cdo )=\int_{\rr{2d}} \omega(Y)\vartheta(Y)^t
	\psi_Y\wpr c_2(t,Y,\cdo )\,dY.	    
	\end{equation}
	By Propositions \ref{prop:charact} and
	\ref{prop:conv.class}, the family $\{\psi_Y\wpr c_2(t,Y,\cdo
	)\}_{Y\in\rr{2d}}$ belongs to $\maclS _s$ and $a(t,\cdo )$
	belongs to $\Gamma_s^{(w\vartheta^t)}$. Moreover,
	\begin{align*}
	    \frac{d a(t,\cdo )}{dt}&=\int_{\rr{2d}}
	    \omega(Y)\vartheta(Y)^t\log\vartheta(Y)\psi_Y\wpr
	    c_2(t,Y,\cdo )\,dY
	    \\
	    &\qquad+\int_{\rr{2d}}\int_{\rr{2d}}\omega(Y)
	    \vartheta(Y)^t\psi_Y\wpr \tK_{Y,Z}(t,\cdo)\wpr
	    c_2(t,Z,\cdo )\,dY\,dZ
	    \\
	    &=\int_{\rr{2d}} \omega(Z)\vartheta(Z)^t
	    \log\vartheta(Z)\psi_Z\wpr
	    c_2(t,Z,\cdo )\,dZ
	    \\
	    &\qquad+\int_{\rr{2d}}\int_{\rr{2d}}
	    \omega(Z)\,\vartheta(Z)^t\psi_Y\wpr
		\phi_Y\wpr f(Z,\cdo )\wpr \psi_Z\wpr
		c_2(t,Z,\cdo )\,dY\,dZ
		\\
		&=\int_{\rr{2d}} \omega(Z)\vartheta(Z)^t(b+\log\vartheta)\psi_Z
		\wpr c_2(t,Z,\cdo )\,dZ
		\\
		&=(b+\log\vartheta)\wpr a(t,\cdo ),
	\end{align*}
	with the initial data 
	\[
	a(0,\cdo )=\int_{\rr{2d}}\omega(Y)\psi_Y
	\wpr (\omega(Y)^{-1}\phi_Y\wpr a_0)\,dY=a_0,
	\]
	which provide a solution of \eqref{eq:symeqdiff}.
	
	\par
		
	In order to prove the last part we consider the unique
	solution $a(t,\cdo )$ of \eqref{eq:symeqdiff} with the
	initial data $a(0,\cdo )\equiv 1$. If $\omega\equiv 1$,
	then for $u\in\re$ the mappings
	$$
	t\mapsto a(t+u,\cdo )\quad \text{and}\quad  t\mapsto
	a(t,\cdo )\wpr a(u,\cdo )
	$$
	are both solutions of \eqref{eq:symeqdiff} with
	value $a(u,\cdo )$ at $t=0$, and
	\begin{equation}\label{eq:semigroups.property}
	    	a(t+u,\cdo )=a(t,\cdo )\wpr a(u,\cdo ),
	\end{equation}
	by the uniqueness property for the solution of
	\eqref{eq:symeqdiff}. 

    \par

	Using \eqref{eq:semigroups.property} we have for all
	$t \in \re$, $a(t,\cdo )\wpr a(-t,\cdo )=1$. Taking
	the derivative we get
	$$
	0=\frac{d}{dt}(a(t,\cdo )\wpr a(-t,\cdo ))
	=
	(b+\log\vartheta)\wpr a(t,\cdo )\wpr a(-t,\cdo )
	-
	a(t,\cdo )\wpr(b+\log\vartheta)\wpr a(-t,\cdo ).
	$$
	That is $(b+\log\vartheta)=a(t,\cdo )\wpr(b+\log\vartheta)
	\wpr a(-t,\cdo )$, implying the commutation for the sharp
	product of $a(t,\cdo )$ with $(b+\log\vartheta)$, and the
	result follows. 
	\end{proof}

	\par
	
	By similar argument as for the previous result we get the following.
	
	\par
	
	\begin{thm}\label{thm:symbeqdiff2}
	Let $s\ge1$, $\omega,\vartheta\in\mascP_{E,s}(\rr{2d})$ be such that 
	$\omega\in\Gamma^{(\omega)}_s(\rr{2d})$ and
	$\vartheta\in\Gamma^{(\vartheta)}_s(\rr{2d})$,
	and let
	$a_0\in\Gamma^{(\omega)}_{0,s}(\rr{2d})$, 
	$b\in\Gamma^{(1)}_{0,s}(\rr{2d})$.
	Then, there exists a unique smooth map
	$(t,X)\mapsto a(t,X)\in\cc{}$ 
	such that $a(t,\cdo)\in\Gamma_s^{(\omega\,\vartheta^t)}(\rr{2d})$
	for all $t\in\rr{}$, and $a(t,\cdo)$ satisfies \eqref{eq:symeqdiff}.
	
	\par
	
	Moreover, if $\omega\equiv a_0\equiv 1$, then $a(t,X)$ also satisfies
	\eqref{eq:symeqdiffbis}
	and
	\begin{equation*}
		a(t_1,\cdo)\wpr a(t_2,\cdo)=a(t_1+t_2,\cdo), \quad
		a(t,\cdo )\in \Gamma^{(\vartheta^t)}_{0,s}(\rr{2d}),\quad
		t,t_1,t_2\in \mathbf R.
	\end{equation*}
	\end{thm}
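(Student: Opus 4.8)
The plan is to follow the proof of Theorem \ref{thm:symbeqdiff} line by line, replacing each Roumieu object by its Beurling counterpart and, accordingly, every occurrence of ``for some $h>0$'' by ``for every $h>0$''. The machinery of the spaces $E^n_{h,s}$ and $E^\infty_{h,s}$ together with Lemma \ref{Lemma:ODE} is built precisely to cover both cases at once: by Lemma \ref{Lemma:GammaLink} a uniformly bounded family lies in $\Gamma^{(1)}_{0,s}$ exactly when its $E^\infty_{h,s}$-norm is finite for \emph{every} $h>0$, while Lemma \ref{Lemma:ODE} solves $dc/dt=\maclK c$ inside $E^\infty_{h,s}$ for every $h>0$ with a constant depending only on $h$. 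Hence it suffices to reprove the operator estimates \eqref{Eq:KtildeKEst} for every $h>0$; the auxiliary results in Sections \ref{sec2} and \ref{sec3} that are stated for the Roumieu case (Propositions \ref{prop:confest}, \ref{prop:charact} and Theorems \ref{thm:logomega}, \ref{thm:gamma1}) all admit Beurling analogs obtained verbatim by turning ``some $h$'' into ``every $h$'', and I will invoke these freely.

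First I would normalise the weights. By Proposition \ref{Prop:EquivWeights} I may replace $\omega$ and $\vartheta$ by equivalent smooth weights that are $s_0$-elliptic with Beurling-type (every-$h$) derivative estimates for some fixed $s_0\in(\frac 12,1)$; this alters neither the symbol classes (as noted after Definition \ref{Def:SymbClExp}) nor the target spaces $\Gamma^{(\omega\vartheta^t)}_{0,s}$, and it changes the equation only by absorbing a bounded $\Gamma^{(1)}_{0,s}$ correction into $b$. Choosing windows $\phi,\psi\in\Sigma_{s_0}(\rr{2d})$ satisfying \eqref{eq:pu}, I set up $c_1$, $c_2$, the kernels $K_{Y,Z}$, $\widetilde K_{Y,Z}$ and the operators $\maclK$, $\widetilde\maclK$ exactly as in \eqref{c_1}--\eqref{Eq:KKernelExp1}.

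The heart of the matter is the kernel estimate \eqref{eq:symkest}, now required for every $h>0$, and here the two pieces of $f(Y,\cdo)=b+\log(\vartheta/\vartheta(Y))$ are handled separately. For the first, $b\in\Gamma^{(1)}_{0,s}$ is Beurling and $\phi\in\Sigma_{s_0}\subseteq\Sigma_s$, so the Beurling analog of Proposition \ref{prop:confest} shows $\{\phi\wpr b(\cdo+Y)\}_Y$ to be bounded in $\Sigma_s$. For the second, the Fa{\`a} di Bruno computation in the proof of Theorem \ref{thm:logomega}, applied to the $s_0$-elliptic every-$h$ weight $\vartheta$, gives that $\{\log(\vartheta(\cdo+Y)/\vartheta(Y))\}_Y$ is uniformly bounded in $\Gamma^{(\vartheta)}_{0,s}$ (and its nonzero-order derivatives in $\Gamma^{(1)}_{0,s}$); composing with $\phi\in\Sigma_{s_0}$ through the same Beurling analog of Proposition \ref{prop:confest} again yields a family bounded in $\Sigma_s$. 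Feeding these into the Beurling versions of Corollary \ref{cor:DerEstLocWeylprod} and Lemma \ref{lem:confbis} (together with \eqref{eq:estsharpbis}) produces \eqref{eq:symkest} for every $h>0$, and the Beurling analog of Theorem \ref{thm:gamma1} turns this into $\|H(K_{Y,Z},P)\|_{s_\infty^w}\le C h^{l}\,l!^s\,e^{-r_0|Y-Z|^{\frac 1s}}$, $l=|P|$, valid for every $h>0$. The bounds \eqref{Eq:KtildeKEst} then follow as in the Roumieu case, and Lemma \ref{Lemma:ODE} delivers $c_1(t,Y,\cdo)$, $c_2(t,Y,\cdo)\in E^\infty_{h,s}$ for every $h$, hence uniformly bounded in $\Gamma^{(1)}_{0,s}$ by Lemma \ref{Lemma:GammaLink}.

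Finally I would reconstruct $a(t,\cdo)=\int \omega(Y)\vartheta(Y)^t\,\psi_Y\wpr c_2(t,Y,\cdo)\,dY$; by the Beurling analog of Proposition \ref{prop:charact} and by Proposition \ref{prop:conv.class} (which is stated directly for the classes $\Gamma^{(\omega)}_{0,s}$) this function lies in $\Gamma^{(\omega\vartheta^t)}_{0,s}$, and differentiating under the integral sign as in the Roumieu proof shows that it solves \eqref{eq:symeqdiff}, uniqueness being supplied by the reproducing identity \eqref{eq:pu}. The ``moreover'' part is then word for word as for Theorem \ref{thm:symbeqdiff}: with $\omega\equiv a_0\equiv 1$, uniqueness forces $t\mapsto a(t+u,\cdo)$ and $t\mapsto a(t,\cdo)\wpr a(u,\cdo)$ to solve the same initial value problem, which gives the semigroup law \eqref{eq:sharprodsemigroupppt} and the commutation of $a(t,\cdo)$ with $b+\log\vartheta$; this step uses only uniqueness and the equation and so needs no modification. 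I expect the one genuinely delicate point to be the every-$h$ control of the $\log\vartheta$ contribution to the kernel: unlike $b$, it is not handed to us as a Beurling symbol, and only the weight normalisation of Proposition \ref{Prop:EquivWeights}, forcing order $s_0<1$ with every-$h$ constants, lets the Fa{\`a} di Bruno estimate upgrade it to the Beurling class $\Gamma^{(\vartheta)}_{0,s}$.
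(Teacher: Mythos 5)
Your overall strategy---rerunning the proof of Theorem \ref{thm:symbeqdiff} through the $E^{n}_{h,s}$ machinery with Beurling modifications---is exactly what the paper intends; its proof of Theorem \ref{thm:symbeqdiff2} consists of the single remark that it follows by similar arguments. You have also located the real pressure point correctly: since $\omega ,\vartheta$ now lie only in $\mascP _{E,s}$, the quotient in \eqref{Eq:WeightEstimates} grows like $e^{r_1(1+|t|)|Y-Z|^{1/s}}$ for some \emph{fixed} $r_1$, so the kernel estimate \eqref{eq:symkest} must be available with \emph{arbitrarily large} $r_0$, which forces the whole generator $f(Y,\cdo )=b+\log (\vartheta /\vartheta (Y))$ to enter with Beurling (every-$h$) Gevrey bounds; through Lemmas \ref{Lemma:BoundedGSFamWeylComp} and \ref{lem:confbis} the $|Y-Z|$-decay rate of $\phi _Y\wpr f(Y,\cdo )\wpr \psi _Z$ is tied to the Gevrey constants of the middle factor, not only to the localisation of $\phi ,\psi$.

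The gap is in how you secure this for the $\log \vartheta$ part. Mollifying via Proposition \ref{Prop:EquivWeights} does give $\log (\vartheta _0(\cdo +Y)/\vartheta _0(Y))$ every-$h$ bounds by the Fa{\`a} di Bruno argument, but it changes the generator by $\log (\vartheta /\vartheta _0)$, and this correction inherits only the regularity of the original $\vartheta$. Under the stated hypothesis $\vartheta \in \Gamma ^{(\vartheta )}_{s}$ (Roumieu, some $h$) one only gets $\log (\vartheta /\vartheta _0)\in \Gamma ^{(1)}_{s}$, not $\Gamma ^{(1)}_{0,s}$: take $\vartheta =e^{g}$ with $g$ bounded and $g\in \Gamma ^{(1)}_{s}\setminus \Gamma ^{(1)}_{0,s}$; then for any Beurling-regular $\vartheta _0\asymp \vartheta$ the difference $g-\log \vartheta _0$ cannot be Beurling. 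So your assertion that the normalisation costs only ``a bounded $\Gamma ^{(1)}_{0,s}$ correction to $b$'' is precisely the step that needs proof and fails in general; with a merely Roumieu middle factor you get \eqref{eq:symkest} for only some $r_0$, and $\int \Vert K_{Y,Z}(t,\cdo )\Vert _{s_\infty ^w}\, dZ$ is no longer controlled once $r_1(1+|t|)>r_0$. (In Theorem \ref{thm:symbeqdiff} the analogous correction is harmless because there $b\in \Gamma ^{(1)}_{s}$ is itself Roumieu and the weights are in $\mascP _{E,s}^0$, so $r_1$ can be taken below the given $r_0$.) To close the argument you must either read the hypothesis as $\vartheta \in \Gamma ^{(\vartheta )}_{0,s}$, or fix a Beurling-regular representative of the weight from the outset, which is all that the applications in Sections \ref{sec4} and \ref{sec5} require. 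A smaller remark: the printed main conclusion places $a(t,\cdo )$ in the Roumieu class $\Gamma ^{(\omega \vartheta ^t)}_{s}$, not in $\Gamma ^{(\omega \vartheta ^t)}_{0,s}$, so your wholesale ``some $h$'' to ``every $h$'' swap is aimed at a differently hypothesised statement from the one you were asked to prove.
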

%
%
%

\par

\section{Lifting of pseudo-differential operators and Toeplitz
operators on modulation spaces}\label{sec4}

\par

In this section we apply the group properties in Theorems \ref{thm:symbeqdiff} and
\ref{thm:symbeqdiff2} to deduce lifting properties of pseudo-differential operators
on modulation spaces. Thereafter we combine these results by the Wiener
property of certain pseudo-differential operators with symbols in suitable
modulation spaces to get lifting properties for Toeplitz operators with
weights as their symbols.

\par

We begin to apply Theorems \ref{thm:symbeqdiff} and
\ref{thm:symbeqdiff2} to get the following.

\par

\begin{thm}\label{thm:identification}
Let $s\ge 1$, $\mabfp \in (0,\infty ]^{2d}$, 
$A\in \GL (d,\mathbf R)$, $\omega \in \mathscr P_{E,s}^0(\rr {2d})$,
and let $\mascB$ be an invariant
BF-space on $\rr {2d}$, or $\mascB = L^{\mabfp ,E}(\rr {2d})$
for some phase-shift split parallelepiped $E$ in $\rr {2d}$. Then
the following are true:
\begin{enumerate}
\item There exist $a\in \Gamma ^{(\omega )}_{s}(\rr {2d})$ and $b\in \Gamma
^{(1/\omega )}_{s}(\rr {2d})$ such that
\begin{equation}\label{abinverse}
\op _A(a)\circ \op _A(b) =\op _A(b)\circ \op _A(a) =\operatorname
{Id}_{\maclS _s'(\rr d)}.
\end{equation}
Furthermore, $\op _A(a)$ is an isomorphism from $M(\omega
_0,\mathscr B)$ onto $M(\omega _0/\omega ,\mathscr B )$, for
every $\omega _0\in \mathscr P_{E,s}^0(\rr {2d})$.

\vrum

\item Let $a_0\in
\Gamma ^{(\omega )}_{s}(\rr {2d})$ be such that $\op _A(a_0)$ is an 
isomorphism from $M^2_{(\omega _1)}(\rr d)$ to $M^2_{(\omega
_1/\omega )} (\rr d)$ for some $\omega _1\in \mathscr P_{E,s}^0(\rr
{2d})$. Then $\op _A(a_0)$
is an isomorphism from $M(\omega
_2 ,\mathscr B)$ to $M(\omega _2/\omega ,\mathscr B)$, for
every  $\omega _2\in \mathscr P_{E,s}^0(\rr {2d})$. Furthermore,
the inverse of $\op _A(a_0)$ is equal to
$\op _A(b_0)$ for some $b_0\in \Gamma ^{(1/\omega )}_{s}(\rr {2d})$.
\end{enumerate}
\end{thm}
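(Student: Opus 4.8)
The plan is to reduce Part~(1) to the Weyl case, extract the inverse pair from the one-parameter group in Theorem~\ref{thm:symbeqdiff}, and then settle Part~(2) by conjugating $\op _A(a_0)$ to an $L^2$-invertible operator whose symbol lies in $\Gamma ^{(1)}_{s}$, to which the Wiener algebra property applies.

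For the existence in Part~(1), I would first replace $\omega$ by an equivalent $s$-elliptic weight, so that $\omega \in \mascP ^0_{E,s}(\rr {2d})\cap \Gamma ^{(\omega )}_{s}(\rr {2d})$ (Proposition~\ref{Prop:EquivWeights} and the remark after Definition~\ref{Def:SymbClExp}); this leaves all the modulation spaces unchanged up to norm equivalence. Applying Theorem~\ref{thm:symbeqdiff} with $\vartheta =\omega$, $\omega \equiv a_0\equiv 1$ and $b\equiv 0\in \Gamma ^{(1)}_{s}$ produces a smooth family $a(t,\cdo )\in \Gamma ^{(\omega ^t)}_{s}$ obeying the group law \eqref{eq:sharprodsemigroupppt}. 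Setting $a^w=a(1,\cdo )\in \Gamma ^{(\omega )}_{s}$ and $b^w=a(-1,\cdo )\in \Gamma ^{(1/\omega )}_{s}$, the cases $(t_1,t_2)=(1,-1)$ and $(-1,1)$ of \eqref{eq:sharprodsemigroupppt} give $a^w\wpr b^w=b^w\wpr a^w=1$, hence $\op ^w(a^w)\op ^w(b^w)=\op ^w(b^w)\op ^w(a^w)=\operatorname{Id}$. I then pass to general $A$ via \eqref{calculitransform}, defining $a,b$ by $\op _A(a)=\op ^w(a^w)$ and $\op _A(b)=\op ^w(b^w)$; by the calculus invariance in Proposition~\ref{Prop:Gamma(omega)}(1) one still has $a\in \Gamma ^{(\omega )}_{s}$ and $b\in \Gamma ^{(1/\omega )}_{s}$, and \eqref{abinverse} follows.

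The isomorphism claim in Part~(1) is now a matter of continuity. By Proposition~\ref{Prop:p3.2} when $\mascB$ is an invariant BF-space, and by the corresponding quasi-Banach mapping property for the split Lebesgue targets $M(\cdo ,L^{\mabfp ,E})$ in the full range $\mabfp \in (0,\infty ]^{2d}$ (cf.\ Proposition~\ref{Prop:pseudomod} together with Proposition~\ref{Prop:Gamma(omega)}(3)), the operator $\op _A(a)$ is continuous from $M(\omega _0,\mascB )$ to $M(\omega _0/\omega ,\mascB )$, and $\op _A(b)$ from $M(\omega _0/\omega ,\mascB )$ to $M(\omega _0,\mascB )$, for every $\omega _0\in \mascP ^0_{E,s}$. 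As both spaces embed continuously in $\maclS _s'(\rr d)$ and the two operators are mutual inverses there by \eqref{abinverse}, they are mutually inverse isomorphisms.

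For Part~(2), I would invoke Part~(1) twice, for the weights $\omega _1$ and $\tau :=\omega _1/\omega$ (both in $\mascP ^0_{E,s}$, which is stable under quotients), obtaining liftings $\op _A(a_{\omega _1})\colon M^2_{(\omega _1)}\to L^2$ and $\op _A(a_\tau )\colon M^2_{(\tau )}\to L^2$ with inverses $\op _A(b_{\omega _1})$ and $\op _A(b_\tau )$. Conjugating the given isomorphism $\op _A(a_0)\colon M^2_{(\omega _1)}\to M^2_{(\omega _1/\omega )}=M^2_{(\tau )}$, the operator $\op _A(g):=\op _A(a_\tau )\circ \op _A(a_0)\circ \op _A(b_{\omega _1})$ is an isomorphism of $L^2(\rr d)$, and its symbol $g=a_\tau \wpr _A a_0\wpr _A b_{\omega _1}$ lies in $\Gamma ^{(\tau \omega /\omega _1)}_{s}=\Gamma ^{(1)}_{s}$ by Proposition~\ref{Prop:Gamma(omega)}(1),(2). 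The Wiener algebra property (Proposition~\ref{Thm:specinv}(2)) yields $g'\in \Gamma ^{(1)}_{s}$ with $\op _A(g)^{-1}=\op _A(g')$, so undoing the conjugation gives $\op _A(a_0)^{-1}=\op _A(b_{\omega _1})\circ \op _A(g')\circ \op _A(a_\tau )=\op _A(b_0)$ with $b_0=b_{\omega _1}\wpr _A g'\wpr _A a_\tau \in \Gamma ^{((1/\omega _1)\cdot \tau )}_{s}=\Gamma ^{(1/\omega )}_{s}$. Finally, exactly as in Part~(1), the continuity of $\op _A(a_0)$ and $\op _A(b_0)$ in the mutually inverse directions between $M(\omega _2,\mascB )$ and $M(\omega _2/\omega ,\mascB )$ shows that $\op _A(a_0)$ is the desired isomorphism for every $\omega _2\in \mascP ^0_{E,s}$.

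The main obstacle is the inversion in Part~(2): one must ensure that $\op _A(a_0)^{-1}$ is again a pseudo-differential operator with symbol in the precise class $\Gamma ^{(1/\omega )}_{s}$. This forces the reduction of weighted-$L^2$ invertibility to genuine $L^2(\rr d)$-invertibility --- achieved by the Part~(1) liftings, whose very existence is what makes the conjugated symbol collapse to weight $1$ --- followed by the spectral invariance of $\op _A(\Gamma ^{(1)}_{s})$ from Proposition~\ref{Thm:specinv}. Throughout, the exponents in the repeated $\wpr _A$-products must be tracked carefully through the algebra property of Proposition~\ref{Prop:Gamma(omega)}(2) so that they cancel to $\Gamma ^{(1)}_{s}$ and then to $\Gamma ^{(1/\omega )}_{s}$.
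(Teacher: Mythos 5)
Your proposal is correct and follows essentially the same route as the paper: existence of the inverse pair via the one-parameter group of Theorem \ref{thm:symbeqdiff} combined with the continuity results for $\Gamma^{(\omega)}_{s}$-symbols on $M(\omega_0,\mascB)$, and Part (2) by conjugating $\op_A(a_0)$ with the Part (1) liftings to an $L^2$-invertible operator with symbol in $\Gamma^{(1)}_{s}$ and invoking the spectral invariance of Proposition \ref{Thm:specinv}. The only cosmetic difference is that the paper first reduces Part (2) to the Weyl case via Proposition \ref{Prop:Gamma(omega)}(1), whereas you carry the $\wpr_A$-products throughout; the weight bookkeeping is identical.
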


\par

\begin{thm}\label{thm:identification2}
Let $s>1$, $\mabfp \in (0,\infty ]^{2d}$, 
$A\in \GL (d,\mathbf R)$, $\omega \in \mathscr P_{E,s}(\rr {2d})$,
and let $\mascB$ be an invariant
BF-space on $\rr {2d}$, or $\mascB = L^{\mabfp ,E}(\rr {2d})$
for some phase-shift split parallelepiped $E$ in $\rr {2d}$. Then
the following are true:
\begin{enumerate}
\item There exist $a\in \Gamma ^{(\omega )}_{0,s}(\rr {2d})$ and $b\in \Gamma
^{(1/\omega )}_{0,s}(\rr {2d})$ such that
\begin{equation}\label{Eq:abinverse}
\op _A(a)\circ \op _A(b) =\op _A(b)\circ \op _A(a) =\operatorname
{Id}_{\Sigma _s'(\rr d)}.
\end{equation}
Furthermore,
$\op _A(a)$ is an isomorphism from $M(\omega
_0,\mathscr B)$ onto $M(\omega _0/\omega ,\mathscr B )$, for
every $\omega _0\in \mathscr P_{E,s}(\rr {2d})$.

\vrum

\item Let $a_0\in
\Gamma ^{(\omega )}_{0,s}(\rr {2d})$ be such that $\op _A(a_0)$ is an 
isomorphism from $M^2_{(\omega _1)}(\rr d)$ to $M^2_{(\omega
_1/\omega )} (\rr d)$ for some $\omega _1\in \mathscr P_{E,s}(\rr
{2d})$. Then $\op _A(a_0)$
is an isomorphism from $M(\omega
_2 ,\mathscr B)$ to $M(\omega _2/\omega ,\mathscr B)$, for
every  $\omega _2\in \mathscr P_{E,s}(\rr {2d})$. Furthermore,
the inverse of $\op _A(a_0)$ is equal to
$\op _A(b_0)$ for some $b_0\in \Gamma ^{(1/\omega )}_{0,s}(\rr {2d})$.
\end{enumerate}
\end{thm}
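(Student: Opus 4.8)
The plan is to follow the scheme behind Theorem~\ref{thm:identification}, but with every ingredient replaced by its Beurling-type counterpart: Theorem~\ref{thm:symbeqdiff2} instead of Theorem~\ref{thm:symbeqdiff}, the classes $\Gamma ^{(\cdot )}_{0,s}$ and the weight class $\mascP _{E,s}$ in place of $\Gamma ^{(\cdot )}_{s}$ and $\mascP _{E,s}^0$, Proposition~\ref{Prop:p3.2B} for continuity, Proposition~\ref{Prop:Gamma(omega)B} for the calculus and product properties, and Proposition~\ref{Thm:specinv}(1) for spectral invariance (this is where $s>1$ enters). For part (1) I would first reduce to an $s$-elliptic weight: by Propositions~\ref{Prop:EquivWeights} and~\ref{prop:conv.class} one may take $\omega _0=\omega *\phi \asymp \omega$ with $\phi \in \Sigma _s$, so that $\omega _0\in \Gamma ^{(\omega )}_{0,s}\subseteq \Gamma ^{(\omega )}_{s}$; since $\omega _0\asymp \omega$ this changes none of the spaces $M(\cdot ,\mascB )$ and puts us in the situation $\omega \in \Gamma ^{(\omega )}_{s}$ required by Theorem~\ref{thm:symbeqdiff2}. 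Applying that theorem in the case where its initial weight and initial datum are trivial, with $b\equiv 0$ and the remaining weight $\vartheta$ taken to be our (elliptic) $\omega$, produces a family $a(t,\cdo )\in \Gamma ^{(\omega ^t)}_{0,s}$ with $a(t_1,\cdo )\wpr a(t_2,\cdo )=a(t_1+t_2,\cdo )$. Hence the Weyl symbols $a:=a(1,\cdo )\in \Gamma ^{(\omega )}_{0,s}$ and $b:=a(-1,\cdo )\in \Gamma ^{(1/\omega )}_{0,s}$ satisfy $a\wpr b=b\wpr a=a(0,\cdo )=1$, i.e. $\op ^w(a)\circ \op ^w(b)=\op ^w(b)\circ \op ^w(a)=\operatorname{Id}$.

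To reach a general $A$ I would transfer $a,b$ to $A$-symbols through \eqref{calculitransform}; Proposition~\ref{Prop:Gamma(omega)B}(1) guarantees the transferred symbols still lie in $\Gamma ^{(\omega )}_{0,s}$ and $\Gamma ^{(1/\omega )}_{0,s}$, and they satisfy \eqref{Eq:abinverse} on $\Sigma _s'$. The isomorphism statement then follows from Proposition~\ref{Prop:p3.2B}: for any $\omega _0\in \mascP _{E,s}$ the maps $\op _A(a)\colon M(\omega _0,\mascB )\to M(\omega _0/\omega ,\mascB )$ and $\op _A(b)\colon M(\omega _0/\omega ,\mascB )\to M(\omega _0,\mascB )$ are continuous and, by \eqref{Eq:abinverse} restricted to these spaces, mutually inverse.

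For part (2) the key is to conjugate $\op _A(a_0)$ into an operator whose symbol lies in $\Gamma ^{(1)}_{0,s}$ and which is invertible on $L^2=M^2$, so that Proposition~\ref{Thm:specinv}(1) applies. Set $\omega _1'=\omega _1/\omega \in \mascP _{E,s}$ and apply part (1) twice, to the gap-weights $\omega _1$ and $\omega _1'$, obtaining isomorphisms $\op _A(\check g)\colon M^2\to M^2_{(\omega _1)}$ with $\check g\in \Gamma ^{(1/\omega _1)}_{0,s}$ and $\op _A(h)\colon M^2_{(\omega _1')}\to M^2$ with $h\in \Gamma ^{(\omega _1')}_{0,s}$. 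The composition $\op _A(h)\circ \op _A(a_0)\circ \op _A(\check g)$ then maps $M^2\to M^2_{(\omega _1)}\to M^2_{(\omega _1')}\to M^2$ and is an isomorphism of $L^2$; by Proposition~\ref{Prop:Gamma(omega)B}(2) its symbol $e:=h\wpr _A a_0\wpr _A\check g$ lies in $\Gamma ^{(1)}_{0,s}$, since the weights $\omega _1'$, $\omega$ and $1/\omega _1$ collapse to $1$. Now Proposition~\ref{Thm:specinv}(1) (here $s>1$ is used) yields $\op _A(e)^{-1}=\op _A(e')$ with $e'\in \Gamma ^{(1)}_{0,s}$, and unwinding gives $\op _A(a_0)^{-1}=\op _A(\check g)\circ \op _A(e')\circ \op _A(h)=\op _A(b_0)$ with $b_0=\check g\wpr _A e'\wpr _A h\in \Gamma ^{(1/\omega )}_{0,s}$ (the weights $1/\omega _1$, $1$, $\omega _1'$ multiply to $1/\omega$), again by Proposition~\ref{Prop:Gamma(omega)B}(2). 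Because $\op _A$ is injective on symbols (Remark~\ref{Rem:KernelThm}), the operator identities force $a_0\wpr _A b_0=b_0\wpr _A a_0=1$, so $\op _A(a_0)$ and $\op _A(b_0)$ are genuine inverses on $\Sigma _s'$; Proposition~\ref{Prop:p3.2B} then makes $\op _A(a_0)\colon M(\omega _2,\mascB )\to M(\omega _2/\omega ,\mascB )$ and $\op _A(b_0)\colon M(\omega _2/\omega ,\mascB )\to M(\omega _2,\mascB )$ continuous for every $\omega _2\in \mascP _{E,s}$, whence both are isomorphisms.

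The main obstacle is precisely the passage in part (2) from invertibility on the single weighted space $M^2_{(\omega _1)}$ to invertibility on $L^2$, which is what the Wiener/spectral-invariance result demands; the double sandwiching by the part-(1) liftings is designed exactly to absorb the weight $\omega _1$ and leave an operator $\op _A(e)$ with $e\in \Gamma ^{(1)}_{0,s}$ acting on $L^2$, and the crucial bookkeeping is the collapse of the product of the three weights to $1$, guaranteed by Proposition~\ref{Prop:Gamma(omega)B}(2). Two secondary points deserve care and I would address them explicitly: first, that the continuity of Proposition~\ref{Prop:p3.2B} (stated for invariant BF-spaces of Roumieu type) also covers the quasi-Banach split-Lebesgue choice $\mascB =L^{\mabfp ,E}(\rr {2d})$ with $\mabfp \in (0,\infty ]^{2d}$, so that the case $p,q<1$ is included; second, that the mutual-inverse relations, established a priori on $L^2$, extend to all of $\Sigma _s'$, which follows from the continuity of $\op _A(a_0)$ and $\op _A(b_0)$ on $\Sigma _s'$ (also provided by Proposition~\ref{Prop:p3.2B}) together with density. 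The Beurling variant with $\mascP _{E,s}$ and $\Gamma ^{(\cdot )}_{0,s}$ otherwise runs word for word as the Roumieu case of Theorem~\ref{thm:identification}.
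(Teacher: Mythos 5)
Your proposal is correct and follows essentially the same route as the paper: part (1) via Theorem \ref{thm:symbeqdiff2} (with trivial initial data and the elliptically regularized weight as $\vartheta$) plus the continuity results, and part (2) via the same sandwich $e=h\wpr a_0\wpr \check g$ conjugating $\op_A(a_0)$ to an $L^2$-invertible operator with symbol in $\Gamma^{(1)}_{0,s}$, followed by spectral invariance (Proposition \ref{Thm:specinv}(1)) and unwinding --- this is exactly the paper's $c=a_2\wpr a_0\wpr b_1$ construction with different letters. The only cosmetic difference is that you conclude by pairing two-sided continuity with the mutual-inverse relation, whereas the paper factors $\op^w(a_0)=\op^w(b_2)\circ\op^w(c)\circ\op^w(a_1)$ through intermediate spaces; both are equivalent.
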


\par

We only prove Theorem \ref{thm:identification2}. Theorem \ref{thm:identification}
follows by similar arguments and is left for the reader.

\par

\begin{proof}[Proof of Theorem \ref{thm:identification2}]
The existence of $a\in \Gamma ^{(\omega )}_{0,s}(\rr {2d})$ and
$b\in \Gamma ^{(1/\omega )}_{0,s}(\rr {2d})$ such that
\eqref{Eq:abinverse} holds is guaranteed by Theorem \ref{thm:symbeqdiff2}.
By \cite[Theorems 2.3 and 2.6]{To28} it follows that
\begin{alignat}{5}
&\op _A(a) & \, &:& \, &M(\omega _0,\mascB ) & &\to & &M(\omega _0/\omega ,\mascB )
\label{Eq:PseudoOnModSpace1}
\intertext{and}
&\op _A(b) & \, &:& \, &M(\omega _0/\omega ,\mascB ) & &\to & &M(\omega _0 ,\mascB )
\label{Eq:PseudoOnModSpace2}
\end{alignat}
are continuous. By \eqref{Eq:abinverse} and the fact that $M(\omega _0,\mascB )$
and $M(\omega _0/\omega ,\mascB )$ are contained in $\Sigma _s'(\rr {2d})$, it
follows that \eqref{Eq:PseudoOnModSpace1} and \eqref{Eq:PseudoOnModSpace2}
are homeomorphic, and (1) follows.

\par

(2) It suffices to prove the result in the Weyl case, $A=\frac 12I$, in view of Proposition
\ref{Prop:Gamma(omega)}. By (1), we may find
$$
a_1\in \Gamma ^{(\omega _1)}_{0,s},\quad b_1\in \Gamma ^{(1/\omega _1)}_{0,s},
\quad a_2\in \Gamma ^{(\omega _1/\omega )}_{0,s},\quad
b_2\in \Gamma ^{(\omega /\omega _1)}_{0,s}
$$
satisfying the following properties:
\begin{itemize}
\item $\op ^w(a_j)$ and $\op ^w(b_j)$ are inverses to each others on
$\Sigma _s'(\rr d)$ for $j=1,2$;

\vrum

\item For arbitrary  $\omega _2\in \mascP _{E,s}(\rr {2d})$, the mappings
\begin{equation}\label{4homeomorphisms}
\begin{aligned}
\op ^w (a_1)\, &:\, M^2_{(\omega _2)}\to M^2_{(\omega
_2/\omega _1)},
\\[1ex]
\op ^w (b_1)\, &:\, M^2_{(\omega _2)}\to M^2_{(\omega
_2\omega _1)},
\\[1ex]
\op ^w (a_2)\, &:\, M^2_{(\omega _2)}\to M^2_{(\omega
_2\omega /\omega _1)},
\\[1ex]
\op ^w (b_2)\, &:\, M^2_{(\omega _2)}\to M^2_{(\omega
_2\omega _1/\omega )}
\end{aligned}
\end{equation}
are isomorphisms.
\end{itemize}

\par

In particular, $\op ^w(a_1)$ is an isomorphism from $M^2_{(\omega _1 )}$ to
$L^2$, and $\op ^w(b_1)$ is an isomorphism from $L^2$ to $M^2_{(\omega _1)}$. 

\par

Now set $c=a_2\wpr a \wpr b_1$. Then by \cite[Theorem 4.14]{CaTo},
the symbol $c$ satisfies
$$
c=a_2\wpr a \wpr b_1\in \Gamma ^{(\omega _1/\omega )}_{s}\wpr \Gamma ^{(\omega )}_{s}
\wpr \Gamma ^{(1/\omega _1)}_{s}\subseteq \Gamma ^{(1)}_s. 
$$
Furthermore,  $\op ^w(c)$ is a composition of three isomorphisms and
consequently  $\op ^w(c)$ is boundedly invertible on  $L^2$. 

\par 

By Proposition \ref{Thm:specinv} (2), $\op ^w(c)^{-1}=\op ^w(c_1)$ for some
$c_1\in \Gamma ^{(1)}_{0,s}$. Hence,
by (1) it follows that $\op ^w(c)$
and $\op ^w(c_1)$ are isomorphisms on $M(\omega _2,\mathscr
B)$, for each $\omega _2\in \mascP _{E,s}(\rr {2d})$. Since  $\op ^w(c)$
and  $\op ^w(c_1)$ are bounded on every $M(\omega ,\mathscr{B})$,
the factorization of the identity $ \op ^w(c) \op ^w(c_1) = \operatorname{Id}$
is well-defined on every $M(\omega ,\mathscr{B})$. Consequently, $
\op ^w(c)$ is an isomorphism on $M(\omega , \mathscr{B})$.

\par

Using the inverses of $a_2$ and $b_1$, we now find that 
$$
\op ^w(a)=\op ^w(b_2)\circ \op ^w(c)\circ \op ^w(a_1) 
$$
is a composition of isomorphisms from the domain space $M(\omega
  _2,\mathscr B)$ onto the image space 
$M(\omega _2/\omega ,\mathscr B)$ (factoring through some
intermediate spaces)  for every  $\omega _2\in
\mascP _{E,s}(\rr {2d})$ and every invariant BF-space
$\mathscr B$. This proves the isomorphism assertions for $\op ^w(a)$.

\par

Finally, the inverse of $\op ^w(a)$ is given by
$$
\op ^w(b_1)\circ \op ^w(c_1)\circ \op ^w(a_2).
$$
which is a Weyl operator with symbol in $\Gamma ^{(1/\omega)}_{0,s}$, and
the result follows.
\end{proof}

\par

\section{Mapping properties for Toeplitz operators}\label{sec5}

\par

In this section we study the  isomorphism properties of  Toeplitz
operators between modulation spaces. We  first
state  results for  Toeplitz operators that  are well-defined in
the sense of \eqref{toeplitz} and Propositions \ref{Tpcont1} and
\ref{Tpcont2}. Then we state and prove more general results
for  Toeplitz operators that  are   defined  only  in the framework of
pseudo-differential calculus.

\par

We start with the following result about  Toeplitz operators with smooth symbols.

\par

\begin{thm}\label{locidentification}
Let $s\ge 1$ $\omega ,\omega _0,v\in \mascP _{E,s}^0(\rr {2d})$  be
such that $\omega _0\in \Gamma ^{(\omega _0)}_{s}(\rr {2d})$ and
that $\omega _0$ is $v$-moderate, and let $\mascB$
  be an invariant BF-space on $\rr {2d}$ or $\mascB =
  L^{\mabfp ,E}(\rr {2d})$ for some phase-shift split parallelepiped
  $E$ in $\rr {2d}$.
If $\phi \in M^1_{(v)}(\rr d)$, then $\tp _\phi (\omega _0)$ is an
isomorphism from $M(\omega ,\mascB )$ to $M(\omega /\omega
_0 ,\mascB )$.
\end{thm}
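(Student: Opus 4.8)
The plan is to represent $\tp _\phi (\omega _0)$ as a Weyl operator whose symbol lies in $\Gamma ^{(\omega _0)}_s(\rr {2d})$, to verify invertibility on a single ``balanced'' Hilbert modulation space, and then to let Theorem \ref{thm:identification} propagate the isomorphism to all $M(\omega ,\mascB )$. Concretely, by the Weyl formulation \eqref{toeplweyl} I would put
\[
c_0 = \omega _0 * W_{\phi ,\phi }, \qquad \tp _\phi (\omega _0) = (2\pi )^{-d/2}\op ^w(c_0).
\]
Since $\omega _0\in \mascP _{E,s}^0$ is sub-exponential, this case is \emph{not} covered by Proposition \ref{Tpcont1}, so the displayed identity is taken as the definition of $\tp _\phi (\omega _0)$; the whole point is to show it is meaningful and has the stated mapping behaviour.

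First I would show $c_0\in \Gamma ^{(\omega _0)}_s(\rr {2d})$. As $\phi \in M^1_{(v)}$, the Wigner distribution $W_{\phi ,\phi }$ lies in $L^1_{(v)}(\rr {2d})$ (after replacing $v$ by an equivalent submultiplicative weight, using the dilation relation between $W_{\phi ,\phi }$ and $V_\phi \phi$ and the bound $v(X)\lesssim e^{r|X|^{1/s}}$ valid for $v\in \mascP _{E,s}^0$). Differentiating under the integral and using $\omega _0\in \Gamma ^{(\omega _0)}_s$ gives
\[
|\partial ^\alpha c_0(X)| = |((\partial ^\alpha \omega _0)*W_{\phi ,\phi })(X)| \lesssim h^{|\alpha |}\alpha !^s \int \omega _0(X-Y)|W_{\phi ,\phi }(Y)|\, dY,
\]
and the $v$-moderateness $\omega _0(X-Y)\lesssim \omega _0(X)v(Y)$ together with $W_{\phi ,\phi }\in L^1_{(v)}$ yields the required $\Gamma ^{(\omega _0)}_s$-estimate. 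This both justifies the definition and places $c_0$ in the class needed for Proposition \ref{Prop:p3.2} and Theorem \ref{thm:identification}.

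Next I would establish invertibility on the balanced space $M^2_{(\vartheta )}$ with $\vartheta =\omega _0^{1/2}$. Since $c_0$ is real-valued, $\op ^w(c_0)$ is formally self-adjoint on $L^2$, and by \eqref{toeplitz}
\[
(\tp _\phi (\omega _0)f,f)_{L^2} = \int \omega _0\,|V_\phi f|^2 = \nm {\vartheta \, V_\phi f}{L^2}^2 \asymp \nm f{M^2_{(\vartheta )}}^2,
\]
where the last equivalence holds because $\vartheta$ is $v^{1/2}$-moderate and $\phi \in M^1_{(v)}\subseteq M^1_{(v^{1/2})}$ is an admissible window for $M^2_{(\vartheta )}$ (Proposition \ref{p1.4}). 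Cauchy--Schwarz gives boundedness of the sesquilinear form $(f,g)\mapsto (\tp _\phi (\omega _0)f,g)$ on $M^2_{(\vartheta )}$, so by Lax--Milgram $\tp _\phi (\omega _0)$ is an isomorphism from $M^2_{(\vartheta )}$ onto its dual $M^2_{(1/\vartheta )}=M^2_{(\vartheta /\omega _0)}$.

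Finally, taking $a_0=c_0\in \Gamma ^{(\omega _0)}_s$ and $\omega _1=\vartheta$, the previous paragraph is exactly the hypothesis of Theorem \ref{thm:identification}(2), namely that $\op ^w(c_0)$ is an isomorphism $M^2_{(\vartheta )}\to M^2_{(\vartheta /\omega _0)}$. That theorem then gives at once that $\op ^w(c_0)$, hence $\tp _\phi (\omega _0)$, is an isomorphism from $M(\omega ,\mascB )$ onto $M(\omega /\omega _0,\mascB )$ for every $\omega \in \mascP _{E,s}^0$ and every admissible $\mascB$, with inverse a pseudo-differential operator with symbol in $\Gamma ^{(1/\omega _0)}_s$. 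I expect the main obstacle to be the first step: verifying $c_0\in \Gamma ^{(\omega _0)}_s$ for a general window $\phi \in M^1_{(v)}$ (rather than $\phi \in \Sigma _s$, where Proposition \ref{prop:conv.class} would apply directly), which needs careful Wigner-integrability bookkeeping with the sub-exponential weights. The genuine invertibility is then confined to the single coercive $L^2$-estimate, while all propagation to arbitrary $\omega$ and $\mascB$ is furnished by Theorem \ref{thm:identification}.
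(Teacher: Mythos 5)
Your architecture coincides with the paper's: the result is obtained there as an immediate consequence of (i) Proposition \ref{Aomegaproperties}, which realises $\tp _\phi (\omega _0)$ as $\op ^w(b)$ with $b\in \Gamma ^{(\omega _0)}_{s}$, (ii) Lemma \ref{Abijections}, the isomorphism $M^2_{(\vartheta )}\to M^2_{(1/\vartheta )}$ with $\vartheta =\omega _0^{1/2}$, and (iii) Theorem \ref{thm:identification}{\,}(2), exactly your three steps. The differences are in how you prove the first two ingredients. For (ii) your Lax--Milgram/coercivity argument is a clean equivalent of the paper's route (which uses the identity $(\tp _\phi (\omega _0)f,g)_{L^2}=(f,g)_{M^{2,\phi}_{(\vartheta )}}$, duality, closed range, self-adjointness and Banach's theorem); nothing is lost or gained. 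For (i) the paper does not estimate $\omega _0*W_{\phi ,\phi}$ directly: it invokes Lemma \ref{Prop2.1inTo8} (Proposition 2.1 in \cite{To8}) to get $\tp _\phi (\omega _0)=\op ^w(b)$ with $b\in \splM ^{\infty ,1}_{(1/\omega _{0,r_0})}$, and then uses $\splM ^{\infty ,1}_{(1/\omega _{0,r_0})}\subseteq \Gamma ^{(\omega _0)}_{s}$ from Proposition \ref{Prop:Gamma(omega)}{\,}(3). Your direct convolution estimate is more elementary and self-contained, but it is also the one place where your sketch does not quite close.

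Concretely: you need $W_{\phi ,\phi}\in L^1_{(\tilde v)}$ for a submultiplicative $\tilde v$ moderating $\omega _0$. Since $|W_{\phi ,\phi}(X)|=2^d|V_{\check \phi}\phi (2X)|$, the change of variables produces $\int |V_{\check \phi}\phi (Y)|\, v(Y/2)\, dY$, so you need $v(Y/2)\lesssim v(Y)$ (or membership of $\phi$ in $M^1$ with respect to the dilated weight). Submultiplicativity only gives $v(Y/2)\ge v(Y)^{1/2}$, and the bounds you propose to use, $v(X)\lesssim e^{r|X|^{\frac 1s}}$ or $v(Y/2)\lesssim v(Y)e^{r|Y/2|^{\frac 1s}}$ for every $r>0$, reduce the problem to integrability of $|V_{\check\phi}\phi |$ against $v\cdot e^{r|\cdot |^{1/s}}$, which is \emph{not} implied by $\phi \in M^1_{(v)}$ when $v$ grows more slowly than any such exponential. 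This is fixable (e.g.\ by arranging $v(Y/2)\le Cv(Y)$ for the moderating weight, which holds for all the usual weights and can be built into the choice of $v$), but it is a genuine extra step; the paper's detour through Lemma \ref{Prop2.1inTo8} is precisely what absorbs this dilation bookkeeping. With that point settled, the rest of your argument is correct and delivers the theorem.
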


\par

In the next result we relax our restrictions on the weights but impose
more restrictions on $\mascB$.

\par

\begin{thm}\label{locidentification2}
Let $s> 1$, $0\le t\le 1$, $p,q\in [1,\infty]$, and $\omega ,\omega
_0,v_0,v_1\in \mascP _{E,s}(\rr {2d})$ be such that  $\omega _0$ is $v_0$-moderate
and  $\omega $ is
$v_1$-moderate. Set  $v=v_1^tv_0$, $\vartheta =\omega _0^{1/2}$ and let
$\omega _{0,t}$ be the same as in \eqref{omega0t}.
If  $\phi \in M^{1}_{(v )}(\rr d)$ and $\omega _0\in
\splM ^{\infty}_{(1/\omega _{0,t})}(\rr {2d})$,  then $\tp _\phi (\omega _0)$ is an
isomorphism from $M_{(\vartheta
\omega )}^{p,q}(\rr d)$ to $M_{(\omega /\vartheta )}^{p,q}(\rr
d)$.
\end{thm}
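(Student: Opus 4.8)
The plan is to reduce the assertion to an invertibility statement on $L^2(\rr d)$, apply the spectral invariance (Wiener algebra) property of Proposition~\ref{Thm:specinv}, and then propagate the resulting isomorphism to all spaces $M^{p,q}_{(\omega )}$ by the continuity result of Proposition~\ref{Prop:pseudomod}. Throughout I would use the Weyl representation \eqref{toeplweyl}, which gives $\tp _\phi (\omega _0)=\op ^w(c)$ with $c=(2\pi )^{-d/2}\omega _0*W_{\phi ,\phi}$. Since $\omega _0\in \mascP _{E,s}$ need not be elliptic, I would first invoke Proposition~\ref{Prop:EquivWeights} to replace $\vartheta =\omega _0^{1/2}$ by an equivalent smooth weight $\vartheta _0\asymp \vartheta$ lying in $\mascP _{E,s}\cap \Gamma ^{(\vartheta _0)}_{0,s}$; this changes none of the modulation spaces involved, while making the lifting machinery applicable.

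Applying Theorem~\ref{thm:identification2} with $\vartheta _0$ in the role of the lifting weight, I obtain symbols $a\in \Gamma ^{(\vartheta _0)}_{0,s}$ and $b\in \Gamma ^{(1/\vartheta _0)}_{0,s}$ with $\op ^w(a)\circ \op ^w(b)=\op ^w(b)\circ \op ^w(a)=\operatorname{Id}$, where $\op ^w(a)$ divides the weight by $\vartheta _0$ and $\op ^w(b)$ multiplies it by $\vartheta _0$ on every $M^{p,q}_{(\omega ')}$. I then set
\[
T=\op ^w(b)\circ \tp _\phi (\omega _0)\circ \op ^w(b),
\]
so that $T=\op ^w(d)$ with $d=b\wpr c\wpr b$, and, by $\op ^w(a)\op ^w(b)=\operatorname{Id}$, one has $\tp _\phi (\omega _0)=\op ^w(a)\circ T\circ \op ^w(a)$. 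The structural input is Proposition~\ref{Prop:CorWeyl}: once $c\in \splM ^{\infty ,1}_{(\omega _1)}$ is known, the sandwiching relation \eqref{Eq:ch123} yields $d\in \splM ^{\infty ,1}_{(v_2)}$ with $v_2(X,Y)=v_1(2Y)$, i.e.\ a symbol of exactly the type covered by the Wiener property in Proposition~\ref{Thm:specinv}(4).

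The main obstacle is establishing the symbol membership $c=(2\pi )^{-d/2}\omega _0*W_{\phi ,\phi}\in \splM ^{\infty ,1}_{(\omega _1)}$ from the hypotheses $\omega _0\in \splM ^{\infty}_{(1/\omega _{0,t})}$ and $\phi \in M^1_{(v)}$, with $v=v_1^tv_0$ and $\omega _1$ as in Proposition~\ref{Prop:CorWeyl}. I would prove this by a direct short-time Fourier transform estimate: expressing $V_\Phi (\omega _0*W_{\phi ,\phi})$ in terms of $V_\Psi \omega _0$ and the short-time Fourier transform of $W_{\phi ,\phi}$ (the latter controlled by $V_\psi \phi$), and combining the $M^\infty$-decay of $\omega _0$ with the $L^1_{(v)}$-integrability of $V_\psi \phi$. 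This is precisely the computation underlying the continuity statement of Proposition~\ref{Tpcont1}, and may alternatively be extracted from \cite[Corollary 4.2]{CG1}.

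It then remains to show $T$ is invertible on $L^2(\rr d)$. Here I would use the positivity of the Toeplitz operator: by \eqref{toeplitz}, for $f\in M^2_{(\vartheta )}$,
\[
(\tp _\phi (\omega _0)f,f)_{L^2}=\int \omega _0\,|V_\phi f|^2\, dX=\|\vartheta V_\phi f\|_{L^2}^2\asymp \|f\|_{M^2_{(\vartheta )}}^2 ,
\]
so that, via the duality $(M^2_{(\vartheta )})'=M^2_{(1/\vartheta )}$ and the formal self-adjointness of $\tp _\phi (\omega _0)$, the operator $\tp _\phi (\omega _0)$ is bounded below with dense range, hence an isomorphism from $M^2_{(\vartheta )}$ onto $M^2_{(1/\vartheta )}$. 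Since $\op ^w(b)$ is an isomorphism $L^2\to M^2_{(\vartheta )}$ and $M^2_{(1/\vartheta )}\to L^2$, the composition $T$ is an isomorphism on $L^2$. Proposition~\ref{Thm:specinv}(4) then gives $T^{-1}=\op ^w(e)$ with $e\in \splM ^{\infty ,1}_{(v_2)}$, and Proposition~\ref{Prop:pseudomod}---whose condition \eqref{e5.9} holds because $\omega $ is $v_1$-moderate and $v_2(X,Y)=v_1(2Y)$---shows that $\op ^w(d)$ and $\op ^w(e)$ are bounded on $M^{p,q}_{(\omega )}$; the factorized identity $\op ^w(d)\op ^w(e)=\op ^w(e)\op ^w(d)=\operatorname{Id}$, argued as in the proof of Theorem~\ref{thm:identification2}(2), then forces $T$ to be an isomorphism on $M^{p,q}_{(\omega )}$. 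Finally, reading $\tp _\phi (\omega _0)=\op ^w(a)\circ T\circ \op ^w(a)$ as the composition of the isomorphisms $\op ^w(a)\colon M^{p,q}_{(\vartheta \omega )}\to M^{p,q}_{(\omega )}$, $T\colon M^{p,q}_{(\omega )}\to M^{p,q}_{(\omega )}$, and $\op ^w(a)\colon M^{p,q}_{(\omega )}\to M^{p,q}_{(\omega /\vartheta )}$ completes the proof.
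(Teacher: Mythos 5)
Your proposal is correct and follows essentially the same route as the paper's proof (given for Theorem \ref{locidentification2}$'$, of which the stated theorem is the case $q_0=\infty$): the $M^2_{(\vartheta )}\to M^2_{(1/\vartheta )}$ isomorphism via positivity and duality (Lemma \ref{Abijections}), the symbol membership $c\in \splM ^{\infty ,1}_{(\omega _1)}$ from Proposition \ref{Tpcont1}$'$, conjugation by the lifting operators of Theorem \ref{thm:identification2} combined with Proposition \ref{Prop:CorWeyl} to land in $\splM ^{\infty ,1}_{(v_2)}$, the Wiener property of Proposition \ref{Thm:specinv}, and Proposition \ref{Prop:pseudomod} for the $M^{p,q}$ continuity. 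The only (harmless) difference is organizational: you invert the conjugated operator $T$ on the unweighted spaces and then compose with the liftings, whereas the paper unconjugates first to exhibit the inverse Toeplitz symbol in $\splM ^{\infty ,1}_{(\omega _2)}$ and verifies the Weyl-product weight conditions for $b\wpr c=1$ directly.
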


\par

Before the proofs of Theorems \ref{locidentification} and
\ref{locidentification2} we state  the following consequence of Theorem
\ref{locidentification2} which  was the original  goal of our
investigations.

\par

\begin{cor}\label{locidentification3}
Let $s\ge 1$, $\omega ,\omega _0,v_1,v_0\in \mascP _{E,s}(\rr {2d})$ and
that $\omega _0$ is
$v_0$-moderate and $\omega $ is $v_1$-moderate. Set  $v=v_1v_0$
and $\vartheta =\omega _0^{1/2}$. If  $\phi \in
M^{1}_{(v)}(\rr d)$, then  $\tp
_\phi (\omega _0)$ is an isomorphism from $M_{(\vartheta \omega
)}^{p,q}(\rr d)$ to $M_{(\omega /\vartheta )}^{p,q}(\rr d)$
simultaneously for all  $p,q\in [1,\infty ]$.
\end{cor}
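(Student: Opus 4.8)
The plan is to obtain Corollary~\ref{locidentification3} as the specialization of Theorem~\ref{locidentification2} to the single value $t=1$. With this choice the moderating weight in Theorem~\ref{locidentification2} becomes $v=v_1^tv_0=v_1v_0$, which is exactly the weight $v$ of the Corollary, while the auxiliary weight from \eqref{omega0t} degenerates to
\[
\omega _{0,1}(X,Y)=v_1(2Y)^{1-1}\omega _0(X)=\omega _0(X),\qquad X,Y\in\rr{2d}.
\]
All other data — the order $\vartheta=\omega _0^{1/2}$, the $v_0$- and $v_1$-moderateness of $\omega _0$ and $\omega$, the window requirement $\phi\in M^1_{(v)}(\rr d)$, and the source and target spaces $M^{p,q}_{(\vartheta\omega )}(\rr d)$, $M^{p,q}_{(\omega /\vartheta )}(\rr d)$ — then match those of the Corollary verbatim. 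Hence the statement reduces to checking the one hypothesis of Theorem~\ref{locidentification2} that is not literally assumed in the Corollary, namely the symbol membership $\omega _0\in\splM ^{\infty}_{(1/\omega _{0,1})}(\rr {2d})$.

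I would verify this membership by a direct short-time Fourier transform estimate. Fix any window $\Phi\in\Sigma _1(\rr {2d})\setminus 0$; by the window-independence of modulation-space norms (cf.\ Proposition~\ref{p1.4BF}) it suffices to bound $V_\Phi\omega _0\cdot(1/\omega _{0,1})$ in $L^\infty(\rr {2d}\times\rr {2d})$. Taking $v_0$ submultiplicative and using the moderateness estimate $\omega _0(Z)\lesssim\omega _0(X)v_0(Z-X)$ together with the integral representation \eqref{defstft}$'$, one gets
\[
|V_\Phi\omega _0(X,Y)|\le\int_{\rr {2d}}\omega _0(Z)\,|\Phi(Z-X)|\,dZ\lesssim\omega _0(X)\int_{\rr {2d}}v_0(W)\,|\Phi(W)|\,dW,
\]
and the last integral is finite because $\Phi\in\Sigma _1(\rr {2d})\subseteq L^1_{(v_0)}(\rr {2d})$. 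Since the weight attached to $\splM ^{\infty}_{(1/\omega _{0,1})}$ is $(X,Y)\mapsto 1/\omega _0(X)$, this is precisely the required uniform bound, valid independently of $Y$, $s$ and $p,q$. Conceptually, the whole point of the choice $t=1$ is to remove the frequency ($Y$) decay that $\omega _{0,t}$ would otherwise demand, so that the symbol condition holds for free.

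Finally, the qualifier ``simultaneously for all $p,q$'' asks that one and the same operator realize the isomorphism on every $M^{p,q}$, and this is automatic here: $\tp _\phi(\omega _0)$ is a fixed operator on $\Sigma _s'(\rr d)$, and through its Weyl representation \eqref{toeplweyl} together with the pseudo-differential lifting of Theorem~\ref{thm:identification2}(2) its inverse is realized by a fixed Weyl operator $\op ^w(b_0)$ with $b_0\in\Gamma ^{(1/\omega _0)}_{0,s}(\rr {2d})$, neither of which depends on $(p,q)$; thus the pair $\tp _\phi(\omega _0),\op ^w(b_0)$ restricts to mutually inverse maps $M^{p,q}_{(\vartheta\omega )}\leftrightarrow M^{p,q}_{(\omega /\vartheta )}$ for every $p,q\in[1,\infty]$ at once. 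The genuinely substantive analytic work (the one-parameter group of Section~\ref{sec3} and the Wiener lemma producing the $\Gamma$-class inverse) is already contained in Theorem~\ref{locidentification2}, so the only delicate point left in the Corollary is the range of $s$: for $s>1$ the conclusion is immediate, whereas the endpoint $s=1$, where $\mascP _{E,1}=\mascP _E$ is the full class of moderate weights, is the main obstacle, since Theorem~\ref{locidentification2} is stated for $s>1$; this borderline case must be recovered from the Roumieu-type counterparts of the lifting machinery (Theorem~\ref{thm:identification} and Proposition~\ref{Thm:specinv}(2), which remain valid for $s\ge 1$) by repeating the argument with $\Gamma ^{(\cdot)}_{s}$-symbols in place of $\Gamma ^{(\cdot)}_{0,s}$-symbols.
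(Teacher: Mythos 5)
Your reduction is the paper's own: specialize Theorem \ref{locidentification2} to $t=1$, so that $v=v_1v_0$ and $\omega _{0,1}(X,Y)=\omega _0(X)$, and then the only substantive hypothesis left to check is $\omega _0\in \splM ^{\infty}_{(1/\omega _{0,1})}(\rr {2d})$. Where you genuinely diverge is in how that membership is verified. The paper takes an equivalent elliptic weight $\omega _1\in \mascP _{E,s}(\rr {2d})\cap \Gamma ^{(\omega _1)}_{0,s}(\rr {2d})$ with $\omega _1\asymp \omega _0$, writes $\omega _0=\omega _1\cdot (\omega _0/\omega _1)$ with $\omega _0/\omega _1\in L^\infty \subseteq M^{\infty}$, and invokes a multiplication theorem for modulation spaces (Theorem 2.2 in \cite{To9}) to conclude $\omega _0\in M^{\infty}_{(\omega _2)}$ with $\omega _2(x,\xi ,\eta ,y)=1/\omega _0(x,\xi )$. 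Your one-line estimate
\[
|V_\Phi \omega _0(X,Y)|\le \int \omega _0(Z)|\Phi (Z-X)|\, dZ
\lesssim \omega _0(X)\nm {v_0\Phi}{L^1},
\]
using only $v_0$-moderateness and $\Phi \in \Sigma _1(\rr {2d})\subseteq L^1_{(v_0)}(\rr {2d})$ (legitimate, since $\omega _0\in L^\infty _{(1/\omega _0)}$ makes \eqref{defstft}$'$ applicable), reaches the same $L^\infty$-bound more directly and without the external product theorem. Both arguments yield only the sup-norm ($\splM ^{\infty ,\infty}$) membership, which is what the hypothesis $\omega _0\in \splM ^{\infty}_{(1/\omega _{0,t})}$ of Theorem \ref{locidentification2} asks for; this is consistent with the window class $M^1_{(v)}$ (i.e.\ $q_0=\infty$, $r_0=1$ in the primed version), so your verification suffices.

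Two caveats. First, your claim that the inverse is realized as $\op ^w(b_0)$ with $b_0\in \Gamma ^{(1/\omega _0)}_{0,s}(\rr {2d})$ via Theorem \ref{thm:identification2}{\,}(2) is not justified: for $\phi$ merely in $M^1_{(v)}$ and non-smooth $\omega _0$, the Weyl symbol $\omega _0*W_{\phi ,\phi}$ of $\tp _\phi (\omega _0)$ need not belong to any $\Gamma$-class, and the inverse produced in the proof of Theorem \ref{locidentification2}$'$ only has symbol in $\splM ^{\infty ,1}_{(\omega _2)}$. This is harmless for the ``simultaneously for all $p,q$'' assertion, which already follows because Theorem \ref{locidentification2} states the isomorphism of one fixed operator for every $p,q\in [1,\infty ]$. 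Second, you are right to flag the endpoint $s=1$: Theorem \ref{locidentification2} assumes $s>1$, and since $\mascP _{E,1}=\mascP _E$ while the Roumieu-type substitutes you propose (Theorem \ref{thm:identification}, Theorem \ref{locidentification}) require weights in the strictly smaller class $\mascP ^0_{E,s}$ and smooth symbols, the case $s=1$ with general exponentially moderate weights is not actually recovered by your sketch. The paper's own proof simply cites Theorem \ref{locidentification2} without addressing this, so the gap lies in the source as much as in your argument; apart from these points your proof is correct.
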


\par

\begin{proof}
Let $\omega _1\in \mascP _{E,s}(\rr {2d})\cap \Gamma ^{(\omega _1)}_{0,s}(\rr {2d})$
be such that
$C^{-1}\le\omega _1/\omega _0\le C$, for some constant $C$. Hence,
$\omega /\omega _0\in L^\infty \subseteq M^{\infty}$. By Theorem 2.2
in \cite{To9}, it follows that $\omega =\omega _1 \cdot (\omega
/\omega _1)$ belongs to $M^{\infty}_{(\omega _2)}(\rr {2d})$, when
$\omega _2(x,\xi ,\eta ,y)=1/\omega _0(x,\xi )$. The result now
follows by setting $t=1$ and $q_0=1$ in Theorem
\ref{locidentification2}.
\end{proof}

\par

In the proofs of Theorems \ref{locidentification} and
\ref{locidentification2} we consider Toeplitz operators as defined by
an extension of the form~\eqref{toeplitz}.
Later on we  present extensions of
these theorems (cf. Theorems \ref{locidentification}$'$ and
\ref{locidentification2}$'$ below) for those readers who accept to use
pseudo-differential 
calculus to extend the definition of Toeplitz operators. Except for
the interpretation of $\tp _\phi (\omega) $  the proofs of Theorems
\ref{locidentification} and 
\ref{locidentification2} are identical to those of Theorems
\ref{locidentification}$'$ and 
\ref{locidentification2}$'$. 

\par

We need some preparations and start with the following lemma.

\par

\begin{lemma}\label{Abijections}
Let $s\ge 1$,  $\omega ,v\in \mascP _{E,s}(\rr {2d})$ be such  that $\vartheta =
\omega ^{1/2}$ is $v$-moderate.  Assume that $\phi \in M^2_{(v)}$.
Then $\tp _\phi (\omega )$  is an isomorphism from
$M^2_{(\vartheta )}(\rr d)$ onto $M^2_{(1/\vartheta )}(\rr d)$.
\end{lemma}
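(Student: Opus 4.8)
The plan is to realise $\tp _\phi (\omega )$ as the operator attached, via the Lax--Milgram lemma, to the natural inner product on the Hilbert space $M^2_{(\vartheta )}(\rr d)$, exploiting the positivity of Toeplitz operators with positive symbols. First I would observe that, since here $p=q=2$ and hence $\min (p,p',q,q')=2$, the window $\phi \in M^2_{(v)}(\rr d)\setminus 0$ is admissible for $M^2_{(\vartheta )}(\rr d)$ by Remark \ref{Rem:ExtWindows} (the weight $\vartheta$ being $v$-moderate by assumption), so that
\[
\nm f{M^2_{(\vartheta )}}\asymp \nm {\vartheta \, V_\phi f}{L^2(\rr {2d})},\qquad f\in M^2_{(\vartheta )}(\rr d).
\]
Writing $\omega =\vartheta ^2$ with $\vartheta$ real and positive, the defining relation \eqref{toeplitz} yields the key identity
\[
(\tp _\phi (\omega )f,g)_{L^2(\rr d)} = (\omega \, V_\phi f,V_\phi g)_{L^2(\rr {2d})} = (\vartheta \, V_\phi f,\vartheta \, V_\phi g)_{L^2(\rr {2d})} =: B(f,g).
\]
The right-hand side is well-defined and bounded directly by Cauchy--Schwarz for all $f,g\in M^2_{(\vartheta )}(\rr d)$, and by \eqref{toeplitz} it extends the Toeplitz sesquilinear form; accordingly I take $\tp _\phi (\omega )$ to be the operator that $B$ represents.

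Next I would verify that $B$ is a bounded and coercive sesquilinear form on $M^2_{(\vartheta )}(\rr d)\times M^2_{(\vartheta )}(\rr d)$. Boundedness is immediate from the computation above, namely
\[
|B(f,g)|\le \nm {\vartheta \, V_\phi f}{L^2}\,\nm {\vartheta \, V_\phi g}{L^2}\asymp \nm f{M^2_{(\vartheta )}}\,\nm g{M^2_{(\vartheta )}},
\]
while coercivity is precisely the positivity of the Toeplitz form combined with the norm equivalence of the first paragraph,
\[
B(f,f)=\nm {\vartheta \, V_\phi f}{L^2}^2\asymp \nm f{M^2_{(\vartheta )}}^2 .
\]
In other words, $B$ is an inner product equivalent to the one inducing the Hilbert space structure of $M^2_{(\vartheta )}(\rr d)$.

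Finally I would invoke the standard modulation space duality: for $\vartheta \in \mascP _E(\rr {2d})$ the $L^2(\rr d)$ pairing identifies $M^2_{(1/\vartheta )}(\rr d)$ with the dual of $M^2_{(\vartheta )}(\rr d)$, this being the case $p=q=2$ of the relation $(M^{p,q}_{(\vartheta )})'=M^{p',q'}_{(1/\vartheta )}$ (cf. \cite{Fe4,Gc2}). The Lax--Milgram lemma applied to the bounded coercive form $B$ on the Hilbert space $M^2_{(\vartheta )}(\rr d)$ then produces a unique isomorphism $A$ from $M^2_{(\vartheta )}(\rr d)$ onto its dual, determined by $(Af,g)_{L^2}=B(f,g)$; under the above identification the target is exactly $M^2_{(1/\vartheta )}(\rr d)$. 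By the key identity $A=\tp _\phi (\omega )$, and the claim follows. The only points requiring care are the admissibility of the non-classical window $\phi \in M^2_{(v)}$ --- precisely the endpoint $\min (p,p',q,q')=2$ covered by Remark \ref{Rem:ExtWindows} --- and the duality identification; once these are in place the positivity of $\tp _\phi (\omega )$ renders coercivity automatic and the statement reduces to Lax--Milgram.
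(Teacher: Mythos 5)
Your proof is correct and follows essentially the same route as the paper's: both rest on the identity $(\tp _\phi (\omega )f,g)_{L^2}=(\vartheta \, V_\phi f,\vartheta \, V_\phi g)_{L^2}$, on the admissibility of the window $\phi \in M^2_{(v)}$ from Remark \ref{Rem:ExtWindows}, and on the duality $(M^2_{(\vartheta )})'\cong M^2_{(1/\vartheta )}$ under the $L^2$ pairing. The only difference is one of packaging: you invoke Lax--Milgram for the coercive Hermitian form, whereas the paper unwinds the same content by hand, deriving the two-sided estimate $\nm f{M^2_{(\vartheta )}}\asymp \nm {\tp _\phi (\omega )f}{M^2_{(1/\vartheta )}}$ and then concluding via injectivity with closed range, self-adjointness for dense range, and Banach's theorem.
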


\par

\begin{proof}
Recall from Remark \ref{Rem:ExtWindows}  that for  $\phi \in M^2_{(v)}(\rr
d)\setminus \{0\}$ the expression $\nm {V_\phi f\cdot \vartheta }{L^2}$
defines an  equivalent norm
on $M^2_{(\vartheta )}$. Thus the occurring  STFTs with respect to
$\phi  $ are well defined. 

\par

Since $ \tp _\phi (\omega ) $ is bounded from  $M^2_{(\vartheta )}$ to
$M^2_{(1/\vartheta )}$ by Proposition~\ref{Tpcont2}, the estimate
\begin{equation}
  \label{eq:23}
\|\tp _\phi (\omega )  f \|_{M^2_{(1/\vartheta )}} \lesssim \|f\|_{M^2_{(\vartheta )}}\,  
\end{equation}
holds for all $f\in M^2_{(\vartheta )}$. 

\par

Next we  observe that
\begin{equation}\label{Aomegaident}
(\tp _\phi (\omega ) f,g)_{L^2(\rr d)} = (\omega V_\phi f,V_\phi
g)_{L^2(\rr {2d})} = (f,g)_{M^{2,\phi }_{(\vartheta )}},
\end{equation}
for $f,g\in M^{2}_{(\vartheta )}(\rr d)$ and $\phi \in M^2_{(v)}(\rr
d)$. The duality of modulation spaces (Proposition~\ref{p1.4}(3)) now yields
the following identity:
\begin{eqnarray}
  \|f\|_{M^{2}_{(\vartheta )}} &\asymp & \sup _{\nm g{M^{2}_{(\vartheta )}}=1}
    |(f,g)_{M^2_{(\vartheta)}}| \notag \\
&\asymp & \sup _{\nm g{M^{2}_{(\vartheta )}}=1} |(\topo f, g )_{L^2}|
\asymp \|\topo f\|_{M^2_{(1/\vartheta )}} \label{eq:24} \, .
\end{eqnarray}
A combination of \eqref{eq:23} and \eqref{eq:24} shows that $\nm
f{M^2_{(\vartheta )}}$ and $\| \topo f
\|_{M^2_{(1/\vartheta )}}$ are equivalent norms on $M^2_{(\vartheta )}$. 

\par

In particular, $\tp _\phi (\omega )$ is one-to-one from 
$M^2_{(\vartheta )}$ to $M^2_{(1/\vartheta )}$ with closed range.
Since $\tp _\phi (\omega )$ is self-adjoint with respect to $L^2$, it
follows by duality that  $\tp _\phi (\omega )$ has dense
range in $M^2_{(1/\vartheta )}$. Consequently, $\topo $ is onto
$M^2_{(1/\vartheta )}$. By Banach's theorem $\topo $ is an isomorphism from
$M^2_{(\vartheta )}$ to $M^2_{(1/\vartheta )}$. 
\end{proof}

\par

We  need a further  generalization  of Proposition
\ref{Tpcont1} to more general classes of symbols and windows. Set 
\begin{equation}\label{Tomega}
\omega _1(X,Y)=\frac{v_0(2Y)^{1/2}v_1(2Y)}{\omega
_0(X+Y)^{1/2}\omega _0(X-Y)^{1/2}}.
\end{equation}

\par

\renewcommand{\rubrik}{Proposition \ref{Tpcont1}$'$}

\begin{tom}
Let $s\ge 1$, $0\le t\le 1$, $p,q,q_0\in [1,\infty]$, and  $\omega ,\omega _0,v_0,v_1\in
\mascP _{E,s}(\rr {2d})$ be such that $v_0$ and $v_1$ are submultiplicative,
$\omega _0$ is
$v_0$-moderate and $\omega $ is $v_1$-moderate. Set 
\begin{equation*}
r_0=2q_0/(2q_0-1),\quad v=v_1^tv_0 \quad \text{and}\quad \vartheta = \omega _0^{1/2} \,,  
\end{equation*}
and  let  $\omega _{0,t}$ and $\omega _1$ be as in \eqref{omega0t} and
\eqref{Tomega}. Then the following are true:

\par

\begin{enumerate}
\item The definition of $(a,\phi )\mapsto \tp _\phi (a)$ from $\Sigma
_s(\rr {2d})\times \Sigma _s(\rr d)$ to $\mathcal L(\Sigma _s(\rr
d),\Sigma _s'(\rr d))$ extends uniquely to a continuous map from
$\splM ^{\infty ,q_0}_{(1/\omega _{0,t})}(\rr {2d})\times
M^{r_0}_{(v)}(\rr d)$ to $\mathcal L(\Sigma _s(\rr d),\Sigma _s
'(\rr d))$.

\vrum

\item If $\phi \in M^{r_0}_{(v)}(\rr d)$ and $a\in \splM
^{\infty ,q_0}_{(1/\omega _{0,t})}(\rr {2d})$, then $\tp _\phi (a)
=\op ^w(a_0)$ for some $a_0\in \splM ^{\infty ,1}_{(\omega _1)}(\rr
{2d})$, and $\tp _\phi (a)$ extends uniquely to a continuous map
from $M_{(\vartheta \omega )}^{p,q}(\rr d)$ to $M_{(\omega
/\vartheta )}^{p,q}(\rr d)$.
\end{enumerate}
\end{tom}

\par

For the proof we need the following result, which follows from
\cite[Proposition 2.1]{To8} and its proof.

\par

\begin{lemma}\label{Prop2.1inTo8}
Assume that $s\ge 1$, $q_0,r_0\in [1,\infty ]$ satisfy $r_0=2q_0/(2q_0-1)$. Also
assume that $v \in \mascP _{E,s}(\rr {2d})$ is submultiplicative, and
that $\kappa ,\kappa _0\in \mascP _{E,s}(\rr {2d}\oplus \rr {2d})$
satisfy 
\begin{equation}\label{kappacond1}
\kappa _0(X_1+X_2,Y)\le C\kappa (X_1,Y)\, v(Y+X_2)v(Y-X_2)\quad X_1,X_2,Y\in \rr {2d},
\end{equation}
for some constant $C>0$. Then the  map $(a,\phi )\mapsto \operatorname 
{Tp}_{\phi} (a)$ from $\Sigma _s(\rr {2d})\times \Sigma _s(\rr {d})$
to $\mathcal L(\Sigma _s(\rr {d}),\Sigma _s'(\rr {d}))$ extends
uniquely to a continuous mapping from $\splM ^{\infty
  ,q_0}_{(\omega )}(\rr {2d})\times M^{r_0}_{(v)}(\rr d)$  to $\mathcal L(\Sigma _s(\rr
{d}),\Sigma _s'(\rr {d}))$. Furthermore, if $\phi \in
M^{r_0}_{(v)}(\rr d)$ and $a\in \splM ^{\infty ,q_0}_{(\kappa 
)}(\rr {2d})$, then $\operatorname {Tp}_{\phi} (a) = \op ^w(b)$ for some
$b\in \splM ^{\infty ,1}_{(\kappa _0)}$.
\end{lemma}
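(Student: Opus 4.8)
The plan is to reduce the entire statement to the Weyl-calculus representation of the Toeplitz operator and then to a single convolution estimate on (ultra-)modulation spaces, which is precisely the content of \cite[Proposition 2.1]{To8} transported to the Gelfand--Shilov setting. Recall from \eqref{toeplweyl} that $\tp _\phi (a)=(2\pi )^{-d/2}\op ^w(a*W_{\phi ,\phi })$, so that $\tp _\phi (a)=\op ^w(b)$ with $b=(2\pi )^{-d/2}(a*W_{\phi ,\phi })$. With this identification, both assertions follow once I establish that the bilinear map $(a,\phi )\mapsto b$ extends continuously from $\Sigma _s(\rr {2d})\times \Sigma _s(\rr d)$ to $\splM ^{\infty ,q_0}_{(\kappa )}(\rr {2d})\times M^{r_0}_{(v)}(\rr d)$ and that its range lies in $\splM ^{\infty ,1}_{(\kappa _0)}(\rr {2d})$. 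Since, by Remark \ref{Rem:KernelThm}, $\op ^w$ identifies symbols with operators and maps $\splM ^{\infty ,1}_{(\kappa _0)}$ boundedly into $\mathcal L(\Sigma _s(\rr d),\Sigma _s'(\rr d))$, the operator-valued continuity stated in the lemma is an immediate consequence of the symbol-level estimate for $b$.

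The first step is to control the Wigner factor. I would show that $\phi \in M^{r_0}_{(v)}(\rr d)$ forces $W_{\phi ,\phi }$ to belong to $\splM ^{1,q_2}(\rr {2d})$ with $q_2=q_0/(2q_0-1)$ and with weight $(X_2,Y)\mapsto v(Y+X_2)v(Y-X_2)$ (up to equivalence), using the bilinear continuity of the Wigner map on modulation spaces recalled in Section \ref{sec1}. The exponent relation $r_0=2q_0/(2q_0-1)$ is exactly the one for which this estimate closes, namely $r_0=2q_2$; and the weight $v(Y+X_2)v(Y-X_2)$ appears because the short-time Fourier transform of $W_{\phi ,\phi }$ at $(X_2,Y)$ is governed by $V_\phi \phi$ evaluated at the two shifted points $Y\pm X_2$, so that the membership $\phi \in M^{r_0}_{(v)}$ feeds in through $v$ at precisely these arguments.

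Given the Wigner membership, I would estimate $b=(2\pi )^{-d/2}\,a*W_{\phi ,\phi }$ by a generalized Young inequality for the spaces $\splM $. Using the identity that expresses $V_\Phi (a*g)$ as a partial convolution, in the phase-space variable, of $V_{\Phi _1}a$ and $V_{\Phi _2}g$ for a factored window $\Phi =\Phi _1*\Phi _2$, the problem reduces to a mixed-norm Young--H\"older estimate in the first $\rr {2d}$-variable combined with the pointwise weight bound \eqref{kappacond1}. With $a$ in the $(\infty ,q_0)$-space and $W_{\phi ,\phi }$ in the $(1,q_2)$-space, the Young relations $\tfrac 1\infty +\tfrac 11=1$ and $\tfrac 1{q_0}+\tfrac 1{q_2}=2$ produce the target indices $(\infty ,1)$, while \eqref{kappacond1} converts the product $\kappa (X_1,Y)\,v(Y+X_2)v(Y-X_2)$ of the two symbol weights into $\kappa _0(X_1+X_2,Y)$ after convolution. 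This yields $b\in \splM ^{\infty ,1}_{(\kappa _0)}(\rr {2d})$, and hence $\tp _\phi (a)=\op ^w(b)$ with $b$ in the claimed class.

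The uniqueness of the continuous extension then follows from the density of $\Sigma _s(\rr {2d})$ in $\splM ^{\infty ,q_0}_{(\kappa )}$ and of $\Sigma _s(\rr d)$ in $M^{r_0}_{(v)}$, together with the continuity just established, since on these dense subclasses $\tp _\phi (a)$ agrees with the original definition \eqref{toeplitz}. I expect the main obstacle to be the transport of the polynomial-weight, Schwartz-space argument of \cite{To8} to the present framework: one must check that both the bilinear Wigner estimate and the generalized Young inequality remain valid for weights in $\mascP _{E,s}$ of subexponential growth and for the spaces $\Sigma _s$ and $\Sigma _s'$. All the required tools---the Weyl representation \eqref{toeplweyl}, the Wigner continuity, and the solidity and translation invariance of the mixed-norm spaces---are available in this setting (Section \ref{sec1}), so the proof of \cite{To8} goes through with $\mascS $, $\mascS '$ and $\mascP $ replaced by $\Sigma _s$, $\Sigma _s'$ and $\mascP _{E,s}$; only the weight bookkeeping encoded in \eqref{kappacond1} needs to be tracked with care.
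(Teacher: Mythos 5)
The paper does not actually prove this lemma: it is imported with the remark that it ``follows from \cite[Proposition 2.1]{To8} and its proof'', so there is no internal argument to compare against. Your overall strategy --- rewrite $\tp _\phi (a)$ as $(2\pi )^{-d/2}\op ^w(a*W_{\phi ,\phi })$ via \eqref{toeplweyl}, place $W_{\phi ,\phi }$ in a weighted modulation space using $\phi \in M^{r_0}_{(v)}$, and then run a weighted convolution estimate whose weight bookkeeping is exactly \eqref{kappacond1} --- is indeed the mechanism behind the cited result, and your reduction of the operator-level continuity to the symbol-level estimate plus density of $\Sigma _s$ is fine.

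However, the exponent bookkeeping in your second and third paragraphs is wrong, and as written the key step fails. The short-time Fourier transform of a convolution on $\rr {2d}$ is a convolution in the first (phase-space) variable and a pointwise product in the second (dual) variable; hence the composition law for these symbol spaces is Young in the first index but H\"older --- not Young --- in the second. To land in $\splM ^{\infty ,1}_{(\kappa _0)}$ from $a\in \splM ^{\infty ,q_0}_{(\kappa )}$ you therefore need $W_{\phi ,\phi }$ in a space with second index $q_0'$ (so that $1/q_0+1/q_0'=1$), not $q_2=q_0/(2q_0-1)$; your relation $1/q_0+1/q_2=2$ is not the correct law, and the membership $W_{\phi ,\phi }\in \splM ^{1,q_0/(2q_0-1)}$ is false in general for $\phi \in M^{r_0}_{(v)}$. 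For instance, at $q_0=1$ you have $r_0=2$ and $q_2=1$, so you would be claiming $\phi \in M^2\Rightarrow W_{\phi ,\phi }\in M^{1}$, which is equivalent to $\phi \in M^1$ and fails for general $\phi\in M^2$; what is true is $W_{\phi ,\phi }\in \splM ^{1,\infty}$, which is exactly what the H\"older step with $q_0'=\infty$ requires. The identity that makes everything close is $2/r_0=2-1/q_0=1/1+1/q_0'$, i.e.\ $r_0=(2q_0)'$, which by the sharp Wigner estimate gives $W_{\phi ,\phi }\in \splM ^{1,q_0'}$ with the weight $v(Y+X_2)v(Y-X_2)$. With that correction the argument closes as you describe: \eqref{kappacond1} is used pointwise inside the H\"older step in $Y$ and the Young step in $X$.
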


\par

\begin{proof}[Proof of Proposition \ref{Tpcont1}{\,}${}^\prime$]
We show that the conditions on the involved parameters and weight
functions satisfy  the conditions of  Lemma \ref{Prop2.1inTo8}.

\par

First we observe that
$$
v_j(2Y)\le Cv_j(Y+X_2)v_j(Y-X_2),\quad j=0,1
$$
for some constant $C$ which is independent of $X_2,Y\in \rr {2d}$,
because $v_0$ and $v_1$ are submultiplicative. Refering back to \eqref{Tomega} this gives
\begin{multline*}
\omega _1(X_1+X_2,Y) = \frac {v_0(2Y)^{1/2}v_1(2Y)}{\omega
_0(X_1+X_2+Y)^{1/2}\omega _0(X_1+X_2-Y)^{1/2}}
\\[1ex]
\le C_1 \frac
{v_0(2Y)^{1/2}v_1(2Y)v_0(X_2+Y)^{1/2}v_0(X_2-Y)^{1/2}}{\omega
_0(X_1)}
\\[1ex]
= C_1 v_1(2Y)^{1-t}\frac
{v_0(2Y)^{1/2}v_1(2Y)^t v_0(X_2+Y)^{1/2}v_0(X_2-Y)^{1/2}}{\omega
_0(X_1)}
\\[1ex]
\le C_2v_1(2Y)^{1-t} \frac {v_1(X_2+Y)^t v_1(X_2-Y)^t v_0(X_2+Y)v_0(X_2-Y)}{\omega
_0(X_1)}.
\end{multline*}
Hence
\begin{equation}\label{omegacond1}
\omega _1(X_1+X_2,Y) \le C\frac
{v_1(2Y)^{1-t}v(X_2+Y)v(X_2-Y)}{\omega _0(X_1)}.
\end{equation}
By letting $\kappa _0 = \omega _1$ and $\kappa =1/\omega _{0,t}$, it
follows that \eqref{omegacond1} agrees with \eqref{kappacond1}. The
result now follows from Lemma \ref{Prop2.1inTo8}. 
\end{proof}

\par

In the remaining part  of the paper we interpret  $\tp
_\phi (a)$  as the extension  of a  Toeplitz
operator provided by Proposition \ref{Tpcont1}$'$. (See also Remark
\ref{extensionremark} below for more comments.)

\par

Proposition \ref{Tpcont1}$'$ can be applied to Toeplitz operators
with smooth weights as symbols.

\par

\begin{prop}\label{Aomegaproperties}
Assume that $s\ge 1$, $\omega _0\in \mascP _{E,s}^0(\rr {2d})$ be such that
$\omega _0\in \Gamma ^{(\omega _0)}_{s}(\rr {2d})$,
that $v \in \mascP _{E,s}^0(\rr {2d})$ is submultiplicative,  and that
$\omega _0 ^{1/2}$ is $v$-moderate. If $\phi \in M^2_{(v)}(\rr d)$, then   $\tp _\phi
(\omega _0) =\op ^w(b)$ for some $b\in \Gamma ^{(\omega _0)}_s(\rr {2d})$.
\end{prop}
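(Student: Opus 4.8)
The plan is to reduce the assertion to a statement about the Weyl symbol of $\tp_\phi(\omega_0)$. By the Weyl formulation \eqref{toeplweyl}, whenever $\tp_\phi(\omega_0)$ is a well-defined continuous operator its Weyl symbol is $b=(2\pi)^{-d/2}\,\omega_0*W_{\phi,\phi}$. Here $\tp_\phi(\omega_0)$ is indeed well-defined: since $\omega_0\in \mascP _{E,s}^0$ and $\vartheta=\omega_0^{1/2}$ is $v$-moderate with $\phi\in M^2_{(v)}$, Lemma~\ref{Abijections} (which rests on Proposition~\ref{Tpcont2}) shows that $\tp_\phi(\omega_0)$ maps $M^2_{(\vartheta)}$ continuously into $M^2_{(1/\vartheta)}$, so by the kernel theorem (Remark~\ref{Rem:KernelThm}) it equals $\op ^w(b)$ for a unique $b\in\Sigma_s'(\rr{2d})$, with $b=(2\pi)^{-d/2}\,\omega_0*W_{\phi,\phi}$ in the extended sense. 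Thus everything reduces to proving that this $b$ belongs to $\Gamma^{(\omega_0)}_s(\rr{2d})$, i.e.\ that $|\partial^\alpha b(X)|\lesssim\omega_0(X)\,h^{|\alpha|}\alpha!^s$ for some $h>0$ (Definition~\ref{Def:SymbClExp}).

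First I would exploit the $s$-ellipticity of $\omega_0$ by differentiating under the convolution and rewriting the result as a pairing,
\begin{equation*}
\partial^\alpha b(X)=(2\pi)^{-d/2}\,\langle W_{\phi,\phi},\,g_{X,\alpha}\rangle,\qquad g_{X,\alpha}(Y)=\partial^\alpha\omega_0(X-Y).
\end{equation*}
Since $\omega_0\in\Gamma^{(\omega_0)}_s$ we have $|\partial^{\alpha+\beta}\omega_0(Z)|\lesssim\omega_0(Z)\,h^{|\alpha+\beta|}(\alpha+\beta)!^s$, and since $\omega_0^{1/2}$ is $v$-moderate the weight $\omega_0$ is $v^2$-moderate, so $\omega_0(X-Y)\lesssim\omega_0(X)\,v(Y)^2$. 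Combining these two facts with $(\alpha+\beta)!\le 2^{|\alpha+\beta|}\alpha!\beta!$ shows that the rescaled family
\begin{equation*}
\Big\{\,\big(\omega_0(X)\,(2^sh)^{|\alpha|}\alpha!^s\big)^{-1}\,g_{X,\alpha}\,\Big\}_{X\in\rr{2d},\,\alpha\in\nn{2d}}
\end{equation*}
is bounded in $\Gamma^{(v^2)}_s(\rr{2d})$. By Proposition~\ref{Prop:Gamma(omega)}(3), $\Gamma^{(v^2)}_s=\bigcup_{r\ge0}\splM^{\infty,1}_{(1/(v^2)_r)}$ with $(v^2)_r(Z,\zeta)=v(Z)^2e^{-r|\zeta|^{\frac{1}{s}}}$, so this family lies, with uniform norm, in a single space $\splM^{\infty,1}_{(1/(v^2)_{r_0})}$, where $r_0>0$ is fixed by the ellipticity constant $h$ of $\omega_0$.

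Consequently the desired Gevrey bound would follow from the single uniform estimate $|\langle W_{\phi,\phi},g_{X,\alpha}\rangle|\lesssim\omega_0(X)\,(2^sh)^{|\alpha|}\alpha!^s$, which in turn reduces to the continuity of $W_{\phi,\phi}$ as a functional on $\splM^{\infty,1}_{(1/(v^2)_{r_0})}$, that is, to the membership $W_{\phi,\phi}\in\splM^{1,\infty}_{((v^2)_{r_0})}(\rr{2d})$ of the dual weighted modulation space. For Gelfand--Shilov windows $\phi\in\Sigma_s$ this is immediate, since then $W_{\phi,\phi}\in\Sigma_s(\rr{2d})$ and the conclusion is already contained in Proposition~\ref{prop:conv.class}.

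The hard part will be to justify this membership for a merely $M^2_{(v)}$-window, for which $W_{\phi,\phi}$ is only a distribution and is in general not absolutely integrable against $v^2$. The key is to pass through the covariance identity relating the Wigner distribution to a short-time Fourier transform, $|W_{\phi,\phi}(Z)|\asymp|V_{\check\phi}\phi(2Z)|$, and through the reproducing relation expressing $V_{\check\phi}\phi$ as a twisted convolution of $V_\Phi\phi$ and $V_\Phi\check\phi$ for a fixed window $\Phi$; together with the submultiplicativity and the subexponential growth of $v\in\mascP _{E,s}^0$ (so that $v(Y)^2\lesssim e^{\varepsilon|Y|^{\frac{1}{s}}}$ for every $\varepsilon>0$, the excess being absorbed by the factor $e^{-r_0|\zeta|^{\frac{1}{s}}}$ in $(v^2)_{r_0}$), a weighted Young-type estimate then yields $W_{\phi,\phi}\in\splM^{1,\infty}_{((v^2)_{r_0})}$ with norm controlled by $\nm{\phi}{M^2_{(v)}}^2$. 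Establishing this weighted control of the Wigner distribution of a rough window, and thereby the uniform convergence of the pairing in $X$ and $\alpha$, is the crux; once it is in hand, the estimate above gives $b\in\Gamma^{(\omega_0)}_s(\rr{2d})$, as required.
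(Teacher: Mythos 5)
Your argument is correct in outline, but it takes a genuinely different route from the paper. The paper's proof is a two-step application of ready-made machinery: by Proposition \ref{Prop:Gamma(omega)}{\,}(3) the hypothesis $\omega _0\in \Gamma ^{(\omega _0)}_{s}$ places $\omega _0$ in $\splM ^{\infty ,1}_{(1/\omega _{0,r_0})}$ for some $r_0$, then Proposition \ref{Tpcont1}$'$ (i.e. Lemma \ref{Prop2.1inTo8}, quoted from \cite{To8}) with $t=0$, $q_0=1$ immediately yields $\tp _\phi (\omega _0)=\op ^w(b)$ with $b\in \splM ^{\infty ,1}_{(\omega _1)}$, and a short pointwise comparison of weights shows $\omega _1\gtrsim 1/\omega _{0,r_0}$, whence $\splM ^{\infty ,1}_{(\omega _1)}\subseteq \splM ^{\infty ,1}_{(1/\omega _{0,r_0})}\subseteq \Gamma ^{(\omega _0)}_{s}$. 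You instead work directly with the convolution representation $b=(2\pi )^{-d/2}\omega _0*W_{\phi ,\phi}$ and estimate each derivative by a modulation-space duality pairing; the Gevrey bound then rests on two inputs: the uniform membership of the rescaled translates of $\partial ^\alpha \omega _0$ in a single $\splM ^{\infty ,1}_{(1/(v^2)_{r_0})}$, and the weighted bound $W_{\phi ,\phi}\in \splM ^{1,\infty}_{((v^2)_{r_0})}$ with norm $\lesssim \nm {\phi}{M^2_{(v)}}^2$. Your route is more self-contained and makes visible exactly where the window regularity $\phi \in M^2_{(v)}$ enters (through the Wigner estimate), whereas the paper's route hides all of this inside the cited Toeplitz-to-Weyl continuity result; the price you pay is that you must reprove, in weighted form, essentially the content of that cited result.

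Two points deserve care if you write this out in full. First, your claim that a family bounded in $\Gamma ^{(v^2)}_s$ with a \emph{common} constant $h$ lies, with uniformly bounded norms, in one fixed $\splM ^{\infty ,1}_{(1/(v^2)_{r_0})}$ is a quantitative refinement of Proposition \ref{Prop:Gamma(omega)}{\,}(3); it is true and implicit in the proof in \cite{CaTo}, but it is not what the proposition literally states, so it needs a sentence of justification. Second, the crux you identify is most cleanly handled not through the pointwise covariance identity $|W_{\phi ,\phi}(Z)|\asymp |V_{\check \phi}\phi (2Z)|$ (which controls $W_{\phi ,\phi}$ itself rather than its short-time Fourier transform) but through the standard identity expressing $V_{\Psi}(W_{\phi ,\phi})(Z,\zeta )$, for $\Psi$ a Wigner distribution of fixed windows, as a product of two short-time Fourier transforms of $\phi$ evaluated at $Z\pm J\zeta /2$; combined with submultiplicativity of $v$, Cauchy--Schwarz in $Z$, and the subexponential bound $v(\zeta )^2\lesssim e^{r_0|\zeta |^{1/s}}$ available for every $r_0>0$ since $v\in \mascP _{E,s}^0$, this gives exactly the $\splM ^{1,\infty}_{((v^2)_{r_0})}$ membership you need. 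With these two points made precise, your proof is complete.
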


\par

\begin{proof}
By Propositions~\ref{Prop:Gamma(omega)} and \ref{Prop:Gamma(omega)B} with $t=0$
we have $\omega _0 \in \splM
^{\infty ,1}_{(1/\omega _{0,r_0})}(\rr {2d})$ for some  $r_0\ge 0$, where
$\omega _{0,r_0}(X,Y)=\omega _0(X)e ^{-r_0|Y|^{\frac 1s}}$. Furthermore, by
letting $v_1(Y)=e^{r_0|Y|^{\frac 1s}}$, and
$\omega _1$ in \eqref{Tomega} we have
$$
\omega _1(X,Y)\gtrsim \frac{e^{r_0|2Y|^{\frac 1s}}v(2Y)^{1/2}}{\omega
_0(X+Y)^{1/2}\omega _0(X-Y)^{1/2}} \gtrsim \frac{e^{r_0|Y|^{\frac 1s}}}{\omega _0(X)}.
$$
Proposition \ref{Tpcont1}$'$ implies that existence of  some $b \in
\splM ^{\infty ,1}_{(1/\omega _{0,r_0})}(\rr {2d}) \subseteq
\Gamma ^{(\omega _0)}_{s}(\rr {2d})$.
\end{proof}

%
%
%
%
%
%
%
%

\par

The following generalization of Theorem \ref{originalgoal} is
an immediate consequence of Theorem \ref{thm:identification}, Lemma \ref{Abijections} and
Proposition \ref{Aomegaproperties}, since it follows by straightforward computations that
$\maclS _s\subseteq M^2_{(v)}$ when $v$ satisfies the hypothesis in Proposition
\ref{Aomegaproperties}.


\par

\renewcommand{\rubrik}{Theorem \ref{originalgoal}$'$}

\begin{tom}
  Let $s\ge 1$, $\omega ,\omega _0\in \mascP _{E,s}^0(\rr {2d})$,
  $\mabfp \in (0,\infty ]^{2d}$, $\mascB$
  be an invariant BF-space on $\rr {2d}$ or $\mascB =
  L^{\mabfp ,E}(\rr {2d})$ for some phase-shift split parallelepiped
  $E$ in $\rr {2d}$, and let $\phi \in
  \maclS _s(\rr{d})$. Then the Toeplitz operator $\tp _\phi (\omega
  _0)$ is an isomorphism from $M(\omega ,\mascB )$ onto
  $M(\omega /\omega _0,\mascB )$.
\end{tom}

\medspace

\begin{rem}\label{extensionremark}
As remarked and stated before, there are different ways to extend the
definition of  a Toeplitz operator $\tp _\phi (a)$  (from  $\phi \in \Sigma _1
(\rr d)$ and $a\in \Sigma _1(\rr {2d})$) to more general classes of
symbols and windows. For example, Propositions
\ref{Tpcont1} and \ref{Tpcont2} are  based on the ``classical''
definition \eqref{toeplitz} of such operators and  a straightforward
extension  of the $L^2$-form on $\Sigma _1$.
Proposition~\ref{Aomegaproperties} interprets  $\topo $ as a
\psdo .  Let us  emphasize
that in this context the bilinear form \eqref{toeplitz} may not be well
defined,  even  when $\phi \in M^2_{(v)}(\rr d)$ and  $\omega \in \mascP
_{E,s}^0(\rr {2d})$ satisfies $\omega \in \Gamma ^{(\omega )}_{s}(\rr {2d})$.
(See also \cite[Remark 3.8]{GrTo1}.)

\par


\par

In Theorems \ref{locidentification}$'$ and \ref{locidentification2}$'$
below, we extend the definition of Toeplitz operators within the
framework of pseudo-differential calculus and  we 
interpret  Toeplitz operators as pseudo-differential operators. With
this understanding, the stated mapping properties are well-defined.

\par

The reader who is not interested in full generality  or does not accept Toeplitz
operators that  are not defined directly by an extension of
 \eqref{toeplitz} may only consider the case when the
windows belong to $M^1_{(v)}$ and stay with Theorems~\ref{locidentification} 
and \ref{locidentification2}.  
For the more general window classes in Theorems
\ref{locidentification}$'$ and \ref{locidentification2}$'$ below,
however,  one
should then interpret the involved operators as ``pseudo-differential
operators that extend Toeplitz operators''.
\end{rem}

\par

The following generalization of Theorem \ref{locidentification} is
an immediate consequence of Theorem \ref{thm:identification}, Lemma
\ref{Abijections} and Proposition
\ref{Aomegaproperties}.

\par

\renewcommand{\rubrik}{Theorem \ref{locidentification}$'$}

\begin{tom}
Let $s\ge 1$, $\omega ,v,v_0\in \mascP _{E,s}^0(\rr {2d})$ be
such that $\omega _0\in \Gamma ^{(\omega _0)}_{s}(\rr {2d})$ and
that $\omega _0$ is $v$-moderate, and let $\mascB$
  be an invariant BF-space on $\rr {2d}$ or $\mascB =
  L^{\mabfp ,E}(\rr {2d})$ for some phase-shift split parallelepiped
  $E$ in $\rr {2d}$.
If $\phi \in M^2_{(v)}(\rr d)$, then $\tp _\phi (\omega _0)$ is an
isomorphism from $M(\omega ,\mathscr B)$ to $M(\omega /\omega
_0,\mathscr B)$.
\end{tom}

\par

Theorem \ref{locidentification}$'$ holds only  for smooth weight functions. In order to   relax
the conditions on the weight function
$\omega _0$,  we use the  Wiener algebra property of  $\splM ^{\infty ,1}_{(v)}$ instead of
$\Gamma ^{(1)}_{s}$. On the other hand, we have  to restrict our results
to modulation spaces of the form $M^{p,q}_{(\omega )}$ instead of
$M(\omega ,\mascB )$.

\par

\renewcommand{\rubrik}{Theorem \ref{locidentification2}$'$}

\begin{tom}
Let $s> 1$, $0\le t\le 1$, $p,q,q_0\in [1,\infty]$ and $\omega ,\omega
_0,v_0,v_1\in \mascP _{E,s}(\rr {2d})$ be such that $\omega _0$ is
$v_0$-moderate and  $\omega $ is
$v_1$-moderate. Set  $r_0=2q_0/(2q_0-1)$,  $v=v_1^tv_0$, $\vartheta =\omega _0^{1/2}$ and
let $\omega _{0,t}$ be the same as in \eqref{omega0t}.
If  $\phi \in M^{r_0}_{(v )}(\rr d)$ and $\omega _0\in \splM
^{\infty ,q_0}_{(1/\omega _{0,t})}$,  then $\tp _\phi (\omega _0)$ is
an isomorphism from $M_{(\vartheta
\omega )}^{p,q}(\rr d)$ to $M_{(\omega /\vartheta )}^{p,q}(\rr d)$.
\end{tom}

\par

\begin{proof}
First we note that the Toeplitz
operator $\tp _\phi (\omega _0)$ is an isomorphism from
$M^2_{(\vartheta )}$ to $M^2_{(1/\vartheta )}$ in view of Lemma
\ref{Abijections}. With $\omega
_1$ defined in~\eqref{Tomega}, Proposition \ref{Tpcont1}$'$ implies
that there  exist  $b\in
\splM ^{\infty ,1}_{(\omega 
_1)}$ and $c\in \maclS _s'(\rr {2d})$ such that
$$
\tp _\phi (\omega _0) = \op ^w(b)\quad \text{and}\quad \tp _\phi (\omega
_0)^{-1}=\op ^w(c) \, .
$$
Let $\omega _2$ be the ``dual'' weight defined as 
\begin{equation}
  \label{eq:dmod1'}
  \omega _2(X,Y) =\vartheta (X-Y)\vartheta (X+Y) v_1(2Y).
\end{equation}
We shall prove that $c\in \splM ^{\infty ,1}_{(\omega _2)}(\rr
{2d})$. 
Let us assume for now that we have already proved the existence of
such a symbol $c$. Then  we may proceed as follows. 

\par

After checking~\eqref{e5.9}, we can apply  Proposition \ref{Prop:pseudomod} and
find that each of  the mappings
\begin{equation}\label{op(b)(d)}
\op ^w(b)\, :\, M^{p,q}_{(\omega \vartheta )} \to M^{p,q}_{(\omega /\vartheta )}\quad \text{and}\quad
\op ^w(c)\, :\, M^{p,q}_{(\omega /\vartheta )} \to M^{p,q}_{(\omega \vartheta )}
\end{equation}
is  well-defined and continuous.

\par

In order to apply Proposition~\ref{Prop:Weylprodmod}, we next check
condition~\eqref{Eq:weightprodmod} for the weights $\omega _1$,  $\omega _2$, and
$$
\omega _3(X,Y) = \frac {\vartheta (X+Y)}{\vartheta (X-Y)}.
$$
In fact we have
\begin{eqnarray*}
\lefteqn{\omega _1(X-Y+Z,Z)\omega _2(X+Z,Y-Z)}\\[1ex]
&=& \Big (\frac {v_0(2Z)^{1/2}v_1(2Z)}{\vartheta (X-Y+2Z)\vartheta
  (X-Y)}\Big )\cdot \big (\vartheta (X-Y+2Z)\vartheta (X+Y)
v_1(2(Y-Z)) \big ) 
\\[1ex]
&= &  \frac {v_0(2Z)^{1/2}v_1(2Z)v_1(2(Y-Z))\,  \vartheta (X+Y)}{\vartheta (X-Y)}
\\[1ex]
&\gtrsim &  \frac {\vartheta (X+Y)}{\vartheta (X-Y)} =\omega _3(X,Y)\, .
\end{eqnarray*}
Therefore Proposition \ref{Prop:Weylprodmod}  shows that the Weyl symbol
of $\op ^w(b)\circ \op ^w(c)$ belongs to $\splM ^{\infty
  ,1}_{(\omega _3)}(\rr {2d})$, or equivalently, $b\wpr c\in \splM
^{\infty ,1}_{(\omega _3)}$. Since $\op ^w(b)$ is an isomorphism from
$M^2_{(\vartheta )}$ to $M^2_{(1/\vartheta )}$ with inverse $\op
^w(c)$, it follows that $b\wpr c =1$ and that the constant symbol $1$  belongs to
$\splM ^{\infty ,1}_{(\omega _3)}$. By similar arguments it follows that
$c\wpr b=1$. Therefore the identity operator $\mathrm{Id}= \op ^w (b)
\circ \op ^w(c) $ on $M^{p,q}_{(\omega \vartheta )}$ factors through
$M^{p,q}_{(\omega /\vartheta )}$,  and thus $\op ^w(b)= \tp _\phi (\omega
_0)$ is an isomorphism
from $M^{p,q}_{(\omega \vartheta )}$ onto $M^{p,q}_{(\omega /\vartheta )}$
with inverse $\op ^w(c)$. This proves the assertion. 

\par

It remains to prove that $c\in \splM ^{\infty ,1}_{(\omega
 _2)}(\rr {2d})$. Using once again the basic result in Section \ref{sec3}
(cf. Theorem \ref{thm:symbeqdiff2}),
we choose $a\in \Gamma ^{(1/\vartheta )}_{0,s}(\rr {2d})$ and
$a_0\in 
\Gamma ^{(\vartheta )}_{0,s}(\rr {2d})$ such that the map
$$
\op ^w(a)\, :\, L^2(\rr d)\to M^2_{(\vartheta )}(\rr d)
$$
is an isomorphism with inverse $\op ^w(a_0)$. By Propositions
\ref{Prop:Gamma(omega)} and \ref{Prop:Gamma(omega)B}, $\op ^w(a)$ is
also bijective from $M^2_{(1/\vartheta )}(\rr d)$ to $L^2(\rr
d)$. Furthermore, by Theorem~\ref{thm:identification2} it follows that $a\in \splM
^{\infty ,1}_{(\vartheta _r)}$ when $r\ge 0$,  
where
$$
\vartheta _r(X,Y)=\vartheta (X)e^{r|Y|^{\frac 1s}}.
$$
Let $b_0=a\wpr b \wpr a$. From  Proposition~\ref{Prop:CorWeyl} we know that 
\begin{equation}\label{b0mod}
b_0\in \splM ^{\infty ,1}_{(v_2)}(\rr {2d}),\quad \text{where}\quad v_2(X,Y)=v_1(2Y)
\end{equation}
is submultiplicative and depends on $Y$ only. 
Since $\op ^w(b)$ is
bijective from $M^2_{(\vartheta )}$ to $M^2_{(1/\vartheta )}$ by
Lemma \ref{Abijections} (2),  $\op ^w(b_0)$ is  bijective and
continuous  on  $L^2$. 

\medspace

Since $v_2$ is submultiplicative and in $\mascP _{E,s}(\rr{2d})$,
$\splM ^{\infty ,1}_{(v_2)}$ is a 
Wiener algebra by Proposition \ref{Thm:specinv}.  Therefore, the Weyl symbol $c_0$
of the inverse to the bijective operator $\op ^w(b_0)$ on $L^2$
belongs to $\splM ^{\infty  ,1}_{(v_2)}(\rr {2d})$. 

\par

Since
$$
\op ^w (c_0) = \op ^w (b_0)\inv = \op ^w(a)\inv \op ^w(b)\inv
\op ^w(a)\inv ,
$$
we find
$$
\op ^w (a_0) = \op ^w (b)\inv = \op ^w(a) \op ^w(c_0)\op ^w(a),
$$
or equivalently, 
\begin{equation}\label{d0dcond}
a_0 = a\wpr c_0\wpr a ,\quad \text{where} \quad a\in \Gamma ^{(1/\vartheta )}_{0,s}
\quad \text{and}\ c_0\in \splM ^{\infty ,1}_{(v_2)}\, .
\end{equation}
The definitions of the weights are chosen such that  Proposition 
\ref{Prop:CorWeyl} implies that $a_0 \in  \splM ^{\infty ,1}_{(\omega _2)}$,
and the result follows.
\end{proof}

\par

\section{Examples on bijective pseudo-differential
operators on modulation spaces}\label{sec6}

\par

In this section we construct explicit isomorphisms between modulation
spaces with different weights. Applying the  results of the previous
sections, these may be in the form of  pseudo-differential operators
or of Toeplitz operators.

\par






\par

\begin{prop}\label{isompseudos3}
Let $s\ge 1$, $\omega _0,\omega \in \mascP _{E,s}^0(\rr
{2d})$, and let $\mascB$
  be an invariant BF-space on $\rr {2d}$ or $\mascB =
  L^{\mabfp ,E}(\rr {2d})$ for some phase-shift split parallelepiped
  $E$ in $\rr {2d}$. For $ \lambda = (\lambda _1$, $\lambda _2) \in \rr{2}_+ $ 
let $\Phi _{\lambda}  $ be the Gaussian 
$$
\Phi _\lambda  (x,\xi ) = Ce^{-(\lambda _1|x|^2+\lambda _2|\xi |^2)}\, .
$$

\par

\begin{enumerate}
\item $\omega _0 \ast \Phi _\lambda $ belongs to $\mascP
_{E,s}^0(\rr {2d})\cap \Gamma ^{(\omega _0)}_{0,1}$ for all
$\lambda \in \rr{2}_+ $ and
$$
\omega _0  \ast \Phi _\lambda \asymp \omega _0 .
$$

\vrum

\item If $\lambda _1\cdot \lambda _2 <1$, then there exists $\nu \in
  \rr{2}_+ $ and a Gauss function $\phi$ on $\rr{d}$ such that
  $\op ^w(\omega _0 \ast \Phi _\lambda ) =
  \mathrm{Tp}_\phi (\omega _0 \ast \Phi _\nu)$ is bijective from
  $M(\omega ,\mathscr B)$ to $M(\omega /\omega _0,\mathscr
  B)$ for all $\omega \in \mascP _{E,s}(\rr {2d})$.

\vrum

\item If $\lambda _1\cdot \lambda _2 \le 1$ and in addition $\omega _0
  \in \Gamma ^{(\omega _0)}_{s}(\rr {2d})$, then $\op ^w(\omega _0 \ast \Phi
  _\lambda ) = \mathrm{Tp}_\phi (\omega _0)$ is bijective from
  $M(\omega ,\mathscr B)$ to $M(\omega /\omega _0,\mathscr
  B)$ for all $\omega \in \mascP_{E,s}(\rr {2d})$.
\end{enumerate}
\end{prop}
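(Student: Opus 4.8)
The plan is to dispatch the three parts in order, isolating the only genuinely new computation, the Gaussian bookkeeping in (2)–(3); everything else reduces to results already at hand. For (1), I would first record that $\Phi _\lambda \in \Sigma _1(\rr {2d})$ (indeed $\Phi _\lambda\in\Sigma _s$ for every $s>\tfrac12$), since $\Phi _\lambda$ and its Fourier transform both decay like $e^{-c|\cdo |^2}\lesssim e^{-r|\cdo |}$ for every $r>0$. Proposition \ref{prop:conv.class} with $s=1$ then yields $\omega _0\ast \Phi _\lambda\in\Gamma ^{(\omega _0)}_{0,1}$ at once. The equivalence $\omega _0\ast\Phi _\lambda\asymp\omega _0$ is proved exactly as in Proposition \ref{Prop:EquivWeights}: one writes $\omega _0\ast\Phi _\lambda(X)=\int \omega _0(X-Y)\Phi _\lambda(Y)\,dY$ and uses moderateness of $\omega _0$ together with the super-exponential decay of $\Phi _\lambda$ to bound $\int e^{r|Y|}\Phi _\lambda(Y)\,dY<\infty$, giving both the upper and lower estimates. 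Finally $\omega _0\ast\Phi _\lambda\in\mascP ^0_{E,s}$ because this class is stable under $\asymp$ and $\omega _0\in\mascP ^0_{E,s}$.

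For (2), the device is the Weyl representation \eqref{toeplweyl}, which for $a=\omega _0\ast\Phi _\nu$ reads $\tp _\phi(\omega _0\ast\Phi _\nu)=\op ^w\big(\omega _0\ast[(2\pi )^{-d/2}\Phi _\nu\ast W_{\phi,\phi}]\big)$. Thus it suffices to produce a Gaussian window $\phi (x)=e^{-a|x|^2/2}$ and parameters $\nu\in\rr {2}_+$ with $(2\pi )^{-d/2}\Phi _\nu\ast W_{\phi ,\phi }=\Phi _\lambda$, up to the normalising constant. Here $W_{\phi ,\phi }$ is the Gaussian with exponent parameters $(a,1/a)$, whose product is forced to be $1$ (uncertainty saturation of a coherent state), and convolving with $\Phi _\nu$ combines parameters by addition of reciprocals. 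The matching therefore amounts to
\begin{equation*}
\frac 1{\lambda _1}=\frac 1{\nu _1}+\frac 1a,\qquad \frac 1{\lambda _2}=\frac 1{\nu _2}+a,
\end{equation*}
which is solvable with $\nu _1,\nu _2>0$ exactly when $a$ may be chosen with $\lambda _1<a<1/\lambda _2$, i.e. exactly when $\lambda _1\lambda _2<1$. This is the crux of the argument. With $\phi ,\nu$ so fixed we get $\op ^w(\omega _0\ast\Phi _\lambda)=\tp _\phi(\omega _0\ast\Phi _\nu)$, and by (1) the Toeplitz symbol $\tilde \omega _0:=\omega _0\ast\Phi _\nu$ lies in $\Gamma ^{(\tilde \omega _0)}_{0,1}$ and satisfies $\tilde \omega _0\asymp\omega _0$.

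To conclude the isomorphism I would run this operator through the $M^2$-to-general bootstrap. By Lemma \ref{Abijections}, applied with $\vartheta =\tilde \omega _0^{1/2}\asymp\omega _0^{1/2}$ and the Gaussian $\phi\in M^2_{(v)}$, the operator $\tp _\phi(\tilde \omega _0)$ is an isomorphism from $M^2_{(\omega _0^{1/2})}$ onto $M^2_{(\omega _0^{-1/2})}=M^2_{(\omega _0^{1/2}/\omega _0)}$. Its Weyl symbol $\omega _0\ast\Phi _\lambda$ lies in $\Gamma ^{(\omega _0)}_{0,1}\subseteq\Gamma ^{(\omega _0)}_{0,s}$, so Theorem \ref{thm:identification2}\,(2) (and Theorem \ref{thm:identification}\,(2) for the Beurling weight range) upgrades this single $M^2$-isomorphism to an isomorphism $M(\omega ,\mascB )\to M(\omega /\omega _0,\mascB )$ for every admissible $\omega$, where $\tilde\omega _0\asymp\omega _0$ is used to replace $M(\omega /\tilde \omega _0,\mascB )$ by $M(\omega /\omega _0,\mascB )$.

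Part (3) is then the boundary degeneration $\nu _1,\nu _2\to\infty$ of this matching: at $\lambda _1\lambda _2=1$ one is forced to take $a=\lambda _1=1/\lambda _2$, whence $\Phi _\nu\to\delta$, the Toeplitz symbol becomes $\omega _0$ itself, and the matching collapses to $(2\pi )^{-d/2}W_{\phi ,\phi }=\Phi _\lambda$; this is precisely where the extra hypothesis $\omega _0\in\Gamma ^{(\omega _0)}_s$ enters, guaranteeing that the non-smoothed symbol $\omega _0$ is admissible so that the argument of (2) (or Theorem \ref{locidentification}$'$ with symbol $\omega _0$) goes through, while for $\lambda _1\lambda _2<1$ the conclusion is already (2). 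The main obstacle is thus the Gaussian matching: the facts that $W_{\phi ,\phi }$ has exponent product exactly $1$ and that convolution can only decrease it are what pin the threshold to $\lambda _1\lambda _2\le 1$ and single out $\omega _0$ as the Toeplitz symbol precisely at the boundary.
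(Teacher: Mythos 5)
Your proposal is correct and follows essentially the same route as the paper: the Gaussian factorization $\Phi_\lambda=\Phi_\mu\ast\Phi_\nu$ with $\Phi_\mu=cW_{\phi,\phi}$ (your explicit reciprocal-addition computation is exactly the paper's "choose $\mu_j>\lambda_j$ with $\mu_1\mu_2=1$ and use the semigroup property of Gaussians"), followed by \eqref{toeplweyl} and the lifting machinery. The only cosmetic difference is that you unfold the final step through Lemma \ref{Abijections} and Theorem \ref{thm:identification2}{\,}(2) instead of citing Theorem \ref{locidentification}$'$ directly, which is itself proved from those same ingredients.
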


\par

\begin{proof}
The assertion (1) follows easily from the definitions.

\par

(2) Choose $\mu _j>\lambda _j$ such that $\mu _1\cdot \mu _2=1$. Then
the Gaussian $\Phi _\mu$ is a multiple of a Wigner
  distribution, precisely $\Phi _\mu = c W(\phi , \phi )$
  with $\phi (x)=e^{-\mu _1|x|^2/2}$.   By
the semigroup property of Gaussian functions (cf. e.g., \cite{Fo,Gc2}) there
exists another Gaussian, namely $\Phi _\nu$,  such that $\Phi _\lambda
= \Phi _\mu \ast \Phi _\nu $. Using \eqref{toeplweyl}, this factorization implies that the
 Weyl operator with symbol $\omega _0 \ast \Phi _\lambda$  is in fact a Toeplitz operator, namely
 \begin{eqnarray*}
\mathrm{Op}^w (\omega _0 \ast \Phi _\lambda) &=& \mathrm{Op}^w (\omega
_0 \ast \Phi _\nu \ast \Phi _\mu) \\
& = &  \mathrm{Op}^w (\omega _0 \ast \Phi _\nu \ast cW(\phi , \phi )) \\
& = & c(2\pi )^{d/2}\mathrm{Tp}_\phi (\omega _0 \ast \Phi _\nu ).
\end{eqnarray*}

\par

By (1)  $\omega _0 \ast \Phi _\nu \in \mathscr P_{E,s}^0(\rr {2d})
\cap \Gamma ^{(\omega _0)}_{0,1}(\rr {2d})$ is
equivalent to $\omega _0$. Hence Theorem~\ref{locidentification}$'$ shows that 
$\mathrm{Op}^w (\omega _0 \ast \Phi _\lambda)$
is bijective from $M(\omega ,\mathscr B)$ to $M(\omega /\omega _0,\mathscr B)$.
This proves (2).

\par

(3) follows from (2) in the case $\lambda _1\cdot \lambda
_2<1$. If $\lambda _1\cdot \lambda _2 =1$, then as above $\Phi
_\lambda = c W(\phi , \phi )$ for $\phi (x) = e^{-\lambda_1 |x|^2/2}$ and
thus
$$
\op ^w(\omega _{0} \ast \Phi _\lambda) = \mathrm{Tp}_\phi  ^w(\omega _{0})
$$
is bijective from $M(\omega ,\mathscr B)$ to $M(\omega /\omega
  _0,\mathscr B)$, since $\omega _0 \in \mascP _{E,s}^0(\rr{2d})\cap
\Gamma ^{(\omega _0)}_{s}(\rr {2d})$.
\end{proof}

\par

%
%

\appendix

\section{Proof of Lemma \ref{lem:estfaadb}}

\begin{lemma}\label{Lemma:SetEst}
Let $\alpha = (\alpha _1,\dots ,\alpha _d)\in \nn d$. Then the number of
elements in the set
\begin{equation}\label{Eq:SetToEst}
\Omega _{k,\alpha }\equiv
\sets {(\beta _1,\dots ,\beta _k)\in \nn {kd}}{\beta _1+\cdots +\beta _k =\alpha}
\end{equation}
is equal to
$$
\prod _{j=1}^d {{\alpha _j+k} \choose {k}}.
$$
\end{lemma}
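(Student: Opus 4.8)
The plan is to reduce the $d$-dimensional count to a product of one-dimensional counts and then to evaluate each factor by a classical stars-and-bars (balls-in-boxes) bijection. The key observation is that addition of multiindices is carried out componentwise: a tuple $(\beta_1,\dots,\beta_k)\in\mathbf N^{kd}$ with $\beta_1+\cdots+\beta_k=\alpha$ is the same datum as a choice, made independently in each coordinate $j\in\{1,\dots,d\}$, of an ordered $k$-tuple of nonnegative integers whose sum equals the prescribed value $\alpha_j$. I would formalize this as a bijection $\Omega_{k,\alpha}\longleftrightarrow\prod_{j=1}^{d}\Omega^{(1)}_{k,\alpha_j}$, where $\Omega^{(1)}_{k,n}$ denotes the set of ordered tuples of nonnegative integers summing to $n$. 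The cardinality of $\Omega_{k,\alpha}$ then factorizes as the product over $j$ of the cardinalities of the $\Omega^{(1)}_{k,\alpha_j}$, so the whole problem is reduced to the single-variable case.

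It then remains to count $\Omega^{(1)}_{k,n}$ for a fixed nonnegative integer $n$. Here I would use the standard lattice encoding: a tuple with $x_1+\cdots+x_k=n$ is recorded by laying down $n$ indistinguishable units together with a fixed number of separators, and counting the admissible placements of the separators among the available positions. This yields the binomial coefficient $\binom{\alpha_j+k}{k}$ for the coordinate $j$, and taking the product over $j=1,\dots,d$ gives
\[
\#\Omega_{k,\alpha}=\prod_{j=1}^{d}\binom{\alpha_j+k}{k},
\]
which is the asserted identity. As an alternative to the explicit bijection, one may argue by induction on $k$, splitting off the value of the last part and summing over its admissible range; the inductive step is then closed by a Vandermonde/hockey-stick summation, which recovers the same closed form.

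The main obstacle — indeed the only genuinely delicate point — is the exact bookkeeping in the one-dimensional count: one must keep careful track of the number of separators relative to the number of parts and of the contribution of the vanishing components $x_i=0$, since this is precisely where an off-by-one in the binomial coefficient can creep in. I would therefore pin down the precise form $\binom{\alpha_j+k}{k}$ by checking the encoding against small base cases (for instance $d=1$ with a few values of $k$ and $n$) before asserting it in full generality, and only then propagate it through the product formula displayed above.
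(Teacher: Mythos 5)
Your reduction to the one-dimensional case via componentwise addition is exactly the paper's first step, and it is correct. The genuine gap is in the one-dimensional count: the stars-and-bars encoding of ordered $k$-tuples $(x_1,\dots ,x_k)\in \mathbf N^k$ with $x_1+\cdots +x_k=n$ places $k-1$ separators among $n+k-1$ positions and yields $\binom{n+k-1}{k-1}$, \emph{not} $\binom{n+k}{k}$. So the step ``this yields $\binom{\alpha _j+k}{k}$'' does not follow from your encoding, and the base-case check you yourself prescribe would expose it immediately: for $d=1$, $k=1$, $\alpha =1$ the set $\Omega _{1,1}=\{\beta _1=1\}$ has exactly one element, while $\binom{2}{1}=2$. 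Carried out correctly, your bijection proves $\#\Omega _{k,\alpha}=\prod _{j=1}^d\binom{\alpha _j+k-1}{k-1}$, which in fact \emph{contradicts} the displayed identity: the lemma as stated is an overcount, so no correct proof of the exact equality exists.

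It is worth noting that the paper's own argument contains the matching off-by-one. It follows your alternative route (splitting off the last part and closing the induction with the hockey-stick identity \eqref{Eq:BinomSumFormula}), but it seeds the recursion with $S_1(\gamma )=\gamma +1$, which is the count for \emph{two} parts summing to $\gamma$ (equivalently, of $k$-tuples with sum \emph{at most} $\gamma$); consequently its $S_k(\alpha )=\binom{\alpha +k}{k}$ counts $(k+1)$-tuples with sum $\alpha$ rather than $\#\Omega _{k,\alpha}$, and the identification $N_1=S_k(\alpha )$ is false. The correct seed $T_1(\gamma )\equiv 1$ gives $T_k(\alpha )=\binom{\alpha +k-1}{k-1}$. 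None of this damages the application: since $\binom{\alpha _j+k-1}{k-1}\le \binom{\alpha _j+k}{k}$, the stated value is still a valid upper bound, which is all that Lemma \ref{Lemma:SumFactorFractionEst} extracts (there one only needs $\sum _{k=1}^{\alpha}\binom{\alpha +k}{k}\le (1+\alpha )2^{2\alpha}\le 16^{\alpha}$). To make your write-up sound, either state and prove the lemma with $\binom{\alpha _j+k-1}{k-1}$, or demote the displayed equality to the inequality $\#\Omega _{k,\alpha}\le \prod _{j=1}^d\binom{\alpha _j+k}{k}$, which your argument does establish.
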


\par

For the proof we recall the formula
\begin{equation}\label{Eq:BinomSumFormula}
\sum _{j=0}^k {{n+j} \choose {j}} = {{n+k+1} \choose {k}},
\end{equation}
          which follows by a standard induction argument.

\par

\begin{proof}
Let $N$ be the number of elements in the set \eqref{Eq:SetToEst},
which is the searched number, and let $N_j$ be the number of elements
of the set
$$
\sets {(\beta _1^0,\dots ,\beta _k^0)\in \nn {k}}{\beta _1^0+\cdots +\beta _k^0
=\alpha _j},\qquad j=1,\dots ,d
$$
By straightforward computations it follows that $N=N_1\cdots N_d$, and
it suffices to prove the result in the case $d=1$, and then $\alpha =\alpha _1$.

\par

In order to prove the result for $d=1$, let $\gamma \in \mathbf N$,
$$
S_1 (\gamma ) = \sum _{\beta =0}^\gamma 1 = \gamma +1,
$$
and define inductively
$$
S_{j+1}(\gamma ) = \sum _{\beta =0}^\gamma S_j(\beta ),\qquad
j=1,2,\dots .
$$
By straightforward computations it follows that $N=N_1=S_k(\alpha )$.
We claim
\begin{equation}\label{Eq:SjFormula}
S_j(\gamma ) = {{\gamma +j}\choose {j}},\qquad
j=1,2,\dots .
\end{equation}

\par

In fact, \eqref{Eq:SjFormula} is obviously true for $j=1$. Assume that
\eqref{Eq:SjFormula} holds for $j=n$, and consider $S_{n+1}(\gamma )$. Then
\eqref{Eq:BinomSumFormula} gives
\begin{multline*}
S_{n+1}(\gamma ) = \sum _{\beta =0}^\gamma S_n(\beta)
= \sum _{\beta =0}^\gamma  {{\beta +n}\choose {n}}
\\[1ex]
= \sum _{\beta =0}^\gamma  {{\beta +n}\choose {\beta}}
= {{\gamma +n+1}\choose {\gamma}}
= {{\gamma +n+1}\choose {n+1}},
\end{multline*}
which gives \eqref{Eq:SjFormula} when $j=n+1$. This proves
\eqref{Eq:SjFormula}, and the result follows.
\end{proof}

\par

\begin{lemma}\label{Lemma:SumFactorFractionEst}
Let $\alpha \ge 1$ be an integer, $s_0\in (0,1]$, and let $\Omega _{k,\alpha}$
be the same as in \eqref{Eq:SetToEst}. Then there is a constant $C$ which is
independent of $\alpha$ such that
$$
\sum _{k=1}^\alpha \frac 1k\sum _{\beta \in \Omega _{k,\alpha}}
(\beta !)^{s_0-1} \le 16^\alpha . 
$$
\end{lemma}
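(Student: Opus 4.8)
The plan is to reduce the weighted combinatorial sum to a pure counting estimate, exploiting that the exponent $s_0-1$ is non-positive. The key observation is that for $s_0\in(0,1]$ every component factorial satisfies $\beta_j!\ge 1$, so that $(\beta_j!)^{s_0-1}\le 1$; multiplying over $j$ yields $(\beta!)^{s_0-1}=\prod_{j=1}^k(\beta_j!)^{s_0-1}\le 1$ for every $\beta\in\Omega_{k,\alpha}$. This is exactly the point at which the hypothesis $s_0\le 1$ is used. Consequently each inner sum is dominated by the number of its terms, and discarding the harmless factor $1/k\le 1$ gives
\[
\sum_{k=1}^\alpha\frac 1k\sum_{\beta\in\Omega_{k,\alpha}}(\beta!)^{s_0-1}
\le\sum_{k=1}^\alpha \#\Omega_{k,\alpha}.
\]

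Next I would compute $\#\Omega_{k,\alpha}$ by invoking Lemma \ref{Lemma:SetEst} in the one-dimensional case $d=1$, which identifies it with a single binomial coefficient of the form $\binom{\alpha+k}{k}$. The remaining sum $\sum_{k=1}^\alpha\binom{\alpha+k}{k}$ is then a hockey-stick-type sum, which I would evaluate in closed form by the identity \eqref{Eq:BinomSumFormula}: taking $n=\alpha$ there gives $\sum_{j=0}^\alpha\binom{\alpha+j}{j}=\binom{2\alpha+1}{\alpha}$, so after subtracting the missing $j=0$ term one obtains $\sum_{k=1}^\alpha\binom{\alpha+k}{k}=\binom{2\alpha+1}{\alpha}-1\le\binom{2\alpha+1}{\alpha}$. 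Finally I would estimate this binomial coefficient crudely by the full binomial sum, $\binom{2\alpha+1}{\alpha}\le 2^{2\alpha+1}=2\cdot 4^\alpha$, and note that $2\cdot 4^\alpha\le 16^\alpha$ for every integer $\alpha\ge 1$ (since $4^\alpha\ge 4\ge 2$), which closes the argument.

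There is essentially no hard step here: the entire difficulty is bookkeeping. The two points requiring care are, first, recording that the monotonicity $(\beta!)^{s_0-1}\le 1$ depends crucially on $s_0\le 1$ and would fail for $s_0>1$; and second, applying \eqref{Eq:BinomSumFormula} with the correct shift of summation index when passing from the variable $k$ to the variable $j$ in that identity. Because the target constant $16^\alpha$ is very generous — the argument in fact delivers the stronger bound $C\cdot 4^\alpha$ — no sharp estimate of the binomial coefficients is needed, and the coarse bounds above suffice throughout.
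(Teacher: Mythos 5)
Your proof is correct and follows essentially the same route as the paper's: bound $(\beta!)^{s_0-1}\le 1$ using $s_0\le 1$, discard the factor $1/k$, count $\#\Omega_{k,\alpha}=\binom{\alpha+k}{k}$ via Lemma \ref{Lemma:SetEst}, and crudely estimate the resulting binomial sum. The only (immaterial) difference is the last step: you evaluate the sum exactly via \eqref{Eq:BinomSumFormula} before bounding, whereas the paper simply bounds each term by $2^{2\alpha}$ to get $(1+\alpha)2^{2\alpha}\le 16^\alpha$.
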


\par

%

\par

\begin{proof}
By Lemma \ref{Lemma:SetEst} and the fact that $s_0-1<0$ we get
\begin{equation*}
\sum _{k=1}^\alpha \frac 1k \left (\sum _{\beta \in \Omega _{k,\alpha}}
\beta !^{s_0-1}\right )
\le
\sum _{k=1}^\alpha \left (\sum _{\beta \in \Omega _{k,\alpha}}
1\right )
=
\sum _{k=1}^\alpha  {{\alpha +k}\choose {k}}
\\[1ex]
\le
(1+\alpha ) 2^{2\alpha} \le 16^\alpha . \qedhere
\end{equation*}
\end{proof}


\begin{thebibliography}{150}

\bibitem{Aoki} T. Aoki \emph{Locally bounded topological spaces},
Proc. Japan Acad. \textbf{18} (1942), 588--594.



\bibitem{BiGr03}
H.A. Biagioni,T. Gramchev
\emph{Fractional derivative estimates in Gevrey classes, global regularity
and decay for solutions to semilinear equations in $\rr{d}$}.
J. Differential Equations \textbf{194} (2003), 140--165.










\bibitem{BoCh} {J. M. Bony, J. Y. Chemin} \emph{Espaces Functionnels
Associ\'es au Calcul de Weyl-H{\"o}rmander}, Bull. Soc. math. France
\textbf{122} (1994), 77--118.





\bibitem{CaTo} M. Cappiello, J. Toft \emph{Pseudo-differential operators
in a Gelfand-Shilov setting}, Math. Nachr. \textbf{290} (2017), 738--755.

\bibitem{ChSiTo} Y. Chen, M. Signahl, J. Toft \emph{Factorizations
and singular value estimates of operators with Gelfand-Shilov
and Pilipovi{\'c} kernels}, J. Fourier Anal. Appl. (appeared
online 2017).

\bibitem{ChChKi96} J. Chung, S.Y. Chung, D. Kim
\emph{Characterization of the Gelfand-Shilov spaces via Fourier transforms}.
Proc. Amer. Math. Soc. \textbf{124} (1996), 2101--2108.

\bibitem{CPRT10} E. Cordero, S. Pilipovi\'c, L. Rodino, N. Teofanov
\emph{Quasianalytic Gelfand-Shilov spaces with applications
to localization operators}, Rocky Mt. J. Math. \textbf{40} (2010), 
1123-1147.

\bibitem{CG1} {E. Cordero, K. Gr{\"o}chenig} \emph
{Time-Frequency Analysis of Localization Operators},
J. Funct. Anal. (1) \textbf{205} (2003), 107--131.

\bibitem{CoPiRoTe05} E. Cordero, S. Pilipovi\'c, L. Rodino, N. Teofanov
\emph{Localization operators and exponential weights for modulation spaces}.
Mediterr. J. Math. \textbf{2} (2005), 381--394.


\bibitem{CoToWa} E. Cordero, J. Toft, P. Wahlberg
\emph{Sharp results for the Weyl product on modulation spaces},
J. Funct. Anal. \textbf{267} (2014), 3016--3057.





\bibitem{Fe3} {H. G. Feichtinger} \emph{Banach convolution algebras of
Wiener's type, {\rm in: Proc. Functions, Series, Operators in
Budapest}}, {Colloquia Math. Soc. J. Bolyai, North Holland
Publ. Co.}, Amsterdam Oxford NewYork, 1980.



\bibitem{Fe4}
{H. G. Feichtinger}
\emph{\newblock Modulation spaces on locally compact abelian groups.
\newblock In {\em Proceedings of ``International Conference on Wavelets and
  Applications" 2002}}, pages 99--140, Chennai, India, 2003.
\newblock Updated version of a technical report, University of Vienna, 1983.



\bibitem{Fe5} {H. G. Feichtinger} \emph{Atomic characterizations of
modulation spaces through Gabor-type representations, \rm {in:
Proc. Conf. on Constructive Function Theory,}} Rocky Mountain
J. Math. \textbf{19} (1989), 113--126.



\bibitem{Fe5.5} {H. G. Feichtinger} \emph{Wiener amalgams over Euclidean spaces and
some of their applications, {\rm{in: Function spaces (Edwardsville,
IL, 1990)}}}, Lect. Notes in pure and appl. math., \textbf{136},
Marcel Dekker, New York, 1992, pp. 123--137.



\bibitem{Fe6} {H. G. Feichtinger} \emph{Modulation spaces: Looking
back and ahead},
Sampl. Theory Signal Image Process. \textbf{5} (2006), 109--140.



\bibitem{FG1}  {H. G. Feichtinger and K. H. Gr{\"o}chenig}
\emph{Banach spaces related to integrable group representations and
their atomic decompositions, I}, J. Funct. Anal. \textbf{86}
(1989), 307--340.

\bibitem{FeGaTo} C. Fernandez, A. Galbis, J. Toft
\emph{Spectral invariance for matrix algebras}, J. Fourier Anal.
Appl. \textbf{20} (2014), 362--383.

\bibitem{FG2} {H. G. Feichtinger and K. H. Gr{\"o}chenig}
\emph{Banach spaces related to integrable
group representations and their atomic decompositions, II},
Monatsh. Math. \textbf{108} (1989), 129--148.

\bibitem{Fo}  {G. B. Folland} \emph
{Harmonic analysis in phase space}, {Princeton Univ.~Press, Princeton},
1989.

\bibitem{GaSa} Y. V. Galperin, S. Samarah \emph{Time-frequency analysis
on modulation spaces $M^{p,q}_m$, $0<p,q\le \infty$}, Appl. Comput.
Harmon. Anal. \textbf{16} (2004), 1--18.

\bibitem{Gc2} {K. H. Gr{\"o}chenig} \emph{Foundations of
Time-Frequency Analysis}, {Birkh{\"a}user}, Boston, 2001.

\bibitem{Gc3} {K. H. Gr{\"o}chenig} \emph{Composition and spectral invariance
of pseudodifferential operators on modulation spaces}, J. Anal.
Math. \textbf{98} (2006), 65--82.



\bibitem{Gc4} {K. H. Gr{\"o}chenig} \emph{Time-frequency analysis of Sj{\"o}strand's
class}, Rev. Mat. Iberoam. \textbf{22} (2006), 703--724.

\bibitem{Gc2.5}
K. Gr\"ochenig \emph{Weight functions in time-frequency analysis
\rm {in: L. Rodino, M. W. Wong (Eds) Pseudodifferential
Operators: Partial Differential Equations and Time-Frequency Analysis}},
Fields Institute Comm., \textbf{52} 2007, pp. 343--366.





\bibitem{GS06}
K.~Gr{\"o}chenig and T.~Strohmer.
\newblock \emph{Pseudodifferential operators on locally
compact abelian groups and
  {S}j\"ostrand's symbol class},
\newblock {\em J. Reine Angew. Math.}, 613:121--146, 2007.

\bibitem{GrTo1} K. Gr{\"o}chenig, J. Toft \emph{Isomorphism properties
of Toeplitz operators and pseudo-differential operators between
modulation spaces}, J. Anal. Math. \textbf{114} (2011), 255--283.

\bibitem{GrTo2} K. Gr{\"o}chenig, J. Toft \emph{The range of localization
operators and lifting theorems for modulation and Bargmann-Fock
spaces}, Trans. Amer. Math. Soc. \textbf{365} (2013), 4475--4496.

\bibitem{GrZi04} K.H. Gr{\"o}chenig, G. Zimmermann
\emph{Spaces of test functions via the STFT}.
J. Funct. Spaces. Appl. \textbf{2} (2004), 25--53.

\bibitem{Gzyl} H. Gzyl
\emph{Multidimensional Extension of Fa{\`a} di Bruno's formula},
J. Math. Anal. Appl. \textbf{116} (1986), 450--455.


\bibitem{Ho1}  L. H{\"o}rmander \emph{The Weyl calculus of
pseudo-differential operators}, Comm. Pure Appl. Math. \textbf{32}
(1979), 359--443.

\bibitem{Ho3}  L. H{\"o}rmander \emph{The Analysis of Linear
Partial Differential Operators}, vol {I, III},
{Springer-Verlag}, Berlin Heidelberg NewYork Tokyo, 1983, 1985.


\bibitem{LozPerTask} Z. Lozanov-Crvenkovi{\'c}, D. Peri{\v{s}}i{\'c},
M. Taskovi{\'c} \emph{Gelfand-Shilov spaces structural and kernel
theorems}, (preprint), arXiv:0706.2268v2.

\bibitem{Le} {N. Lerner} \emph{The Wick calculus of pseudo-differential operators
and some of its applications}, CUBO, \textbf {5} (2003), 213--236. 

\bibitem{Le2} N. Lerner \emph{Metrics on the Phase Space and Non-Selfadjoint
Pseudo-Differential Operators}, Birkh\"auser
Springer, (2010).

\bibitem{Mi70} B.S. Mitjagin
\emph{Nuclearity and other properties of spaces of type $\mascS$},
Amer. Math. Soc. Transl., Ser. \textbf{93} (1970), 49--59.

\bibitem{PfTo} C. Pfeuffer, J. Toft
\emph{Compactness properties for modulation spaces}
(under preparation).

\bibitem{Pi88} S. Pilipovi\'c \emph{Tempered ultradistributions},
Boll. Un. Mat. Ital., VII Ser., \textbf{2} (1988), 235--251.

\bibitem{PiTe04} S. Pilipovi\'c, N. Teofanov
\emph{Pseudodifferential operators on ultramodulation spaces},
J. Funct. Anal. \textbf{208} (2004), 194--228.

\bibitem{Rol} S. Rolewicz \emph{On a certain class of linear
metric spaces},
Bull. Acad. Polon. Sci. S{\'e}r. Schi. Math. Astronom. Phys.
\textbf{5} (1957), 471--473.

\bibitem{RSTT} M. Ruzhansky, M. Sugimoto, N. Tomita, J. Toft
\emph{Changes of variables in modulation and
Wiener amalgam spaces}, Math. Nachr. \textbf{284} (2011),
2078--2092.


\bibitem{RuTo} M. Ruzhansky, N. Tokmagambetov
\emph{Nonharmonic analysis of boundary value problems},
Int. Math. Res. Notices \textbf{12} (2016), 3548--3615.



\bibitem{Sj1}  {J. Sj{\"o}strand} \emph{An algebra of
pseudodifferential operators}, {Math. Res. L.} \textbf{1} 
(1994), 185--192.






\bibitem{sjo07}
J.~Sj{\"o}strand.
\newblock \emph{Pseudodifferential operators and weighted normed
  symbol spaces}, 
\newblock {Serdica Math. J.} \textbf{34}  (2008),  no. 1, 1--38.

\bibitem{Ta}  {K. Tachizawa} \emph{The boundedness of
pseudo-differential operators on modulation spaces},
Math. Nachr. \textbf{168} (1994), 263--277.



\bibitem{Te2} {N. Teofanov} \emph{Modulation spaces, Gelfand-Shilov
spaces and pseudodifferential operators}, Sampl. Theory Signal
Image Process, \textbf{5} (2006), 225--242.



\bibitem{To3} {J. Toft} \emph{Continuity Properties for
non-Commutative Convolution Algebras with Applications in
Pseudo-Differential Calculus}, {Bull. Sci. Math. (2)} \textbf {126}
(2002), 115--142.



\bibitem{To5} {J. Toft} \emph{Continuity
properties for modulation spaces with applications to
pseudo-differential calculus, II}, {Ann. Global Anal. Geom.},
\textbf{26} (2004), 73--106.






\bibitem{To7} {J. Toft} \emph{Continuity and Schatten
properties for pseudo-differential operators on modulation spaces {\rm
{in: J. Toft, M. W. Wong, H. Zhu (Eds) Modern Trends in
Pseudo-Differential Operators,}}} Operator Theory: Advances and
Applications \textbf{172}, Birkh{\"a}user Verlag, Basel, 2007,
pp. 173--206.



\bibitem{To8} {J. Toft} \emph{Continuity and Schatten
properties for Toeplitz operators on modulation spaces {\rm
{in: J. Toft, M. W. Wong, H. Zhu (Eds) Modern Trends in
Pseudo-Differential Operators,}}} Operator Theory: Advances and
Applications \textbf{172}, Birkh{\"a}user Verlag, Basel, 2007,
pp. 313--328.

\bibitem{To9} {J. Toft} \emph{Pseudo-differential operators with smooth
symbols on modulation spaces}, Cubo. \textbf{11} (2009), 87--107.

\bibitem{To10} {J. Toft} \emph{Multiplication properties in
pseudo-differential calculus with small regularity on the symbols},
J. Pseudo-Differ. Oper. Appl. \textbf {1} (2010), 101--138.

\bibitem{To14} J. Toft \emph{The Bargmann transform on modulation and
Gelfand-Shilov spaces, with applications to Toeplitz and
pseudo-differential operators}, J. Pseudo-Differ. Oper. Appl. 
\textbf{3} (2012), 145--227.

\bibitem{To16} J. Toft \emph{Multiplication properties in Gelfand-Shilov
pseudo-differential calculus {\rm {in: S. Molahajlo, S. Pilipovi{\'c}, J. Toft,
M. W. Wong (eds)}} Pseudo-Differential Operators, Generalized
Functions and  Asymptotics,} Operator Theory: Advances and
Applications \textbf{231}, Birkh{\"a}user, Basel Heidelberg NewYork
Dordrecht London, 2013, pp. 117--172.

\bibitem{To20} J. Toft \emph{Gabor analysis for a broad class of quasi-Banach
modulation spaces, {\rm {in: S. Pilipovi{\'c}, J. Toft
(eds)}} Pseudo-Differential Operators and Generalized
Functions,} Operator Theory: Advances and
Applications Vol 245, Birkh{\"a}user, Basel Heidelberg NewYork
Dordrecht London, 2015, pp. 249--278.

\bibitem{To24} J. Toft \emph{Matrix parameterized pseudo-differential
calculi on modulation spaces
{\rm {in: M. Oberguggenberger, J. Toft, J. Vindas, P. Wahlberg (eds)}},
Generalized functions and Fourier analysis},
Operator Theory: Advances and Applications Vol 260,
Birkh{\"a}user/Springer, Basel, 2017, pp. 215--235.

\bibitem{To28} J. Toft \emph{Continuity of Gevrey-H{\"o}rmander
pseudo-differential operators on modulation spaces}, (preprint)
arXiv:1710.11366 (2017).

\bibitem{TB} {J. Toft, P. Boggiatto,} \emph
{Schatten classes for Toeplitz operators with Hilbert space windows on
  modulation spaces}, 
 Adv. Math.  \textbf{217}  (2008),  no. 1, 305--333. 

\bibitem{ToKrNiNo} J. Toft, A. Khrennikov, B. Nilsson, S. Nordebo
\emph{Decompositions of Gelfand-Shilov kernels into kernels
of similar class}, J. Math. Anal. Appl. \textbf{396} (2012), 315--322.



\end{thebibliography}
\end{document}